\pgfplotsset{compat=1.10}
\newcommand{\mypm}{\mathbin{\mathpalette\@mypm\relax}}
\newcommand{\@mypm}[2]{\ooalign{%
  \raisebox{.1\height}{$#1+$}\cr
  \smash{\raisebox{-.6\height}{$#1-$}}\cr}}
\DeclareMathOperator{\Gram}{Gram}
\DeclareMathOperator{\rank}{rank}
\newcommand{\CSP}{\mathrm{CS}_{+}}
\newcommand{\CSPvN}{\mathrm{CS}_{+,\mathrm{vN}}^n}
\newcommand{\cpsd}{\operatorname{\mathrm{cpsd-rank}}}
\newcommand{\ignore}[1]{}
\newcommand{\CP}{\mathrm{CP}}
\newcommand{\cprank}{\text{\rm cp-rank}}
\newcommand{\psdrank}{\text{\rm psd-rank}}
\newcommand{\cpsdrank}{\text{\rm cpsd-rank}}
\newcommand{\xib}[2]{{\xi_{#2}^{\mathrm{#1}}}}
\newcommand{\N}{\mathbb{N}}
\newcommand{\R}{\mathbb{R}}
\newcommand{\K}{\mathbb{K}}
\newcommand{\C}{\mathbb{C}}
\newcommand{\oS}{\mathbb{S}}
\newcommand{\T}{{\sf T}}
\DeclareMathOperator{\Tr}{Tr}
\renewcommand{\sup}{\mathrm{sup}}
\renewcommand{\min}{\mathrm{min}}
\renewcommand{\max}{\mathrm{max}}
\renewcommand{\epsilon}{\varepsilon}
\newcommand{\Diag}{\text{\rm Diag}}
\DeclareMathOperator{\cpsdr}{cpsd-rank}
\DeclareMathOperator{\hcpsd}{cpsd-rank_{\mathbb C}}
\newcommand{\DAS}{\mathcal D_{\hspace{-0.08em}\mathcal A}(S)}
\newcommand{\DA}{\mathcal D_{\hspace{-0.08em}\mathcal A}}
\newcommand{\DS}{\mathcal D(S)}
\newcommand{\CDS}{D(S)}
\newcommand{\cx}{[{\bf x}]}
\newcommand{\ncx}{\langle {\bf x}\rangle}
\newcommand{\bx}{\mathbf{x}}
\newcommand{\bX}{\mathbf{X}}
\newcommand{\psdcone}{\mathrm{S}}
\newcommand{\Hermitian}{\mathrm{H}}
\newcommand{\Hilbert}{\mathcal H}
\newcommand{\elliptope}{\mathrm{E}}
\newcommand{\MD}{{\mathcal D}}
\newcommand{\MM}{{\mathcal M}}
\newcommand{\MI}{{\mathcal I}}
\newcommand{\taucpsos}{   \tau_{\mathrm{cp}}^{\mathrm{sos}}}
\newcommand{\taucp}{\tau_{\mathrm{cp}}}
\newcommand{\fmin}{f_{*}}
\newcommand{\MB}{{\mathcal B}}
\newcommand{\ftr}{f^{\text{tr}}}
\newcommand{\MA}{{\mathcal A}}
\newcommand{\SAcpsd}{S_A^{\hspace{0.1em}\mathrm{cpsd}}}
\newcommand{\SAplus}{S_A^{+}}
\newcommand{\SApsd}{S_A^{\hspace{0.1em}\mathrm{psd}}}
\newcommand{\SAVcpsd}{S_{A,V}^{\hspace{0.1em}\mathrm{cpsd}}}
\newcommand{\SAVkcpsd}{S_{A,V_k}^{\hspace{0.1em}\mathrm{cpsd}}}
\newcommand{\SAScpsd}{S_{A,\oS^{n-1}}^{\hspace{0.1em}\mathrm{cpsd}}}
\newcommand{\SABcpsd}{S_{A\oplus B}^{\hspace{0.1em}\mathrm{cpsd}}}
\newcommand{\SBcpsd}{S_B^{\hspace{0.1em}\mathrm{cpsd}}}
\newcommand{\SAcp}{S_A^{\hspace{0.1em}\mathrm{cp}}}
\newcommand{\SAVcp}{S_{A,V}^{\hspace{0.1em}\mathrm{cp}}}
\providecommand{\qedsymbol}{$\square$}
\newcommand{\mathqed}{\quad\hbox{\qedsymbol}}
\DeclareRobustCommand{\qed}{%
  \ifmmode \mathqed
  \else
    \leavevmode\unskip\penalty9999 \hbox{}\nobreak\hfill
    \quad\hbox{\qedsymbol}%
  \fi
}
\begin{document}
 
\title{Lower bounds on matrix factorization ranks via noncommutative polynomial optimization\thanks{The first and second authors are supported by the Netherlands Organization for Scientific Research, grant number 617.001.351, and the second author by the ERC Consolidator Grant QPROGRESS 615307.}
}


\author{Sander Gribling \and David de Laat \and Monique Laurent }

\authorrunning{Gribling, de Laat, Laurent} 

\institute{S. Gribling \at
              CWI, Amsterdam 
           \and
           D. de Laat \at
              CWI, Amsterdam
              \and 
              M. Laurent \at
              CWI, Amsterdam, and Tilburg University
}

\date{Received: date / Accepted: date}

\maketitle

\begin{abstract}
We use techniques from (tracial noncommutative) polynomial optimization to formulate hierarchies of semidefinite programming lower bounds on matrix factorization ranks. In particular, we consider the nonnegative rank, the positive semidefinite rank, and their symmetric analogues: the completely positive rank and the completely positive semidefinite rank. We study convergence properties of our hierarchies, compare them extensively to known lower bounds, and provide some (numerical) examples.
\keywords{Matrix factorization ranks \and Nonnegative rank \and Positive semidefinite rank \and Completely positive rank \and Completely positive semidefinite rank \and Noncommutative polynomial optimization}
 \subclass{15A48 \and 15A23 \and 90C22}
\end{abstract}

\section{Introduction}

\subsection{Matrix factorization ranks}

A factorization of a matrix $A \in \R^{m \times n}$ over a sequence $\{K^d\}_{d\in\N}$ of cones that are each equipped with an inner product $\langle\cdot,\cdot\rangle$ is a decomposition of the form $A=(\langle X_i,Y_j\rangle)$ with  $X_i, Y_j \in K^d$ for all $(i,j)\in [m]\times [n]$, for some integer $d\in \N$. Following \cite{GPT13}, the smallest integer $d$ for which such a factorization exists is called the \emph{cone factorization rank} of $A$ over $\smash{\{K^d\}}$.

The cones $K^d$ we use in this paper are the nonnegative orthant $\R^d_+$ with the usual inner product and the cone $\psdcone^d_+$ (resp., $\Hermitian^d_+$) of $d\times d$ real symmetric (resp., Hermitian) positive semidefinite matrices with the trace inner product $\smash{\langle X, Y \rangle = \mathrm{Tr}(X^\T Y)}$ (resp., $\smash{\langle X, Y \rangle = \mathrm{Tr}(X^* Y)}$). We obtain the \emph{nonnegative rank}, denoted $\rank_+(A)$, which uses the cones $K^d=\R^d_+$, and the \emph{positive semidefinite rank}, denoted $\smash{\psdrank_\K(A)}$, which uses the cones $K^d=\psdcone^d_+$ for $\K = \R$ and $K^d=\Hermitian^d_+$ for $\K=\C$. Both the nonnegative rank and the positive semidefinite rank are defined whenever $A$ is entrywise nonnegative.

The study of the nonnegative rank is largely motivated by the groundbreaking work of Yannakakis~\cite{Yan91}, who showed that the linear extension complexity of a polytope $P$ is given by the nonnegative rank of its slack matrix. The \emph{linear extension complexity} of $P$ is the smallest 
 integer $d$ for which $P$ can be obtained as the linear image of an affine section of the nonnegative orthant $\R^d_+$. The {\em slack matrix} of $P$ is given by the matrix $(b_i-a_i^{\sf T}v)_{v\in V,i\in I}$, where $P= \text{conv}(V)$ and $P= \{x: a_i^{\sf T}x\le b_i\ (i\in I)\}$ are the point and hyperplane representations of $P$. Analogously, the {\em semidefinite extension complexity} of $P$ is  the smallest $d$ such that $P$ is the linear image of an affine section of the cone $\psdcone^d_+$ and it is given by the (real) positive semidefinite rank of its slack matrix~\cite{GPT13}.

The motivation to study the linear and semidefinite extension complexities is that polytopes with small extension complexity admit efficient algorithms for linear optimization. Well-known examples include spanning tree polytopes~\cite{KippM} and permutahedra~\cite{Goe15}, which have polynomial linear extension complexity, and the stable set polytope of perfect graphs, which has polynomial semidefinite extension complexity~\cite{GLS}  (see, e.g., the surveys~\cite{CGZ10,FGPRT15}).
The above connection to the nonnegative rank and to the positive semidefinite rank of the slack matrix can be used to show that 
a polytope does not admit a small extended formulation. Recently this connection was used to show that the linear extension complexities of the traveling salesman, cut, and stable set polytopes are exponential in the number of nodes~\cite{FMPTdW12}, and this result was extended to their semidefinite extension complexities in~\cite{LRS15}.
Surprisingly, the linear extension complexity of the matching polytope is also exponential~\cite{Roth14}, even though linear optimization over this set is polynomial time solvable~\cite{Edmonds65}. It is an open question whether the semidefinite extension complexity of the matching polytope is exponential. 
 
Besides this link to extension complexity, the nonnegative rank also finds applications in probability theory and in communication complexity, and the positive semidefinite rank has applications in quantum information theory and in quantum communication complexity (see, e.g.,~\cite{MSvS03,FFGT11,JSWZ13,FMPTdW12}).
	
For square symmetric matrices ($m=n$) we are also interested in {\em symmetric} analogues of the above matrix factorization ranks, where we require the same factors for the rows and  columns (i.e., $X_i = Y_i$ for all $i\in [n]$). 
The symmetric analog of  the nonnegative rank is the  \emph{completely positive rank}, denoted $\cprank(A)$, which uses the cones $K^d = \smash{\R_+^d}$, and the symmetric analog of the positive semidefinite rank is the \emph{completely positive semidefinite rank}, denoted $\cpsdr_\K(A)$, which uses  the cones $K^d=\psdcone^d_+$ if $\K=\R$ and $K^d=\Hermitian^d_+$ if $\K=\C$. These symmetric factorization ranks are not always well defined since  not every symmetric nonnegative matrix admits a symmetric factorization by nonnegative vectors or postive semidefinite matrices. The symmetric matrices  for which these parameters are well defined form convex cones known as the {\em completely positive cone}, denoted $\CP^n$, and the {\em completely positive semidefinite cone}, denoted $\CSP^n$. We have the inclusions  $\CP^n \subseteq \CSP^n \subseteq \psdcone_+^n$, which are known to be strict for $n\ge 5$. For details on these cones see~\cite{BSM03,BLP17,LP15} and references therein.

Motivation for the cones  $\CP^n$ and $\CSP^n$ comes in particular from their use to model classical and quantum information optimization problems.
For instance, graph parameters such as the stability number and the chromatic number can be written as linear optimization problems over the completely positive cone~\cite{dKP02}, and the same holds, more generally, for  quadratic problems with mixed binary variables~\cite{Burer09}. 
The $\cprank$ is widely studied in the linear algebra community; see, e.g.,~\cite{BSM03,SMBJS13,SMBBJS15,BSU14}. 

The  completely positive semidefinite cone was first studied in~\cite{LP15} to describe quantum analogues of the stability number and of the chromatic number of a graph. This was later extended to general graph homomorphisms in~\cite{SV16} and to graph isomorphism in~\cite{AMRSS}. In addition, as shown in~\cite{MR14,SV16}, there is a close connection
between the completely positive semidefinite cone and the set of quantum correlations. This also gives a relation between the completely positive semidefinite rank and the minimal entanglement dimension necessary to realize a quantum correlation. 
This connection has been used in~\cite{PSVW16,GdLL17,PV17} to construct matrices whose completely positive semidefinite rank is exponentially large in the matrix size. For the special case of synchronous quantum correlations the minimum entanglement dimension is directly given by the completely positive semidefinite rank of a certain matrix (see~\cite{GdLL17b}).

The following inequalities hold for the nonnegative rank and the positive semidefinite rank: We have
\[
\psdrank_\C(A)\leq \psdrank_\R(A) \leq \rank_+(A) \leq \min \{m,n\}
\]
for any $m\times n$ nonnegative matrix $A$ and $\cprank(A)\le \binom{n+1}{2}$ for any $n\times n$ completely positive matrix $A$. However, the situation for the cpsd-rank is very different. Exploiting the connection between the completely positive semidefinite cone and quantum correlations it follows from results in~\cite{Slofstra17} that the cone $\smash{\CSP^n}$ is not closed for $n\ge 1942$. The results in~\cite{DPP17} show that this already holds for $n\ge 10$. As a consequence there does not exist an upper bound on the $\cpsdrank$ as a function of the matrix size. For small matrix sizes very little is known. It is  an open problem whether $\CSP^5$ is closed, and we do not even know how to construct a $5 \times 5$ matrix whose cpsd-rank exceeds~$5$. 

The $\rank_+$, $\cprank$, and $\text{psd-rank}$ are known to be computable; this follows  using results from~\cite{Ren92} since upper bounds exist on these factorization ranks that depend only on the matrix size, see~\cite{BR06} for a proof for the case of the $\cprank$.  But computing the nonnegative rank is NP-hard~\cite{Vav09}. In fact, determining the $\rank_+$ and $\psdrank$ of a matrix are both equivalent to the existential theory of the reals~\cite{Shi16a,Shi16b}.
For the cp-rank and the cpsd-rank no such results are known, but there is no reason to assume they are any easier. In fact it is not even clear whether the cpsd-rank is computable in general. 

To obtain upper bounds on the factorization rank of a given matrix one can employ heuristics that try to construct small factorizations. Many such heuristics exist for the nonnegative rank (see the overview~\cite{Gillis17} and references therein),  factorization algorithms exist for completely positive matrices (see the recent paper~\cite{GP18}, also~\cite{DD12} for structured completely positive matrices), and algorithms to compute positive semidefinite factorizations are presented in the recent work~\cite{VGG17}.  In this paper we want to compute {\em lower bounds} on matrix factorization ranks, which we achieve by employing a relaxation approach based on (noncommutative) polynomial optimization. 

\subsection{Contributions and connections to existing bounds}

In this work we provide a unified approach to obtain lower bounds on the four matrix factorization ranks mentioned above,  based on tools from (noncommutative) polynomial optimization. 

We sketch the main ideas of our approach in Section~\ref{secsketch} below, after having introduced some necessary notation and preliminaries about (noncommutative) polynomials in Section~\ref{sec:prelim}.  We then indicate in Section~\ref{seclink}  how our approach relates to the more classical use of polynomial optimization dealing with the minimization of polynomials over basic closed semialgebraic sets. The main body of the paper consists of four sections each dealing with one of the four matrix factorization ranks. We start  with  presenting our approach for the completely positive semidefinite rank and then explain how to adapt this to the other ranks. 

For our results we need several  technical tools about linear forms on spaces of polynomials, both in the commutative and noncommutative setting. To ease the readability of the paper we group these technical tools in Appendix~\ref{sec:background}. Moreover, we provide full proofs, so that our paper is self-contained. In addition,  some of the proofs might differ from the customary ones in the literature since our treatment  in this paper is consistently on the `moment' side rather than using real algebraic results about sums of squares. 

In Section~\ref{sec:lower bounds on cpsd rank} we introduce our approach for the completely positive semidefinite rank. We start by defining a hierarchy of lower bounds
\[
\xib{cpsd}{1}(A) \leq \xib{cpsd}{2}(A) \leq \ldots\le \xib{cpsd}{t}(A)\le \ldots\leq \cpsdr_\C(A),
\] 
where $\xib{cpsd}{t}(A)$, for $t \in \N$, is given as the optimal value of a semidefinite program whose size increases with $t$. Not much is known about lower bounds for the cpsd-rank in the literature. 
The inequality $\sqrt{\rank(A)} \le \cpsdr_\C(A)$ is known, which follows by viewing a Hermitian $d\times d$ matrix as a $d^2$-dimensional real vector, and an analytic lower bound is given in~\cite{PSVW16}. 
We show that the new parameter $\smash{\xib{cpsd}{1}(A)}$ is at least as good as this analytic lower bound and we give a small example where a strengthening of $\smash{\xib{cpsd}{2}(A)}$ is strictly better then both above mentioned generic lower bounds.
Currently we lack evidence that the lower bounds $\smash{\xib{cpsd}{t}(A)}$ can be larger than, for example, the matrix size, but this could be because small matrices with large cpsd-rank are hard to construct or might even not exist. 
We also introduce several ideas leading to strengthenings of the basic bounds $\xib{cpsd}{t}(A)$. 

We then adapt these ideas to the other three matrix factorization  ranks discussed above, where for each of them we obtain analogous hierarchies of bounds. 

For the nonnegative rank and the completely positive rank much more is known about lower bounds. The best known generic lower bounds are due to Fawzi and Parrilo~\cite{FP15,FP16}. In~\cite{FP16} the parameters $\tau_+(A)$ and $\tau_{\mathrm{cp}}(A)$ are defined, which, respectively,  lower bound the nonnegative rank and the $\cprank$, along with their computable semidefinite programming relaxations $\tau_\mathrm{+}^\mathrm{sos}(A)$ and $\tau_\mathrm{cp}^\mathrm{sos}(A)$. In~\cite{FP16} it is also shown that $\tau_+(A)$ is at least as good as certain norm-based lower bounds. In particular, $\tau_+(\cdot)$ is at least as good as the $\ell_\infty$ norm-based lower bound, which was used by Rothvo{\ss}~\cite{Roth14} to show that the matching polytope has exponential linear extension complexity. In~\cite{FP15} it is shown that for the Frobenius norm, the square of the norm-based bound is still a lower bound on the nonnegative rank, but it is not known how this lower bound compares to $\tau_+(\cdot)$.

Fawzi and Parrilo~\cite{FP16} use the atomicity of the nonnegative and completely positive ranks to derive the parameters $\tau_+(A)$ and $\tau_{\mathrm{cp}}(A)$; i.e., they use the fact that the nonnegative rank (cp-rank) of $A$  is equal to the smallest $d$ for which $A$ can be written as the sum of $d$ nonnegative (positive semidefinite) rank one matrices. As the $\psdrank$ and $\cpsdrank$ are not known to admit atomic formulations, the techniques from~\cite{FP16} do not extend directly 
to these factorization ranks. However, our approach via polynomial optimization captures these factorization ranks as well.

In Sections~\ref{sec:lowercp} and~\ref{sec:lowernnr} we construct semidefinite programming hierarchies of lower bounds $\xib{cp}{t}(A)$ and $\xib{+}{t}(A)$ on $\cprank(A)$ and $\rank_+(A)$. We show that the bounds $\xib{+}{t}(A)$ converge to $\tau_+(A)$ as $t \to \infty$. The basic hierarchy $\{\xib{cp}{t}(A)\}$ for the cp-rank does not converge to $\tau_{\mathrm{cp}}(A)$ in general, but we provide two types of additional constraints that can be added to the program defining $\xib{cp}{t}(A)$ to ensure convergence to $\tau_{\mathrm{cp}}(A)$. First, we show how a generalization of the tensor constraints that are used in the definition of the parameter $\smash{\tau_{\mathrm{cp}}^\mathrm{sos}(A)}$ can be used for this, and we also give a more efficient (using smaller matrix blocks) description of these constraints. This strengthening of $\xib{cp}{2}(A)$ is then at least as strong as $\tau_{\mathrm{cp}}^\mathrm{sos}(A)$, but requires matrix variables of roughly half the size. Alternatively, we show that for every $\epsilon >0$ there is a finite number of additional linear constraints that can be added to the basic hierarchy $\{\xib{cp}{t}(A)\}$ so that the limit of the sequence of these new lower bounds $\xib{+}{t}(A)$ is at least $\tau_{\mathrm{cp}}(A)-\epsilon$.  We give numerical results on small matrices studied in the literature, which show that $\xib{+}{3}(A)$ can improve over $\tau_{+}^\mathrm{sos}(A)$. 

Finally, in Section~\ref{sec:psdrank} we derive  a hierarchy $\{\smash{\xib{psd}{t}(A)}\}$ of lower bounds on the psd-rank. We compare the new bounds $\smash{\xib{psd}{t}(A)}$ to a bound from~\cite{LWdW17} and we provide some numerical examples illustrating their performance. 

We provide two implementations of all the lower bounds introduced in this paper, at the arXiv submission of this paper. One implementation uses Matlab and the CVX package~\cite{cvx14}, and the other one uses Julia~\cite{Julia17}. The implementations support various semidefinite programming solvers, for our numerical examples we used Mosek~\cite{mosek}.

\subsection{Preliminaries} \label{sec:prelim}

In order to explain our basic approach in the next section, we  first need to introduce some notation. We denote the set of all  words in the symbols $x_1,\ldots,x_n$ by $\langle {\bf x}\rangle = \langle x_1, \ldots, x_n \rangle$, where the empty word is denoted by $1$. This is a semigroup with involution, where the binary operation is concatenation, and the involution of a word $w\in \ncx$ is the word  $w^*$ obtained by  reversing the order of the symbols in $w$. The $*$-algebra of all real linear combinations of these words is denoted by $\R\langle {\bf x} \rangle$, and its elements are called \emph{noncommutative polynomials}. The involution extends to $\R\ncx$ by linearity. A polynomial $p\in \R\ncx$ is called {\em symmetric} if $p^*=p$  and $\mathrm{Sym} \, \R\ncx$ denotes the set of symmetric polynomials.
The degree of a word $w\in\ncx$ is the number of symbols composing it, denoted as $|w|$ or $\deg(w)$, and the degree of a polynomial $p=\sum_wp_ww\in \R\ncx$ is the maximum degree of a word $w$ with $p_w\ne 0$. Given $t\in \N \cup \{\infty\}$, we let $\langle {\bf x} \rangle_t$ be the set of words $w$ of degree $|w| \leq t$, so that $\langle  {\bf x} \rangle_\infty=\langle  {\bf x}\rangle$, and $\R\langle {\bf x} \rangle_t$ is the real vector space of noncommutative polynomials $p$ of degree $\mathrm{deg}(p) \leq t$. Given $t \in \N$, we let $\langle {\bf x} \rangle_{=t}$ be the set of words of degree exactly equal to $t$.
 
For a set $S\subseteq \mathrm{Sym} \,\R\ncx$  and $t\in \N\cup\{\infty\}$, the \emph{truncated quadratic module} at degree $2t$ associated to $S$ is defined as the cone generated by all polynomials $p^*g p \in \R\ncx_{2t}$  with $g\in S\cup\{1\}$:
\begin{equation}\label{eq:quadratic module}
\MM_{2t}(S)=\mathrm{cone}\Big\{p^*gp: p\in \R\ncx, \ g\in S\cup\{1\},\ \deg(p^*gp)\le 2t\Big\}.
\end{equation}
Likewise, for a set $T \subseteq \R\ncx$, we can define the \emph{truncated ideal} at degree $2t$, denoted by $\mathcal I_{2t}(T)$, as the vector space spanned by all polynomials $p h \in \R\ncx_{2t}$ with $h \in T$:
\begin{equation} \label{eq:ideal}
\mathcal I_{2t}(T) = \mathrm{span}\big\{ ph : p \in \R\langle \bx \rangle, \, h \in T, \, \mathrm{deg}(ph) \leq 2t \big\}.
\end{equation}
We say that $\MM(S) + \MI(T)$ is {\em Archimedean} when there exists a scalar $R>0$ such that 
\begin{equation}\label{eq:Arch}
R-\sum_{i=1}^n x_i^2\in \mathcal M(S)+ \mathcal I(T).
\end{equation}

Throughout we are interested in the space $\R\langle {\bf x} \rangle_t^*$ of real-valued linear functionals on $\R\langle {\bf x} \rangle_t$. We list some basic definitions: A linear functional $L \in \R\langle {\bf x} \rangle_t^*$ is \emph{symmetric} if $L(w) = L(w^*)$ for all $w \in \langle {\bf x} \rangle_t$ and \emph{tracial} if $L(ww') = L(w'w)$ for all  $w,w' \in \langle {\bf x} \rangle_t$. A linear functional $L \in \R\langle {\bf x} \rangle_{2t}^*$ is said to be \emph{positive} if $L(p^*p) \geq 0$ for all $p \in \R\langle {\bf x} \rangle_t$. Many properties of a linear functional $L \in \R\ncx_{2t}^*$ can be expressed as properties of its associated moment matrix (also known as its {\em Hankel matrix}). For $L \in \R\ncx_{2t}^*$ we define its associated \emph{moment matrix}, which has rows and columns indexed by words in $\ncx_t$, by 
\[
M_t(L)_{w,w'} = L(w^* w') \quad \text{for} \quad w,w' \in \ncx_t,
\]
and as usual we set $M(L) = M_\infty(L)$. It then follows that $L$ is symmetric if and only if $M_t(L)$ is symmetric, and $L$ is positive if and only if $M_t(L)$ is positive semidefinite. In fact, one can even express nonnegativity of a  linear form $L\in \R\ncx_{2t}^*$ on $\MM_{2t}(S)$  in terms of certain associated positive semidefinite moment matrices. For this, given a polynomial $g\in \R\ncx$, define the linear form $gL \in \smash{\R\langle{\bf x}\rangle_{2t-\deg(g)}^*}$ by $(gL)(p)=L(gp)$. 
Then we have $$L(p^*gp)\ge 0 \text{ for all } p\in \R\ncx_{t-d_g} \iff M_{t-d_g}(gL)\succeq 0, \quad (d_g = \lceil \deg(g)/2\rceil),$$
and thus $L\ge 0$ on $\MM_{2t}(S)$ if and only if $M_{t-d_{g}}(gL) \succeq 0$ for all $g\in S \cup \{1\}$. Also, the condition $L=0$ on $\MI_{2t}(T)$ corresponds to linear equalities on the entries of $M_t(L)$. 

The moment matrix also allows us to define a property called {\em flatness}. For $t \in \N$, a linear functional $L \in \R\ncx_{2t}^*$ is called \emph{$\delta$-flat} if the rank of $M_t(L)$ is equal to that of its principal submatrix indexed by the words in $\ncx_{t-\delta}$, that is, 
\begin{equation}\label{eq:flat}
\rank (M_t(L))=\rank (M_{t-\delta}(L)).
\end{equation}
We call $L$ \emph{flat} if it is $\delta$-flat for some $\delta \geq 1$. When $t=\infty$, $L$ is said to be {\em flat} when $\mathrm{rank}(M(L))<\infty$, which is equivalent to $\rank (M(L))=\rank(M_s(L))$ for some $s\in \N$.

A key example of a flat symmetric tracial positive linear functional on $\R\ncx$ is given by the \emph{trace evaluation} at a given matrix  tuple $\bX = (X_1,\ldots,X_n) \in (\Hermitian^d)^n$:
\[
p \mapsto \mathrm{Tr}(p(\bX)).
\]
Here $p(\bX)$ denotes the matrix obtained by substituting $x_i$ by $X_i$ in $p$, and throughout $\mathrm{Tr}(\cdot)$ denotes the usual matrix trace, which satisfies $\mathrm{Tr}(I) = d$ where $I$ is the identity matrix in $\Hermitian^d$. We mention in passing that we use $\mathrm{tr}(\cdot)$ to denote the \emph{normalized matrix trace}, which satisfies $\mathrm{tr}(I) = 1$ for $I \in \Hermitian^d$. 
Throughout, we use $L_\bX$ to denote the real part of the above functional, that is, $L_\bX$ denotes the linear form on $\R\ncx$ defined by 
\begin{equation} \label{eq:lx}
L_{\bf X}(p) = \mathrm{Re}(
\mathrm{Tr}(p(X_1,\ldots,X_n))) \quad \text{for} \quad p \in \R\langle{\bf x}\rangle.
\end{equation}
Observe that $L_\bX$ too is a symmetric tracial positive linear functional on $\R\ncx$. Moreover, $L_\bX$ is nonnegative on $\MM(S)$ if the matrix tuple $\bX$ is taken from the {\em matrix positivity domain} $\DS$ associated to the finite set $S \subseteq \mathrm{Sym} \, \R\ncx$, defined as 
\begin{equation}\label{eqDSnc}
\DS =\bigcup_{d\ge 1} \Big\{\bX=(X_1,\ldots,X_n)\in (\Hermitian^d)^n: g(\bX)\succeq 0 \text{ for } g\in S\Big\}.
\end{equation}
Similarly, the linear functional $L_\bX$ is zero on $\MI(T)$ if the matrix tuple $\bX$ is taken from the {\em matrix variety} 
$\mathcal V(T)$ associated to the finite set $T \subseteq \mathrm{Sym} \, \R\ncx$, defined as 
\[
\mathcal V(T) = \bigcup_{d\ge 1} \big\{\bX \in (\Hermitian^d)^n : h(\bX) = 0 \text{ for all } h \in T\big\},
\]

To discuss convergence properties of our lower bounds for matrix  factorization ranks we will need to consider infinite dimensional analogs of matrix algebras, namely $C^*$-algebras admitting a tracial state. 
Let us introduce some basic notions we need about $C^*$-algebras; see, e.g.,~\cite{Blackadar06} for details. For our purposes we define a {\em $C^*$-algebra} to be a norm closed $*$-subalgebra of the complex algebra $\MB(\Hilbert)$ of bounded operators on a complex Hilbert space $\Hilbert$. In particular, we have $\|a^*a\| = \|a\|^2$ for all elements $a$ in the algebra. Such an algebra $\MA$ is said to be {\em unital} if it contains the identity operator (denoted $1$). For instance, any full complex matrix algebra $\C^{d\times d}$ is a unital $C^*$-algebra. Moreover, by a fundamental result of Artin-Wedderburn, any finite dimensional $C^*$-algebra (as a vector space) is $*$-isomorphic to a direct sum $\bigoplus_{m=1}^M \C^{d_m\times d_m}$ of full complex matrix algebras~\cite{BEK78,Wed}. In particular, any finite dimensional $C^*$-algebra is unital. 

An element $b$ in a $C^*$-algebra $\MA$ is called {\em positive}, denoted $b\succeq 0$, if it is of the form $b=a^*a$ for some $a\in\MA$. For finite sets $S \subseteq \mathrm{Sym} \,\R\ncx$ and $T \subseteq \R\ncx$, the $C^*$-algebraic analogs of the matrix positivity domain and matrix variety are the sets
\begin{align*}
\DAS & = \big\{{\bf X}=(X_1,\ldots,X_n) \in \mathcal{A}^n : X_i^* = X_i \text{ for } i \in [n], \, g({\bf X}) \succeq 0 \text{ for } g \in S \big\},\\
\mathcal V_{\mathcal A}(T) & = \big\{{\bf X}=(X_1,\ldots,X_n) \in \mathcal{A}^n : X_i^* = X_i \text{ for } i \in [n], \, h({\bf X}) = 0 \text{ for } h \in T \big\}.
\end{align*}

A \emph{state} $\tau$ on a unital $C^*$-algebra $\MA$ is a linear form on $\MA$ that is {\em positive}, i.e., $\tau(a^*a)\ge 0$ for all $a\in \MA$, and satisfies $\tau(1)=1$. Since $\MA$ is a complex algebra, every state $\tau$ is Hermitian: $\tau(a) = \tau(a^*)$ for all $a \in \MA$. We say that that a state is \emph{tracial} if $\tau(ab) = \tau(ba)$ for all $a,b \in \mathcal A$ and {\em faithful} if $\tau(a^*a)=0$ implies $a=0$. A useful fact is that on a full matrix algebra $\C^{d\times d}$ the normalized matrix trace is the unique tracial state (see, e.g.,~\cite{BK12}). Now, given a tuple $\bX=(X_1,\ldots,X_n)\in \MA^n$ in a $C^*$-algebra $\MA$ with tracial state $\tau$, the second key example of a symmetric tracial positive linear functional on $\R\ncx$ is given by the {\em trace evaluation map}, which we again denote by $L_\bX$  and is defined by 
\[
L_\bX(p)=\tau (p(X_1,\ldots,X_n)) \quad \text{for all} \quad p\in\R\ncx.
\]

\subsection{Basic approach} \label{secsketch}
To explain the basic idea of how we obtain lower bounds for matrix factorization ranks we consider the case of the completely positive semidefinite rank. Given a minimal factorization $A=(\mathrm{Tr}(X_i,X_j))$, with $d=\cpsdr_\C(A)$ and $\bX=(X_1,\ldots,X_n)$ in $(\Hermitian_+^d)^n$, consider the linear form $L_{\bf X}$ on  
$\R\langle{\bf x}\rangle$ as defined in~\eqref{eq:lx}:
\[
L_{\bf X}(p) = \mathrm{Re}(
\mathrm{Tr}(p(X_1,\ldots,X_n))) \quad \text{for} \quad p \in \R\langle{\bf x}\rangle.
\]
Then we have $A=(L_{\bX}(x_ix_j))$ and  $\cpsdr_\C(A) = d=L_{\bX}(1)$. To obtain lower bounds on $\cpsdr_\C(A)$ we minimize $L(1)$ over a set of linear functionals $L$ that satisfy certain computationally tractable properties of $L_{\bf X}$. Note that this idea of minimizing $L(1)$ has recently been used in the works~\cite{TS15,Nie16} in the commutative setting to derive a  hierarchy of lower bounds converging to the nuclear norm of a symmetric tensor.

The above linear functional $L_{\bf X}$ is symmetric and tracial. Moreover it satisfies some positivity conditions, since we have $L_{\bf X}(q) \geq 0$ whenever $q({\bf X})$ is positive semidefinite. 
It follows that $L_{\bf X}(p^*p) \geq 0$ for all $p\in\R\ncx$ and, as we explain later, $L_{\bf X}$ satisfies the {\em localizing} conditions $L_{\bf X}(p^*(\sqrt{A_{ii}} x_i - x_i^2)p) \geq 0$ for all $p$ and $i$. Truncating the linear form yields the following hierarchy of lower bounds: 
\begin{align*}
\xib{cpsd}{t}(A) = \mathrm{min} \Big\{ L(1) : \; & L \in \R\langle x_1,\ldots,x_n \rangle_{2t}^* \text{ tracial and symmetric},\\[-0.2em]
&L(x_ix_j) = A_{ij} \quad \text{for} \quad i,j \in [n],\\[-0.2em]
&L \geq 0\quad \text{on} \quad \mathcal M_{2t}\big( \{\sqrt{A_{11}} x_1-x_1^2, \ldots,\sqrt{A_{nn}} x_n-x_n^2 \}\big)\Big\}.
\end{align*}
The bound $\xib{cpsd}{t}(A)$ is computationally tractable (for small $t$). Indeed, as was explained in Section \ref{sec:prelim}, the localizing constraint ``$L\ge 0$ on $\mathcal M_{2t}(S)$" can be enforced by requiring certain matrices, whose entries are determined by $L$, to be positive semidefinite. This makes the  problem defining $\smash{\xib{cpsd}{t}(A)}$ into a semidefinite program. The localizing conditions ensure the Archimedean property of the quadratic module, which permits to show certain convergence properties of the bounds $\smash{\xib{cpsd}{t}(A)}$.

The above approach extends naturally to the other matrix factorization ranks, using the following two basic ideas. First, since the cp-rank and the nonnegative rank deal with factorizations by {\em diagonal} matrices, we use linear functionals acting on classical {\em commutative} polynomials. Second, the {\em asymmetric} factorization ranks (psd-rank and nonnegative rank) can be seen as analogs of the symmetric ranks in the {\em partial matrix} setting, where we know only the values of $L$ on the quadratic monomials corresponding to entries in the off-diagonal blocks (this will require scaling of the factors in order to be able to define localizing constraints ensuring the Archimedean property). A main advantage of our approach is that it applies to all four matrix factorization ranks, after easy suitable adaptations.  

\subsection{Connection to polynomial optimization} \label{seclink}

In classical polynomial optimization the problem is to find the global minimum of a commutative polynomial $f$ over a semialgebraic set of the form 
\[
D(S) = \{x \in \R^n : g(x) \geq 0 \text{ for } g \in S\}, 
\]
where $S \subseteq \R[\bx] = \R[x_1,\ldots,x_n]$ is a finite set of polynomials.\footnote{Here, and throughout the paper, we use $\cx$ as the commutative analogue of $\ncx$.}
Tracial polynomial optimization is a noncommutative analog, where the problem is to minimize the normalized trace 
$\mathrm{tr}(f(\bX))$ of a symmetric polynomial $f$ over a matrix positivity domain $\DS$ where $S \subseteq \mathrm{Sym} \, \R\ncx$ is a finite set of symmetric polynomials.\footnote{In fact, one could consider optimization over $\DS \cap \mathcal V(T)$ for some finite set $T \subseteq \R\ncx$, the results below still hold in that setting, see Appendix~\ref{sec:background}.} Notice that the distinguishing feature here is the dimension independence: the optimization is over all possible matrix sizes. Perhaps counterintuitively, in this paper we use techniques similar to those used for the tracial polynomial optimization problem to compute lower bounds on factorization dimensions.

For classical polynomial optimization Lasserre~\cite{Las01} and Parrilo~\cite{Par00} have proposed hierarchies of semidefinite programming relaxations based on the theory of moments and the dual theory of sums of squares polynomials. These can be used to compute successively better lower bounds converging to  the global minimum (under the Archimedean condition). This approach has been used in a wide range of applications and there is an extensive literature (see, e.g.,~\cite{AL12,Las09,Lau09}). Most relevant to this work, it is used in~\cite{Las14} to design conic approximations of the completely positive cone and in~\cite{Nie14} to check membership in the completely positive cone. This approach has also been extended to the noncommutative setting, first to the eigenvalue optimization problem~\cite{NPA10,NPA12} (which will not play a role in this paper), and later to tracial optimization~\cite{BCKP13,KP16}. 

For our paper the moment formulation of the lower bounds is most relevant: For all $t \in \N \cup \{\infty\}$ we can define the bounds 
\begin{align*}
f_t &=\mathrm{inf}_{}\big\{L(f) : L\in \R\cx_{2t}^*,\, L(1)=1,\, L\ge 0  \text{ on }  \MM_{2t}(S)\big\}, \\
f_t^\mathrm{tr} &=\mathrm{inf}_{}\big\{L(f) : L\in \R\langle {\bf x}\rangle_{2t}^* \text{ tracial and symmetric},\, L(1)=1,\, L \ge 0 \text{ on } \MM_{2t}(S)\big\},
\end{align*}
where $f_t$ (resp., $f_t^\mathrm{tr}$) lower bounds the (tracial) polynomial optimization problem. 

The connection between the parameters $\xib{cpsd}{t}(A)$ and $f_t^\mathrm{tr}$ is now clear: in the former we do not have the normalization property ``$L(1)=1$" but we do have the additional affine constraints ``$L(x_i x_j) = A_{ij}$". This close relation to (tracial) polynomial optimization allows us to use that theory to understand the convergence properties of our bounds. 
Since  throughout the paper we use  (proof) techniques from (tracial) polynomial optimization, we will  state the main convergence results we need, with full proofs,  in Appendix~\ref{sec:background}. Moreover, we give all proofs from the ``moment side", which is most relevant to our treatment. Below we give a short summary of the convergence results for the hierarchies $\{f_t\}$ and $\{f_t^\mathrm{tr}\}$ that are relevant to our paper. We refer to Appendix \ref{sec:hierarchies} for details.

Under the condition that $\MM(S)$ is Archimedean we have asymptotic convergence: $f_t \rightarrow f_\infty$ and $f_t^\mathrm{tr} \to f_\infty^\mathrm{tr}$ as $t \to \infty$. In the commutative setting one can moreover show that $f_\infty$ is equal to the global minimum of $f$ over the set $D(S)$.  However, in the noncommutative setting, the parameter $f_\infty^\mathrm{tr}$ is in general not equal to the minimum of $\mathrm{tr}(f(\bX))$ over $\bX \in \mathcal D(S)$. Instead we need to consider the $C^*$-algebraic version of the tracial polynomial optimization problem: one can show that 
\[
f_\infty^\mathrm{tr}= \mathrm{inf} \big\{ \tau(f({\bf X})) : {\bf X} \in \DAS, \, \mathcal A \text{ is a unital $C^*$-algebra with tracial state } \tau\big\}.
\]
An important additional convergence result holds under flatness. If the program defining the bound $f_t$ (resp., $f_t^\mathrm{tr}$) admits a sufficiently flat optimal solution, then equality holds: $f_t = f_\infty$ (resp., $f_t^\mathrm{tr} = f_\infty^\mathrm{tr}$). Moreover, in this case, the parameter $f_t^\mathrm{tr}$ is equal to the minimum value of $\mathrm{tr}(f(\bX))$ over the matrix positivity domain $\mathcal D(S)$.

\section{Lower bounds on the completely positive semidefinite rank}
\label{sec:lower bounds on cpsd rank}

Let $A$ be a completely positive semidefinite $n \times n$ matrix. For  $t \in \N\cup\{\infty\}$ we consider the following semidefinite program,
which, as we see below, lower bounds the complex completely positive semidefinite rank of $A$:
\begin{align*}
\xib{cpsd}{t}(A) = \mathrm{min} \big\{ L(1) : \; & L \in \R\langle x_1,\ldots,x_n \rangle_{2t}^* \text{ tracial and symmetric},\\
&L(x_ix_j) = A_{ij} \quad \text{for} \quad i,j \in [n],\\[-0.2em]
&L \geq 0\quad \text{on} \quad \mathcal M_{2t}(\SAcpsd) \big\},
\end{align*}
where we set 
\begin{equation}\label{eqSA}
\SAcpsd = \big\{\sqrt{A_{11}} x_1 - x_1^2, \ldots, \sqrt{A_{nn}} x_n - x_n^2\big\}.
\end{equation}
Additionally, define the parameter $\smash{\xib{cpsd}{*}(A)}$, obtained by adding the rank constraint $\rank(M(L)) < \infty$ to the program defining $\smash{\xib{cpsd}{\infty}(A)}$, where we consider the infimum instead of the minimum since we do not know whether the infimum is always attained. (In Proposition~\ref{prop:lowerbound} we show the infimum is attained in $\smash{\xib{cpsd}{t}(A)}$ for $t\in\N\cup\{\infty\}$). This gives a hierarchy of monotone nondecreasing lower bounds on the completely positive semidefinite rank: 
\[
\xib{cpsd}{1}(A)  \leq \ldots \le\xib{cpsd}{t}(A)\le \ldots \leq \xib{cpsd}{\infty}(A) \leq \xib{cpsd}{*}(A)\leq \hcpsd(A).
\]
The inequality $\xib{cpsd}{\infty}(A)\le \xib{cpsd}{*}(A)$ is clear and monotonicity as well: If $L$ is feasible for $\smash{\xib{cpsd}{k}(A)}$ with $t \leq k \leq \infty$, then its restriction to $\R\ncx_{2t}$ is feasible for $\smash{\xib{cpsd}{t}(A)}$. 

The following notion of {\em localizing} polynomials will be useful. 
A set $S\subseteq \R\ncx$ is said to be {\em localizing} at a matrix tuple $\bX$ if $\bX \in \mathcal D(S)$ (i.e., $g(\bX)\succeq 0$ for all $g\in S$) and we say that $S$ is {\em localizing for $A$} if $S$ is localizing at some factorization $\bX\in (\Hermitian_+^d)^n$ of $A$ with $d=\hcpsd(A)$.
The set $\smash\SAcpsd$ as defined in~\eqref{eqSA} is  localizing for $A$, and, in fact, it is localizing at {\em any} factorization $\bX$ of $A$ by Hermitian positive semidefinite matrices. Indeed, since 
\[
A_{ii}=\Tr(X_i^2)\ge \lambda_{\max}(X_i^2) = \lambda_{\max} (X_i)^2
\]
we have $\sqrt{A_{ii}} X_i - X_i^2 \succeq 0$ for all $i\in [n]$.

We can now use this to show the inequality $\xib{cpsd}{*}(A) \le \hcpsd(A)$. For this set $d = \hcpsd(A)$,  let $\bX \in (\Hermitian_+^d)^n$ be a Gram factorization of $A$, and 
consider the linear form $L_\bX \in \R\langle \bx \rangle^*$ defined by
\[
L_\bX(p) = \mathrm{Re}(\mathrm{Tr}(p(\bX))) \quad \text{for all} \quad p \in \R\langle\bx\rangle.
\]
By construction $L_\bX$ is symmetric and tracial, and we have $A=(L(x_ix_j))$. Moreover, since the set of polynomials $\smash{\SAcpsd}$ is localizing for $A$, 
the linear form $L_\bX$ is nonnegative on $\smash{\mathcal M(\SAcpsd)}$. Finally, we have $\rank(M(L_\bX))<\infty$, since the algebra generated by $X_1, \ldots, X_n$ is finite dimensional. Hence, $L_\bX$ is feasible for $\smash{\xib{cpsd}{*}(A)}$ with $L_\bX(1)=d$, which shows $\smash{\xib{cpsd}{*}(A)} \le \hcpsd(A)$.

The inclusions in~\eqref{eq:inclusion} below show the quadratic module $\MM(\SAcpsd)$ is Archimedean (recall the definition in~\eqref{eq:Arch}). Moreover, although there are other possible choices for the localizing polynomials to use in $\smash{\SAcpsd}$, these inclusions also show that the choice made in~\eqref{eqSA} leads to the largest truncated quadratic module and thus to the best bound. For any scalar $c > 0$, we have the inclusions
\begin{equation}\label{eq:inclusion}
\MM_{2t}(x,c-x) \subseteq \MM_{2t}(x,c^2-x^2) \subseteq \MM_{2t}(cx-x^2) \subseteq \MM_{2t+2}(x,c-x),
\end{equation}
which hold in light of the following identities:
\begin{align}
c-x&= \big((c-x)^2 + c^2-x^2\big)/(2c), \label{eq:cx} \\
c^2 - x^2 &= (c-x)^2 + 2(cx - x^2), \label{eq:x^2}\\
cx - x^2 &= \big((c-x) x (c-x) + x(c-x)x\big)/c, \label{eq:x-x^2}\\
x &= \big( (cx - x^2) + x^2\big)/c. \label{eq:x}
\end{align} 

In the rest of this section we investigate properties of the  hierarchy $\{\xib{cpsd}{t}(A)\}$ as well as some variations on it. We discuss convergence properties, asymptotically and under flatness, and we give another formulation for the parameter $\smash{\xib{cpsd}{*}(A)}$. Moreover, as the inequality $\smash{\xib{cpsd}{*}(A)} \leq \hcpsd(A)$ is typically strict, we present an approach to strengthen the bounds in order to go beyond $\smash{\xib{cpsd}{*}(A)}$. Then we propose some techniques to simplify the computation of the bounds, and we illustrate the behaviour of the bounds on some examples.

\subsection{The parameters \texorpdfstring{$\xib{cpsd}{\infty}(A)$}{xi infinity} and \texorpdfstring{$\xib{cpsd}{*}(A)$}{xi star}}
\label{seccpsdconv}

In this section we consider convergence properties of the hierarchy $\xib{cpsd}{t}(\cdot)$, both asymptotically and under flatness. 
We also give equivalent reformulations of the limiting parameters $\smash{\xib{cpsd}{\infty}(A)}$ and $\smash{\xib{cpsd}{*}(A)}$ in terms of $C^*$-algebras  with a tracial state, which  we will use in Sections \ref{secstronger}-\ref{sec:addproperties} to show properties of these parameters.

\begin{proposition} \label{prop:lowerbound}
Let $A \in \CSP^n$. For  $t \in \N \cup \{\infty\}$ the optimum in $\xib{cpsd}{t}(A)$ is attained, and 
\[
\lim_{t \to \infty} \xib{cpsd}{t}(A) = \xib{cpsd}{\infty}(A).
\]
Moreover, $\smash{\xib{cpsd}{\infty}(A)}$ is equal to the  smallest  scalar $\alpha \geq 0$ for which there exists a unital $C^*$-algebra $\MA$ with tracial state $\tau$ and $(X_1,\ldots,X_n) \in \mathcal D_{\mathcal A}(\SAcpsd)$ such that $A = \alpha \cdot (\tau(X_iX_j))$.
\end{proposition}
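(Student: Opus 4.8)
The plan is to split the statement into three parts: (i) the optimum is attained for each $t \in \N \cup \{\infty\}$; (ii) $\lim_{t\to\infty}\xib{cpsd}{t}(A)=\xib{cpsd}{\infty}(A)$; and (iii) the $C^*$-algebraic reformulation of $\xib{cpsd}{\infty}(A)$. For (i), the key point is a uniform a priori bound on any feasible $L$: from the localizing constraint $L\ge 0$ on $\MM_{2t}(\SAcpsd)$ together with the inclusions in~\eqref{eq:inclusion} one obtains $L(x_i^2)\le \sqrt{A_{ii}}\,L(x_i)$ and, iterating the Archimedean-type relations, that $M_t(L)$ has entries bounded in terms of the $A_{ii}$'s (indeed $M_t(L)\succeq 0$ with a bounded trace on each "level" forces all entries bounded). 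Hence the feasible set is a closed and bounded subset of the finite-dimensional space $\R\ncx_{2t}^*$ when $t\in\N$; since the constraints are all closed conditions (positivity of finitely many moment matrices, linear equalities, tracial and symmetric), the feasible set is compact and nonempty (it contains $L_\bX$ for a Gram factorization), so $L\mapsto L(1)$ attains its minimum. For $t=\infty$ one argues by a diagonal/compactness argument: take optimal $L^{(t)}$ for $\xib{cpsd}{t}(A)$, use the uniform entrywise bounds to extract (via Tychonoff/Bolzano--Weierstrass on each coordinate) a pointwise limit $L^{(\infty)}\in\R\ncx^*$, which inherits all the constraints by taking limits and satisfies $L^{(\infty)}(1)=\lim_t \xib{cpsd}{t}(A)$; this simultaneously proves (ii), once one checks $L^{(\infty)}$ is feasible for $\xib{cpsd}{\infty}(A)$ (so $\xib{cpsd}{\infty}(A)\le \lim_t\xib{cpsd}{t}(A)$) while monotonicity gives the reverse inequality.

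For (iii), the direction "$\ge$" (i.e. the $C^*$-construction gives a value no larger than $\xib{cpsd}{\infty}(A)$, hence $\xib{cpsd}{\infty}(A)$ is at least the stated infimum) is the substantive one and uses a GNS-type construction. Given an optimal feasible $L=L^{(\infty)}$ for $\xib{cpsd}{\infty}(A)$, normalize it to a state $\widetilde L = L/L(1)$ on $\R\ncx$; since $L$ is positive, symmetric, tracial, and nonnegative on the Archimedean quadratic module $\MM(\SAcpsd)$, the standard machinery (this is exactly the content of the convergence results quoted from Appendix~\ref{sec:background}, in particular the identification of $f_\infty^\mathrm{tr}$ with a $C^*$-algebraic infimum) produces a unital $C^*$-algebra $\MA$ with faithful tracial state $\tau$ and self-adjoint $X_1,\dots,X_n\in\MA$ with $\tau(p(\bX))=\widetilde L(p)$ for all $p$; the Archimedean property guarantees the $X_i$ are bounded, so they genuinely live in a $C^*$-algebra, and nonnegativity of $\widetilde L$ on $\MM(\SAcpsd)$ translates (again via the GNS representation, using that $\tau$ is a trace so the $X_i$ act as multiplication-type operators and positivity of $\widetilde L(p^* g p)$ for all $p$ forces $g(\bX)\succeq 0$) into $\bX\in\DA(\SAcpsd)$. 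Setting $\alpha=L(1)$ we get $A=(L(x_ix_j))=\alpha(\tau(X_iX_j))$, so $\alpha$ is admissible in the $C^*$-minimization, giving $\xib{cpsd}{\infty}(A)\ge \inf\{\alpha:\dots\}$.

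For the reverse direction "$\le$": given any unital $C^*$-algebra $\MA$ with tracial state $\tau$, tuple $\bX\in\DA(\SAcpsd)$, and scalar $\alpha\ge 0$ with $A=\alpha(\tau(X_iX_j))$, define $L(p)=\alpha\,\tau(p(\bX))$ for $p\in\R\ncx$ (composing with $\mathrm{Re}$ is harmless since $\tau$ is Hermitian and we may restrict to symmetric arguments). Then $L$ is symmetric and tracial because $\tau$ is; $L\ge 0$ on $\MM(\SAcpsd)$ because $\tau$ is positive and $g(\bX)\succeq 0$ for $g\in\SAcpsd$ so $\tau(p^*gp(\bX))\ge 0$; the affine constraints $L(x_ix_j)=\alpha\tau(X_iX_j)=A_{ij}$ hold by hypothesis; and $L(1)=\alpha\tau(1)=\alpha$. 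Hence $L$ is feasible for $\xib{cpsd}{\infty}(A)$ with value $\alpha$, so $\xib{cpsd}{\infty}(A)\le\alpha$; taking the infimum over all such $(\MA,\tau,\bX,\alpha)$ finishes the proof. One small additional point to record is that the infimum on the $C^*$ side is attained (matching the "smallest scalar" phrasing), which follows because the optimal $L^{(\infty)}$ from part (i)--(ii) produces, via the GNS construction above, an actual minimizer. The main obstacle I anticipate is the careful bookkeeping in the GNS step: verifying that boundedness of the $X_i$ (hence that one really lands in a $C^*$-algebra, not just a $*$-algebra with a positive tracial functional) follows from the Archimedean property, and that the localizing inequalities $L(p^*gp)\ge 0$ upgrade to the operator inequalities $g(\bX)\succeq 0$ — but both of these are precisely what the cited results in Appendix~\ref{sec:background} are designed to deliver, so the proof should largely be a matter of invoking them with the present affine constraints in place of the normalization $L(1)=1$.
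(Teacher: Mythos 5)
Your proposal follows essentially the same route as the paper: compactness of (a sublevel set of) the feasible region plus pointwise limits for attainment and for $\lim_t \xib{cpsd}{t}(A)=\xib{cpsd}{\infty}(A)$, and the GNS-based representation theorem (Theorem~\ref{propLinfinitedim} in the appendix) for the $C^*$-algebraic reformulation, with both directions of that equivalence handled as in the paper.

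One inaccuracy to fix: the feasible set of $\xib{cpsd}{t}(A)$ is \emph{not} bounded, so it is not compact as you claim. For instance, if $L_0$ denotes the trace evaluation at the zero tuple (so $L_0(1)=1$ and $L_0(w)=0$ for $|w|\ge 1$), then $L_0$ is nonnegative on $\MM_{2t}(\SAcpsd)$ and annihilates all $x_ix_j$, so adding $c\,L_0$ to any feasible $L$ stays feasible while $L(1)$ grows without bound. The entrywise bounds of Lemma~\ref{lemma:upperboundLw} are of the form $|L(w)|\le R^{|w|/2}L(1)$ and hence only control $M_t(L)$ once $L(1)$ is controlled. The repair is exactly what the paper does: since one is minimizing $L(1)$, one may add the redundant constraint $L(1)\le \xib{cpsd}{\infty}(A)$ (finite because $L_\bX$ from a Gram factorization is feasible), after which the sublevel set is compact and your argument goes through verbatim. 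A second, very minor point: your normalization $\widetilde L=L/L(1)$ in part (iii) needs $L(1)>0$, which holds for $A\ne 0$ because $L(x_i^2)=A_{ii}$ together with $L\ge 0$ on $\MM_2(\SAcpsd)$ forces $L(x_i)\ge\sqrt{A_{ii}}$ and then $L(1)>0$ by Cauchy--Schwarz; the case $A=0$ is trivial.
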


\begin{proof}
The sequence $(\smash{\xib{cpsd}{t}(A)})_t$ is monotonically nondecreasing and upper bounded by $\smash{\xib{cpsd}{\infty}(A)} <\infty$, which implies its limit exists 
and is at most $\smash{\xib{cpsd}{\infty}(A)}$. 

As $\xib{cpsd}{t}(A)\le \xib{cpsd}{\infty}(A)$, we may add the redundant constraint $L(1) \leq \smash{\xib{cpsd}{\infty}(A)}$ to the problem $\xib{cpsd}{t}(A)$ for every $t \in \N$. By~\eqref{eq:x^2} we have $\mathrm{Tr}(A) -\smash{\sum_ix_i^2} \in \MM_2(\SAcpsd)$. Hence, using the result of Lemma~\ref{lemma:upperboundLw}, the feasible region of $\smash{\xib{cpsd}{t}(A)}$ is compact, and thus it has an optimal solution $L_t$. Again by Lemma~\ref{lemma:upperboundLw}, the sequence $\smash{(L_t)}$ has a pointwise converging subsequence with limit $L \in \R \langle\bx\rangle^*$. This pointwise limit $L$ is symmetric, tracial, satisfies $(L(x_ix_j)) = A$, and is nonnegative on $\smash{\mathcal M(\SAcpsd)}$. Hence $L$ is feasible for $\smash{\xib{cpsd}{\infty}(A)}$. This implies  that 
$L$ is optimal for $\smash{\xib{cpsd}{\infty}(A)}$ and we have $\lim_{t \to \infty} \smash{\xib{cpsd}{t}(A)} = \smash{\xib{cpsd}{\infty}(A)}$. 

The reformulation of $\xib{cpsd}{\infty}(A)$ in terms of $C^*$-algebras with a tracial state  follows directly using Theorem~\ref{propLinfinitedim}. \qed
\end{proof}

Next we give some equivalent reformulations for the parameter $\xib{cpsd}{*}(A)$, which follow as a direct application of Theorem \ref{propLfinitedim}. In general we do not know whether the infimum in $\smash{\xib{cpsd}{*}(A)}$ is attained. However, as a direct application of Corollary~\ref{thm:flat-nonc}, we see that this infimum is attained if there is an integer $t \in \N$ for which $\smash{\xib{cpsd}{t}(A)}$ admits a flat optimal solution.

\begin{proposition}\label{prop:cpsd*}
Let $A \in \CSP^n$. The parameter $\smash{\xib{cpsd}{*}(A)}$ is given by the infimum of $L(1)$ taken over all conic combinations $L$ of trace evaluations at elements in $\smash{\mathcal D_{\mathcal A}(\SAcpsd)}$ for which $A=(L(x_ix_j))$. The parameter $\xib{cpsd}{*}(A)$ is also equal to the infimum over all $\alpha \geq 0$ for which there exist a finite dimensional $C^*$-algebra $\MA$ with tracial state $\tau$ and $(X_1,\ldots,X_n) \in \smash{\mathcal D_{\mathcal A}(\SAcpsd)}$ such that $A = \alpha \cdot (\tau(X_iX_j))$.

In addition, if $\xib{cpsd}{t}(A)$ admits a flat optimal solution, then $\xib{cpsd}{t}(A)= \xib{cpsd}{*}(A)$.
\end{proposition}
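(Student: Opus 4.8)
The plan is to read off the two reformulations of $\xib{cpsd}{*}(A)$ from the finite-dimensional representation theorem for tracial functionals (Theorem~\ref{propLfinitedim}) applied to the feasible solutions of the program defining $\xib{cpsd}{*}(A)$, and to derive the flatness statement from the noncommutative flat-extension result Corollary~\ref{thm:flat-nonc} together with the chain of inequalities already established.

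First I would unwind the definition: a linear form $L$ is feasible for $\xib{cpsd}{*}(A)$ exactly when $L \in \R\ncx^*$ is symmetric and tracial, is nonnegative on $\MM(\SAcpsd)$, has $\rank(M(L)) < \infty$, and satisfies $(L(x_ix_j))_{i,j} = A$, and its objective value is $L(1)$. Theorem~\ref{propLfinitedim} characterizes the symmetric tracial functionals on $\R\ncx$ that are nonnegative on a quadratic module and have finite-rank moment matrix: they are precisely the conic combinations of trace evaluations at elements of the associated positivity domain $\mathcal D_{\mathcal A}(\SAcpsd)$, equivalently (grouping blocks via Artin--Wedderburn) the functionals of the form $L = L(1)\cdot L_\bX$ for some finite dimensional $C^*$-algebra $\MA$ with tracial state $\tau$ and some $\bX \in \mathcal D_{\mathcal A}(\SAcpsd)$, where $L_\bX(p) = \tau(p(\bX))$. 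Imposing in addition the affine constraint $(L(x_ix_j)) = A$ translates, after setting $\alpha = L(1)$ and using $\tau(1) = 1$, into $A = \alpha\cdot(\tau(X_iX_j))$; since the objective $L(1)$ becomes $\alpha$, the infimum of $L(1)$ over feasible $L$ equals the infimum of $\alpha$ over admissible triples $(\MA,\tau,\bX)$, and likewise the infimum of $L(1)$ over the conic combinations described. This yields both reformulations. The only point requiring care here is that Theorem~\ref{propLfinitedim} is phrased in the (tracial) polynomial-optimization setting, so I would carry the extra affine constraints $L(x_ix_j) = A_{ij}$ and the free scalar $\alpha = L(1)$ (in place of a fixed normalization $L(1)=1$) through the argument; this is harmless since everything is linear in $L$.

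For the last assertion, suppose $L \in \R\ncx_{2t}^*$ is a flat optimal solution of $\xib{cpsd}{t}(A)$, i.e.\ $L$ is $\delta$-flat for some $\delta \ge 1$. By Corollary~\ref{thm:flat-nonc}, $L$ extends to a symmetric tracial linear functional $\widetilde L$ on all of $\R\ncx$ that is still nonnegative on $\MM(\SAcpsd)$, has finite-rank moment matrix, and agrees with $L$ on $\R\ncx_{2t}$; in particular $\widetilde L(1) = L(1)$ and $(\widetilde L(x_ix_j)) = (L(x_ix_j)) = A$. Hence $\widetilde L$ is feasible for $\xib{cpsd}{*}(A)$ with value $L(1) = \xib{cpsd}{t}(A)$, so $\xib{cpsd}{*}(A) \le \xib{cpsd}{t}(A)$. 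Together with the monotone chain $\xib{cpsd}{t}(A) \le \xib{cpsd}{\infty}(A) \le \xib{cpsd}{*}(A)$ recorded earlier, this forces $\xib{cpsd}{t}(A) = \xib{cpsd}{\infty}(A) = \xib{cpsd}{*}(A)$.

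I do not expect a genuine obstacle: the substance is entirely contained in the appendix results, and what remains is bookkeeping --- verifying that the moment-matrix conditions (symmetric, tracial, nonnegative on $\MM_{2t}(\SAcpsd)$), the affine constraints on the entries $L(x_ix_j)$, and the objective $L(1)$ all survive both the finite-dimensional representation and the flat extension. The mildest subtlety is that the flat extension $\widetilde L$ must remain \emph{tracial}, not merely symmetric and positive; this is precisely what the tracial version Corollary~\ref{thm:flat-nonc} delivers, so no additional work is needed.
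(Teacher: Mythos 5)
Your proposal is correct and follows exactly the route the paper intends: the paper itself derives the two reformulations as "a direct application" of Theorem~\ref{propLfinitedim} (with the normalization $L(1)=1$ replaced by the free scalar $\alpha=L(1)$ and the affine constraints $L(x_ix_j)=A_{ij}$ carried along), and obtains the flatness claim from Corollary~\ref{thm:flat-nonc} combined with the monotone chain $\xib{cpsd}{t}(A)\le\xib{cpsd}{\infty}(A)\le\xib{cpsd}{*}(A)$. Your write-up just supplies the bookkeeping the paper leaves implicit.
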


Next we show a formulation for $\xib{cpsd}{*}(A)$ in terms of factorization by block-diagonal matrices, which helps explain why the inequality $\smash{\xib{cpsd}{*}(A)} \leq \smash{\cpsdr_\C(A)}$ is typically strict. Here $\| \cdot \|$ is the operator norm, so that $\|X\| = \lambda_\max(X)$ for $X \succeq 0$.
\begin{proposition}\label{lem:cpsd*}
For $A \in \CSP^n$ we have 
\begin{align}\label{eq:reformulationxi*}
\xib{cpsd}{*}(A) = \mathrm{inf} \Big\{ \sum_{m=1}^M d_m \cdot \underset{i \in [n]}{\max} \frac{\|X^m_{i}\|^2}{A_{ii}} : \;
& M \in \N,\, d_1,\ldots,d_M \in \N,\\[-0.7em]\nonumber
& X_i^m \in \Hermitian_+^{d_m} \text{ for } i \in [n], m \in [M],\\[-0.1em]\nonumber
& A = \mathrm{Gram}\big(\oplus_{m=1}^M X_1^m,\ldots,\oplus_{m = 1}^M X_n^m\big)\Big\}.
\end{align}
Note that using matrices from $\psdcone_+^{d_m}$ instead of $\Hermitian_+^{d_m}$ does not change the  optimal value.
\end{proposition}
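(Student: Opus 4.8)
\emph{Proof strategy.} The plan is to derive this directly from the second reformulation of $\xib{cpsd}{*}(A)$ in Proposition~\ref{prop:cpsd*}: it is the infimum of those $\alpha \ge 0$ for which there is a finite dimensional $C^*$-algebra $\MA$ with tracial state $\tau$ and a tuple $\bX \in \DA(\SAcpsd)$ with $A = \alpha \cdot (\tau(X_iX_j))$. I would first use Artin--Wedderburn to write $\MA = \bigoplus_{m=1}^M \C^{d_m\times d_m}$, so each $X_i$ becomes a block-diagonal tuple $X_i = \bigoplus_{m=1}^M X_i^m$ with $X_i^m \in \Hermitian^{d_m}$; recall that every tracial state on such an algebra has the form $\tau\big(\bigoplus_m Y^m\big) = \sum_{m=1}^M \lambda_m \mathrm{tr}(Y^m)$ with $\lambda_m \ge 0$ and $\sum_m \lambda_m = 1$ (using that the normalized trace is the unique tracial state on a full matrix algebra), and that $\bX \in \DA(\SAcpsd)$ means precisely $\sqrt{A_{ii}}\,X_i^m - (X_i^m)^2 \succeq 0$, i.e.\ $X_i^m \succeq 0$ and $\|X_i^m\| \le \sqrt{A_{ii}}$, for all $i,m$. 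Throughout I would assume $A_{ii} > 0$ for all $i$ (otherwise $X_i = 0$ in any factorization of $A$, so the $i$-th row and column vanish and one restricts to the principal submatrix indexed by $\{i : A_{ii} > 0\}$).

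For ``$\text{RHS} \le \xib{cpsd}{*}(A)$'' I would take $\alpha,\MA,\tau,\bX$ in the decomposed form, set $\beta_m := \alpha\lambda_m/d_m \ge 0$ and rescale $\widetilde X_i^m := \sqrt{\beta_m}\,X_i^m \in \Hermitian_+^{d_m}$. Since $\mathrm{tr}(Y^m) = \Tr(Y^m)/d_m$ on the $m$-th block, the identity $A = \alpha(\tau(X_iX_j))$ becomes $A_{ij} = \sum_m \beta_m \Tr(X_i^mX_j^m) = \sum_m \Tr(\widetilde X_i^m \widetilde X_j^m)$, i.e.\ $A = \mathrm{Gram}\big(\bigoplus_m \widetilde X_1^m,\ldots,\bigoplus_m \widetilde X_n^m\big)$; moreover $\|\widetilde X_i^m\|^2/A_{ii} = \beta_m\|X_i^m\|^2/A_{ii} \le \beta_m$, so $\sum_m d_m \max_i \|\widetilde X_i^m\|^2/A_{ii} \le \sum_m d_m\beta_m = \alpha\sum_m\lambda_m = \alpha$. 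This is a feasible point of~\eqref{eq:reformulationxi*} with value $\le \alpha$; taking the infimum over $\alpha$ gives the inequality.

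For the reverse inequality I would start from a feasible tuple $X_i^m \in \Hermitian_+^{d_m}$ of~\eqref{eq:reformulationxi*}, set $c_m := \max_i \|X_i^m\|^2/A_{ii}$, discard the blocks with $c_m = 0$ (for which all $X_i^m$ vanish, contributing nothing), and put $\alpha := \sum_m d_m c_m$ (the objective value). Rescaling $Y_i^m := X_i^m/\sqrt{c_m} \in \Hermitian_+^{d_m}$ gives $\|Y_i^m\| \le \sqrt{A_{ii}}$, hence $\sqrt{A_{ii}}\,Y_i^m - (Y_i^m)^2 \succeq 0$, so $(Y_1,\ldots,Y_n) \in \DA(\SAcpsd)$ with $\MA = \bigoplus_m \C^{d_m\times d_m}$; then I would take the tracial state $\tau$ with weights $\lambda_m := d_m c_m/\alpha \ge 0$ (which sum to $1$) and verify $\alpha\,\tau(Y_iY_j) = \sum_m c_m\Tr(Y_i^mY_j^m) = \sum_m \Tr(X_i^mX_j^m) = A_{ij}$. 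Hence $\xib{cpsd}{*}(A) \le \alpha$, and the infimum gives the claim (if $\alpha = 0$ then $A = 0$ and both sides vanish).

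For the last remark, real psd matrices are a special case, so using them only increases the infimum; for the reverse, given a Hermitian solution I would replace each $X_i^m = R_i^m + \mathbf i\, S_i^m$ ($R_i^m$ real symmetric, $S_i^m$ real antisymmetric) by $\widehat X_i^m := \tfrac1{\sqrt 2}\left(\begin{smallmatrix} R_i^m & -S_i^m \\ S_i^m & R_i^m\end{smallmatrix}\right) \in \psdcone_+^{2d_m}$. This preserves positive semidefiniteness, satisfies $\Tr(\widehat X_i^m\widehat X_j^m) = \Tr(X_i^mX_j^m)$, and since $\left(\begin{smallmatrix}R & -S\\ S & R\end{smallmatrix}\right)$ realizes the $\C$-linear operator $R+\mathbf i\,S$ as an $\R$-linear operator on $\R^{2d}\cong\C^d$ it has the eigenvalues of $R+\mathbf i\,S$ with multiplicities doubled, so $\|\widehat X_i^m\|^2 = \tfrac12\|X_i^m\|^2$; thus each block of size $d_m$ becomes one of size $2d_m$ with $\max_i\|\cdot\|^2/A_{ii}$ halved, leaving the objective unchanged. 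I do not expect a genuine obstacle here: once Proposition~\ref{prop:cpsd*} is available the argument is bookkeeping, the only delicate points being the reconciliation of the normalization $\tau(1)=1$ with the Gram factorization (handled by the rescalings $\widetilde X_i^m = \sqrt{\alpha\lambda_m/d_m}\,X_i^m$ and $Y_i^m = X_i^m/\sqrt{c_m}$), the degenerate cases $A_{ii}=0$, $c_m=0$, $\alpha=0$, and the standard complex-to-real $2d\times 2d$ dilation with its trace-preserving factor $1/\sqrt2$.
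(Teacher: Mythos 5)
Your proof is correct and follows essentially the same route as the paper: both directions go through the reformulation of $\xib{cpsd}{*}(A)$ in Proposition~\ref{prop:cpsd*}, with your Artin--Wedderburn decomposition of the tracial state and the rescalings $\widetilde X_i^m=\sqrt{\alpha\lambda_m/d_m}\,X_i^m$ and $Y_i^m=X_i^m/\sqrt{c_m}$ matching the paper's conic-combination bookkeeping exactly. As a small bonus you also verify the concluding remark about real versus Hermitian factors via the standard $2d_m\times 2d_m$ realification, which the paper merely asserts.
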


\begin{proof}
The proof uses the formulation of $\smash{\xib{cpsd}{*}(A)}$ in terms of conic combinations of trace evaluations at matrix tuples in $\smash{\mathcal D(\SAcpsd)}$ as given in Proposition~\ref{prop:cpsd*}. We first show the inequality $\beta \leq \smash{\xib{cpsd}{*}(A)}$, where $\beta$ denotes the optimal value of the program in~\eqref{eq:reformulationxi*}.

For this, assume $L\in \R\ncx^*$ is a conic  combination of  trace evaluations at elements of $\smash{\MD(\SAcpsd)}$ such that $A=(L(x_ix_j))$. We will construct a feasible solution for~\eqref{eq:reformulationxi*} with objective value $L(1)$. The linear functional $L$ can be written as
\[
L=\sum_{m=1}^M \lambda_m L_{\mathbf Y^m}, \text{ where } \lambda_m > 0 \text{ and } \mathbf Y^m=(Y^m_1,\ldots,Y^m_n) \in \MD(\SAcpsd) \text{ for } m \in [M].
\]
Let $d_m$ denote the size of the matrices $Y_1^m, \ldots, Y_n^m$, so that $L(1)=\smash{\sum_m} \lambda_m d_m$. 
Since $\mathbf Y^m \in \smash{\MD(\SAcpsd)}$, we have $Y^m_i \succeq 0$ and $A_{ii}I-(Y^m_i)^2\succeq 0$ by identities~\eqref{eq:x^2} and~\eqref{eq:x}. This implies $\|Y^m_i\|^2 \leq A_{ii}$ for all $i\in [n]$ and $m \in [M]$.
Define $\mathbf X^m = \sqrt{\lambda_m} \, \mathbf Y^m$.
Then, $L(x_ix_j)= \sum_m \Tr(X^m_iX^m_j)$, so that the matrices $\smash{\oplus_m X^m_1,\ldots,\oplus_m X^m_n}$ form a Gram decomposition of $A$. 
This gives a feasible solution to~\eqref{eq:reformulationxi*} with value 
\[
\sum_{m=1}^M d_m \cdot \underset{i\in [n]}\max \frac{\|X^m_i\|^2}{A_{ii}} =\sum_{m=1}^M  d_m \lambda_m \, \underset{i\in [n]}\max \frac{\|Y^m_i\|^2}{A_{ii}} \le \sum_{m=1}^M d_m\lambda_m =L(1),
\]
which shows  $\beta \le L(1)$, and hence $\beta \leq \xib{cpsd}{*}(A)$.

For the other direction we assume 
\[
A = \mathrm{Gram}\big(\oplus_{m=1}^M X^m_1,\ldots,\oplus_{m=1}^M X^m_n\big), \quad X^m_1,\ldots,X^m_n \in \psdcone^{d_m}_+ \  \text{ for } m \in [M].
\]
Set $\lambda_m = \max_{i\in [n]} {\|X^m_i\|^2/ A_{ii}}$, and define the linear form $L$ by
\[
L= \sum_{m=1}^M \lambda_m L_{\mathbf Y^m}, \quad \text{where} \quad \mathbf Y^m = \bX^m / \sqrt{\lambda_m} \quad \text{for all} \quad m \in [M].
\]
We have $L(1)=\sum_m \lambda_m d_m$ and $A=(L(x_ix_j))$, and thus it suffices to show that each matrix tuple $\mathbf Y^m$ belongs to $\smash{\MD(\SAcpsd)}$. 
For this we observe that $\lambda_mA_{ii}\geq \|X^m_i\|^2$. Therefore $\lambda_m A_{ii} I \succeq (X_i^m)^2$, and thus $A_{ii} I \succeq (Y_i^m)^2$, which implies $\sqrt{A_{ii}} Y_i^m - (Y_i^m)^2 \succeq 0$. 
This shows $\smash{\xib{cpsd}{*}(A)} \le L(1)=\sum_m \lambda_m d_m$, and thus $\smash{\xib{cpsd}{*}(A)} \leq \beta$. \qed
\end{proof}

We can say a bit more when the matrix $A$ lies on an extreme ray of the cone $\smash{\CSP^n}$. In the formulation from Proposition~\ref{lem:cpsd*} it suffices to restrict the minimization over factorizations of $A$ involving only one block. However, we know very little about the extreme rays of $\CSP^n$, also in view of the recent 
result that the cone is not closed for large $n$~\cite{Slofstra17,DPP17}. 
\begin{proposition}\label{lem:flatextreme}
If $A$ lies on an extreme ray of the cone $\CSP^n$, then
\[
\xib{cpsd}{*}(A) = {\text{\rm inf}} \Big\{ d \cdot \underset{i \in [n]}{\max} \frac{\|X_{i}\|^2}{A_{ii}} : d \in \N, X_1,\ldots,X_n \in \Hermitian_+^{d}, \, A = \mathrm{Gram}\big(X_1, \ldots, X_n \big)\Big\}.
\]
Moreover, if $\oplus_{m=1}^M X^m_1,\ldots,\oplus_{m=1}^M X^m_n$ is a Gram decomposition of $A$ providing an optimal solution to~\eqref{eq:reformulationxi*} and some block $X^m_i$ has rank $1$, then $\xib{cpsd}{*}(A)=\hcpsd(A)$.
\end{proposition}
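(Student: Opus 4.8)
The plan is to prove the two assertions of Proposition~\ref{lem:flatextreme} separately, both building on the block-diagonal reformulation of $\xib{cpsd}{*}(A)$ from Proposition~\ref{lem:cpsd*}.

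\medskip

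\noindent\textbf{Reduction to one block for extreme rays.}
First I would observe that the right-hand side of the claimed identity is trivially an upper bound for the infimum in~\eqref{eq:reformulationxi*}, since a single-block factorization is a special case of a block-diagonal one (take $M=1$). So the content is the reverse inequality: any block-diagonal Gram factorization $A = \mathrm{Gram}(\oplus_{m=1}^M X_1^m,\ldots,\oplus_{m=1}^M X_n^m)$ can be replaced by a single-block one without increasing the objective $\sum_m d_m \max_i \|X_i^m\|^2/A_{ii}$. The key step here is to use the extremality of $A$. Writing $A^m = \mathrm{Gram}(X_1^m,\ldots,X_n^m)$ for each $m\in[M]$, we get $A = \sum_{m=1}^M A^m$ with each $A^m \in \CSP^n$ (indeed $A^m$ has a psd Gram factorization by construction). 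Since $A$ spans an extreme ray of the convex cone $\CSP^n$, each summand $A^m$ must be a nonnegative scalar multiple of $A$, say $A^m = \mu_m A$ with $\mu_m \ge 0$ and $\sum_m \mu_m = 1$. Now I would treat the blocks with $A^m = 0$ (equivalently $X_i^m = 0$ for all $i$, using $A_{ii}^m = \Tr((X_i^m)^2)$) as contributing $0$ to the objective, so they can be discarded; for a nonzero block with $A^m = \mu_m A$ and $\mu_m > 0$, the tuple $(X_1^m/\sqrt{\mu_m},\ldots,X_n^m/\sqrt{\mu_m})$ is a Gram factorization of $A$ itself of the same size $d_m$, with $\max_i \|X_i^m/\sqrt{\mu_m}\|^2/A_{ii} = \mu_m^{-1}\max_i \|X_i^m\|^2/A_{ii}$. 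The main obstacle is to show that a single well-chosen block among these already achieves value at most the total $\sum_m d_m\max_i\|X_i^m\|^2/A_{ii}$: I would pick the index $m^\star$ minimizing the ``per-block-normalized'' ratio, i.e.\ minimizing $\mu_m^{-1}\max_i\|X_i^m\|^2/A_{ii}$ over the nonzero blocks (or, more carefully, compare $\sum_m \mu_m \cdot (\mu_m^{-1}\max_i\|X_i^m\|^2/A_{ii}) \cdot d_m$-type quantities), and argue via a weighted-average / pigeonhole argument that the single-block value $d_{m^\star}\cdot \mu_{m^\star}^{-1}\max_i\|X_i^{m^\star}\|^2/A_{ii}$ is at most the full block-diagonal objective. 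A clean way: since $\sum_m \mu_m = 1$, the full objective is $\sum_m d_m \mu_m^{-1}(\mu_m^{-1}\max_i\|X_i^m\|^2/A_{ii})\cdot \mu_m \ge$ — actually it is simpler to just take $M = 1$ directly feasible and note that for the \emph{infimum} it suffices that for every block-diagonal factorization there exists a single-block factorization with value $\le$ one of its block-contributions, which can be arranged by the above rescaling since each $A^m/\mu_m = A$. I would carefully write this out choosing $m^\star$ so that $d_{m^\star}\max_i\|X_i^{m^\star}\|^2/A_{ii} \le \mu_{m^\star}\cdot\big(\sum_m d_m\max_i\|X_i^m\|^2/A_{ii}\big)$ is impossible in general, so instead the correct statement uses that $A^m = \mu_m A$ forces the geometry to be rigid — I will need to think about whether we rescale to $A$ and accept the factor $\mu_{m^\star}^{-1}$, which then needs $d_{m^\star}\mu_{m^\star}^{-1}\le \sum_m d_m$; this follows because $d_{m^\star}\le \sum_m d_m$ and... here is where I expect to spend real effort, reconciling the dimension count with the scaling factors, possibly by an additional averaging over the choice of $m^\star$ weighted by $\mu_m$.

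\medskip

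\noindent\textbf{A rank-one block forces $\xib{cpsd}{*}(A) = \hcpsd(A)$.}
For the second statement I would argue as follows. Suppose $\oplus_{m=1}^M X_1^m,\ldots,\oplus_{m=1}^M X_n^m$ is an optimal solution to~\eqref{eq:reformulationxi*} and some block $X_i^m$ has rank $1$ (equivalently, all of $X_1^{m_0},\ldots$ in that block... no — just one matrix $X_{i_0}^{m_0}$). The point is that in the chain $\xib{cpsd}{*}(A)\le\hcpsd(A)$ the loss comes precisely from the discrepancy between $\Tr((X_i^m)^2) = A_{ii}$ and $\|X_i^m\|^2$; the objective $\sum_m d_m\max_i\|X_i^m\|^2/A_{ii}$ underestimates $\sum_m d_m = \hcpsd$-type count exactly when some $\|X_i^m\|^2 < A_{ii}$, i.e.\ when some $X_i^m$ is not rank one. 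More precisely I would combine the rank-one hypothesis with extremality: from the first part (or directly), $A = \sum_m A^m$ with $A^m = \mu_m A$; the block $m_0$ containing the rank-one matrix has $A^{m_0} = \mu_{m_0} A$, and I claim its rank-one factor makes $\mu_{m_0}$ large / the block ``efficient.'' Actually the cleanest route: a rank-one psd Gram factorization of $\mu_{m_0}A$ means $\mu_{m_0} A$ is a rank-one-per-matrix cpsd representation; but then, rescaling, $A$ has a factorization by matrices $X_i^{m_0}/\sqrt{\mu_{m_0}}$ where $X_{i_0}^{m_0}/\sqrt{\mu_{m_0}}$ is rank one, so $\|X_{i_0}^{m_0}/\sqrt{\mu_{m_0}}\|^2 = \Tr((X_{i_0}^{m_0}/\sqrt{\mu_{m_0}})^2) = A_{i_0 i_0}$, giving $\max_i\|X_i^{m_0}/\sqrt{\mu_{m_0}}\|^2/A_{ii} \ge 1$ hence $= $ exactly computes things so that the single-block value equals $d_{m_0}$; I would then use the first part to conclude this single block is optimal, so $\xib{cpsd}{*}(A) = d_{m_0} \cdot \max_i\|X_i^{m_0}\|^2/(\mu_{m_0}A_{ii})$ and, since the optimal $\max$ is $\ge 1$ and equals the value, we get $\xib{cpsd}{*}(A) = d_{m_0}\cdot c$ with $c\ge 1$; but also $\xib{cpsd}{*}(A)\le\hcpsd(A)\le d_{m_0}$ would then force $c = 1$ and $\xib{cpsd}{*}(A) = d_{m_0} = \hcpsd(A)$, where the last equality uses that this rescaled single-block tuple is itself a genuine cpsd Gram factorization of $A$ (each $X_i^{m_0}/\sqrt{\mu_{m_0}}$ is psd), hence $\hcpsd(A)\le d_{m_0}$. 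I expect the main obstacle to be the bookkeeping of normalization constants and making precise the step ``rank-one forces the max-ratio to be exactly $1$ and the block to be optimal,'' which rests on the identity $\|Y\|^2 = \Tr(Y^2)$ for $Y\succeq 0$ iff $\rank Y \le 1$, combined carefully with the extreme-ray rigidity established in the first half.
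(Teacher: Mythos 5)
Your overall strategy is the same as the paper's, and all the right ingredients are on the table; the problem is that the one step you flag as ``where I expect to spend real effort'' --- and at one point declare ``impossible in general'' --- is in fact the entire content of the first assertion, and it does go through by exactly the $\mu$-weighted averaging you gesture at. Concretely, with $A^m=\Gram(X_1^m,\dots,X_n^m)=\mu_m A$, $\sum_m\mu_m=1$, and $\lambda_m=\max_i\|X_i^m\|^2/A_{ii}$, each rescaled block $(X_i^m/\sqrt{\mu_m})_i$ is a single-block Gram factorization of $A$ with value $d_m\lambda_m/\mu_m$, so the single-block infimum $\beta$ satisfies $\beta\le d_m\lambda_m/\mu_m$ for every $m$ with $\mu_m>0$. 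Since $\sum_m \mu_m\cdot(d_m\lambda_m/\mu_m)=\sum_m d_m\lambda_m$ exhibits the full block-diagonal objective as a convex combination of the numbers $d_m\lambda_m/\mu_m$, the smallest of these numbers is at most $\sum_m d_m\lambda_m$. That is precisely the inequality $d_{m^\star}\lambda_{m^\star}\le\mu_{m^\star}\sum_m d_m\lambda_m$ you dismissed: it need not hold for every $m^\star$, but it must hold for at least one, and that is all you need. (The detour via ``$d_{m^\star}\mu_{m^\star}^{-1}\le\sum_m d_m$'' should be discarded; it is not the relevant comparison.)

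For the second assertion your sketch has a genuine missing link: you write ``use the first part to conclude this single block is optimal,'' but the first part only guarantees that \emph{some} block attains $\min_m d_m\lambda_m/\mu_m$, not the particular block containing the rank-one matrix. You need the extra observation that at an \emph{optimal} block-diagonal solution all the quantities $d_m\lambda_m/\mu_m$ coincide: each one is $\ge\xib{cpsd}{*}(A)$ (being the value of a feasible single-block solution), while their $\mu$-weighted average equals $\sum_m d_m\lambda_m=\xib{cpsd}{*}(A)$ by optimality, so all of them equal $\xib{cpsd}{*}(A)$. Once that is in place, the block $m_0$ with a rank-one factor satisfies $\lambda_{m_0}=\mu_{m_0}$: for the rank-one matrix $\|X_{i_0}^{m_0}\|^2=\Tr((X_{i_0}^{m_0})^2)=\mu_{m_0}A_{i_0i_0}$, and every other ratio $\|X_i^{m_0}\|^2/A_{ii}$ is $\le\mu_{m_0}$, so after rescaling the maximum is exactly $1$, not merely $\ge 1$, and no squeeze against $\hcpsd(A)$ is needed to pin it down. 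Hence $\xib{cpsd}{*}(A)=d_{m_0}\lambda_{m_0}/\mu_{m_0}=d_{m_0}\ge\hcpsd(A)\ge\xib{cpsd}{*}(A)$, forcing equality throughout. With these two steps made precise your argument coincides with the paper's proof.
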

\begin{proof}
Let $\beta$ be the infimum in Proposition~\ref{lem:flatextreme}. The inequality $\smash{\xib{cpsd}{*}(A)} \leq \beta$ follows from the reformulation of $\smash{\xib{cpsd}{*}(A)}$ in Proposition~\ref{lem:cpsd*}. To show the reverse inequality we consider a solution
$
\oplus_{m=1}^M X^m_1,\ldots,\oplus_{m=1}^M X^m_n
$
to~\eqref{eq:reformulationxi*}, and 
 set $\lambda_m= \max_i\|X^m_i\|^2/A_{ii}$. We will show $\beta \le \sum_m d_m\lambda_m$. For this define the matrices
$
A_m=\Gram(X^m_1,\cdots,X^m_n),
$
so that $A=\sum_m A_m$.
As $A$ lies on an extreme ray of $\CSP^n$, we must have $A_m = \alpha_m A$ for some $\alpha_m>0$ with $\sum_m\alpha_m=1$.
Hence, since 
\[
A=A_m/\alpha_m=\Gram(X^m_1/\sqrt{\alpha_m}, \cdots, X^m_n/\sqrt{\alpha_m}),
\]
we have $\beta \le d_m\lambda_m/\alpha_m$ for all $m\in [M]$.
It suffices now to use  $\sum_m \alpha_m=1$ to see that 
$\min_m d_m\lambda_m/\alpha_m \leq \sum_m d_m\lambda_m$. So we have shown $\beta \le \min_m d_m\lambda_m/\alpha_m \le \sum_m d_m\lambda_m.$ This  
implies $\beta \le \xib{cpsd}{*}(A)$, and thus equality holds.

Assume now that $\oplus_{m=1}^M X^m_1,\ldots,\oplus_{m=1}^M X^m_n$ is optimal to~\eqref{eq:reformulationxi*} and that there is a block $X_i^m$ of rank $1$.  By Proposition~\ref{lem:cpsd*} we have $\sum_m d_m\lambda_m= \smash{\xib{cpsd}{*}(A)}$. From the argument just made above it follows that 
\[
\xib{cpsd}{*}(A)= \min_m d_m\lambda_m/\alpha_m =\sum_m d_m \lambda_m.
\]
As $\sum_m \alpha_m=1$ this implies $d_m\lambda_m/\alpha_m =\min_m d_m\lambda_m/\alpha_m$ for all $m$; that is, all terms $d_m\lambda_m/\alpha_m$ take the same value $\xib{cpsd}{*}(A)$.
By assumption there exist some  $m\in [M]$ and  $i\in [n]$ for which $X^m_i$ has rank $1$. Then $\|X^m_i\|^2=\langle X^m_i,X^m_i\rangle$, which gives $\lambda_m =\alpha_m$, and thus  $\smash{\xib{cpsd}{*}(A)} = d_m$. 
On the other hand, $\hcpsd(A)\le d_m$ since $\smash{(X^m_i/\sqrt{\alpha_m})_i}$ forms a Gram decomposition of $A$, so equality $\xib{cpsd}{*}(A)=d_m=\hcpsd(A)$ holds. \qed
\end{proof}

\subsection{Additional localizing constraints to improve on \texorpdfstring{$\smash{\xib{cpsd}{*}(A)}$}{xi star}}
\label{cpsd-additional-constraints}

In order to strengthen the bounds we may require nonnegativity over a (truncated) quadratic module generated by a larger set of localizing polynomials for $A$. The following lemma gives one such approach.
\begin{lemma}\label{lem:newloc}
Let $A \in \CSP^n$. For $v\in \R^n$ and $g_v= v^\T Av -\big(\sum_{i=1}^n v_ix_i\big)^2$, the set $\{g_v\}$ is localizing for every Gram factorization by Hermitian positive semidefinite matrices of $A$ (in particular, $\{g_v\}$ is localizing for $A$).
\end{lemma}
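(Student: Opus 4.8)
The plan is to verify directly that $g_v(\bX) \succeq 0$ for an arbitrary Gram factorization $\bX = (X_1, \ldots, X_n)$ of $A$ by Hermitian positive semidefinite matrices, since $A_{ij} = \mathrm{Tr}(X_i X_j)$. Plugging in, $g_v(\bX) = (v^\T A v) I - \big(\sum_i v_i X_i\big)^2$, so I need to show $(v^\T A v) I \succeq \big(\sum_i v_i X_i\big)^2$. Write $Y = \sum_{i=1}^n v_i X_i$; note $Y$ is Hermitian (a real linear combination of Hermitian matrices), so $Y^2 \succeq 0$ and $\|Y^2\| = \lambda_{\max}(Y^2) = \lambda_{\max}(Y)^2 \le \|Y\|^2$. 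Hence it suffices to show that the scalar $v^\T A v$ dominates $\lambda_{\max}(Y^2)$.

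The key step is the computation $v^\T A v = \sum_{i,j} v_i v_j A_{ij} = \sum_{i,j} v_i v_j \mathrm{Tr}(X_i X_j) = \mathrm{Tr}\big((\sum_i v_i X_i)(\sum_j v_j X_j)\big) = \mathrm{Tr}(Y^2)$. So I must show $\mathrm{Tr}(Y^2) I \succeq Y^2$, equivalently $\mathrm{Tr}(Y^2) \ge \lambda_{\max}(Y^2)$. This is immediate because $Y^2$ is positive semidefinite, so the sum of its eigenvalues (its trace) is at least its largest eigenvalue. This is exactly parallel to the argument already given in the excerpt showing $\smash{\SAcpsd}$ is localizing, where $A_{ii} = \mathrm{Tr}(X_i^2) \ge \lambda_{\max}(X_i^2)$; here the single variable $x_i$ is replaced by the combination $\sum_i v_i x_i$.

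Therefore $g_v(\bX) = \mathrm{Tr}(Y^2) I - Y^2 \succeq 0$, which means $\{g_v\}$ is localizing at $\bX$. Since $\bX$ was an arbitrary Gram factorization of $A$ by Hermitian positive semidefinite matrices, $\{g_v\}$ is localizing at every such factorization; in particular, it is localizing at a factorization of size $d = \hcpsd(A)$, so $\{g_v\}$ is localizing for $A$ in the sense defined above.

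I do not anticipate a real obstacle here — the statement is essentially a reformulation of the positive semidefiniteness of $Y^2$ together with the Gram-matrix identity $v^\T A v = \mathrm{Tr}(Y^2)$. The only point requiring minor care is that $Y = \sum_i v_i X_i$ is Hermitian (so that $Y^2 = Y^* Y \succeq 0$ and the eigenvalue/trace comparison applies), which holds because $v \in \R^n$ and each $X_i$ is Hermitian. One could alternatively phrase the whole thing in one line as $\langle Y^2 w, w\rangle \le \mathrm{Tr}(Y^2) \|w\|^2$ for all vectors $w$, using that $Y^2 \succeq 0$ has all eigenvalues bounded by its trace.
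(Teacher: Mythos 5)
Your proof is correct and matches the paper's argument exactly: both compute $v^\T A v = \Tr\big((\sum_i v_i X_i)^2\big)$ via the Gram identity and then use that the trace of the positive semidefinite matrix $(\sum_i v_i X_i)^2$ dominates its largest eigenvalue. No issues.
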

\begin{proof}
If $X_1,\ldots,X_n$ is a Gram decomposition of $A$ by Hermitian positive semidefinite matrices, then 
\[
v^\T Av= \Tr\big(\Big(\sum_{i=1}^n v_iX_i\Big)^2\big) \geq \lambda_{\max}\big(\Big(\sum_{i=1}^n v_iX_i\Big)^2\big),
\]
hence $v^\T AvI-(\sum_{i=1}^nv_iX_i)^2\succeq 0$. \qed
\end{proof}

Given a set $V\subseteq \R^n$, we consider the larger set 
\[
\SAVcpsd= \SAcpsd \cup \{g_v: v\in V\}
\]
of localizing polynomials for $A$. For $t \in \N \cup \{\infty,*\}$, denote by $\smash{\xib{cpsd}{t,V}(A)}$ the parameter obtained by replacing in $\smash{\xib{cpsd}{t}(A)}$ the nonnegativity constraint on $\MM_{2t}(\SAcpsd)$ by nonnegativity on the larger set $\smash{\MM_{2t}(\SAVcpsd)}$. We have $\xib{cpsd}{t,\emptyset}(A)=\xib{cpsd}{t}(A)$ and
\[
\xib{cpsd}{t}(A)\le \xib{cpsd}{t,V}(A)\le \hcpsd(A) \quad \text{for all} \quad V \subseteq \R^n.
\]

By scaling invariance, we can add the above constraints for all $v \in \R^n$ by setting $V$ to be the unit sphere $\oS^{n-1}$. Since $\oS^{n-1}$ is a compact metric space, there exists a sequence $V_1 \subseteq V_2 \subseteq \ldots \subseteq \oS^{n-1}$ of finite subsets such that $\bigcup_{k\ge 1} V_k$ is dense in $\oS^{n-1}$. 
Each of the parameters \smash{$\xib{cpsd}{t,V_k}(A)$} involves finitely many localizing constraints, and, as we now show, they converge to the parameter \smash{$\xib{cpsd}{t,\oS^{n-1}}(A)$}.

\begin{proposition}\label{remlocsphere}
Consider a matrix $A\in \CSP^n$. For $t \in \{\infty, *\}$, we have
\[
\lim_{k \to \infty} \xib{cpsd}{t,V_k}(A) = \xib{cpsd}{t,\oS^{n-1}}(A).
\]
\end{proposition}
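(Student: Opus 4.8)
The plan is to prove convergence by a compactness argument on the feasible regions, in the same spirit as the proof of Proposition~\ref{prop:lowerbound}. Fix $t \in \{\infty, *\}$. Monotonicity is immediate: since $V_1 \subseteq V_2 \subseteq \cdots \subseteq \oS^{n-1}$, a linear form $L$ feasible for $\smash{\xib{cpsd}{t,V_{k+1}}(A)}$ is also feasible for $\smash{\xib{cpsd}{t,V_k}(A)}$, so the sequence $(\xib{cpsd}{t,V_k}(A))_k$ is nondecreasing; and since $\bigcup_k V_k \subseteq \oS^{n-1}$, each $\xib{cpsd}{t,V_k}(A) \leq \xib{cpsd}{t,\oS^{n-1}}(A)$. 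Hence the limit $\ell := \lim_{k\to\infty} \xib{cpsd}{t,V_k}(A)$ exists and satisfies $\ell \leq \xib{cpsd}{t,\oS^{n-1}}(A)$. It remains to prove the reverse inequality $\ell \geq \xib{cpsd}{t,\oS^{n-1}}(A)$.

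For the case $t = \infty$: for each $k$, let $L_k$ be an optimal solution to $\xib{cpsd}{\infty,V_k}(A)$ (which is attained by the same argument as in Proposition~\ref{prop:lowerbound} — the constraint $\Tr(A) - \sum_i x_i^2 \in \MM_2(\SAcpsd) \subseteq \MM_2(\SAVkcpsd)$ together with Lemma~\ref{lemma:upperboundLw} makes the feasible region compact). Since $L_k(1) = \xib{cpsd}{\infty,V_k}(A) \leq \xib{cpsd}{\infty,\oS^{n-1}}(A) < \infty$ uniformly in $k$, Lemma~\ref{lemma:upperboundLw} again gives a pointwise-convergent subsequence $L_{k_j} \to L \in \R\ncx^*$. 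The limit $L$ is symmetric, tracial, satisfies $(L(x_i x_j)) = A$, and is nonnegative on $\MM(\SAcpsd)$. The key point is that $L$ is nonnegative on $\MM(\SAScpsd)$: for a fixed $v \in \bigcup_k V_k$, say $v \in V_{k_0}$, we have $L_{k_j} \geq 0$ on $\MM(\{g_v\})$ for all $k_j \geq k_0$, so by pointwise convergence $L(p^* g_v p) \geq 0$ for all $p$; thus $L \geq 0$ on $\MM(\{g_v\})$ for all $v$ in the dense set $\bigcup_k V_k$. By continuity of $v \mapsto L(p^*(v^\T A v - (\sum_i v_i x_i)^2)p)$ (a polynomial in $v$ for each fixed $p$) and density, this extends to all $v \in \oS^{n-1}$. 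Hence $L$ is feasible for $\xib{cpsd}{\infty,\oS^{n-1}}(A)$ with $L(1) = \ell$, giving $\xib{cpsd}{\infty,\oS^{n-1}}(A) \leq \ell$.

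For the case $t = *$: here the limiting parameter carries the extra constraint $\rank(M(L)) < \infty$, which is \emph{not} closed under pointwise limits, so the above compactness argument does not directly apply. Instead I would use the reformulation of $\xib{cpsd}{*,V}(A)$ in terms of $C^*$-algebras with a tracial state (the analog of Proposition~\ref{prop:cpsd*}, via Theorem~\ref{propLfinitedim}): $\xib{cpsd}{*,V}(A)$ is the infimum of $\alpha \geq 0$ for which there is a finite-dimensional $C^*$-algebra $\MA$ with tracial state $\tau$ and $(X_1,\ldots,X_n) \in \mathcal D_{\mathcal A}(\SAVcpsd)$ with $A = \alpha\cdot(\tau(X_iX_j))$. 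Given any feasible solution $(\MA,\tau,\bX)$ for $\xib{cpsd}{*,\oS^{n-1}}(A)$ with value $\alpha$, the tuple $\bX$ satisfies $g_v(\bX) \succeq 0$ for \emph{all} $v \in \oS^{n-1}$, hence in particular for all $v \in V_k$, so $(\MA,\tau,\bX)$ is feasible for $\xib{cpsd}{*,V_k}(A)$ with the same value $\alpha$ for every $k$. This shows $\xib{cpsd}{*,V_k}(A) \leq \alpha$ for all $k$, hence $\ell \leq \xib{cpsd}{*,\oS^{n-1}}(A) = \inf \alpha$; combined with the trivial direction this gives $\ell = \xib{cpsd}{*,\oS^{n-1}}(A)$. (In fact this argument shows the stronger statement that $\xib{cpsd}{*,V_k}(A)$ is already equal to $\xib{cpsd}{*,\oS^{n-1}}(A)$ for every $k$, once one notes the feasible sets of $\bX$-tuples coincide — any tuple feasible for finitely many $V_k$ that additionally satisfies the localizing inequalities need not satisfy them for all of $\oS^{n-1}$, so one does need the density/continuity argument to go from $V_k$-feasibility back; the cleanest route is to argue the infima agree by the two-sided inequality above.)

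\textbf{Main obstacle.} The delicate point is the $t=*$ case: the finite-rank constraint is not preserved under the pointwise limits used for $t=\infty$, so one genuinely needs the $C^*$-algebraic reformulation to transfer feasible solutions back and forth. The $t=\infty$ case is routine once the density-plus-continuity step (promoting nonnegativity on $\{g_v : v \in \bigcup_k V_k\}$ to nonnegativity on $\{g_v : v \in \oS^{n-1}\}$) is spelled out, which is where the choice of a dense union of finite sets is used.
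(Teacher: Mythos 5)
Your argument for $t=\infty$ is correct, and it takes a genuinely different route from the paper: you extract a pointwise limit $L$ of optimal solutions $L_k$ (via Lemma~\ref{lemma:upperboundLw}) and then promote nonnegativity of $L$ on $\MM(\{g_v\})$ from the dense set $\bigcup_k V_k$ to all of $\oS^{n-1}$ by observing that $v\mapsto L(p^*g_vp)$ is a quadratic polynomial, hence continuous, in $v$. The paper instead works on the $C^*$-algebraic side for both values of $t$ simultaneously: starting from $\bX\in\DA(\SAVkcpsd)$ with $k$ large, it shows that the scaled tuple $\sqrt{1-\epsilon}\,\bX$ lies in $\DA(\SAScpsd)$, using quantitative Lipschitz estimates on $u\mapsto\|\sum_iu_iX_i\|^2$ and $u\mapsto u^\T Au$, at the cost of a factor $1/(1-\epsilon)$ in the objective; letting $\epsilon\to0$ finishes the proof. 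For $t=\infty$ your route is arguably cleaner, as it needs no quantitative estimates.

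The $t=*$ case, however, has a genuine gap. What you actually prove there is that every feasible solution of $\xib{cpsd}{*,\oS^{n-1}}(A)$ is feasible for $\xib{cpsd}{*,V_k}(A)$, i.e.\ $\xib{cpsd}{*,V_k}(A)\le\xib{cpsd}{*,\oS^{n-1}}(A)$ --- but that is exactly the ``trivial direction'' you already established by monotonicity, and you then conclude equality by combining this inequality with itself. The hard direction requires converting a tuple satisfying only the finitely many localizing constraints indexed by $V_k$ into one satisfying them for all of $\oS^{n-1}$, with a controlled loss in the objective; your parenthetical correctly senses that the two feasible sets do not coincide but supplies no mechanism to bridge them. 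The paper's $\sqrt{1-\epsilon}$-scaling is precisely such a mechanism, and it is compatible with the finite-rank requirement (scaling a tuple inside a finite-dimensional $C^*$-algebra stays in that algebra), which is why the paper's single argument also covers $t=*$. Without a step of this kind the $t=*$ case remains unproved.
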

\begin{proof}
Let $\epsilon > 0$. Since $\bigcup_k V_k$ is dense in $\oS^{n-1}$, there is an integer $k\ge 1$ so that for every $u \in \oS^{n-1}$ there exists a vector $v \in V_k$  satisfying
\begin{equation} \label{eq:vproperty}
\|u-v\|_1 \leq \frac{\varepsilon \lambda_\min(A)}{4 \sqrt{n} \, \max_i A_{ii}} \quad \text{and} \quad \|u-v\|_2 \leq \frac{\varepsilon \lambda_\min(A)}{4\mathrm{Tr}(A^2)^{1/2}}.
\end{equation}
The above Propositions~\ref{prop:lowerbound}~and~\ref{prop:cpsd*} have natural analogues for the programs \smash{$\xib{cpsd}{t,V}(A)$}. These show that  for $t = \infty$ ($t = *$) the parameter $\xib{cpsd}{t,V_k}(A)$ is the infimum over all $\alpha \geq 0$ for which there exist a (finite dimensional) unital $C^*$-algebra $\MA$ with tracial state $\tau$ and $\bX \in \DA(\SAVkcpsd)$ such that $A = \alpha \cdot (\tau(X_iX_j))$.

Below we will show that ${\bf X}' = \sqrt{1-\varepsilon} {\bf X} \in \DA(\SAScpsd)$. 
This implies that the linear form $L \in \R\langle \bx \rangle^*$ defined by $L(p) = \alpha/(1-\varepsilon) \tau(p({\bf X'}))$ is feasible for $\smash{\xib{cpsd}{t,\oS^{n-1}}(A)}$ with objective value $L(1) = \alpha/(1-\varepsilon)$. This shows 
\[
\xib{cpsd}{t,\oS^{n-1}}(A) \le {1\over 1-\epsilon}\ \xib{cpsd}{t,V_k} (A) \le {1\over 1-\epsilon}\ \lim_{k\to\infty} \xib{cpsd}{t,V_k}(A).
\]
Since $\varepsilon >0$ was arbitrary, letting $\epsilon$ tend to 0  completes the proof.

We now show ${\bf X}' = \sqrt{1-\varepsilon} {\bf X} \in \DA(\SAScpsd)$. For this consider the map
\[
f_{\bf X} \colon \oS^{n-1} \to \R, \, v \mapsto \Big\|\sum_{i=1}^n v_i X_i\Big\|^2,
\]
where $\| \cdot \|$ denotes the $C^*$-algebra norm of $\mathcal A$. For $\alpha\in \R$ and $a\in \MA$ with $a^*=a$, we have $\alpha \ge \|a\|$ if and only if $\alpha-a\succeq 0$  in $\MA$, or, equivalently,  $\alpha^2-a^2\succeq 0$ in $\MA$. Since $\bX \in \DA(\SAVkcpsd)$ we have $v^\T A v - f_{\bf X}(v) \geq 0$ for all $v \in V_k$, and hence
\[
v^\T A v - f_{{\bf X}'}(v) 
= v^\T A v \Big( 1 - (1-\varepsilon) \frac{f_{\bf X}(v)}{v^\T A v}\Big)
\geq v^\T A v \big( 1 - (1-\varepsilon) \big) = \varepsilon v^\T A v \geq \varepsilon \lambda_\min(A).
\]
Let $u \in \oS^{n-1}$ and let $v \in V_k$ be such that~\eqref{eq:vproperty} holds. Using Cauchy-Schwarz we have
\begin{align*}
| u^\T A u - v^\T A v | 
&= | (u-v)^\T A (u + v)| 
= |\langle A, (u-v) (u+v)^\T \rangle| \\
&\leq \sqrt{\mathrm{Tr}(A^2)} \sqrt{\mathrm{Tr}((u+v) (u-v)^\T (u-v) (u+v)^\T)}\\
&\leq \sqrt{\mathrm{Tr}(A^2)} \|u-v\|_2 \|u+v\|_2 \leq 2\sqrt{\mathrm{Tr}(A^2)} \|u-v\|_2\\
&\leq 2\sqrt{\mathrm{Tr}(A^2)} \frac{\varepsilon \lambda_\min(A)}{4\sqrt{\mathrm{Tr}(A^2)}}=  \frac{\varepsilon \lambda_\min(A)}{2}.
\end{align*}
Since $\sqrt{A_{ii}} X_i - X_i^2$ is positive in $\mathcal A$, we have that $\sqrt{A_{ii}} -X_i$ is positive in $\mathcal A$ by~\eqref{eq:cx} and~\eqref{eq:x^2}, which implies $\|X_i\| \leq \sqrt{A_{ii}}$. By the reverse triangle inequality we then have
\begin{align*}
|f_{\bf X'}(u) - f_{\bf X'}(v)| &= \Big| \big\|\sum_{i=1}^n u_i X_i'\big\| - \big\|\sum_{i=1}^n v_i X_i'\big\|\Big| \Big( \big\|\sum_{i=1}^n u_i X_i'\big\| + \big\|\sum_{i=1}^n v_i X_i'\big\|\Big)\\
&\leq  \big\|\sum_{i=1}^n (v_i - u_i) X_i'\big\| 2\sqrt{n} \, \max_i \sqrt{A_{ii}} \\
&\leq \Big(\sum_{i=1}^n |v_i - u_i| \|X_i'\|\Big) 2\sqrt{n} \, \max_i \sqrt{A_{ii}}\\
&\leq \|u-v\|_1 2 \sqrt{n} \, \max_i A_{ii} \leq  \frac{\varepsilon \lambda_\min(A)}{4 \sqrt{n} \, \max_i A_{ii}} 2\sqrt{n}\, \max_i A_{ii}= \frac{\varepsilon \lambda_\min(A)}{2}.
\end{align*}
Combining the above inequalities we obtain  that $u^\T A u - f_{{\bf X}'}(u) \geq 0$  for all $\oS^{n-1}$, and hence $u^\T A u - \big(\sum_{i=1}^n u_i X_i'\big)^2$ is positive in $\mathcal A$. Thus we have $\bX' \in \DA(\SAScpsd)$. \qed
\end{proof}

We now discuss two examples where the bounds $\xib{cpsd}{*,V}(A)$ go beyond $\xib{cpsd}{*}(A)$.

\begin{example}\label{exA12}
Consider the matrix 
\begin{equation} \label{eqdefA}
A = \begin{pmatrix}
1 &1/2\\
1/2 & 1
\end{pmatrix}=
 \Gram\Big(
 \begin{pmatrix} 1 & 0 \\ 0 & 0 \end{pmatrix},
 \begin{pmatrix} 1/2 & 1/2 \\ 1/2 & 1/2\end{pmatrix} \Big),
\end{equation}
with $\cpsdr_\C(A) = 2$.
We can also write $A = \mathrm{Gram}(Y_1, Y_2)$, where
\[
Y_1 = \frac{1}{\sqrt{2}}\begin{pmatrix} 1 & 0 & 0\\ 0 & 1 & 0\\ 0 & 0 &0 \end{pmatrix}, 
\quad Y_2 = \frac{1}{\sqrt{2}} \begin{pmatrix} 1 & 0 & 0\\
0 & 0 & 0\\
0 & 0 & 1
\end{pmatrix}.
\]
With $X_i= \sqrt 2 \  Y_i$ we have $I - X_i^2 \succeq 0$ for $i=1,2$. Hence the linear form $L = L_\bX /2$ is feasible for $\smash{\xib{cpsd}{*}}(A)$, which shows that $\smash{\xib{cpsd}{*}(A)} \leq L(1) = 3/2$. In fact, this form $L$ gives an optimal flat solution to $\smash{\xib{cpsd}{2}(A)}$, as we can check using a semidefinite programming solver, so $\smash{\xib{cpsd}{*}}(A) = 3/2$. In passing, we observe that $\smash{\xib{cpsd}{1}}(A) = 4/3$, which coincides with the analytic lower bound~\eqref{eq:lbcpsdanalytic}
(see also Lemma~\ref{lem:lb} below).

For $e = (1,1) \in \R^2$ and $V = \{e\}$, this form $L$ is not feasible for $\smash{\xib{cpsd}{*,V}(A)}$, because for the polynomial $p = 1-3 x_1 - 3x_2$ we have $L(p^*g_ep) = -9/2 < 0$. This means that the localizing constraint 
$\smash{L(p^*g_ep)\ge 0}$ is not redundant: For $t\ge 2$ it cuts off part of the feasibility region of $\smash{\xib{cpsd}{t}(A)}$. Indeed, using a semidefinite programming solver we find an optimal flat solution of $\smash{\xib{cpsd}{3,V}(A)}$ with objective value $(5-\sqrt{3})/2\approx 1.633$, hence 
\[
\xib{cpsd}{*,V}(A) = (5-\sqrt{3})/2 > 3/2 = \xib{cpsd}{*}(A). 
\]
\end{example}

\begin{example} \label{ex:circulantcpsd}
Consider the symmetric circulant matrices
\[
M(\alpha) = 
\begin{pmatrix} 
	1 & \alpha & 0 & 0 & \alpha \\ 
	\alpha & 1 & \alpha & 0 & 0 \\ 
	0 & \alpha & 1 & \alpha & 0 \\ 
	0 & 0 & \alpha  & 1 & \alpha \\ 
	\alpha & 0 & 0 & \alpha & 1
\end{pmatrix}\quad \text{ for } \quad \alpha\in\R.
\]
For $0\le \alpha \le 1/2$ we have $M(\alpha) \in \CSP^5$ with $\hcpsd(M(\alpha)) \leq 5$. To see this we set $\beta=(1+\sqrt{1-4\alpha^2})/2$ and observe that the matrices 
\[
X_i = \mathrm{Diag}(\sqrt{\beta} \, e_i + \sqrt{1-\beta}\, e_{i+1})  \in \psdcone^5_+, \quad i\in [5], \quad  (\text{with }e_6 := e_1),
\]
form a factorization of $M(\alpha)$. As $M(\alpha)$ is supported by a cycle, we have $M(\alpha)\in \CSP^5$ if and only if $M(\alpha)\in \CP^5$~\cite{LP15}. Thus $M(\alpha) \in \CSP^5$ if and only if $0 \leq \alpha \leq 1/2$. 

By using its formulation in Proposition~\ref{lem:cpsd*}, we can use the above factorization to derive the inequality $\xib{cpsd}{*}(M(1/2))\le 5/2$. However, using a semidefinite programming solver we see that  
\[
\xib{cpsd}{2,V}(M(1/2)) = 5,
\]
where $V$ is the set containing the vector $(1,-1,1,-1,1)$ and its cyclic shifts. Hence the bound $\smash{\xib{cpsd}{2,V}(M(1/2))}$ is tight: It certifies $\cpsdr_\C(M(1/2))=5$, while
the other known bounds, the rank bound $\smash{\sqrt{\mathrm{rank}(A)}}$ and the analytic bound~\eqref{eq:lbcpsdanalytic}, only give $\cpsdr_\C(A) \geq 3$. 

We now observe that there exist $0<\epsilon,\delta<1/2$ such that $\cpsdrank_\C(M(\alpha)) = 5$ for all $ \alpha \in [0,\epsilon] \cup [\delta,1/2]$. 
Indeed, this follows from  the fact that $\xib{cpsd}{1}(M(0)) = 5$ (by Lemma~\ref{lem:lb}), the above result that $\smash{\xib{cpsd}{2,V}(M(1/2))} = 5$,  and the lower semicontinuity of $\alpha \mapsto \smash{\xib{cpsd}{2,V}(M(\alpha))}$, which is shown in Lemma~\ref{prop:lowersemi} below.

As the matrices $M(\alpha)$ are nonsingular, the above factorization shows that their cp-rank is equal to 5 for all $\alpha \in [0,1/2]$; whether they all have $\cpsdrank$ equal to 5 is not known.
\end{example}

\subsection{Boosting the bounds}\label{secstronger}

In this section we propose some 
additional constraints that can be added to strengthen the bounds $\smash{\xib{cpsd}{t,V}(A)}$ for finite $t$. These constraints may shrink the feasibility region of $\smash{\xib{cpsd}{t,V}(A)}$ for $t \in \N$, but they are redundant for $t\in \{\infty,*\}$. The latter is shown using the reformulation of the parameters $\smash{\xib{cpsd}{\infty,V}(A)}$ and $\smash{\xib{cpsd}{*,V}(A)}$ in terms of $C^*$-algebras. 

We first mention how to construct  localizing constraints of ``bilinear type'', inspired by the work of Berta, Fawzi and Scholz~\cite{BFS16}. Note that as for localizing constraints, these bilinear constraints can be modeled as semidefinite constraints.
\begin{lemma}\label{lem:bilinear}
Let $A\in \CSP^n$, $t \in \N \cup \{\infty, *\}$, and let $\{g,g'\}$ be localizing for $A$. 
If we add the constraints
\begin{equation}\label{eq:bilinear}
L(p^*gpg')\ge 0 \quad \text{for} \quad p\in \R\ncx \quad \text{with} \quad \deg(p^*gpg')\le 2t
\end{equation}
to $\xib{cpsd}{t,V}(A)$, then we still get a lower bound on $\hcpsd(A)$. However, the constraints~\eqref{eq:bilinear} are redundant for $\smash{\xib{cpsd}{\infty,V}(A)}$ and $\smash{\xib{cpsd}{*,V}(A)}$ when $g,g' \in \smash{\MM(\SAVcpsd)}$.
\end{lemma}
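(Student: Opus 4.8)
The first claim is immediate: adding any valid constraint satisfied by $L_{\bf X}$ for a factorization $\bf X$ can only shrink the feasible region, so the optimal value stays a lower bound on $\hcpsd(A)$. Concretely, if $\{g,g'\}$ is localizing at a Gram factorization $\bf X$ of $A$ by matrices in $\Hermitian_+^d$ with $d=\hcpsd(A)$, then $g({\bf X})\succeq 0$ and $g'({\bf X})\succeq 0$, and since these matrices commute with... no — they need not commute, so I must be slightly careful. Instead I would argue: $L_{\bf X}(p^*gpg') = \operatorname{Re}\operatorname{Tr}(p({\bf X})^* g({\bf X}) p({\bf X}) g'({\bf X}))$, and since $g({\bf X})\succeq 0$ and $g'({\bf X})\succeq 0$, writing $g({\bf X}) = B^*B$ and $g'({\bf X}) = C^*C$ we get $\operatorname{Tr}(p^* B^*B p\, C^*C) = \operatorname{Tr}((Bp C^*)(Bp C^*)^*)\ge 0$ using cyclicity of the trace and $(Bp C^*)^* = C p^* B^*$. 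Hence $L_{\bf X}$ satisfies~\eqref{eq:bilinear}, so the strengthened program still lower-bounds $\hcpsd(A)$.

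For the redundancy claim, I would use the $C^*$-algebraic reformulations of $\smash{\xib{cpsd}{\infty,V}(A)}$ and $\smash{\xib{cpsd}{*,V}(A)}$ (the analogues of Propositions~\ref{prop:lowerbound} and~\ref{prop:cpsd*} noted in the proof of Proposition~\ref{remlocsphere}): an optimal $L$ arises as $L(p) = \alpha\,\tau(p({\bf X}))$ for some unital $C^*$-algebra $\MA$ with tracial state $\tau$ and ${\bf X}\in\DA(\SAVcpsd)$ (finite dimensional in the case of $\xib{cpsd}{*,V}$), up to taking conic combinations of trace evaluations in the $*$-case. So it suffices to check that such an $L$ automatically satisfies~\eqref{eq:bilinear} whenever $g, g'\in\MM(\SAVcpsd)$. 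Since ${\bf X}\in\DA(\SAVcpsd)$ and $g,g'$ lie in the quadratic module generated by $\SAVcpsd$, evaluation gives $g({\bf X})\succeq 0$ and $g'({\bf X})\succeq 0$ in $\MA$. Then for any $p\in\R\ncx$, writing $g({\bf X}) = a^*a$ and $g'({\bf X}) = b^*b$ in $\MA$ and using that $\tau$ is a tracial positive state: $\tau(p({\bf X})^* a^*a\, p({\bf X})\, b^*b) = \tau\big((b\,p({\bf X})^* a^*)(b\,p({\bf X})^* a^*)^*\big)\ge 0$. Hence $L(p^*gpg') = \alpha\,\tau(\cdots)\ge 0$, and by linearity this extends to conic combinations of trace evaluations, so the constraints~\eqref{eq:bilinear} hold for every feasible-up-to-optimality $L$ in the $\infty$- and $*$-programs; they are therefore redundant there.

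The only genuinely delicate point — and the step I would flag as the main obstacle — is making sure the trace/state manipulation is valid in the noncommutative setting: one must really invoke cyclicity of $\tau$ (resp. $\operatorname{Tr}$) together with positivity, rather than any commutation of $g({\bf X})$ and $g'({\bf X})$, which generally fails. The identity to lean on is $\tau(c^*c\,d^*d) = \tau(d\,c^*c\,d^*) = \tau\big((c d^*)^*(c d^*)\big)\ge 0$ for a tracial positive state $\tau$; this is exactly the form needed once $g({\bf X})$ and $g'({\bf X})$ are written as $c^*c$ and $d^*d$. A secondary point is that in the $*$-case the optimal $L$ is only a conic combination $\sum_m\lambda_m L_{{\bf Y}^m}$ of trace evaluations (Proposition~\ref{prop:cpsd*}), so one checks~\eqref{eq:bilinear} for each $L_{{\bf Y}^m}$ separately and sums with nonnegative weights $\lambda_m$. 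Also one should note that the condition $g,g'\in\MM(\SAVcpsd)$ is what guarantees $g({\bf X}),g'({\bf X})\succeq 0$ for every ${\bf X}$ feasible for the $C^*$-algebraic program, which is why it is needed for redundancy but not for the lower-bound claim (where it suffices that $\{g,g'\}$ be localizing for $A$ at a minimal factorization).
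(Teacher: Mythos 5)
Your proposal is correct and follows essentially the same route as the paper: the lower-bound claim via $L_{\bf X}(p^*gpg') = \operatorname{Re}\operatorname{Tr}(p({\bf X})^*g({\bf X})p({\bf X})g'({\bf X})) \ge 0$, and the redundancy claim via the $C^*$-algebraic representation $L(p) = L(1)\tau(p({\bf X}))$ combined with writing $g({\bf X})=a^*a$, $g'({\bf X})=b^*b$ and using traciality and positivity of $\tau$. The key identity you flag, $\tau(p^*a^*a\,p\,b^*b)=\tau\bigl((a p b^*)^*(a p b^*)\bigr)\ge 0$, is exactly the one the paper uses.
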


\begin{proof}
Let $\bX\in  (\Hermitian^d_+)^n$ be a Gram decomposition of $A$, and let $L =L_\bX$ be the real part of the trace evaluation at $\bX$.
Then,  $p(\bX)^* g(\bX) p(\bX)\succeq 0$ and $g'(\bX)\succeq 0$, and thus
\[
L(p^*gpg') =\text{Re}( \Tr( p(\bX)^* g(\bX) p(\bX) g'(\bX)))\ge 0.
\]
So by adding the constraints~\eqref{eq:bilinear} we still get a lower bound on $\hcpsd(A)$.

To show that the constraints~\eqref{eq:bilinear} are redundant for  $\xib{cpsd}{t,V}(A)$ and  $\smash{\xib{cpsd}{*,V}(A)}$ when $g,g'\in \smash{\MM(\SAVcpsd)}$, we let $t\in\{\infty,*\}$ and assume $L$ is feasible for $\smash{\xib{cpsd}{t,V}(A)}$. By Theorem~\ref{propLinfinitedim} there exist a unital $C^*$-algebra $\MA$ with tracial state $\tau$ and $\bX\in \smash{\mathcal D(\SAVcpsd)}$ such that $L(p)=L(1) \tau(p(\bX))$ for all $p\in\R\ncx$. Since $g,g' \in \MM(\SAVcpsd)$ we know that 
$g(\bX), g'(\bX)$ are positive elements in $\MA$, so $g(\bX) = a^* a$ and $g'(\bX) = b^* b$ for some $a,b \in \MA$. Then we have 
\begin{align*}
L(p^* g pg) &= L(1) \, \tau(p^*(\bX) \, g(\bX) \, p(\bX) \, g'(\bX) ) \\
&= L(1) \, \tau(p^*(\bX) \, a^* a \, p(\bX) \, b^* b) \\
&=  L(1) \, \tau((a \,  p(\bX) \, b^*)^* a \, p(\bX) \, b^*) \geq 0,
\end{align*}
where we use that $\tau$ is a positive tracial state on $\MA$. \qed
\end{proof}

Second, we show how to use zero entries in $A$ and vectors in the kernel of $A$ to enforce new constraints on $\xib{cpsd}{t,V}(A)$. 

\begin{lemma}\label{lem:kernel}
Let $A\in \CSP^n$ and $t \in \N \cup \{\infty, *\}$. If we add the constraint
\begin{equation} \label{eq:kernel}
L=0  \quad \text{ on } \quad \mathcal I_{2t}\big(\big\{\sum_{i=1}^nv_ix_i: v\in \ker A\big\} \cup \big\{x_ix_j: A_{ij}=0 \big\} \big)
\end{equation}
to $\xib{cpsd}{t,V}(A)$, then we still get a lower bound on $\hcpsd(A)$. Moreover, these constraints are redundant for $\smash{\xib{cpsd}{\infty,V}(A)}$ and $\smash{\xib{cpsd}{*,V}(A)}$.
\end{lemma}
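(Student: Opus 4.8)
The plan is to prove the two assertions separately; in both, the mechanism is the same, namely that at a Gram factorization $\bX$ of $A$ by positive semidefinite operators the conditions $A_{ij}=0$ and $v\in\ker A$ are encoded by the identities $\Tr(X_iX_j)=0$ and $\Tr\big((\sum_iv_iX_i)^2\big)=0$.

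First I would settle \emph{validity}. Take a Gram factorization $\bX=(X_1,\ldots,X_n)\in(\Hermitian_+^d)^n$ of $A$ with $d=\hcpsd(A)$ and set $L=L_\bX$. As recalled in the excerpt, and using Lemma~\ref{lem:newloc} for the polynomials $g_v$ with $v\in V$, the form $L$ is symmetric and tracial, satisfies $L(x_ix_j)=A_{ij}$, is nonnegative on $\MM_{2t}(\SAVcpsd)$, and has $L(1)=\hcpsd(A)$; so it remains only to check that $L$ vanishes on the ideal in~\eqref{eq:kernel}. If $A_{ij}=0$ then $\Tr(X_iX_j)=\|X_j^{1/2}X_i^{1/2}\|_F^2=0$ forces $X_iX_j=0$, so $L(px_ix_j)=\Re\Tr(p(\bX)X_iX_j)=0$ for all $p$. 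If $v\in\ker A$ then $Z_v:=\sum_iv_iX_i$ is Hermitian with $\Tr(Z_v^2)=v^\T Av=0$, hence $Z_v=0$ and $L(pZ_v)=0$ for all $p$. By linearity $L$ then vanishes on the whole ideal, so $L$ stays feasible and the strengthened parameter still lower bounds $\hcpsd(A)$.

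For \emph{redundancy} when $t\in\{\infty,*\}$, take $L$ feasible for $\xib{cpsd}{t,V}(A)$. Exactly as in the proof of Lemma~\ref{lem:bilinear}, Theorem~\ref{propLinfinitedim} (resp.\ Theorem~\ref{propLfinitedim} when $t=*$) provides a unital $C^*$-algebra $\MA$ with tracial state $\tau$ and a tuple $\bX\in\DA(\SAVcpsd)$ with $L(p)=L(1)\,\tau(p(\bX))$ for all $p$; if $L(1)=0$ then $L\equiv0$ and there is nothing to prove, so assume $L(1)>0$. By~\eqref{eq:x} each $X_i$ is positive in $\MA$, hence has a positive square root $X_i^{1/2}\in\MA$. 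Then $A_{ij}=0$ gives $\tau(X_iX_j)=A_{ij}/L(1)=0$, while $v\in\ker A$ gives $\tau(Z_v^2)=v^\T Av/L(1)=0$ for $Z_v=\sum_iv_iX_i$. Using traciality to write $\tau(p(\bX)X_iX_j)=\tau(b^*a)$ with $a=X_i^{1/2}X_j^{1/2}$ (so $\tau(a^*a)=\tau(X_iX_j)=0$) and $\tau(p(\bX)Z_v)=\tau(b^*a)$ with $a=Z_v$ (so $\tau(a^*a)=\tau(Z_v^2)=0$), the Cauchy--Schwarz inequality for the positive semidefinite form $(a,b)\mapsto\tau(b^*a)$ yields $\tau(p(\bX)X_iX_j)=0$ and $\tau(p(\bX)Z_v)=0$ for all $p$, i.e.\ $L$ already satisfies~\eqref{eq:kernel}.

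The step I expect to require the most care is this last one. Since $\tau$ need not be faithful, one cannot conclude $X_iX_j=0$ or $Z_v=0$ in $\MA$ as in the finite-dimensional validity argument; the vanishing of $L$ on the ideal must instead be extracted purely from the vanishing of the $L^2(\tau)$-quantities $\tau(X_iX_j)$ and $\tau(Z_v^2)$ via Cauchy--Schwarz (here one uses that $\tau$, being a state, is bounded, so the other factor $\tau(b^*b)$ appearing in Cauchy--Schwarz is finite). The remaining bookkeeping runs in close parallel to the proofs of Lemmas~\ref{lem:newloc} and~\ref{lem:bilinear}.
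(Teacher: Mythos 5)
Your proof is correct. The validity half is essentially identical to the paper's: evaluate at a minimal Gram factorization, use $\Tr(X_iX_j)=0\Rightarrow X_iX_j=0$ for positive semidefinite matrices and $\Tr\big((\sum_iv_iX_i)^2\big)=0\Rightarrow\sum_iv_iX_i=0$, and conclude that $L_\bX$ kills the ideal.

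The redundancy half takes a genuinely different route. The paper invokes Lemma~\ref{remark:von Neumann} to replace $(\MA,\tau)$ by a von Neumann algebra with a \emph{faithful} tracial state, after which $\tau(X_iX_j)=0$ and $\tau(Z_v^2)=0$ force $X_iX_j=0$ and $Z_v=0$ as elements of the algebra, and the ideal constraints follow trivially. You instead keep the possibly non-faithful $\tau$ produced by Theorem~\ref{propLinfinitedim} and extract only the weaker conclusion that is actually needed — $\tau(p(\bX)X_iX_j)=0$ and $\tau(p(\bX)Z_v)=0$ — via the Cauchy--Schwarz inequality for the sesquilinear form $(a,b)\mapsto\tau(b^*a)$, together with traciality to arrange the product as $\tau(b^*a)$ with $\tau(a^*a)=0$ (your choice $a=X_i^{1/2}X_j^{1/2}$ indeed gives $\tau(a^*a)=\tau(X_iX_j)$). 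This buys you a more self-contained argument that bypasses the von Neumann machinery entirely; what it gives up is the structural statement that the offending elements vanish in the algebra, which the paper reuses elsewhere (e.g., in the proof of Lemma~\ref{lemoplus}) but which is not needed here. Your handling of the degenerate case $L(1)=0$ is also slightly more careful than the paper's.
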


\begin{proof}
Let $\bX\in  (\Hermitian^d_+)^n$ be a Gram factorization of $A$ and let $L_\bX$ be as in~\eqref{eq:lx}. If $Av=0$, then $0=v^\T Av = \Tr((\sum_{i=1}^n v_iX_i)^2)$ and thus  $\sum_{i=1}^nv_iX_i=0$. Hence  $L_\bX((\sum_{I=1}^nv_ix_i)p)=\mathrm{Re}(\Tr((\sum_{i=1}^nv_iX_i)p(\bX)))=0$.
If $A_{ij}=0$, then $\Tr(X_iX_j)=0$, which implies $X_iX_j=0$, since $X_i$ and $X_j$ are positive semidefinite. Hence
$L_\bX(x_ix_ip)=\text{Re}(\Tr(X_iX_jp(\bX)))=0$. Therefore, adding the constraints~\eqref{eq:kernel} still lower bounds $\hcpsd(A)$.

As in the proof of the previous lemma, if $t \in \{\infty,*\}$  and $L$ is feasible for $\smash{\xib{cpsd}{t,V}}(A)$ then, by Theorem~\ref{propLinfinitedim}, there exist a unital $C^*$-algebra $\MA$ with tracial state $\tau$ and $\bX$ in $\smash{\mathcal D(\SAVcpsd)}$ such that $L(p)=L(1) \tau(p(\bX))$ for all $p\in\R\ncx$. Moreover, by Lemma~\ref{remark:von Neumann} we may assume $\tau$ to be faithful. For a vector $v$ in the kernel of $A$ we have $0 = v^\T A v  = L((\sum_i v_i x_i)^2) = L(1) \tau( (\sum_i v_i X_i)^2)$, and hence, since $\tau$ is faithful, $\sum_i v_i X_i = 0$ in $\MA$. It follows that $L(p (\sum_i v_i x_i)) = L(1) \tau(p(\bX) \, 0) = 0$ for all $p \in \R\ncx$.
Analogously, if $A_{ij}=0$, then $L(x_ix_j)=0$ implies $\tau(X_iX_j)=0$ and thus $X_iX_j=0$, since $X_i, X_j$ are positive in $\MA$ and $\tau$ is faithful. This implies $L(p x_i x_j) = 0$ for all $p \in \R\ncx$. This shows that the constraints~\eqref{eq:kernel} are redundant. \qed
\end{proof}
Note that the constraints $L(p \, (\sum_{i=1}^nv_ix_i))=0$  for $p\in \R\ncx_t,$ which are implied by~\eqref{eq:kernel}, are in fact redundant: if $v \in \ker(A)$, then the vector obtained by extending $v$ with zeros belongs to $\ker(M_t(L))$, since $M_t(L)\succeq 0$. Also, for an implementation of $\xib{cpsd}{t}(A)$ with the additional constraints~\eqref{eq:kernel}, it is more efficient to index the moment matrices with a basis for $\R\ncx_{t}$ modulo the ideal $\mathcal I_t\big(\{ \sum_i v_i x_i: v \in \ker(A)\} \cup \{x_i x_j : A_{ij} = 0\}\big)$.

\subsection{Additional properties of the bounds} \label{sec:addproperties}

Here we list some additional properties of the parameters $\xib{cpsd}{t}(A)$ for $t \in \N \cup \{\infty, *\}$. First we state some properties for which the proofs are immediate and thus omitted.
\begin{lemma}\label{lem:properties}
Suppose $A\in\CSP^n$  and $t \in \N \cup \{\infty,*\}$.
\begin{enumerate}
\item[(1)] If $P$ is a permutation matrix, then $\xib{cpsd}{t}(A) = \xib{cpsd}{t}(P^\T A P)$. 
\item[(2)]  If $B$ is a principal submatrix of $A$, then $\xib{cpsd}{t}(B) \leq \xib{cpsd}{t}(A)$. 
\item[(3)]  If $D$ is a positive definite diagonal matrix, then $\xib{cpsd}{t}(A) = \xib{cpsd}{t}(D A D).$
\end{enumerate}
\end{lemma}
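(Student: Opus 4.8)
\textbf{Proof proposal for Lemma~\ref{lem:properties}.}

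The plan is to establish all three statements by exhibiting, for each transformation of $A$, an explicit bijection between feasible solutions of the semidefinite program defining $\xib{cpsd}{t}(A)$ and those of the program for the transformed matrix, preserving the objective value $L(1)$. In each case the bijection will be induced by a linear substitution of the variables $x_1,\ldots,x_n$, and the key point to check is that such substitutions preserve the three ingredients of feasibility: the symmetric and tracial property of $L$, the affine constraints $L(x_ix_j)=A_{ij}$, and the localizing constraint $L\ge 0$ on $\MM_{2t}(\SAcpsd)$.

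For (1), given a permutation $\sigma$ with matrix $P$, I would map a feasible $L$ for $\xib{cpsd}{t}(P^\T A P)$ to the linear form $L'$ on $\R\ncx_{2t}$ obtained by relabelling the variables $x_i \mapsto x_{\sigma(i)}$. Relabelling variables is an automorphism of $\R\ncx_{2t}$ commuting with the involution and preserving degrees, so $L'$ stays symmetric and tracial; the affine constraints are permuted among themselves to match the entries of $A$; and the generators $\sqrt{A_{ii}}x_i-x_i^2$ of $\SAcpsd$ are permuted among the generators $\sqrt{(P^\T AP)_{jj}}x_j - x_j^2$, so $\MM_{2t}(\SAcpsd)$ is mapped onto $\MM_{2t}(S_{P^\T AP}^{\mathrm{cpsd}})$ and nonnegativity is preserved. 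Since $L'(1)=L(1)$, this gives $\xib{cpsd}{t}(A)\le \xib{cpsd}{t}(P^\T AP)$, and applying the same argument with $P$ replaced by $P^\T$ gives equality. For (3), with $D=\Diag(d_1,\ldots,d_n)$ and all $d_i>0$, I would use the scaling substitution $x_i \mapsto d_i x_i$: given $L$ feasible for $\xib{cpsd}{t}(DAD)$, define $L'(p) = L(p(d_1 x_1,\ldots,d_n x_n))$. Again this is a degree-preserving $*$-algebra automorphism, so symmetry and traciality persist; $L'(x_ix_j) = L(d_id_j x_ix_j) = d_id_j (DAD)_{ij} = A_{ij}$; and since $\sqrt{A_{ii}}x_i - x_i^2$ pulls back (up to the positive scalar $d_i$, which is harmless in a cone) to $\sqrt{(DAD)_{ii}}x_i - x_i^2$, the quadratic module is preserved. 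As $L'(1)=L(1)$ and the substitution is invertible, equality follows.

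Statement (2) is slightly different in flavour, since it is an inequality coming from a restriction rather than an invertible substitution. Say $B$ is the principal submatrix of $A$ indexed by a subset $J\subseteq [n]$. Given a feasible $L$ for $\xib{cpsd}{t}(A)$, I would restrict $L$ to the subalgebra $\R\langle x_j : j\in J\rangle_{2t}\subseteq \R\ncx_{2t}$ to obtain a form $L'$. This $L'$ remains symmetric and tracial; it satisfies $L'(x_ix_j)=L(x_ix_j)=A_{ij}=B_{ij}$ for $i,j\in J$; and since $\MM_{2t}(\SBcpsd)$ (built from the generators $\sqrt{A_{jj}}x_j-x_j^2$, $j\in J$, involving only the variables in $J$) is a subset of $\MM_{2t}(\SAcpsd)$, nonnegativity of $L$ on the larger module forces nonnegativity of $L'$ on the smaller one. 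Finally $L'(1)=L(1)$, so $\xib{cpsd}{t}(B)\le \xib{cpsd}{t}(A)$.

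In all three cases the verification is essentially routine bookkeeping, and indeed the paper declares these proofs ``immediate and thus omitted''; if there is any subtlety at all it is only the observation in (3) that a \emph{positive} rescaling of a generator of a quadratic module does not change the cone it generates (so that the localizing constraint is genuinely preserved, not merely mapped to a constraint with a rescaled generator), together with the implicit use in (1)–(3) of the fact that an invertible degree-preserving $*$-homomorphism of $\R\ncx_{2t}$ maps $M_t(L)$ to a congruent, hence equally positive semidefinite, matrix. I would spell out exactly this one line and leave the remaining identifications to the reader.
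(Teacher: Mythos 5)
The paper deliberately omits these proofs as ``immediate,'' and your reconstruction is exactly the intended one: variable relabelling for (1), restriction to a subalgebra for (2), and diagonal rescaling for (3), each checked against the three ingredients of feasibility (and, for $t=*$, the rank condition, which is preserved under congruence in (1) and (3) and can only drop when passing to a principal submatrix of $M(L)$ in (2)). Parts (1) and (2) are correct as written.

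In part (3), however, the substitution goes the wrong way and the displayed computation is false: with $L$ feasible for $\xib{cpsd}{t}(DAD)$ and $L'(p)=L(p(d_1x_1,\ldots,d_nx_n))$ you get $L'(x_ix_j)=d_id_j\,L(x_ix_j)=d_id_j\,(DAD)_{ij}=d_i^2d_j^2A_{ij}$, not $A_{ij}$; moreover $\sqrt{A_{ii}}\,(d_ix_i)-(d_ix_i)^2=d_i\sqrt{A_{ii}}\,x_i-d_i^2x_i^2$ is \emph{not} a positive multiple of the generator $\sqrt{(DAD)_{ii}}\,x_i-x_i^2=d_i\sqrt{A_{ii}}\,x_i-x_i^2$ of $S_{DAD}^{\hspace{0.1em}\mathrm{cpsd}}$, so the quadratic modules are not matched either. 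The fix is simply to use $x_i\mapsto x_i/d_i$ in this direction (equivalently, apply $x_i\mapsto d_ix_i$ to feasible solutions of $\xib{cpsd}{t}(A)$ to produce feasible solutions of $\xib{cpsd}{t}(DAD)$): then $L'(x_ix_j)=(DAD)_{ij}/(d_id_j)=A_{ij}$, and $\sqrt{A_{ii}}\,x_i/d_i-x_i^2/d_i^2=d_i^{-2}\bigl(\sqrt{(DAD)_{ii}}\,x_i-x_i^2\bigr)$, so the generators do correspond up to positive scalars and $\MM_{2t}$ is carried onto $\MM_{2t}$ as you intended. With that one correction the argument is complete.
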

We also have the following direct sum property, where the equality follows using the $C^*$-algebra reformulations as given in Proposition~\ref{prop:lowerbound} and Proposition~\ref{prop:cpsd*}.

\begin{lemma}\label{lemoplus}
If $A \in \CSP^n$ and $B \in \CSP^m$, then $\xib{cpsd}{t}(A\oplus B) \leq \xib{cpsd}{t}(A) + \xib{cpsd}{t}(B)$, where equality holds for $t \in \{\infty, *\}$.
\end{lemma}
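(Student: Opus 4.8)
The plan is to prove the inequality $\xib{cpsd}{t}(A\oplus B) \leq \xib{cpsd}{t}(A) + \xib{cpsd}{t}(B)$ by taking optimal (or near-optimal) feasible solutions $L_A$ for $\xib{cpsd}{t}(A)$ and $L_B$ for $\xib{cpsd}{t}(B)$ and gluing them into a feasible solution for $\xib{cpsd}{t}(A\oplus B)$. Here $A\oplus B$ is an $(n+m)\times(n+m)$ matrix, so the ambient word algebra has variables $x_1,\dots,x_n$ for the $A$-block and $x_{n+1},\dots,x_{n+m}$ for the $B$-block. Given the structure of $A\oplus B$, every off-diagonal entry $(A\oplus B)_{ij}$ with $i\le n<j$ equals $0$, so the natural candidate functional $L$ should ``decouple'' the two sets of variables: set $L(w)=L_A(w)$ for words $w$ using only $x_1,\dots,x_n$, $L(w)=L_B(w)$ for words using only $x_{n+1},\dots,x_{n+m}$, with $L(1)=L_A(1)+L_B(1)$ (correcting the double-count on the empty word), and $L(w)=0$ for any word involving variables from both blocks. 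First I would check this is well defined on $\R\langle\bx\rangle_{2t}^*$ and is symmetric and tracial; traciality needs a small check that cyclically permuting a ``mixed'' word keeps it mixed (so it stays $0$), and that within a single block traciality is inherited from $L_A$, $L_B$. The affine constraints $L(x_ix_j)=(A\oplus B)_{ij}$ hold block-by-block from feasibility of $L_A$, $L_B$, and the cross terms vanish as required. The localizing polynomials in $\SAcpsd[A\oplus B]$ split as those involving only $A$-variables (these are exactly $\SAcpsd[A]$-type, and applied to mixed or $B$-words they act as $0$) and similarly for $B$, so nonnegativity of $L$ on $\MM_{2t}(\SAcpsd[A\oplus B])$ reduces to nonnegativity of $L_A$ and $L_B$ on their respective quadratic modules --- here one writes a general $p\in\R\langle\bx\rangle$ as a sum of its purely-$A$ part, purely-$B$ part, and mixed part, and observes the cross contributions to $L(p^*gp)$ vanish because $g$ only involves variables of one block. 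This gives feasibility of $L$ for $\xib{cpsd}{t}(A\oplus B)$ with $L(1)=L_A(1)+L_B(1)$, proving the inequality; for finite $t$ the infima are attained by Proposition~\ref{prop:lowerbound} so no $\epsilon$ is needed (for $t=*$ one takes near-optimal solutions and lets $\epsilon\to 0$).

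For the reverse inequality when $t\in\{\infty,*\}$, I would use the $C^*$-algebra reformulations: by Proposition~\ref{prop:lowerbound} (for $t=\infty$) or Proposition~\ref{prop:cpsd*} (for $t=*$), $\xib{cpsd}{t}(A\oplus B)$ equals the infimum of $\alpha\ge 0$ for which there is a (finite dimensional, in the $*$ case) unital $C^*$-algebra $\MA$ with tracial state $\tau$ and a tuple $(Z_1,\dots,Z_{n+m})\in\DA(\SAcpsd[A\oplus B])$ with $A\oplus B=\alpha\cdot(\tau(Z_iZ_j))$. From such a tuple, write $P=\sum_{i\le n}Z_i^2$ (the ``support projection part'' of the $A$-block) --- more precisely, since the cross entries $\tau(Z_iZ_j)=0$ for $i\le n<j$ and each $Z_i\succeq 0$ (by~\eqref{eq:x}), faithfulness arguments as in Lemma~\ref{lem:kernel} give $Z_iZ_j=0$ in $\MA$ for $i\le n<j$; after passing to a faithful representation (Lemma~\ref{remark:von Neumann}) the algebra generated by the $A$-variables and that generated by the $B$-variables are ``orthogonal'', so the hard part is to split the trace. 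The natural move: let $p=$ the central support projection of the von Neumann algebra generated by $\{Z_i:i\le n\}$; restricting $\tau$ suitably to $p\MA p$ and $(1-p)\MA p^\perp$-type corners, rescale each restricted trace to a state, and obtain separate feasible data for $A$ and for $B$ with total value $\alpha$. The key identity to track is that $\tau(1)=1$ decomposes as $\tau(p)+\tau(1-p)$, and $\alpha=\alpha\tau(p)+\alpha\tau(1-p)$, where $\alpha\tau(p)$ provides a feasible value for $\xib{cpsd}{t}(A)$ (using the tuple $(Z_i)_{i\le n}$ with the renormalized trace $\tau(\cdot\,p)/\tau(p)$ --- one checks the localizing inequalities $\sqrt{A_{ii}}Z_i-Z_i^2\succeq 0$ survive restriction to the corner) and $\alpha\tau(1-p)$ a feasible value for $\xib{cpsd}{t}(B)$.

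The main obstacle I anticipate is making the ``orthogonal decomposition of the $C^*$-algebra'' step fully rigorous: one must argue that $Z_iZ_j=0$ for $i$ in the $A$-block and $j$ in the $B$-block forces the generated subalgebras to live on complementary pieces, identify the right central projection (or pass to the GNS representation of $\tau$ and use that the two commuting families of positive operators with pairwise-zero products act on orthogonal subspaces), and verify that restricting and renormalizing the tracial state preserves traciality, positivity, and all the localizing constraints --- and also that finite-dimensionality is preserved in the $t=*$ case (which it is, since corners of finite-dimensional $C^*$-algebras are finite-dimensional). A slicker alternative, which I would try first, is to avoid central projections entirely: take the tuple for $A\oplus B$, and simply set $X_i=Z_i$ for $i\le n$ in the algebra $\MA$ with tracial state $\tau$; then $A=\alpha'\cdot(\tau(X_iX_j))$ where $\alpha'=\alpha$ still --- but this only shows $\xib{cpsd}{\infty}(A)\le\alpha$, not with the right additive split. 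So the central-projection bookkeeping seems genuinely necessary, and getting the normalization constants ($\tau(p)$ and $\tau(1-p)$ summing correctly) to line up is where I would be most careful; everything else is routine verification of symmetry, traciality, and the localizing/affine constraints.

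\bigskip

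(Here, in assembling the final proof, one would present the easy inequality in full and then, for the $t\in\{\infty,*\}$ equality, invoke the $C^*$-algebraic reformulations and carry out the central-projection splitting; the $\le$ direction's $\epsilon$-argument for $t=*$ can be folded in by noting $\xib{cpsd}{*}$ is an infimum.)
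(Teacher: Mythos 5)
Your first half (the inequality) is exactly the paper's construction: glue $L_A$ and $L_B$ into $L$ by setting $L(p(\mathbf x,\mathbf y))=L_A(p(\mathbf x,\mathbf 0))+L_B(p(\mathbf 0,\mathbf y))$, which is precisely your ``decoupled'' functional, and the block-wise verification of the localizing constraints goes through as you describe. For the reverse inequality at $t\in\{\infty,*\}$ you also take the paper's route ($C^*$-algebraic reformulation, deduce $X_iY_j=0$ from $\tau(X_iY_j)=0$ and faithfulness, then split the trace via a projection), but the specific projection you name --- the \emph{central} support projection $p$ of the algebra generated by the $A$-block --- is the wrong object, and the step built on it fails. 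The two families need not live in complementary central summands: take $\MA=\C^{2\times 2}$ with the normalized trace, $Z_1=e_{11}$ for the $A$-block and $Z_2=e_{22}$ for the $B$-block. Then $Z_1Z_2=0$, but the central support of the algebra generated by $Z_1$ inside the factor $\C^{2\times 2}$ is the identity, so $1-p=0$ and your corner $(1-p)\MA(1-p)$ is zero; it cannot carry the $B$-data, and the claimed assignment of value $\alpha\,\tau(1-p)=0$ to $\xib{cpsd}{t}(B)$ is false for $B\neq 0$.

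The fix --- and what the paper actually does --- is to use the (generally non-central) \emph{range} projection $P_A$ onto $\sum_{i\le n}\mathrm{Im}(X_i)$ and, symmetrically, $P_B$ onto $\sum_{j}\mathrm{Im}(Y_j)$. From $X_iY_j=0$ these satisfy $P_AP_B=0$, hence $\tau(P_A)+\tau(P_B)\le\tau(1)=1$ (only an inequality is needed, not an exact decomposition of $\tau(1)$). Since $P_AX_i=X_iP_A=X_i$, the functional $L_A(q)=\alpha\,\tau(q(X_1,\dots,X_n)P_A)$ is tracial, nonnegative on $\MM(\SAcpsd)$, satisfies $L_A(x_ix_j)=A_{ij}$, and has $L_A(1)=\alpha\,\tau(P_A)$; no renormalization or restriction to a corner algebra is required. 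Summing the two values gives $\xib{cpsd}{t}(A)+\xib{cpsd}{t}(B)\le\alpha(\tau(P_A)+\tau(P_B))\le\alpha$. Your parenthetical alternative (GNS representation, orthogonality of the ranges of the two commuting families with pairwise-zero products) is essentially this correct argument, so the gap is localized to the choice of projection; but as written, the central-projection bookkeeping you flag as the crux does not go through.
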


\begin{proof}
To prove the inequality  we take $L_A$ and $L_B$ feasible for $\smash{\xib{cpsd}{t}}(A)$ and $\smash{\xib{cpsd}{t}}(B)$, and construct a feasible $L$ for $\smash{\xib{cpsd}{t}}(A\oplus B)$ by $L(p({\bf x}, {\bf y})) = L_A(p({\bf x}, {\bf 0})) + L_B(p({\bf 0}, {\bf y}))$. 

Now we show equality  for $t = \infty$ ($t=*$). By Proposition~\ref{prop:lowerbound} (Proposition~\ref{prop:cpsd*}), $\smash{\xib{cpsd}{t}(A\oplus B)}$ is equal to the infimum over all $\alpha \geq 0$ for which there exists a (finite dimensional) unital $C^*$-algebra $\MA$ with tracial state $\tau$ and $(\bX, {\bf Y}) \in \MD_\MA(\SABcpsd)$ such that $A = \alpha \cdot (\tau(X_iX_j))$, $B = \alpha \cdot (\tau(Y_iY_j))$ and $(\tau(X_iY_j))=0$. This implies $\bX\in \smash{\MD_\MA(\SAcpsd)}$ and  $\mathbf Y\in \smash{\MD_\MA(\SBcpsd)}$.
Let $P_A$ be the projection onto the space $\sum_i \mathrm{Im}(X_i)$ and define the linear form  $L_A \in \R\langle \bx \rangle^*$ by $L_A(p) = \alpha \cdot \tau(p(\bX) P_A)$. It follows that $L_A$ is is nonnegative on $\mathcal M(\SAcpsd)$, and 
\[
L_A(x_ix_j) = \alpha \, \tau(x_ix_jP_A) = \alpha \, \tau(x_ix_j) = A_{ij},
\]
so $L_A$ is feasible for $\smash{\xib{cpsd}{\infty}(A)}$ with $L_A(1)=\alpha \tau(P_A)$.
 In the same way we consider the projection $P_B$ onto the space 
$\sum_j  \mathrm{Im}(Y_j)$ and  define a feasible solution $L_B$ for  $\smash{\xib{cpsd}{t}(B)}$ with $L_B(1)=\alpha\tau(P_B)$.  By Lemma~\ref{remark:von Neumann} we may assume $\tau$ to be faithful, so that positivity of $X_i$ and $Y_j$  together with $\tau(X_iY_j) = 0$ implies $X_iY_j = 0$ for all $i$ and $j$, and thus $\sum_i \mathrm{Im}(X_i) \perp \sum_j \mathrm{Im}(Y_j)$. This implies $I \succeq P_A + P_B$ and thus $\tau(P_A+P_B)\le \tau(1)=1$. We have
\[
L_A(1) + L_B(1)  = \alpha \, \tau(P_A) + \alpha\tau(P_B) \leq \alpha \, \tau(1) = \alpha,
\]
so $\xib{cpsd}{t}(A)+\xib{cpsd}{t}(B) \le L_A(1)+L_B(1)\le \alpha$, completing  the proof. \qed
\end{proof}
Note that the $\cpsdrank$ of a matrix satisfies the same properties as those mentioned in the above two 
lemmas, where the inequality in Lemma~\ref{lemoplus} is always an equality: $\cpsd_\C(A~\oplus~B)=\cpsd_\C(A)+\cpsd_\C(B)$~\cite{PSVW16,GdLL17}.

\medskip
The following lemma shows that the first level of our hierarchy is at least as good as the analytic lower bound~\eqref{eq:lbcpsdanalytic} on the cpsd-rank derived in~\cite[Theorem 10]{PSVW16}.
\begin{lemma} \label{lem:lb}
For any non-zero matrix $A \in \CSP^n$ we have
\begin{equation} \label{eq:lbcpsdanalytic}
\xib{cpsd}{1}(A) \geq \frac{\left( \sum_{i=1}^n \sqrt{A_{ii}}\right)^2}{\sum_{i,j=1}^n A_{ij}}.
\end{equation}
\end{lemma}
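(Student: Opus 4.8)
The plan is to exhibit an explicit feasible solution $L$ for the semidefinite program defining $\xib{cpsd}{1}(A)$ and lower bound its objective value $L(1)$ by the right-hand side of~\eqref{eq:lbcpsdanalytic}; alternatively, and more in keeping with the ``moment side'' philosophy of the paper, I would argue directly that \emph{every} feasible $L$ satisfies $L(1) \ge \left(\sum_i \sqrt{A_{ii}}\right)^2 / \sum_{i,j} A_{ij}$. I will pursue the latter. Let $L$ be feasible for $\xib{cpsd}{1}(A)$, so $L \in \R\ncx_2^*$ is symmetric and tracial, $L(x_ix_j) = A_{ij}$ for all $i,j$, and $L \ge 0$ on $\MM_2(\SAcpsd)$. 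In particular $M_1(L) \succeq 0$ and $M_0(\,(\sqrt{A_{ii}}x_i - x_i^2)L\,) \succeq 0$, i.e. $L(\sqrt{A_{ii}}\,x_i - x_i^2) \ge 0$, which by $L(x_i^2) = A_{ii}$ gives $L(x_i) \ge \sqrt{A_{ii}}$ for every $i$ (using $A_{ii} > 0$; the degenerate case $A_{ii}=0$ forces row/column $i$ of $A$ to vanish and can be handled separately or by a limiting argument).

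The key inequality will come from positive semidefiniteness of the moment matrix $M_1(L)$ restricted to the rows and columns indexed by $1, x_1, \dots, x_n$. Consider the vector $c = (1, -t, -t, \dots, -t)^\T$ for a scalar $t$ to be optimized, or more flexibly $c = (c_0, -c_1, \dots, -c_n)^\T$. Then $c^\T M_1(L) c \ge 0$ expands to
\[
c_0^2 L(1) - 2 c_0 \sum_{i=1}^n c_i L(x_i) + \sum_{i,j=1}^n c_i c_j L(x_ix_j) \ge 0,
\]
that is, $c_0^2 L(1) \ge 2 c_0 \sum_i c_i L(x_i) - \sum_{i,j} c_i c_j A_{ij}$. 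Now choose $c_i = \sqrt{A_{ii}}$ for $i \in [n]$ and $c_0 = \sum_{i,j} A_{ij} \big/ \sum_k \sqrt{A_{kk}}$ (assuming $\sum_{ij}A_{ij} > 0$, which holds since $A \ne 0$ and $A \in \CSP^n \subseteq \psdcone_+^n$ has nonnegative diagonal hence, being psd with a nonzero entry, nonnegative entry sum that is positive). Using $L(x_i) \ge \sqrt{A_{ii}} = c_i$ and $c_0 > 0$ we get $\sum_i c_i L(x_i) \ge \sum_i A_{ii}$, and $\sum_{ij} c_i c_j A_{ij} = \sum_{ij}\sqrt{A_{ii}A_{jj}}A_{ij}$. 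With the chosen $c_0$ one has $2c_0 \sum_i A_{ii} - \sum_{ij}\sqrt{A_{ii}A_{jj}}\,A_{ij}$, divided by $c_0^2$, as a lower bound for $L(1)$; I would then verify this simplifies to the claimed bound. (If a cleaner route is needed, take instead $c_i = \sqrt{A_{ii}}$ and leave $c_0$ free: maximizing the right-hand side $\big(2c_0 \sum_i A_{ii} - \sum_{ij}\sqrt{A_{ii}A_{jj}}A_{ij}\big)/c_0^2$ over $c_0$, or equivalently over $t = 1/c_0$, is a one-variable quadratic optimization whose optimum gives exactly $\left(\sum_i \sqrt{A_{ii}}\right)^2/\sum_{ij}A_{ij}$ provided $\sum_{ij}\sqrt{A_{ii}A_{jj}}A_{ij} = \sum_{ij}A_{ij}$ — which is \emph{not} generally true, so one must be more careful.)

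The main obstacle, and the place I expect the real work to be, is getting the scalar test vector exactly right so that the Schur-complement/psd inequality closes up to the stated bound rather than something weaker. The honest approach is: from $M_1(L) \succeq 0$ and the localizing conditions we control $L(x_i) \ge \sqrt{A_{ii}}$ and $L(x_ix_j) = A_{ij}$; the cleanest deduction uses the $2\times 2$ principal submatrix idea via Cauchy–Schwarz in the GNS space, namely $\langle \mathbf{1}, p \rangle_L^2 \le L(1)\, L(p^*p)$ with $p = \sum_i \sqrt{A_{ii}}\, x_i$ (assuming $p$ symmetric, which it is). This gives $L(p)^2 \le L(1)\, L(p^2)$, where $L(p) = \sum_i \sqrt{A_{ii}} L(x_i) \ge \sum_i A_{ii}$ and, crucially, I want an \emph{upper} bound $L(p^2) \le \sum_{ij}A_{ij}$ — but $L(p^2) = \sum_{ij}\sqrt{A_{ii}A_{jj}}A_{ij}$ is not obviously $\le \sum_{ij}A_{ij}$. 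So instead one should normalize first: apply Lemma~\ref{lem:properties}(3) with $D = \Diag(1/\sqrt{A_{11}}, \dots, 1/\sqrt{A_{nn}})$ to reduce to the case $A_{ii} = 1$ for all $i$ (i.e. $A$ has all-ones diagonal), since $\xib{cpsd}{1}(A) = \xib{cpsd}{1}(DAD)$ and the right-hand side of~\eqref{eq:lbcpsdanalytic} is likewise invariant under this scaling (it becomes $n^2/\sum_{ij}(DAD)_{ij}$). After normalization, the localizing constraint reads $L(x_i) \ge 1$, we have $L(x_ix_j) = (DAD)_{ij}$, and taking $p = \sum_i x_i$ gives $L(p) = \sum_i L(x_i) \ge n$ and $L(p^2) = \sum_{ij}(DAD)_{ij}$; Cauchy–Schwarz $L(p)^2 \le L(1) L(p^2)$ then yields $L(1) \ge n^2 / \sum_{ij}(DAD)_{ij}$, which is exactly the claimed bound for $DAD$, hence for $A$. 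I would write the argument in this normalized form, handle the zero-diagonal entries by noting they force the corresponding rows and columns of $A$ to be zero (so they can be deleted without changing either side, using Lemma~\ref{lem:properties}(2) and a short argument that $L$ restricted to the surviving variables remains feasible), and thereby complete the proof.
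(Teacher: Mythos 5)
Your opening moves are correct and match the paper: the localizing constraint gives $L(\sqrt{A_{ii}}x_i-x_i^2)\ge 0$, hence $L(x_i)\ge\sqrt{A_{ii}}$, and the key inequality should indeed come from $M_1(L)\succeq 0$ via a quadratic form or Cauchy--Schwarz. The gap is in the step you finally commit to: the reduction to unit diagonal. You invoke $\xib{cpsd}{1}(A)=\xib{cpsd}{1}(DAD)$ with $D=\Diag(1/\sqrt{A_{11}},\dots,1/\sqrt{A_{nn}})$ and assert that the right-hand side of~\eqref{eq:lbcpsdanalytic} is ``likewise invariant under this scaling''. It is not. The bound for $DAD$ is $n^2/\big(\sum_{i,j}A_{ij}/\sqrt{A_{ii}A_{jj}}\big)$, which in general differs from $(\sum_i\sqrt{A_{ii}})^2/\sum_{i,j}A_{ij}$ and can be strictly smaller. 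For instance, $A=\bigl(\begin{smallmatrix}4&0&0\\0&1&1\\0&1&1\end{smallmatrix}\bigr)\in\CP^3\subseteq\CSP^3$ has $(\sum_i\sqrt{A_{ii}})^2/\sum_{i,j}A_{ij}=16/8=2$, while $DAD$ has all-ones diagonal and entry sum $5$, so your argument only yields $\xib{cpsd}{1}(A)\ge 9/5<2$. As written, the proof therefore establishes a different (and sometimes weaker) inequality than the one claimed.

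The fix is to drop the normalization and run your Cauchy--Schwarz step directly on $A$ with the \emph{unweighted} polynomial $p=\sum_{i=1}^n x_i$: then $L(p^2)=\sum_{i,j}L(x_ix_j)=\sum_{i,j}A_{ij}$ exactly (no upper bound on $L(p^2)$ is needed, which resolves the difficulty you flagged), while $L(p)=\sum_iL(x_i)\ge\sum_i\sqrt{A_{ii}}$. Since $\sum_{i,j}A_{ij}>0$ for a nonzero entrywise-nonnegative $A$, Cauchy--Schwarz gives $L(1)\ge L(p)^2/L(p^2)\ge(\sum_i\sqrt{A_{ii}})^2/\sum_{i,j}A_{ij}$. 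The weights $\sqrt{A_{ii}}$ enter only through the lower bound on $L(x_i)$, not through the test polynomial --- this is where your search for the right test vector went astray. This is essentially the paper's proof, which phrases the same computation via the Schur complement of $M_1(L)$ with respect to its $(1,1)$ entry, yielding $L(1)\cdot A-(L(x_i)L(x_j))_{i,j}\succeq 0$, and then sums all entries of that positive semidefinite matrix.
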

\begin{proof}
Let $L$ be feasible for $\smash{\xib{cpsd}{1}(A)}$. Since $L$ is nonnegative on $\MM_{2}(\SAcpsd)$, 
it follows that  $L(\sqrt{A_{ii}}x_i-x_i^2)\geq 0$, implying 
$\sqrt{A_{ii}} L(x_i)\ge L(x_i^2)=A_{ii}$ and thus $L(x_i)\ge \sqrt{A_{ii}}$.
Moreover, the matrix $M_1(L)$ is positive semidefinite. By taking the Schur complement with respect to its upper left corner (indexed by $1$) it follows that the matrix $L(1)\cdot A- (L(x_i)L(x_j))$ is positive semidefinite. Hence the sum of its entries is nonnegative, which gives
$L(1)(\sum_{i,j}A_{ij})\ge (\sum_i L(x_i))^2\ge (\sum_i \sqrt{A_{ii}})^2$ and shows the desired inequality. \qed
\end{proof}
As an application of Lemma~\ref{lem:lb}, the first bound $\xib{cpsd}{1}$ is exact for the $k\times k$ identity matrix: $\smash{\xib{cpsd}{1}(I_k)}=\hcpsd(I_k)=k$.
Moreover, by combining this with Lemma~\ref{lem:properties}, it follows that $\smash{\xib{cpsd}{1}(A)}~\ge~k$ if $A$ contains a diagonal positive definite $k\times k$ principal submatrix. A slightly more involved example is given by the $5 \times 5$ circulant matrix $A$ whose entries are given by $A_{ij} = \cos((i-j)4\pi/5)^2$ ($i,j \in [5]$); this matrix was used in~\cite{FGPRT15} to show a separation between the completely positive semidefinite cone and the completely positive cone, and it was shown that $\cpsdrank_\C(A) =2$. The analytic lower bound  of~\cite{PSVW16} also evaluates to $2$, hence Lemma~\ref{lem:lb} shows that our bound is tight on this example. 

\medskip
We now examine further analytic properties of the parameters $\smash{\xib{cpsd}{t}}(\cdot)$. 
For each $r \in \N$, the set of matrices $A\in\CSP^n$ with $\hcpsd(A) \le r$ is closed, which shows that the function $A \mapsto \cpsd_\C(A)$ is lower semicontinuous. We now show that the functions $A \mapsto \smash{\xib{cpsd}{t}(A)}$ have the same property. The other bounds defined in this paper are also lower semicontinuous, with a similar proof.
\begin{lemma} \label{prop:lowersemi}
For every $t \in \N \cup \{\infty\}$ and $V \subseteq \R^n$, the function 
\[
\psdcone^n \to \R \cup \{\infty\}, \, A \mapsto \xib{cpsd}{t,V}(A)
\]
is lower semicontinuous.
\end{lemma}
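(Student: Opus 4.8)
The plan is to use the sequential characterization of lower semicontinuity on the (metric) space $\psdcone^n$: I would show that for every sequence $(A_k)_k$ converging to $A$ one has $\liminf_{k\to\infty}\xib{cpsd}{t,V}(A_k)\ge \xib{cpsd}{t,V}(A)$. The condition that $A_k$ lies in the domain where $\xib{cpsd}{t,V}$ is defined (e.g., has nonnegative diagonal) is open, so it may be assumed. If the $\liminf$ equals $+\infty$ there is nothing to prove; otherwise I pass to a subsequence along which $\xib{cpsd}{t,V}(A_k)$ converges to the finite value $\beta:=\liminf_{k\to\infty}\xib{cpsd}{t,V}(A_k)$, and for each $k$ I fix a feasible solution $L_k$ of $\xib{cpsd}{t,V}(A_k)$ with $L_k(1)\le \xib{cpsd}{t,V}(A_k)+1/k$. (Optimal $L_k$ exist by the natural analogue of Proposition~\ref{prop:lowerbound} for the enlarged set $\SAVcpsd$, cf.\ the proof of Proposition~\ref{remlocsphere}, but near-optimal choices already suffice.)

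The core of the argument is a compactness step on the moment side. By identity~\eqref{eq:x^2} we have, for each $k$, $\Tr(A_k)-\sum_{i=1}^n x_i^2\in \mathcal M_2(\SAcpsd)\subseteq \mathcal M_{2t}(\SAVcpsd)$ (where throughout this paragraph $A$ is replaced by $A_k$), and $\Tr(A_k)\to\Tr(A)$, so the scalar witnessing the Archimedean property stays bounded; together with $L_k(1)\to\beta$, Lemma~\ref{lemma:upperboundLw} yields an estimate on $|L_k(w)|$ for every word $w\in\ncx$ depending only on $|w|$ and on these uniformly bounded quantities. Hence $(L_k)_k$ is bounded at every word, and a diagonal extraction produces a further subsequence converging pointwise on $\ncx_{2t}$ (on all of $\ncx$ when $t=\infty$) to some $L\in\R\ncx_{2t}^*$.

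It then remains to verify that $L$ is feasible for $\xib{cpsd}{t,V}(A)$. Symmetry and traciality pass to the pointwise limit, and $L(x_ix_j)=\lim_k(A_k)_{ij}=A_{ij}$. For nonnegativity on $\mathcal M_{2t}(\SAVcpsd)$ the point is that the localizing polynomials for $A_k$, namely $\sqrt{(A_k)_{ii}}\,x_i-x_i^2$ and $v^{\T}\!A_k v-(\sum_{i=1}^n v_ix_i)^2$ for $v\in V$, converge coefficientwise to the corresponding localizing polynomials for $A$; so for any fixed $p\in\R\ncx$ and any generator $g\in\SAVcpsd\cup\{1\}$ with $\deg(p^*gp)\le 2t$, writing $g_k$ for the generator built from $A_k$, only finitely many words of bounded degree appear in $p^*g_kp$ and both their coefficients and the values $L_k(\cdot)$ converge, so $L_k(p^*g_kp)\to L(p^*gp)$; since $L_k(p^*g_kp)\ge 0$ for all $k$, we conclude $L(p^*gp)\ge 0$. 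Thus $L$ is feasible and $\xib{cpsd}{t,V}(A)\le L(1)=\lim_k L_k(1)=\beta$, which is the claim. The main obstacle is making the estimates of Lemma~\ref{lemma:upperboundLw} genuinely uniform in $k$: this is exactly why one first restricts to a subsequence with $\xib{cpsd}{t,V}(A_k)\to\beta<\infty$, so that both the radius $\Tr(A_k)$ and the masses $L_k(1)$ remain bounded; a further minor point is the case $t=\infty$, where the diagonal extraction over the countable index set $\ncx$ substitutes for finite-dimensional compactness.
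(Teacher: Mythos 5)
Your proof is correct and follows essentially the same route as the paper's: extract a pointwise-convergent subsequence of (near-)optimal solutions via Lemma~\ref{lemma:upperboundLw}, then verify feasibility of the limit functional for $\xib{cpsd}{t,V}(A)$ using the continuous dependence of the localizing polynomials on $A$. The only differences are presentational (sequential $\liminf$ versus closed level sets, a compact coefficientwise-convergence argument in place of the paper's explicit $\epsilon$-bookkeeping, and a direct diagonal extraction for $t=\infty$ instead of the paper's reduction via $\xib{cpsd}{\infty,V}(A)=\sup_t \xib{cpsd}{t,V}(A)$).
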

\begin{proof} 
It suffices to show the result for $t\in \N$, because $\xib{cpsd}{\infty,V}(A)=\sup_t\, \xib{cpsd}{t,V}(A)$, and the pointwise supremum of lower semicontinuous functions is lower semicontinuous. We show that the level sets $\{A \in \psdcone^n: \xib{cpsd}{t,V}(A) \leq r\}$ are closed. For this we consider a sequence $(A_k)_{k\in\N}$ in $\psdcone^n$ converging to $A \in \psdcone^n$ such that \smash{$\xib{cpsd}{t,V}(A_k) \leq r$} for all $k$. We show that \smash{$\xib{cpsd}{t,V}(A) \leq r$}. Let $L_k\in \R\ncx_{2t}^*$ be an optimal solution to $\xib{cpsd}{t,V}(A_k)$. As $L_k(1) \le r$ for all $k$, it follows from Lemma~\ref{lemma:upperboundLw} that there is a pointwise converging subsequence of $(L_k)_k$, still denoted $(L_k)_k$ for simplicity, that has a limit $L\in \R\ncx_{2t}^*$ with $L(1)\leq r$. To complete the proof we show that $L$ is feasible for $\xib{cpsd}{t,V}(A)$. By the pointwise convergence of $L_k$ to $L$, for every $\epsilon >0$, $p \in \R\ncx$, and $i \in [n]$, there exists a $K \in \N$ such that for all $k \geq K$ we have 
\begin{align*}
|L(p^* x_i p) - L_k(p^* x_i p) | &< \min \{1,\frac{\epsilon}{\sqrt{A_{ii}}}\}, \qquad  |L(p^* x_i^2 p) - L_k(p^* x_i^2 p)| < \epsilon, \\
 |\sqrt{A_{ii}} - \sqrt{(A_k)_{ii}}| &< \frac{\epsilon}{L(p^* x_i p) + 1}.
\end{align*}
Hence we have 
\begin{align*}
L(p^*(\sqrt{A_{ii}} x_i - x_i^2) p) &= \sqrt{A_{ii}} \Big(L(p^* x_i p) - L_k(p^* x_i p) + L_k (p^* x_i p) \Big) \\
&\qquad - \Big( L(p^* x_i^2 p) -L_k(p^* x_i^2 p ) + L_k(p^* x_i^2p)\Big) \\
&\geq -2 \epsilon  + \sqrt{A_{ii}} \, L_k (p^* x_i p)  - L_k(p^* x_i^2p) \\
&\geq -3 \epsilon  + \sqrt{(A_k)_{ii}} \, L_k (p^* x_i p)  - L_k(p^* x_i^2p) \\
&= -3 \epsilon + L_k(p^*(\sqrt{(A_k)_{ii}} \, x_i - x_i^2) p)  \geq -3 \epsilon,
\end{align*}
where in the second inequality we use that $0 \leq L_k(p^* x_i p) \leq L(p^* x_i p) + 1$. Letting $\epsilon\rightarrow 0$ gives $L(p^*(\sqrt{A_{ii}}x_i-x_i^2)p)\ge 0$.

Similarly one can show  $L(p^*(v^\T Av - (\sum_i v_i x_i)^2) p) \geq 0$ for $v \in V$, $p \in \R\ncx$. \qed
\end{proof}
If we restrict to completely positive semidefinite matrices with an all-ones diagonal, that is, to $\CSP^n \cap \elliptope_n$,  we can show an even stronger property. Here $\elliptope_n$ is the \emph{elliptope}, which is the set of $n \times n$ positive semidefinite matrices with an all-ones diagonal.
\begin{lemma}\label{lemconvex}
For every $t \in \N \cup \{\infty\}$, the function 
\[
\CSP^n \cap \elliptope_n \rightarrow \R,\, A \mapsto \xib{cpsd}{t}(A)
\]
is convex, and hence continuous on the interior of its domain.
\end{lemma}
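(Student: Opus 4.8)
The plan is to prove convexity directly from the definition of $\xib{cpsd}{t}(A)$ as the optimal value of a semidefinite program, exploiting the fact that on the set $\CSP^n \cap \elliptope_n$ the diagonal is fixed to all-ones, so the localizing polynomials in $\SAcpsd$ no longer depend on $A$. Indeed, for $A$ with $A_{ii}=1$ for all $i$, we have $\SAcpsd = \{x_1 - x_1^2, \ldots, x_n - x_n^2\}$, a fixed set independent of $A$. Thus the only place $A$ enters the program defining $\xib{cpsd}{t}(A)$ is through the affine constraints $L(x_ix_j) = A_{ij}$; the objective $L(1)$, the requirement that $L$ be tracial and symmetric, and the positivity constraint $L \geq 0$ on $\MM_{2t}(\SAcpsd)$ are all independent of $A$. (Note also that $\xib{cpsd}{t}(A)$ is finite on all of $\CSP^n\cap\elliptope_n$, since $\xib{cpsd}{t}(A) \le \hcpsd(A) < \infty$.)

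Concretely, I would take $A^{(0)}, A^{(1)} \in \CSP^n \cap \elliptope_n$ and $\lambda \in [0,1]$, set $A = (1-\lambda) A^{(0)} + \lambda A^{(1)}$, and let $L_0, L_1 \in \R\ncx_{2t}^*$ be optimal solutions for $\xib{cpsd}{t}(A^{(0)})$ and $\xib{cpsd}{t}(A^{(1)})$ respectively (these exist by Proposition~\ref{prop:lowerbound}). Define $L = (1-\lambda) L_0 + \lambda L_1$. Then $L$ is symmetric and tracial (these are linear conditions), and $L \geq 0$ on $\MM_{2t}(\SAcpsd)$ since the truncated quadratic module is a convex cone and both $L_0, L_1$ are nonnegative on it (using that $\SAcpsd$ here does not depend on the matrix). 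Finally $L(x_ix_j) = (1-\lambda) A^{(0)}_{ij} + \lambda A^{(1)}_{ij} = A_{ij}$, so $L$ is feasible for $\xib{cpsd}{t}(A)$, giving
\[
\xib{cpsd}{t}(A) \le L(1) = (1-\lambda) L_0(1) + \lambda L_1(1) = (1-\lambda)\xib{cpsd}{t}(A^{(0)}) + \lambda \xib{cpsd}{t}(A^{(1)}).
\]
This proves convexity for $t \in \N$; for $t = \infty$ one uses $\xib{cpsd}{\infty}(A) = \sup_t \xib{cpsd}{t}(A)$, and a pointwise supremum of convex functions is convex. Continuity on the interior of the domain is then the standard fact that a finite convex function on a convex subset of $\R^N$ is continuous on the relative interior.

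There is essentially no hard step here — the proof is a routine "convexity of the value function of a parametrized conic program whose feasible region depends affinely on the parameter" argument. The one subtlety worth flagging explicitly is the necessity of the hypothesis $A \in \elliptope_n$: without the all-ones diagonal, the localizing set $\SAcpsd$ varies with $A$ (through the coefficients $\sqrt{A_{ii}}$, which depend on $A$ in a non-affine, concave way), and then $\MM_{2t}(\SAcpsd)$ is not a fixed cone, so the convex-combination argument breaks down. Restricting to the elliptope removes exactly this obstruction and also guarantees finiteness of the parameter throughout the domain, which is what makes the continuity conclusion clean.
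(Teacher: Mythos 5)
Your proof is correct and follows essentially the same route as the paper's: take optimal solutions $L_A$, $L_B$ for the two endpoints, observe that the all-ones diagonal makes $\SAcpsd$ independent of the matrix so their convex combination is feasible for the convex combination of the matrices, and conclude. The only cosmetic difference is that you handle $t=\infty$ via a pointwise supremum, whereas the paper's argument applies verbatim for all $t\in\N\cup\{\infty\}$ since attainment of the optimum is guaranteed by Proposition~\ref{prop:lowerbound}.
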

\begin{proof}
Let $A,B\in\CSP^n\cap \elliptope_n$ and $0<\lambda<1$. Let $L_A$ and $L_B$ be optimal solutions for $\smash{\xib{cpsd}{t}}(A)$ and $\smash{\xib{cpsd}{t}}(B)$. Since the diagonals of $A$ and $B$ are the same, we have $\SAcpsd=S_B^{\hspace{0.1em}\mathrm{cpsd}}$. So $L=\lambda L_A+(1-\lambda)L_B$ is feasible for $\smash{\xib{cpsd}{t}}(\lambda A+(1-\lambda )B)$, hence 
$
{\xib{cpsd}{t}(\lambda A+(1-\lambda)B)\le 
\lambda L_A(1)+(1-\lambda)L_B(1) = \lambda \xib{cpsd}{t}(A)+} (1-\lambda)\xib{cpsd}{t}(B).
$ \qed
\end{proof}

\begin{example} \label{lem:notcont}
In this example we show that for $t \geq 1$, the function 
\[
\CSP^n \to \R, \, A \mapsto \xib{cpsd}{t}(A)
\]
is not continuous.
For this we consider  the matrices 
\[
A_k = \begin{pmatrix} 1/k & 0 \\ 0 & 1 \end{pmatrix}\in \CSP^2,
\]
with $\cpsdr_\C(A_k) = 2$ for all $k\geq 1$.
As $A_k$ is diagonal positive definite we have  $\xib{cpsd}{t}(A_k) = 2$ for all $t,k\ge 1$, while  $\xib{cpsd}{t}(\lim_{k \rightarrow \infty} A_k) = 1$. This argument extends to $\CSP^n$ with $n > 2$. This example also shows that the first level of the hierarchy $\smash{\xib{cpsd}{1}(\cdot)}$ can be strictly better than the analytic lower bound~\eqref{eq:lbcpsdanalytic} of~\cite{PSVW16}.
\end{example} 

\begin{example} \label{eqxialpha}
In this example 
we determine $\xib{cpsd}{t}(A)$ for all $t \geq 1$ and $A \in \CSP^2$. In view of Lemma~\ref{lem:properties}(3) we only need to find $\smash{\xib{cpsd}{t}(A(\alpha))}$ for $0 \leq \alpha \leq 1$, where
$
A(\alpha)= \bigl(\begin{smallmatrix} 1 & \alpha \\ \alpha & 1\end{smallmatrix}\bigr).
$

The first bound $\smash{\xib{cpsd}{1}}(A(\alpha))$ is equal to the analytic bound $2/(\alpha+1)$ from~\eqref{eq:lbcpsdanalytic}, where the equality follows from the fact that $L$ given by $L(x_i x_j) = A(\alpha)_{ij}$, $L(x_1)=L(x_2)=1$ and $L(1)=2/(\alpha+1)$ is feasible for $\smash{\xib{cpsd}{1}(A(\alpha))}$.

For $t \geq 2$ we show $\xib{cpsd}{t}(A(\alpha)) = 2-\alpha$. By the above this is true for $\alpha = 0$ and $\alpha = 1$, and in Example~\ref{exA12} we show $\smash{\xib{cpsd}{t}(A(1/2))} =3/2$ for $t\ge 2$. The claim then follows since the function $\alpha \mapsto \smash{\xib{cpsd}{t}(A(\alpha))}$ is convex by Lemma~\ref{lemconvex}.
\end{example}

\section{Lower bounds on the completely positive rank} \label{sec:lowercp}

The best current approach for lower bounding the completely positive rank of a matrix is due to Fawzi and Parrilo~\cite{FP16}. Their approach relies on the atomicity of the completely positive rank, that is, the fact that  $\cprank(A)=r$ if and only if $A$ has an atomic decomposition $A=\sum_{k=1}^r v_k v_k^\T$ for nonnegative vectors $v_k$.  In other words, if $\cprank(A)=r$, then $A/r$  can be written as a convex combination of $r$ rank one positive semidefinite matrices $v_k v_k^\T$ that satisfy $0 \leq v_k v_k^\T \leq A$ and $v_k v_k^\T \preceq A$. 
Based on this observation Fawzi and Parrilo define the parameter 
\[
\tau_\mathrm{cp}(A) = \min \Big\{ \alpha  : \alpha \geq 0,\, A \in \alpha \cdot \mathrm{conv} \big\{ R \in \psdcone^n : 0 \leq R \leq A, \,R \preceq A,\, \rank(R) \leq 1\big\}\Big\},
\]
as lower bound for $\cprank(A)$. They also define the semidefinite programming parameter
\begin{align*}
\tau_{\mathrm{cp}}^{\mathrm{sos}}(A) = \mathrm{min} \big\{ \alpha : \; &   \alpha \in \R, \, X \in \psdcone^{n^2},\\[-0.2em]
&\hspace{-0.3em}\begin{pmatrix} \alpha & \text{vec}(A)^\T  \\ \text{vec}(A) & X \end{pmatrix} \succeq 0,\\
& X_{(i,j),(i,j)} \leq A_{ij}^2 \quad \text{for} \quad 1 \leq i,j \leq n, \\
& X_{(i,j),(k,l)} = X_{(i,l),(k,j)} \quad \text{for} \quad 1 \leq i < k \leq n, \; 1 \leq j < l \leq n,\\
& X \preceq A \otimes A\big\},
\end{align*}
as an efficiently computable relaxation of $\tau_\mathrm{cp}(A)$, and they show $\rank(A) \leq \taucpsos(A)$.  Therefore we have 
\[
\rank (A) \le \tau_{\mathrm{cp}}^{\mathrm{sos}}(A) \le \tau_\mathrm{cp}(A)\le \cprank(A).
\]

Instead of the atomic point of view, here we take the matrix factorization perspective, which allows us to obtain bounds by adapting the techniques from Section~\ref{sec:lower bounds on cpsd rank} to the commutative setting. Indeed, we may view a factorization $A =(a_i^{\sf T}a_j)$ 
by nonnegative vectors  as a factorization by diagonal (and thus pairwise commuting) positive semidefinite matrices.

Before presenting the details of our hierarchy of lower bounds, we mention some of our results in order to make the link to the parameters
$\tau_{\mathrm{cp}}^{\mathrm{sos}}(A)$ and $ \tau_\mathrm{cp}(A)$.
The direct analogue of $\{\smash{\xib{cpsd}{t}(A)}\}$ in the commutative setting leads to a hierarchy that does not converge to $\tau_{\mathrm{cp}}(A)$, but we provide two approaches to strengthen it that do converge to $\tau_{\mathrm{cp}}(A)$. The first approach is based on a generalization of the tensor constraints in $\tau_{\mathrm{cp}}^{\mathrm{sos}}(A)$. 
We also provide a computationally more efficient version of these tensor constraints, leading to a hierarchy whose second level is at least as good as $\smash{\tau_{\mathrm{cp}}^\mathrm{sos}(A)}$ while being defined by a smaller semidefinite program. The second approach relies on adding localizing constraints for vectors in the unit sphere as in Section~\ref{cpsd-additional-constraints}.

The following hierarchy is a commutative analogue of  the hierarchy from Section~\ref{sec:lower bounds on cpsd rank}, where we  may now add  the localizing polynomials $A_{ij}-x_ix_j$ for the pairs  $1 \leq i < j \leq n$, which was not possible in the noncommutative setting of the completely positive semidefinite rank. 
For each $t \in \N \cup \{\infty\}$ we consider the semidefinite program
 \begin{align*}
\xib{cp}{t}(A) = \mathrm{min} \big\{ L(1) : \; & L \in \R[x_1,\ldots,x_n]_{2t}^*,\\
&L(x_ix_j) = A_{ij} \quad \text{for} \quad i,j \in [n],\\
&L \geq 0 \quad \text{on} \quad \mathcal M_{2t}(\SAcp) \big\},
\end{align*}
where we set
\[
\SAcp = \big\{\sqrt{A_{ii}}x_i - x_i^2 : i \in [n]\big\} \cup \big\{A_{ij} - x_i x_j : 1 \leq i < j \leq n\big\}.
\]
We additionally define $\xib{cp}{*}(A)$ by adding the constraint $\rank(M(L)) < \infty$ to $\xib{cp}{\infty}(A)$. 
We also consider the strengthening $\smash{\xib{cp}{t,\dagger}(A)}$,  where we add to $\xib{cp}{t}(A)$ the positivity constraints
\begin{equation}\label{eqposcp}
L(gu) \geq 0 \quad \text{for} \quad g \in \{1\} \cup \SAcp  \quad  \text{and} \quad u \in [{\bf x}]_{2t-\deg(g)}
\end{equation}
and the tensor constraints
\begin{equation} \label{eq:tensorconstraints}
(L((ww')^c))_{w,w' \in \langle\bx\rangle_{=l}} \preceq A^{\otimes l} \quad \text{for all  integers } \quad 2 \leq l \leq t,
\end{equation}
which generalize the case $l=2$ used in the relaxation $\tau_\mathrm{cp}^\mathrm{sos}(A)$.
Here, for a word $w \in \langle\bx\rangle$, we denote by $w^c$ the corresponding (commutative) monomial in $[\bx]$.
The tensor  constraints (\ref{eq:tensorconstraints})  involve matrices indexed by the {\em noncommutative} words of length exactly $l$. In 
Section~\ref{sec:tensor} we show a more economical way to rewrite these constraints as
$
(L(mm'))_{m,m' \in [\bx]_{=l}} \preceq Q_l A^{\otimes l} Q_l^\T,
$
thus involving  smaller matrices indexed by {\em commutative} words of degree $l$. 

Note that, as before, we can strengthen the bounds by adding other localizing polynomials to the set $\SAcp$. In particular, we can follow the approach of Section~\ref{cpsd-additional-constraints}. Another possibility is to add localizing constraints specific to the commutative setting: we can add each monomial $u \in \cx$ to $\SAcp$ (see Section~\ref{sec:DJL} for an example). 

The bounds $\xib{cp}{t}(A)$ and $\xib{cp}{t,\dagger}(A)$ are monotonically nondecreasing in $t$ and they are invariant under simultaneously permuting the rows and columns of $A$ and under scaling a row and column of $A$ by a positive number.  In Propositions~\ref{prop:xibcp} and~\ref{lem:comparison} we show 
\[
\taucpsos(A)\le \xib{cp}{t,\dagger}(A)\le  \taucp(A) \quad \text{for} \quad t \geq 2,
\]
and in Proposition~\ref{prop:xi star dagger=taucp} we show the equality $\xib{cp}{*,\dagger}(A) = \taucp(A)$.

\subsection{Comparison to \texorpdfstring{$\tau_\mathrm{cp}^\mathrm{sos}(A)$}{tau cp sos}}
We first show that the semidefinite programs defining $\xib{cp}{t,\dagger}(A)$ are valid relaxations for the completely positive rank. More precisely, we show that they  lower bound $\taucp(A)$. 
\begin{proposition} \label{prop:xibcp}
For $A \in \CP^n$ and $t \in \N \cup \{\infty,*\}$ we have $\xib{cp}{t,\dagger}(A) \leq \taucp(A)$.
\end{proposition}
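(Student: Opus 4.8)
The plan is to exhibit, for any completely positive matrix $A\in\CP^n$, an explicit feasible solution $L$ to the program defining $\xib{cp}{t,\dagger}(A)$ coming from an optimal (or near-optimal) atomic decomposition witnessing $\taucp(A)$, with objective value equal to $\taucp(A)$. Since $\xib{cp}{t,\dagger}(A)$ is monotone nondecreasing in $t$, it suffices to treat the case $t=*$ (equivalently $t=\infty$ together with the finite-rank moment matrix constraint), because the restriction of such an $L$ to $\R\cx_{2t}$ is feasible for every finite level. So the goal reduces to: given that $A\in\alpha\cdot\mathrm{conv}\{R\in\psdcone^n: 0\le R\le A,\ R\preceq A,\ \rank(R)\le 1\}$, construct $L$ with $L(1)=\alpha$ satisfying all the constraints of $\xib{cp}{*,\dagger}(A)$.

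First I would unpack the definition of $\taucp(A)$: write $A=\alpha\sum_{k=1}^N \mu_k\, v_kv_k^\T$ with $\mu_k>0$, $\sum_k\mu_k=1$, each $v_k\in\R^n_{\ge 0}$, and $0\le v_kv_k^\T\le A$, $v_kv_k^\T\preceq A$. Interpreting each rank-one nonnegative matrix $v_kv_k^\T$ as a factorization by diagonal psd matrices $X_i^{(k)}=\Diag((v_k)_i)$ (scalars, which pairwise commute), define the linear form on $\R\cx$ by $L(p)=\alpha\sum_{k=1}^N\mu_k\, p((v_k)_1,\ldots,(v_k)_n)$, i.e. a conic combination of evaluations at the points $v_k\in\R^n_{\ge 0}$. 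Then $L(1)=\alpha$, $L(x_ix_j)=\alpha\sum_k\mu_k (v_k)_i(v_k)_j=A_{ij}$, and $M(L)$ has finite rank (it is supported on finitely many evaluation points). I would then check the remaining constraints one family at a time. Nonnegativity on $\MM_{2t}(\SAcp)$ and the positivity constraints~\eqref{eqposcp}: at a point $v_k\ge 0$ the value of each generator $\sqrt{A_{ii}}(v_k)_i-(v_k)_i^2$ is $\ge 0$ because $(v_k)_i^2=(v_kv_k^\T)_{ii}\le A_{ii}$ (from $v_kv_k^\T\le A$, hence also $\preceq$), so $(v_k)_i\le\sqrt{A_{ii}}$; similarly $A_{ij}-(v_k)_i(v_k)_j\ge 0$ since $(v_kv_k^\T)_{ij}\le A_{ij}$. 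Evaluation at a point in $\mathcal D(\SAcp)$ (here the nonnegative orthant intersected with these box constraints) is automatically nonnegative on the quadratic module and on the products $gu$ with $u$ a monomial, because monomials are nonnegative at nonnegative points.

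The only genuinely nontrivial family is the tensor constraints~\eqref{eq:tensorconstraints}: $(L((ww')^c))_{w,w'\in\ncx_{=l}}\preceq A^{\otimes l}$ for $2\le l\le t$. For the evaluation form, $L((ww')^c)=\alpha\sum_k\mu_k\prod(\text{powers of }(v_k))$, and the matrix $(L((ww')^c))_{w,w'}$ equals $\alpha\sum_k\mu_k\, (v_k^{\otimes l})(v_k^{\otimes l})^\T$ once we note that for $w=x_{i_1}\cdots x_{i_l}$ the monomial $(ww')^c$ evaluated at $v_k$ factors as $\big(\prod_r (v_k)_{i_r}\big)\big(\prod_r(v_k)_{j_r}\big)=(v_k^{\otimes l})_w\,(v_k^{\otimes l})_{w'}$. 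So the constraint becomes $\alpha\sum_k\mu_k (v_k^{\otimes l})(v_k^{\otimes l})^\T\preceq A^{\otimes l}=(\alpha\sum_k\mu_k v_kv_k^\T)^{\otimes l}$. This does not follow termwise in an obvious way, so the main obstacle is precisely to verify this operator inequality. The route I expect to work: from $v_kv_k^\T\preceq A$ one gets $(v_kv_k^\T)^{\otimes l}\preceq A^{\otimes l}$ by taking tensor powers of positive semidefinite operators (tensor product of psd is psd, and $X\preceq Y$ with $X,Y\succeq 0$ gives $X^{\otimes l}\preceq Y^{\otimes l}$), hence $(v_k^{\otimes l})(v_k^{\otimes l})^\T\preceq A^{\otimes l}$ for each $k$; but summing with weights $\alpha\mu_k$ gives $\alpha\sum_k\mu_k(v_k^{\otimes l})(v_k^{\otimes l})^\T\preceq \alpha\sum_k\mu_k A^{\otimes l}=\alpha A^{\otimes l}$, which is weaker than what we need when $\alpha\ge 1$. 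To close this gap I would instead argue that, because $A\in\alpha\cdot\mathrm{conv}\{\ldots\}$, one may also use that the matrices $v_kv_k^\T$ are rank one and nonnegative to upgrade the bound; concretely, I would relate $\alpha\sum_k\mu_k(v_k^{\otimes l})(v_k^{\otimes l})^\T$ to $(\alpha\sum_k\mu_k v_kv_k^\T)^{\otimes l}$ via the fact that for nonnegative rank-one matrices the map $R\mapsto R^{\otimes l}$ is "super-additive" in the psd order on the relevant cone — i.e. $\sum_k \lambda_k R_k^{\otimes l}\preceq (\sum_k\lambda_k R_k)^{\otimes l}$ when the $R_k=v_kv_k^\T$ and $\sum_k\lambda_k R_k$ are all entrywise nonnegative and the $R_k$ have rank one — and then apply it with $\lambda_k=\alpha\mu_k$. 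Proving this inequality (likely by induction on $l$, using that tensoring a fixed psd matrix on one side preserves the order, plus a Cauchy–Schwarz / Schur-product argument on nonnegative entries) is the technical heart; once it is in hand, feasibility of $L$ is established and $\xib{cp}{t,\dagger}(A)\le L(1)=\taucp(A)$ follows, with the infimum over all such atomic decompositions giving the claimed bound. I would organize the proof as: (i) reduce to $t=*$; (ii) build $L$ from an atomic decomposition; (iii) verify the affine and quadratic-module/positivity constraints (routine); (iv) verify the tensor constraints via the super-additivity lemma; (v) conclude by taking the infimum.
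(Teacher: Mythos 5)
Your overall strategy --- reduce to $t=*$, build $L$ as a conic combination of evaluations at the atoms $v_k$ of a decomposition witnessing $\taucp(A)$, check the affine and quadratic-module constraints (which are indeed routine), and isolate the tensor constraints~\eqref{eq:tensorconstraints} as the crux --- matches the paper's proof, and you correctly diagnose why the naive termwise bound only yields $\alpha A^{\otimes l}$. However, the ``super-additivity'' lemma you propose to close the gap is false as stated. Take $n=2$, $R_1=e_1e_1^\T$, $R_2=e_2e_2^\T$, $\lambda_1=\lambda_2=1/2$: these are entrywise nonnegative rank-one matrices whose weighted sum $\tfrac12 I_2$ is entrywise nonnegative, yet for $l=2$ the matrix $\sum_k\lambda_k R_k^{\otimes 2}$ is diagonal with entry $1/2$ in position $((1,1),(1,1))$, while $\bigl(\sum_k\lambda_k R_k\bigr)^{\otimes 2}=\tfrac14 I_4$ has entry $1/4$ there, so $\sum_k\lambda_k R_k^{\otimes 2}\not\preceq\bigl(\sum_k\lambda_k R_k\bigr)^{\otimes 2}$. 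Entrywise nonnegativity and rank one are not the relevant hypotheses; the ingredient you have available from the definition of $\taucp$ but never invoke in your lemma is the Loewner constraint $R_k\preceq A$ (in the counterexample $R_1\not\preceq\tfrac12 I_2$, which is exactly why it fails). Consequently your suggested proof mechanism (Cauchy--Schwarz and Schur products on nonnegative entries) points in the wrong direction, and step (iv) of your outline has a genuine gap.

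The correct argument, which is the one in the paper (adapted from Fawzi--Parrilo), is a short induction on $l$ using only $R_k\succeq 0$ and $A-R_k\succeq 0$. Writing $X_l=\alpha\sum_k\mu_k R_k^{\otimes l}$, so that $X_1=A$, one observes that $(A-R_k)\otimes R_k^{\otimes(l-1)}\succeq 0$ since the tensor product of positive semidefinite matrices is positive semidefinite; hence $A\otimes R_k^{\otimes(l-1)}\succeq R_k^{\otimes l}$, and scaling by $\alpha\mu_k$ and summing gives $A\otimes X_{l-1}\succeq X_l$. Then $A^{\otimes l}=A\otimes\bigl(A^{\otimes(l-1)}-X_{l-1}\bigr)+A\otimes X_{l-1}\succeq A\otimes X_{l-1}\succeq X_l$ by the induction hypothesis $X_{l-1}\preceq A^{\otimes(l-1)}$ together with $A\succeq 0$. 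Replacing your step (iv) with this induction, the rest of your outline goes through.
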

\begin{proof}
It suffices to show the inequality for $t=*$.
For this consider a decomposition $A=\alpha \, \smash{\sum_{k=1}^r \lambda_k R_k}$, where $\alpha\ge 1$, $\lambda_k>0$, $\sum_{k=1}^r \lambda_k = 1$, $0\le R_k\le A$, $R_k\preceq A$, and $\rank R_k= 1$.
There are nonnegative vectors $v_k$ such that  $R_k=v_k v_k^\T$. Define the linear map $L\in \R\cx^*$ by $L=\alpha\sum_{k=1}^r \lambda_k L_{v_k}$, where $L_{v_k}$ is the evaluation at $v_k$ mapping any polynomial $p\in \R\cx$ to $p(v_k)$.

The equality $(L(x_ix_j))=A$ follows from the identity $A=\alpha\sum_{k=1}^r \lambda_k R_k$. The constraints 
$
\smash{L((\sqrt{A_{ii}} x_i - x_i^2) p^2)} \geq 0
$ 
follow because 
\[
L_{v_k}(\sqrt{A_{ii}} x_i - x_i^2) p^2) = (\sqrt{A_{ii}} (v_k)_i - (v_k)_i^2) p(v_k)^2 \geq 0,
\]
where we use that $(v_k)_i \geq 0$ and  $(v_k)_i^2 = (R_k)_{ii} \leq A_{ii}$ implies $(v_k)_i^2 \leq  (v_k)_i \sqrt{A_{ii}}$. The constraints
$
L((A_{ij} - x_ix_j) p^2) \geq 0
$ 
and
\[
L(gu) \geq 0 \quad \text{for} \quad g \in \{1\} \cup \SAcp \quad  \text{and} \quad u \in [{\bf x}]
\]
follow in a similar way.

It remains to be shown that $X_l \preceq A^{\otimes l}$ for all $l$, where we set $X_l = (L(uv))_{u,v\in \ncx_{=l}}$. Note that $X_1=A$. We adapt the argument used in~\cite{FP16} to show $X_l \preceq A^{\otimes l}$ using induction on $l \geq 2$. Suppose $A^{\otimes (l-1)}\succeq X_{l-1}$. Combining $A-R_k\succeq 0$ and $R_k\succeq 0$ gives $(A-R_k)\otimes R_k^{\otimes (l-1)}\succeq 0$ and thus
$A\otimes R_k^{\otimes (l-1)}\succeq R_k^{\otimes l}$ for each $k$.
Scale by factor $\alpha \lambda_k$ and sum over $k$ to get
\[
A\otimes X_{l-1}=\sum_k \alpha \lambda_k A\otimes R_k^{\otimes (l-1)} \succeq \sum_k \alpha\lambda_k R_k^{\otimes l}= X_l.
\]
Finally, combining with $A^{\otimes (l-1)}-X_{l-1}\succeq 0$ and $A\succeq 0$, we obtain 
\[
A^{\otimes l} =A\otimes (A^{\otimes (l-1)}-X_{l-1})+ A\otimes X_{l-1} \succeq A\otimes X_{l-1}\succeq X_l. \qed
\]
\end{proof}

Now we show that the new parameter $\xib{cp}{2,\dagger}(A)$ is at least as good as  $\tau_\mathrm{cp}^\mathrm{sos}(A)$. Later in Section~\ref{sec: bipartite cp} we will give an example where the inequality is strict. 
\begin{proposition} \label{lem:comparison}
For $A \in \CP^n$ we have
$
\tau_{\mathrm{cp}}^{\mathrm{sos}}(A) \leq \xib{cp}{2,\dagger}(A).
$
\end{proposition}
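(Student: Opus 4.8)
The plan is to show that any feasible solution $L$ for $\xib{cp}{2,\dagger}(A)$ gives rise to a feasible solution for the semidefinite program defining $\tau_\mathrm{cp}^\mathrm{sos}(A)$ with the same objective value, namely $L(1)$. Given such an $L$, set $\alpha = L(1)$ and define the matrix $X \in \psdcone^{n^2}$ by $X_{(i,j),(k,l)} = L(x_i x_j x_k x_l)$, indexed by pairs in $[n]\times[n]$. I would then verify the four constraints in the definition of $\tau_\mathrm{cp}^\mathrm{sos}(A)$ one by one.

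First, the block-matrix positivity $\left(\begin{smallmatrix} \alpha & \mathrm{vec}(A)^\T \\ \mathrm{vec}(A) & X\end{smallmatrix}\right)\succeq 0$: this is exactly the statement that the principal submatrix of the moment matrix $M_2(L)$ indexed by the empty monomial $1$ together with the degree-two monomials $\{x_ix_j : i,j\in[n]\}$ is positive semidefinite, using that $L(1)=\alpha$, $L(x_ix_j) = A_{ij} = \mathrm{vec}(A)_{(i,j)}$, and $L(x_ix_jx_kx_l) = X_{(i,j),(k,l)}$; positivity of this submatrix follows since $M_2(L)\succeq 0$ (as $L\ge 0$ on squares). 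Second, $X_{(i,j),(i,j)} = L(x_i^2 x_j^2) \le A_{ij}^2$: this follows from the localizing constraint $L((A_{ij}-x_ix_j)p^2)\ge 0$ — with an appropriate choice of $p$ (e.g. $p = x_ix_j$ when $i\ne j$, and combining the two localizing polynomials $\sqrt{A_{ii}}x_i - x_i^2$ appropriately when $i=j$) together with the constraint $L(x_ix_j)=A_{ij}$, and the fact that in the commutative setting the monomials commute. Third, the symmetry $X_{(i,j),(k,l)} = X_{(i,l),(k,j)}$ holds automatically because $L(x_ix_jx_kx_l) = L(x_ix_lx_kx_j)$ in $\R\cx$. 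Fourth, $X \preceq A\otimes A$: this is precisely the tensor constraint~\eqref{eq:tensorconstraints} at level $l=2$, rewritten in the indexing by pairs; here one must be a little careful to match the noncommutative-word indexing of~\eqref{eq:tensorconstraints} with the pair-indexing used in $\tau_\mathrm{cp}^\mathrm{sos}$, but for $l=2$ the words of length exactly $2$ are in bijection with pairs $(i,j)\in[n]^2$, and $(ww')^c = x_ix_jx_kx_l$ for $w = x_ix_j$, $w' = x_kx_l$.

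Having checked that this $X$ is feasible for $\tau_\mathrm{cp}^\mathrm{sos}(A)$ with objective value $\alpha = L(1)$, and since $\tau_\mathrm{cp}^\mathrm{sos}(A)$ is a minimization problem, we obtain $\tau_\mathrm{cp}^\mathrm{sos}(A)\le L(1)$; taking the infimum over feasible $L$ for $\xib{cp}{2,\dagger}(A)$ yields $\tau_\mathrm{cp}^\mathrm{sos}(A)\le \xib{cp}{2,\dagger}(A)$, which is the claim (using that $\xib{cp}{2,\dagger}(A)$ attains its optimum, as for the cpsd analogue). The main obstacle I anticipate is purely bookkeeping: getting the index correspondence right between the noncommutative-word formulation of the tensor constraint~\eqref{eq:tensorconstraints} and the Kronecker-product indexing $(i,j),(k,l)$ in $\tau_\mathrm{cp}^\mathrm{sos}(A)$, and confirming that the entrywise constraint $X_{(i,j),(i,j)}\le A_{ij}^2$ genuinely follows from the localizing polynomials in $\SAcp$ rather than needing an extra hypothesis — in particular handling the diagonal case $i=j$, where one needs $L(x_i^4)\le A_{ii}^2$, which should follow by applying the localizing constraint for $\sqrt{A_{ii}}x_i - x_i^2$ with $p = x_i$ together with $L(x_i^2) = A_{ii}$ and Cauchy–Schwarz-type reasoning on the moment matrix.
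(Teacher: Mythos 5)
Your overall strategy is exactly the paper's: set $\alpha=L(1)$, take $X_{(i,j),(k,l)}=L(x_ix_jx_kx_l)$, get the block positivity from the principal submatrix of $M_2(L)$ indexed by $1$ and the degree-two monomials (with rows/columns duplicated to pass from the $\binom{n+1}{2}$ commutative monomials to the $n^2$ pairs), read off the index symmetry from commutativity, and identify $X\preceq A\otimes A$ with the $l=2$ tensor constraint. All of that is fine.

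The one step whose justification is wrong as written is $X_{(i,j),(i,j)}=L(x_i^2x_j^2)\le A_{ij}^2$ for $i\ne j$. You propose to use ``the localizing constraint $L((A_{ij}-x_ix_j)p^2)\ge 0$ with $p=x_ix_j$'', but that polynomial has degree $6>2t=4$, so it is not in $\MM_4(\SAcp)$, and even ignoring degrees it would bound $L(x_i^3x_j^3)$ rather than $L(x_i^2x_j^2)$. What you actually need is $L\big((A_{ij}-x_ix_j)\,x_ix_j\big)\ge 0$, where the multiplier $x_ix_j$ is a \emph{monomial}, not a square; for $i\ne j$ this does not come from the truncated quadratic module at all. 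It is supplied precisely by the extra positivity constraints~\eqref{eqposcp} of the $\dagger$-strengthening, with $g=A_{ij}-x_ix_j$ and $u=x_ix_j\in[\bx]_2$ --- this is the reason the proposition is stated for $\xib{cp}{2,\dagger}(A)$ and not for $\xib{cp}{2}(A)$, and it resolves the uncertainty you flagged at the end. Once you have $A_{ij}L(x_ix_j)\ge L(x_i^2x_j^2)$, the bound follows from $L(x_ix_j)=A_{ij}$. For the diagonal case your Cauchy--Schwarz route (combining $\sqrt{A_{ii}}\,L(x_i^3)\ge L(x_i^4)$ with $L(x_i^3)^2\le A_{ii}L(x_i^4)$) is valid and genuinely in $\MM_4(\SAcp)$; the paper instead writes $(A_{ii}-x_i^2)x_i^2$ directly as an element of $\MM_4(\SAcp)$ via identity~\eqref{eq:x^2}, but both arguments work.
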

\begin{proof}
Let $L$ be feasible for $\xib{cp}{2,\dagger}(A)$. We will construct a feasible solution to the program defining $\taucpsos(A)$ with objective value $L(1)$, which implies $\taucpsos(A)\le L(1)$ and thus the desired inequality. For this set $\alpha = L(1)$ and define the symmetric $n^2 \times n^2$ matrix $X$ by 
$
X_{(i,j),(k,l)} =L(x_ix_jx_kx_l)$ for $i,j,k,l \in [n]$.
Then the  matrix
\[
M:=\begin{pmatrix} \alpha & \text{vec}(A)^\T  \\ \text{vec}(A) & X \end{pmatrix}
\]
is positive semidefinite. This follows because $M$ is obtained from the principal submatrix of $M_2(L)$ indexed by the monomials $1$ and $x_ix_j$ ($1\le i\le j\le n$) where the rows/columns indexed by $x_j x_i$ with $1 \leq i < j \leq n$ are duplicates of the rows/columns indexed by $x_i x_j$. 

We have $L((A_{ij} - x_ix_j)x_ix_j) \geq 0$ for all $i,j$: 
For $i \neq j$ this follows using  the constraint  $L((A_{ij} - x_ix_j)u) \geq 0$ with $u = x_ix_j$ (from (\ref{eqposcp})), and for $i = j$ this follows from 
\[
L((A_{ii} -x_i^2) x_i^2) = L((\sqrt{A_{ii}} - x_i)^2 + 2 (\sqrt{A_{ii}} x_i - x_i^2)) \geq 0,
\]
which holds because of~\eqref{eq:x^2}, the constraint $L(p^2) \geq 0$ for $\deg(p)\leq 2$, and the constraint $L(\sqrt{A_{ii}} x_i - x_i^2) \geq 0$. Using $L(x_ix_j) = A_{ij}$, we get
$
X_{(i,j),(i,j)} = L(x_i^2x_j^2) \leq A_{ij}^2.
$
We also have
$
X_{(i,j),(k,l)} = L(x_ix_jx_kx_l) = L(x_ix_lx_kx_j) = X_{(i,l),(k,j)},
$ 
and the constraint $(L(uv))_{u,v \in \langle\bx \rangle_{=2}} \preceq A^{\otimes 2}$ implies $X \preceq A \otimes A$. \qed
\end{proof}

\subsection{Convergence of the basic hierarchy}

We first summarize convergence properties of the hierarchy $\xib{cp}{t}(A)$. Note that unlike in Section~\ref{sec:lower bounds on cpsd rank} where we can only claim the inequality \smash{$\xib{cpsd}{\infty}(A)\le \xib{cpsd}{*}(A)$}, here we can show  the equality \smash{$\xib{cp}{\infty}(A) = \xib{cp}{*}(A)$}. This is because we can use Theorem \ref{thm:eval-comm}, which permits to represent certain  truncated  linear functionals by  finite atomic measures.

\begin{proposition}\label{prop:cpconv}
Let $A \in \CP^n$. For every $t \in \N \cup \{\infty, *\}$ the optimum in $\xib{cp}{t}(A)$ is attained, and $\xib{cp}{t}(A) \to \xib{cp}{\infty}(A) = \xib{cp}{*}(A)$ as $t\to \infty$. If $\xib{cp}{t}(A)$ admits a flat optimal solution, then  $\smash{\xib{cp}{t}(A) = \xib{cp}{\infty}(A)}$.
Moreover, $\smash{\xib{cp}{\infty}(A) = \xib{cp}{*}(A)}$ is the minimum value of $L(1)$ taken over all conic combinations $\smash L$ of evaluations at elements of $\smash{D(\SAcp)}$ satisfying $A = (L(x_ix_j))$.
\end{proposition}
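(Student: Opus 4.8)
The plan is to mirror the structure of the proofs in Section~\ref{sec:lower bounds on cpsd rank} for the parameters $\xib{cpsd}{t}(A)$, namely Propositions~\ref{prop:lowerbound} and~\ref{prop:cpsd*}, but now in the commutative setting where the extra representation theorem for truncated linear functionals (Theorem~\ref{thm:eval-comm}) is available and yields the stronger conclusion $\xib{cp}{\infty}(A)=\xib{cp}{*}(A)$.

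First I would establish attainment and asymptotic convergence exactly as in Proposition~\ref{prop:lowerbound}: the sequence $(\xib{cp}{t}(A))_t$ is monotone nondecreasing and bounded above by $\xib{cp}{\infty}(A)\le\cprank(A)<\infty$, so its limit exists; adding the redundant bound $L(1)\le\xib{cp}{\infty}(A)$ and using $\Tr(A)-\sum_ix_i^2\in\MM_2(\SAcp)$ (identity~\eqref{eq:x^2}) together with Lemma~\ref{lemma:upperboundLw} shows the feasible region of $\xib{cp}{t}(A)$ is compact, hence an optimal $L_t$ exists; a pointwise-convergent subsequence has a limit $L$ that is feasible for $\xib{cp}{\infty}(A)$ and optimal, giving $\xib{cp}{t}(A)\to\xib{cp}{\infty}(A)$. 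The same compactness argument at $t=\infty$ (restricted appropriately) gives attainment there, and the flatness statement $\xib{cp}{t}(A)=\xib{cp}{\infty}(A)$ under a flat optimal solution is a direct invocation of the commutative flat-extension result (the analogue of Corollary~\ref{thm:flat-nonc}, e.g.\ Theorem~\ref{thm:eval-comm} or its flatness corollary).

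Next I would prove the measure-representation formula. Any conic combination $L=\sum_k\lambda_kL_{v_k}$ of evaluations at points $v_k\in D(\SAcp)$ with $A=(L(x_ix_j))$ is feasible for $\xib{cp}{\infty}(A)$: each $L_{v_k}$ is positive, and $g(v_k)\ge0$ for $g\in\SAcp$ forces $L_{v_k}\ge0$ on $\MM(\SAcp)$; moreover $M(L)$ has finite rank since each evaluation functional has rank-one moment matrix, so $L$ is feasible for $\xib{cp}{*}(A)$ too, giving ``$\le$''. Conversely, given $L$ feasible for $\xib{cp}{\infty}(A)$, the constraint $\sqrt{A_{ii}}x_i-x_i^2\ge0$ on the module (with $p=1$) gives $L(x_i^2)\le\sqrt{A_{ii}}\,L(x_i)$, hence $A_{ii}\le\sqrt{A_{ii}}L(x_i)$, so $L(x_i)\ge\sqrt{A_{ii}}$; combined with $x_i\in\MM_{2}(\SAcp)$ via~\eqref{eq:x} and nonnegativity of $L$ on the module, one gets boundedness of the support, and Theorem~\ref{thm:eval-comm} represents $L$ (after normalizing by $L(1)$) as $L(1)$ times a probability measure supported on $D(\SAcp)$; since the Archimedean property of $\MM(\SAcp)$ makes $D(\SAcp)$ compact, this measure can be taken finitely atomic, which yields the required conic combination with the same value $L(1)$. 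This simultaneously proves $\xib{cp}{\infty}(A)=\xib{cp}{*}(A)$, since the representing $L$ automatically has $\rank(M(L))<\infty$.

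The main obstacle I anticipate is ensuring that Theorem~\ref{thm:eval-comm} genuinely applies, i.e.\ verifying that the truncated positivity and Archimedean hypotheses of that theorem are met by a feasible $L$ at level $\infty$; this requires the localizing polynomials $\sqrt{A_{ii}}x_i-x_i^2$ to pin the support of any representing measure inside a fixed ball, which is precisely what the inclusions~\eqref{eq:inclusion} and the Archimedean property established for $\MM(\SAcp)$ (inherited from the argument following~\eqref{eqSA}) deliver. A secondary subtlety is passing from a general representing measure to a finitely atomic one; this is standard (Bayer--Teichmann / Richter-type atomic decomposition, used throughout this literature), and I would simply cite the relevant statement in Appendix~\ref{sec:background} rather than reprove it. Everything else is bookkeeping parallel to the $\cpsd$ case.
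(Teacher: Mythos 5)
Your proposal is correct and follows essentially the same route as the paper: the attainment and asymptotic convergence are obtained by adapting Proposition~\ref{prop:lowerbound} (via $\mathrm{Tr}(A)-\sum_i x_i^2\in\MM_2(\SAcp)$ and Lemma~\ref{lemma:upperboundLw}), the equality $\xib{cp}{\infty}(A)=\xib{cp}{*}(A)$ and the conic-combination characterization come from applying Theorem~\ref{thm:eval-comm} to the restriction of an optimal $L$ to low-degree polynomials, and the flatness claim is the commutative flat-extension result. The only nitpick is the citation for the latter: the relevant statement is Theorem~\ref{thm:flat-comm} (the commutative analogue of Corollary~\ref{thm:flat-nonc}), not Theorem~\ref{thm:eval-comm}.
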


\begin{proof}
We may assume $A\ne 0$. Since $\sqrt{A_{ii}} x_i -x_i^2 \in \SAcp$ for all $i$, using~\eqref{eq:x^2} we obtain  that $\mathrm{Tr}(A) -\sum_i x_i^2 \in \MM_2(\SAcp)$.  By adapting the proof of Proposition~\ref{prop:lowerbound} to the commutative setting, we see that the optimum in $\xib{cp}{t}(A)$ is attained  for  $t \in \N \cup \{\infty\}$,  and $\xib{cp}{t}(A) \to \xib{cp}{\infty}(A)$ as $t\to \infty$.
 
We now show the inequality $\xib{cp}{*}(A)\le \xib{cp}{\infty}(A)$, which implies that equality holds. For this, let $L$ be optimal for $\smash{\xib{cp}{\infty}}(A)$. By Theorem~\ref{thm:eval-comm}, the restriction of $L$ to $\R[\bx]_2$ extends to  a conic combination of evaluations at points in $D(\SAcp)$. It follows that this extension is feasible for $\xib{cp}{*}(A)$ with the same objective value. This shows that $\xib{cp}{*}(A)\le \xib{cp}{\infty}(A)$, that the optimum in $\xib{cp}{*}(A)$ is attained, and that $\smash{\xib{cp}{*}(A)}$ is the minimum of $L(1)$ over all conic combinations $\smash L$ of evaluations at elements of $\smash{D(\SAcp)}$ such that $A = (L(x_ix_j))$. Finally, by Theorem~\ref{thm:flat-comm} we have $\smash{\xib{cp}{t}}(A) = \smash{\xib{cp}{\infty}}(A)$ if $\xib{cp}{t}(A)$ admits a flat optimal solution. \qed
\end{proof}
Next, we give a reformulation for the parameter $\xib{cp}{*}(A)$, which is similar to the formulation of $\tau_\mathrm{cp}(A)$, although it lacks the constraint $R \preceq A$ which is present in $\tau_\mathrm{cp}(A)$. 

\begin{proposition} \label{prop:tauprime}
We have 
\[
\xib{cp}{*}(A) = \min \Big\{ \alpha : \alpha \geq 0,\, A \in \alpha \cdot \mathrm{conv} \big\{ R \in \psdcone^n : 0 \leq R \leq A, \, \rank(R) \le  1\big\}\Big\}.
\]
\end{proposition}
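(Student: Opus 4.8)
The plan is to prove the two inequalities separately, using the reformulation of $\xib{cp}{*}(A)$ from Proposition~\ref{prop:cpconv} as the minimum of $L(1)$ over conic combinations $L$ of evaluations at points of $D(\SAcp)$ with $A = (L(x_ix_j))$. Write $\beta$ for the optimal value of the minimization on the right-hand side.

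For the inequality $\xib{cp}{*}(A) \le \beta$, I would start from an optimal decomposition $A = \alpha \sum_{k=1}^r \lambda_k R_k$ with $\alpha \ge 0$, $\lambda_k > 0$, $\sum_k \lambda_k = 1$, each $R_k \in \psdcone^n$ of rank at most $1$ satisfying $0 \le R_k \le A$. Write $R_k = v_k v_k^\T$ with $v_k \in \R^n$ (not necessarily entrywise nonnegative in general, but $0 \le R_k$ forces the signs to be consistent up to a global sign, so we may take $v_k \ge 0$). The point is to check that each $v_k$ lies in $D(\SAcp)$: nonnegativity of $(v_k)_i$ is clear, the bound $(v_k)_i^2 = (R_k)_{ii} \le A_{ii}$ gives $\sqrt{A_{ii}}(v_k)_i - (v_k)_i^2 \ge 0$, and $(R_k)_{ij} \le A_{ij}$ (a consequence of $R_k \le A$ applied to suitable test vectors, or directly from the entrywise domination when combined with sign information) gives $A_{ij} - (v_k)_i(v_k)_j \ge 0$. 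Then $L = \alpha \sum_k \lambda_k L_{v_k}$ is a conic combination of evaluations at points of $D(\SAcp)$, satisfies $(L(x_ix_j)) = \alpha \sum_k \lambda_k R_k = A$, and has $L(1) = \alpha \sum_k \lambda_k = \alpha$. Hence $\beta \le \alpha = \taucp'(A)$, i.e. $\xib{cp}{*}(A) \le$ the right-hand side.

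For the reverse inequality, take an optimal $L = \sum_{k=1}^r \mu_k L_{v_k}$ with $\mu_k > 0$ and $v_k \in D(\SAcp)$, so $L(1) = \sum_k \mu_k = \xib{cp}{*}(A)$ and $A = \sum_k \mu_k v_k v_k^\T$. Set $\alpha = \sum_k \mu_k$, $\lambda_k = \mu_k/\alpha$, and $R_k = v_k v_k^\T$. Then $A = \alpha \sum_k \lambda_k R_k$ is a convex combination (scaled by $\alpha$) of rank-one psd matrices, and it remains to verify $0 \le R_k \le A$. Entrywise nonnegativity of $R_k$ is immediate since $v_k \ge 0$ (from $\sqrt{A_{ii}}x_i - x_i^2 \in \SAcp$ forcing $0 \le (v_k)_i \le \sqrt{A_{ii}}$, together with $A_{ij} - x_ix_j \in \SAcp$). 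For the entrywise upper bound $R_k \le A$: since all $\mu_{k'} v_{k'}v_{k'}^\T$ are entrywise nonnegative and sum to $A$, each term is entrywise at most $A$, in particular $\mu_k v_k v_k^\T \le A$ and a fortiori $v_k v_k^\T \le A$ provided $\mu_k \le 1$ — here one must be slightly careful, but since we only need $R_k \le A$ and $R_k = v_k v_k^\T$ appears in the convex combination with coefficient $\lambda_k \le 1$ and $\alpha \ge 1$ (which holds because $\alpha \ge \rank(A) \ge 1$ as $A \ne 0$, or directly), the bound $v_kv_k^\T \le A$ follows from the localizing constraints $A_{ij} - (v_k)_i(v_k)_j \ge 0$ for all $i,j$ directly, which is exactly what $\SAcp$ encodes. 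Thus each $R_k$ is feasible for the set on the right, giving the right-hand side $\le \alpha = \xib{cp}{*}(A)$.

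The main obstacle I expect is the bookkeeping around the constraint $0 \le R_k \le A$ versus $R_k \preceq A$: the right-hand side of the claimed identity does \emph{not} include $R \preceq A$, which is precisely why $\xib{cp}{*}(A)$ can be strictly smaller than $\taucp(A)$, so I must be careful \emph{not} to accidentally use or prove the semidefinite domination. Concretely, the localizing set $\SAcp$ contains only the scalar inequalities $A_{ij} - x_ix_j \ge 0$ and $\sqrt{A_{ii}}x_i - x_i^2 \ge 0$, and these translate exactly into $0 \le (R_k)_{ij} \le A_{ij}$ for all $i,j$ (entrywise), with no psd condition; so the correspondence between the two formulations is a clean entrywise one, and the proof amounts to recognizing that "$L$ is a conic combination of evaluations at points of $D(\SAcp)$" is the same data as "$A$ is a scaled convex combination of rank-one entrywise-dominated psd matrices."
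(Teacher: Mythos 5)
Your proposal is correct and follows essentially the same route as the paper, which deduces the identity from Proposition~\ref{prop:cpconv} together with the observation that $v \in D(\SAcp)$ if and only if $R = vv^\T$ satisfies $0 \le R \le A$ entrywise. The only superfluous part is the detour about $\mu_k \le 1$ in the reverse direction: as you note at the end, $R_k \le A$ follows directly from the localizing constraints defining $D(\SAcp)$ (the off-diagonal entries from $A_{ij} - x_ix_j \ge 0$ and the diagonal ones from $0 \le (v_k)_i \le \sqrt{A_{ii}}$), with no appeal to the weights in the conic combination.
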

\begin{proof}
This follows directly from the reformulation of $\xib{cp}{*}(A)$ in Proposition~\ref{prop:cpconv} in terms of conic evaluations at points in $D(\SAcp)$ after observing that, for $v \in \R^n$, we have $v \in D(\SAcp)$ if and only if the matrix $R = vv^\T$ satisfies $0 \leq R \leq A$. \qed
\end{proof}

\subsection{Additional constraints and convergence to \texorpdfstring{$\tau_\mathrm{cp}(A)$}{tau cp}} \label{sec:cp additional constraints}

The reformulation of the parameter $\xib{cp}{*}(A)$ in Proposition~\ref{prop:tauprime} differs from $\tau_\mathrm{cp}(A)$ in that the constraint $R\preceq A$ is missing. In order to have a hierarchy converging to $\tau_\mathrm{cp}(A)$ we need to add constraints to enforce that $L$ can be decomposed as a  conic combination of evaluation maps at nonnegative vectors  $v$ satisfying $vv^{\sf T}\preceq A$. Here we present two ways to achieve this goal. First we show that the tensor constraints~\eqref{eq:tensorconstraints}  suffice in the sense that $\xib{cp}{*,\dagger}(A) =\tau_{\mathrm{cp}}(A)$ (note that the constraints~\eqref{eqposcp} are not needed for this result).  However, because of the special form of the tensor constraints we do not know whether $\xib{cp}{t,\dagger}(A)$ admitting a flat optimal solution implies $\smash{\xib{cp}{t,\dagger}(A)} = \smash{\xib{cp}{*,\dagger}(A)}$, and we do not know whether $\smash{\xib{cp}{\infty,\dagger}(A)} = \smash{\xib{cp}{*,\dagger}(A)}$. Second, we adapt the approach of adding additional localizing constraints from Section~\ref{cpsd-additional-constraints} to the commutative setting, where we do show $\smash{\xib{cp}{\infty,\oS^{n-1}}(A)} = \smash{\xib{cp}{*,\oS^{n-1}}(A)} = \tau_{\mathrm{cp}}(A)$. This yields a doubly indexed sequence of semidefinite programs whose optimal values converge to $\tau_{\mathrm{cp}}(A)$.

\begin{proposition} \label{prop:xi star dagger=taucp}
Let $A \in \CP^n$. For every $t \in \N \cup \{\infty\}$ the optimum in $\xib{cp}{t,\dagger}(A)$ is attained. We have $\xib{cp}{t,\dagger}(A) \to \xib{cp}{\infty,\dagger}(A)$ as $t\to\infty$ and $\xib{cp}{*,\dagger}(A) =\tau_{\mathrm{cp}}(A)$.
\end{proposition}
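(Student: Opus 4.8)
The plan is to handle attainment and convergence exactly as in Propositions~\ref{prop:lowerbound} and~\ref{prop:cpconv}, and to prove $\xib{cp}{*,\dagger}(A)=\taucp(A)$ by combining Proposition~\ref{prop:xibcp} (one inequality) with a limiting argument on the tensor constraints (the other).

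For attainment I would add the redundant constraint $L(1)\le\taucp(A)$, which is valid since $\xib{cp}{t,\dagger}(A)\le\taucp(A)<\infty$ by Proposition~\ref{prop:xibcp}. Because $\sqrt{A_{ii}}x_i-x_i^2\in\SAcp$, identity~\eqref{eq:x^2} gives $\mathrm{Tr}(A)-\sum_ix_i^2\in\MM_2(\SAcp)$, so Lemma~\ref{lemma:upperboundLw} makes the feasible region of $\xib{cp}{t}(A)$ compact; since the constraints~\eqref{eqposcp} and~\eqref{eq:tensorconstraints} are closed, the feasible region of $\xib{cp}{t,\dagger}(A)$ is still compact and the minimum is attained for $t\in\N$. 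The sequence $(\xib{cp}{t,\dagger}(A))_t$ is nondecreasing (restricting a feasible $L$ to degree $2t$ preserves~\eqref{eqposcp} and the level-$l$ part of~\eqref{eq:tensorconstraints} for $l\le t$) and bounded, so its limit exists and is at most $\xib{cp}{\infty,\dagger}(A)$; extracting from optimal solutions $L_t$ a pointwise convergent subsequence via Lemma~\ref{lemma:upperboundLw} produces a limit feasible for $\xib{cp}{\infty,\dagger}(A)$ of value $\lim_t\xib{cp}{t,\dagger}(A)$, which gives both the convergence and attainment at $t=\infty$.

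For the equality, ``$\le$'' is Proposition~\ref{prop:xibcp} with $t=*$. For ``$\ge$'', take $L$ feasible for $\xib{cp}{*,\dagger}(A)$ and set $\alpha=L(1)$. As $L$ is feasible for $\xib{cp}{*}(A)$, Theorem~\ref{thm:eval-comm} (as used in the proof of Proposition~\ref{prop:cpconv}) gives $L=\sum_{k=1}^r\lambda_kL_{v_k}$ with $\lambda_k>0$, $\sum_k\lambda_k=\alpha$ and $v_k\in D(\SAcp)$, so $0\le v_kv_k^\T\le A$ entrywise and $A=\sum_k\lambda_kv_kv_k^\T$. The crux is to upgrade this entrywise bound to the Loewner inequality $v_kv_k^\T\preceq A$. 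Writing out the atomic decomposition shows that $\bigl(L((ww')^c)\bigr)_{w,w'\in\langle\bx\rangle_{=l}}$ is the Gram matrix $\sum_k\lambda_k(v_k^{\otimes l})(v_k^{\otimes l})^\T$, so~\eqref{eq:tensorconstraints} says $\sum_k\lambda_k(v_k^{\otimes l})(v_k^{\otimes l})^\T\preceq A^{\otimes l}$ for all $l\ge2$; evaluating both sides at the symmetric tensor $z^{\otimes l}$, for arbitrary $z\in\R^n$, gives
\[
\sum_{k=1}^r\lambda_k(z^\T v_k)^{2l}\le(z^\T Az)^l,\qquad\text{hence}\qquad\lambda_{k_0}(z^\T v_{k_0})^{2l}\le(z^\T Az)^l\ \ \text{for each }k_0.
\]
Letting $l\to\infty$ yields $(z^\T v_{k_0})^2\le z^\T Az$ (the case $z^\T Az=0$ is immediate, since then $Az=0$ forces $z^\T v_k=0$ for all $k$), i.e.\ $v_{k_0}v_{k_0}^\T\preceq A$. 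Consequently each $R_k=v_kv_k^\T$ satisfies $0\le R_k\le A$, $R_k\preceq A$, $\rank R_k\le1$, and $A=\alpha\sum_k(\lambda_k/\alpha)R_k$ lies in $\alpha\cdot\mathrm{conv}\{R\in\psdcone^n:0\le R\le A,\,R\preceq A,\,\rank R\le1\}$, so $\taucp(A)\le\alpha=L(1)$; taking the infimum over feasible $L$ completes the argument. (This direction uses only~\eqref{eq:tensorconstraints}, not~\eqref{eqposcp}, consistent with the remark preceding the proposition.)

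I expect the main obstacle to be precisely this passage from the entrywise bound to the Loewner bound $v_kv_k^\T\preceq A$. Everything hinges on the observation that substituting $z^{\otimes l}$ into the level-$l$ tensor constraint collapses it to the scalar inequality $\sum_k\lambda_k(z^\T v_k)^{2l}\le(z^\T Az)^l$, after which the limit $l\to\infty$ is routine; a secondary technical point is to justify cleanly that a finite-rank feasible $L$ admits an atomic representation supported on $D(\SAcp)$ and that the tensor constraints transfer verbatim to that representation.
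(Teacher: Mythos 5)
Your proof is correct and follows essentially the same route as the paper: attainment and convergence as in Proposition~\ref{prop:cpconv}, the inequality $\le$ from Proposition~\ref{prop:xibcp}, and for $\ge$ an atomic decomposition of $L$ whose level-$l$ tensor constraint, evaluated at $z^{\otimes l}$, collapses to $\lambda_k(z^\T v_k)^{2l}\le(z^\T Az)^l$ and yields $v_kv_k^\T\preceq A$ by letting $l\to\infty$. The only adjustment: for the finite-rank functional in $\xib{cp}{*,\dagger}(A)$ you should invoke Theorem~\ref{propLfinitedimcommutative} (as the paper does) rather than Theorem~\ref{thm:eval-comm}, so that a single atomic representation is valid for all degrees $l$ simultaneously.
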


\begin{proof}
The attainment of the optima in $\xib{cp}{t,\dagger}(A)$ for $t \in \N \cup \{ \infty \}$ and the convergence of $\xib{cp}{t,\dagger}(A)$ to $\xib{cp}{\infty,\dagger}(A)$ can be shown in the same way as the analogue statements for $\xib{cp}{t}(A)$ in Proposition~\ref{prop:cpconv}. 

We have seen the inequality $\smash{\xib{cp}{*,\dagger}(A)} \leq \smash{\tau_{\mathrm{cp}}(A)}$ in Proposition~\ref{prop:xibcp}. Now we show the reverse inequality.
Let $L$ be feasible for $\xib{cp}{*,\dagger}(A)$. We will show that $L$ is feasible for $\tau_{\mathrm{cp}}(A)$, which implies 
$\tau_{\mathrm{cp}}(A)\le L(1)$ and thus $\tau_{\mathrm{cp}}(A)\le \smash{\xib{cp}{*,\dagger}(A)}$.

By Proposition~\ref{lem:comparison} and the fact that $\rank(A) \leq \tau_{\mathrm{cp}}^{\mathrm{sos}}(A)$ we have $L(1) > 0$ (where we assume $A\ne 0$).
By Theorem~\ref{propLfinitedimcommutative}, we may write 
\[
L= L(1) \sum_{k=1}^K \lambda_k L_{v_k},
\]
where $\lambda_k>0$, $\sum_k \lambda_k =1$, and $L_{v_k}$ is an evaluation map at a point $v_k \in D(\SAcp)$. We define the matrices $R_k = v_k v_k^\T$, so  that $A = L(1) \sum_{k=1}^K R_k$. The matrices $R_k$ satisfy $0 \leq R_k \leq A$ since $v_k \in D(\SAcp)$. Clearly also $R_k \succeq 0$. It remains to show that $R_k \preceq A$. For this we use the tensor constraints~\eqref{eq:tensorconstraints}. Using that $L$ is a conic combination of evaluation maps, we may rewrite these constraints as 
\[
L(1) \sum_{k=1}^K \lambda_k R_k^{\otimes l} \preceq A^{\otimes l},  
\]
from which it follows that $L(1) \lambda_k R_k^{\otimes l} \preceq A^{\otimes l}$ for all $k\in [K]$. 
Therefore, for all $k\in [K]$ and all vectors $v$ with $v^{\sf T}R_kv>0$ we have 
\begin{equation}\label{eq:conictensor}
L(1) \lambda_k \leq \left(\frac{v^\T A v}{v^\T R_kv}\right)^l \quad \text{for all} \quad  l \in \N.
\end{equation}
Suppose there is a $k$ such that $R_k \not \preceq A$. Then there exists a $v$ such that $\smash{v^\T R_k v > v^\T A v}$. 
As $(v^\T A v) / (v^\T R_kv) < 1$, letting $l$ tend to $\infty$ we obtain $L(1)\lambda_k=0$, reaching a contradiction.
It follows that $R_k \preceq A$ for all $k \in [K]$. \qed
\end{proof}

The second approach for reaching $\tau_\mathrm{cp}(A)$  is based on using the extra localizing constraints from Section~\ref{cpsd-additional-constraints}. For a subset $V\subseteq \oS^{n-1}$,  define $\smash{\xib{cp}{t,V}(A)}$ by replacing the truncated quadratic module $\mathcal M_{2t}(\SAcp)$ in $\smash{\xib{cp}{t}(A)}$ by $\mathcal M_{2t}(\SAVcp)$, where 
\[
\SAVcp= \SAcp \cup \big\{v^\T Av-\Big(\sum_{i=1}^n v_ix_i\Big)^2 : v\in V\big\}.
\]
Proposition~\ref{remlocsphere} can be adapted to the completely positive setting, so that we have a sequence of finite subsets $V_1 \subseteq V_2 \subseteq \ldots \subseteq \oS^{n-1}$ with
$
\smash{\xib{cp}{*,V_k}(A) \to \xib{cp}{*,\oS^{n-1}}(A)}
$
as $k\rightarrow \infty$.
Proposition~\ref{prop:cpconv} still holds when adding extra localizing constraints, so that for any  $k\ge 1$  we have
\[
\lim_{t \to \infty} \xib{cp}{t,V_k}(A) = \xib{cp}{*,V_k}(A).
\] Combined with Proposition~\ref{prop:xitotau} this shows that we have a doubly indexed sequence $\smash{\xib{cp}{t,V_k}}(A)$ of semidefinite programs that converges to  $\tau_\mathrm{cp}(A)$ as $t \to \infty$ and $k \to \infty$. 
\begin{proposition}\label{prop:xitotau}
For $A \in \CP^n$ we have $\xib{cp}{*,\oS^{n-1}}(A) = \tau_{\mathrm{cp}}(A)$.
\end{proposition}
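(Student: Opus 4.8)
The plan is to prove the two inequalities $\xib{cp}{*,\oS^{n-1}}(A) \le \tau_{\mathrm{cp}}(A)$ and $\xib{cp}{*,\oS^{n-1}}(A) \ge \tau_{\mathrm{cp}}(A)$ separately, mirroring the structure of the proof of Proposition~\ref{prop:xi star dagger=taucp} but using the spherical localizing constraints $g_v = v^\T A v - (\sum_i v_i x_i)^2$ in place of the tensor constraints. For the easy direction, $\xib{cp}{*,\oS^{n-1}}(A) \le \tau_{\mathrm{cp}}(A)$: starting from a decomposition $A = \alpha \sum_k \lambda_k v_k v_k^\T$ with $0 \le v_k v_k^\T \le A$ and $v_k v_k^\T \preceq A$, one takes $L = \alpha \sum_k \lambda_k L_{v_k}$, the conic combination of evaluations at the $v_k$, exactly as in Proposition~\ref{prop:xibcp}. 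The only new point to check is that each $v_k$ lies in $D(\SAVcp)$ with $V = \oS^{n-1}$: for a unit vector $v$, $g_v(v_k) = v^\T A v - (v^\T v_k)^2 = v^\T(A - v_kv_k^\T)v \ge 0$ precisely because $v_k v_k^\T \preceq A$. Hence $L$ is feasible for $\xib{cp}{*,\oS^{n-1}}(A)$ with value $\alpha$, and taking the infimum gives the inequality.

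For the reverse inequality $\xib{cp}{*,\oS^{n-1}}(A) \ge \tau_{\mathrm{cp}}(A)$, I would take $L$ feasible for $\xib{cp}{*,\oS^{n-1}}(A)$ and show it is feasible for $\tau_{\mathrm{cp}}(A)$. As before, $L(1) > 0$ may be assumed (e.g.\ because $\xib{cp}{1}(A)$ already dominates the analogue of the analytic bound, or simply $\rank A \le \tau_{\mathrm{cp}}^{\mathrm{sos}}(A) \le \xib{cp}{*}(A) \le \xib{cp}{*,\oS^{n-1}}(A)$, and we may assume $A \neq 0$). By Theorem~\ref{propLfinitedimcommutative} we write $L = L(1)\sum_{k=1}^K \lambda_k L_{v_k}$ with $\lambda_k > 0$, $\sum_k \lambda_k = 1$, and $v_k \in D(\SAVcp)$ for $V = \oS^{n-1}$. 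Setting $R_k = v_k v_k^\T$, the membership $v_k \in D(\SAcp)$ gives $0 \le R_k \le A$ and $R_k \succeq 0$, so $A = L(1)\sum_k \lambda_k R_k$ exhibits $A$ as a conic combination of rank-one PSD matrices. The crucial extra gain over Proposition~\ref{prop:tauprime} is the constraint $R_k \preceq A$: since $v_k \in D(\{g_v : v \in \oS^{n-1}\})$, we have $v^\T A v - (v^\T v_k)^2 \ge 0$ for every unit vector $v$, i.e.\ $v^\T(A - R_k)v \ge 0$ for all $v$, which is exactly $R_k \preceq A$. Thus $L$ is feasible for $\tau_{\mathrm{cp}}(A)$ with objective $L(1)$, giving $\tau_{\mathrm{cp}}(A) \le \xib{cp}{*,\oS^{n-1}}(A)$.

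The main subtlety — and the step most worth spelling out carefully — is the representation of the finite-dimensional feasible $L$ as a conic combination of point evaluations at vectors in $D(\SAVcp)$, i.e.\ the correct invocation of the commutative $C^*$-algebra (atomic) representation theorem, Theorem~\ref{propLfinitedimcommutative}. One must check that the Archimedean condition still holds for the enlarged set $\SAVcp$ — it does, since $\SAVcp \supseteq \SAcp$ and $\MM(\SAcp)$ is already Archimedean by the inclusions~\eqref{eq:inclusion} applied to the localizers $\sqrt{A_{ii}}x_i - x_i^2$ — and that the atoms produced genuinely satisfy all the localizing inequalities indexed by $\oS^{n-1}$, which is immediate from the definition of the support of the representing measure. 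Everything else is a direct unwinding of definitions; no delicate limiting argument of the kind used for the tensor constraints in Proposition~\ref{prop:xi star dagger=taucp} is needed here, since the constraint $R_k \preceq A$ now comes for free from a single localizing inequality rather than from letting a tensor power $l \to \infty$.
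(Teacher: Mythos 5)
Your proof is correct and takes essentially the same route as the paper, which simply notes that the argument for Proposition~\ref{prop:tauprime} carries over once one observes that $u\in D(\SAVcp)$ for $V=\oS^{n-1}$ forces $uu^\T\preceq A$ via $0\le v^\T Av-(\sum_i v_iu_i)^2=v^\T(A-uu^\T)v$ --- exactly your key step in both directions. (One cosmetic remark: the chain $\tau_{\mathrm{cp}}^{\mathrm{sos}}(A)\le\xib{cp}{*}(A)$ is not literally established in the paper, which only compares $\tau_{\mathrm{cp}}^{\mathrm{sos}}$ with the dagger bounds, but $L(1)>0$ follows anyway from $L(x_i)\ge\sqrt{A_{ii}}$ together with $M_1(L)\succeq 0$.)
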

\begin{proof}
The proof is the same as the proof of Proposition~\ref{prop:tauprime}, with the following additional observation: Given a vector $u \in \R^n$, we have $u \in D(S_{A,\oS^{n-1}}^{\hspace{0.1em}\mathrm{cp}})$ only if $uu^\T \preceq A$. The latter follows from the additional localizing constraints: for each $v \in \R^n$ we have 
\[
0 \leq v^\T A v - \Big(\sum_i v_i u_i \Big)^2 = v^{\T} ( A - uu^\T ) v. \qed
\]
\end{proof}

\subsection{More efficient tensor constraints}
\label{sec:tensor}
Here we show that for any integer $l\ge 2$ the constraint $A^{\otimes l} -(L((ww')^c))_{w,w'\in \ncx_{=l}}\succeq 0$, used in the definition of $\xib{cp}{t,+}(A)$, can be reformulated in a more economical way using matrices indexed by {\em commutative} monomials in $\cx_{=l}$ instead of noncommutative words in $\ncx_{=l}$.  For this we exploit the symmetry in the matrices $\smash{A^{\otimes l}}$ and $(L((ww')^c))_{w,w'\in \ncx_{=l}}$ for $L \in \R[\bx]_{2l}^*$. Recall  that for a  word $w \in \langle\bx\rangle$, we let $w^c$ denote the corresponding  (commutative) monomial in $[\bx]$.

Define the matrix $Q_l \in \R^{\cx_{=l} \times \ncx_{=l}}$ by
\begin{equation}\label{eqQl}
(Q_l)_{m,w} = \begin{cases}
1/d_m  & \text{ if } w^c = m,\\
0 & \text{ otherwise,}
\end{cases}
\end{equation}
where, for $m = x_1^{\alpha_1} \cdots x_n^{\alpha_n} \in [\bx]_{=l}$, we define the multinomial coefficient
\begin{equation}\label{eqm}
d_m = \big|\big\{w\in \ncx_{=l}: w^c = m\big\}\big| = \frac{l!}{\alpha_1! \cdots \alpha_n!}.
\end{equation}

\begin{lemma}\label{lemQ}
For $L \in \R\cx_{2l}^*$ we have 
\[
Q_l (L((ww')^c))_{w,w'\in \ncx_{=l}} Q_l^\T = (L(mm'))_{m,m'\in \cx_{=l}}.
\]
\end{lemma}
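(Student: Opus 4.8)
The plan is to compute the $(m,m')$-entry of the left-hand side directly from the definition of matrix multiplication and the definition of $Q_l$ in~\eqref{eqQl}. Writing out the product, we have
\[
\big(Q_l (L((ww')^c))_{w,w'} Q_l^\T\big)_{m,m'} = \sum_{w,w' \in \ncx_{=l}} (Q_l)_{m,w}\, L((ww')^c)\, (Q_l)_{m',w'}.
\]
By~\eqref{eqQl}, $(Q_l)_{m,w}$ is nonzero (and equal to $1/d_m$) precisely when $w^c = m$, and similarly $(Q_l)_{m',w'} = 1/d_{m'}$ exactly when $w'^c = m'$. Hence the sum restricts to words $w$ with $w^c = m$ and $w'$ with $w'^c = m'$, and each surviving term contributes $\frac{1}{d_m d_{m'}} L((ww')^c)$. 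The key observation is that whenever $w^c = m$ and $w'^c = m'$, we have $(ww')^c = w^c w'^c = m m'$, since passing to the commutative image is a homomorphism that forgets the order of letters; thus $L((ww')^c) = L(mm')$ is a constant not depending on the particular words $w,w'$.

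Consequently the double sum becomes
\[
\frac{1}{d_m d_{m'}}\, L(mm') \cdot \big|\{w \in \ncx_{=l} : w^c = m\}\big| \cdot \big|\{w' \in \ncx_{=l} : w'^c = m'\}\big| = \frac{1}{d_m d_{m'}}\, L(mm')\, d_m\, d_{m'} = L(mm'),
\]
where the counts of preimage words are exactly the multinomial coefficients $d_m$ and $d_{m'}$ recorded in~\eqref{eqm}. This is precisely the $(m,m')$-entry of $(L(mm'))_{m,m' \in \cx_{=l}}$, which completes the proof.

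This argument is entirely routine bookkeeping; there is no real obstacle, only the need to be careful that $w \mapsto w^c$ is multiplicative so that $(ww')^c = mm'$ depends only on $m$ and $m'$, which is what lets $L(mm')$ factor out of the sum, and that the cardinality of the fiber of $w \mapsto w^c$ over $m$ is exactly $d_m$ as defined in~\eqref{eqm}.
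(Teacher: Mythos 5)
Your proof is correct and is essentially identical to the paper's: both expand the $(m,m')$-entry of the product, restrict the sum to the $d_m\,d_{m'}$ pairs of words with $w^c=m$ and $(w')^c=m'$, use that $(ww')^c=mm'$ so that $L(mm')$ factors out, and cancel against the $1/(d_m d_{m'})$ normalization. No issues.
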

\begin{proof}
For $m,m'\in \cx_{l}$, the $(m,m')$-entry of the left hand side is equal to
\begin{align*}
\sum_{w,w'\in \ncx_{=l}} Q_{mw}Q_{m'w'}L((ww')^c) &= \sum_{\underset{w^c = m}{w \in \ncx_{=l}}} \sum_{\underset{(w')^c = m'}{w' \in \ncx_{=l}}} \frac{L((ww')^c)}{d_md_{m'}} = L(mm').\qed
\end{align*}
\end{proof}
The symmetric group $S_l$ acts on $\langle \bx \rangle_{=l}$ by $(x_{i_1} \cdots x_{i_l})^\sigma = x_{i_{\sigma(1)}} \cdots x_{i_{\sigma(l)}}$ for $\sigma\in S_l$. Let 
\begin{equation}\label{eqP}
P = \frac{1}{l!} \sum_{\sigma \in S_l} P_\sigma,
\end{equation}
where, for any $\sigma\in S_l$, $P_\sigma \in \smash{\R^{\ncx_{=l} \times \ncx_{=l}}}$ is the permutation matrix defined by
\[
(P_\sigma)_{w,w'} = \begin{cases} 1 & \text{if } w^\sigma = w',\\ 0 & \text{otherwise}.\end{cases}
\]
A matrix $M \in \smash{\R^{\ncx_{=l} \times \ncx_{=l}}}$ is said to be {\em $S_l$-invariant} if $P^\sigma M = M P^\sigma$ for all $\sigma \in S_l$.
\begin{lemma} \label{leminvariant}
If $M \in \smash{\R^{\ncx_{=l} \times \ncx_{=l}}}$ is symmetric and $S_l$-invariant, then 
\[
M\succeq 0 \quad \Longleftrightarrow \quad Q_l M Q_l^\T \succeq 0.
\]
\end{lemma}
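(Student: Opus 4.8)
The plan is to exhibit $Q_l$ (up to a diagonal rescaling) as the matrix implementing the orthogonal projection $P$ onto the $S_l$-invariant subspace of $\R^{\ncx_{=l}}$, and then to argue that, since $M$ commutes with all $P_\sigma$ and is symmetric, the positive semidefiniteness of $M$ is entirely governed by its compression to that invariant subspace. Concretely, first I would compute the row spans and inner products of $Q_l$: the rows of $Q_l$ are indexed by commutative monomials $m \in \cx_{=l}$, the $m$-th row is $\frac1{d_m}\sum_{w^c=m} e_w$, and one checks $Q_l Q_l^\T = \Diag(1/d_m)_{m}$, so the rescaled matrix $\widetilde Q_l := \Diag(\sqrt{d_m})\, Q_l$ has orthonormal rows. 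A short computation then gives $\widetilde Q_l^\T \widetilde Q_l = P$, the averaging projector from~\eqref{eqP}, since both sides are the matrix whose $(w,w')$ entry is $1/d_{w^c}$ if $(w')^c=w^c$ and $0$ otherwise (equivalently, $(P_\sigma)$ averaged over $\sigma\in S_l$ sends $e_{w'}$ to $\frac1{d_{(w')^c}}\sum_{w^c=(w')^c}e_w$, because the stabilizer of $w'$ in $S_l$ has index $d_{(w')^c}$).

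Next I would use $S_l$-invariance of $M$. The hypothesis $P_\sigma M = M P_\sigma$ for all $\sigma$ gives $P M = M P = P M P$, so $M$ leaves the range of $P$ invariant and $M = PMP + (I-P)M(I-P)$ (the cross terms vanish). Consequently $M\succeq 0$ if and only if both $PMP\succeq 0$ on $\mathrm{ran}(P)$ and $(I-P)M(I-P)\succeq 0$ on $\mathrm{ran}(I-P)$. Here is the key point that makes the $(I-P)$ block free of charge: by $S_l$-invariance the restriction of $M$ to $\mathrm{ran}(I-P)$ is a \emph{fixed} positive semidefinite operator independent of $L$ — but we cannot invoke that, since the lemma is stated for an arbitrary $M$, not for the specific moment matrix. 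So instead I must handle $(I-P)M(I-P)$ honestly. The correct statement forces me to reexamine: the lemma as worded cannot be true for arbitrary symmetric $S_l$-invariant $M$, because e.g. $M = -(I-P)$ is symmetric and $S_l$-invariant with $Q_l M Q_l^\T = 0 \succeq 0$ yet $M \not\succeq 0$.

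Therefore the honest plan is: I would establish the implication $M\succeq 0 \Rightarrow Q_l M Q_l^\T\succeq 0$ in full generality (this is immediate: $Q_l M Q_l^\T = (Q_l B)(Q_l B)^\T\succeq 0$ when $M = BB^\T$), and for the converse I would invoke the additional structure actually available in the paper, namely that the only matrices $M$ to which the lemma is applied are differences $M = A^{\otimes l} - (L((ww')^c))_{w,w'}$ with $L\in\R\cx_{2l}^*$ positive on the relevant quadratic module, for which the complementary block $(I-P)M(I-P)$ is a fixed psd matrix. Writing $M_0 := A^{\otimes l}$ for the ``reference'' and noting both $A^{\otimes l}$ and the moment matrix $(L((ww')^c))$ are $S_l$-invariant with identical $(I-P)$-block (because $(I-P)A^{\otimes l}(I-P) = (I-P)(L((ww')^c))(I-P)$, both equalling the universal value coming from the definition of $d_m$ via~\eqref{eqm} — a computation I would carry out using Lemma~\ref{lemQ} and the multinomial identity), the $(I-P)$-block of $M$ vanishes, so $M\succeq 0 \iff PMP\succeq 0 \iff \widetilde Q_l M \widetilde Q_l^\T\succeq 0 \iff Q_l M Q_l^\T\succeq 0$, using that $\widetilde Q_l$ and $Q_l$ differ by the invertible diagonal $\Diag(\sqrt{d_m})$.

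\textbf{Main obstacle.} The genuine difficulty is that the lemma as literally stated is false for arbitrary symmetric $S_l$-invariant $M$; the statement becomes correct only when restricted to the matrices that arise in the application, where the ``antisymmetric'' block $(I-P)M(I-P)$ is forced to be zero. So the real work is (i) identifying $\widetilde Q_l^\T\widetilde Q_l = P$ cleanly, and (ii) verifying via the multinomial coefficients $d_m$ that $(I-P)A^{\otimes l}(I-P) = (I-P)(L((ww')^c))_{w,w'}(I-P)$, so that the difference $M$ has trivial $(I-P)$-block; everything else is the routine reduction of $M\succeq 0$ to its compression onto $\mathrm{ran}(P)$ and the bijective correspondence between that compression and $Q_l M Q_l^\T$.
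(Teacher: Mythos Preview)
You are right that the lemma, as literally stated, is false: your counterexample $M=-(I-P)$ is valid, since $I-P$ commutes with every $P_\sigma$, while $Q_l(I-P)=0$ (because $Q_lP=Q_l$) yet $-(I-P)\not\succeq 0$ whenever $l\ge 2$ and $n\ge 2$. The paper's own proof establishes the identity $Q_l^\T D Q_l = P$ with $D=\Diag(d_m)$, which is equivalent to your $\widetilde Q_l^\T\widetilde Q_l=P$, and then argues that for any eigenvalue $\lambda$ of $M$ with eigenvector $z$ one may assume $Pz=z$, ``for otherwise we can replace $z$ by $Pz$''. That is precisely the gap you diagnosed: if the $\lambda$-eigenspace lies entirely in $\ker P$ then $Pz=0$ and the replacement is vacuous. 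So you and the paper agree on the key structural identity, and the paper's argument is incomplete at exactly the point you flagged.

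Your proposed repair, however, also contains an error. You claim that for $M=A^{\otimes l}-(L((ww')^c))_{w,w'}$ the block $(I-P)M(I-P)$ vanishes because the two summands have identical $(I-P)$-blocks. In fact the moment part $N:=(L((ww')^c))_{w,w'}$ satisfies the \emph{stronger} invariance $P_\sigma N P_\tau^\T=N$ for all $\sigma,\tau$ (its entries depend only on $w^c$ and $(w')^c$), so $PN=N$ and $(I-P)N(I-P)=0$; but $A^{\otimes l}$ only satisfies $P_\sigma A^{\otimes l}P_\sigma^\T=A^{\otimes l}$, and $(I-P)A^{\otimes l}(I-P)$ is nonzero in general (already for $A=I_2$, $l=2$ it equals $I-P\ne 0$). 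Hence $(I-P)M(I-P)=(I-P)A^{\otimes l}(I-P)$, not zero. The correct salvage is simpler than what you propose: this block is automatically positive semidefinite because $A\in\CP^n$ gives $A\succeq 0$ and thus $A^{\otimes l}\succeq 0$. With that observation, $M\succeq 0\iff PMP\succeq 0$ on $\mathrm{ran}(P)\iff Q_lMQ_l^\T\succeq 0$ (the last step via your $\widetilde Q_l$ or the paper's $D$), so Proposition~\ref{prop:tensor} survives even though Lemma~\ref{leminvariant} as stated does not.
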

\begin{proof}
The implication $M \succeq 0 \Longrightarrow Q_l M Q_l^\T \succeq 0$ is immediate. For the other implication we need a preliminary fact. Consider the diagonal matrix $D \in \smash{\R^{\cx_{=l}\times \cx_{=l}}}$ with $D_{mm}= d_m$ for $m \in \cx_{=l}$.
We claim that $Q_l^\T D Q_l = P$, the matrix in~\eqref{eqP}. Indeed, for any $w,w'\in \ncx_{=l}$, we have 
\begin{align*}
(Q_l^\T D Q_l)_{ww'} &= \sum_{m\in \cx_{=l}} (Q_l)_{mw}(Q_l)_{mw'}D_{mm} = \begin{cases} 1/d_m & \text{if } w^c = (w')^c=m,\\ 0 & \text{otherwise}\end{cases}\\
&= \frac{|\{\sigma\in S_l: w^\sigma=w'\}|}{l!} = P_{ww'}.
\end{align*}
Suppose $Q_l M Q_l^\T \succeq 0$, and let $\lambda$ be an eigenvalue of $M$ with eigenvector $z$. Since $MP=PM$, we may assume $Pz=z$, for otherwise we can replace $z$ by $P z$, which is still an eigenvector of $M$ with eigenvalue $\lambda$. We may also assume $z$ to be a unit vector. Then $\lambda \ge 0$ can be shown using the identity $Q_l^\T D Q_l=P$ as follows:
\[
\lambda = z^\T M z = z^\T P M P z = z^\T (Q_l^\T D Q_l) M(Q_l^\T D Q_l)z = (D Q_l z)^\T (Q_l M Q_l^\T) D Q_l z \geq 0. \hspace{-0.1cm}\qed
\]
\end{proof}

We can now derive our symmetry reduction result:
\begin{proposition} \label{prop:tensor}
For $L \in \R\cx_{2l}^*$ we have 
\[
A^{\otimes l}-(L((ww')^c))_{w,w'\in \ncx_{=l}}\succeq 0 \quad \Longleftrightarrow \quad Q_l A^{\otimes l}Q_l^\T - (L(mm'))_{m,m'\in \cx_{=l}}\succeq 0.
\]
\end{proposition}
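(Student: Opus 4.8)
The plan is to deduce the equivalence from Lemma~\ref{leminvariant} applied to the single matrix
\[
M := A^{\otimes l} - \big(L((ww')^c)\big)_{w,w'\in\ncx_{=l}}.
\]
By Lemma~\ref{lemQ} we have $Q_l \big(L((ww')^c)\big)_{w,w'\in\ncx_{=l}} Q_l^\T = \big(L(mm')\big)_{m,m'\in\cx_{=l}}$, and hence $Q_l M Q_l^\T = Q_l A^{\otimes l} Q_l^\T - \big(L(mm')\big)_{m,m'\in\cx_{=l}}$. Thus, once we know that $M$ is symmetric and $S_l$-invariant, Lemma~\ref{leminvariant} gives $M\succeq 0 \Longleftrightarrow Q_l M Q_l^\T\succeq 0$, which is precisely the asserted statement. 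So the whole proof reduces to verifying these two structural properties of $M$.

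Symmetry is immediate: $A^{\otimes l}$ is symmetric because $A$ is, and $\big(L((ww')^c)\big)_{w,w'}$ is symmetric because $(ww')^c=(w'w)^c$ as commutative monomials. For $S_l$-invariance I would check that both summands of $M$ commute with every permutation matrix $P_\sigma$ (equivalently with $P$ from~\eqref{eqP}). For $A^{\otimes l}$ this is the familiar fact that permuting tensor legs fixes $A^{\otimes l}$: writing a word of length $l$ as a tuple $(i_1,\dots,i_l)$, one has $(A^{\otimes l})_{w^\sigma,(w')^\sigma}=\prod_{k}A_{i_{\sigma(k)}j_{\sigma(k)}}=\prod_k A_{i_k j_k}=(A^{\otimes l})_{w,w'}$, so $P_\sigma^\T A^{\otimes l}P_\sigma=A^{\otimes l}$, and since $P_\sigma$ is orthogonal this gives $A^{\otimes l}P_\sigma=P_\sigma A^{\otimes l}$. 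For the moment matrix, the point is that permuting the letters inside $w$ and inside $w'$ does not change the associated commutative monomial: $(w^\sigma)^c=w^c$, hence $(w^\sigma(w')^\sigma)^c=(w^\sigma)^c\,((w')^\sigma)^c=w^c(w')^c=(ww')^c$, so $\big(L((ww')^c)\big)_{w^\sigma,(w')^\sigma}=\big(L((ww')^c)\big)_{w,w'}$ and the same commutation relation holds. Therefore $M$ commutes with each $P_\sigma$, i.e. $M$ is $S_l$-invariant.

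With symmetry and $S_l$-invariance of $M$ established, the conclusion follows directly from Lemma~\ref{leminvariant} together with the identity $Q_l M Q_l^\T = Q_l A^{\otimes l}Q_l^\T - \big(L(mm')\big)_{m,m'\in\cx_{=l}}$ noted above. There is no genuine obstacle here; the only mildly delicate point is the bookkeeping of the $S_l$-action on words of length $l$ needed for the invariance of the moment block, and this is exactly the observation that $(\cdot)^c$ is invariant under permuting letters.
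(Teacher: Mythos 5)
Your proposal is correct and follows essentially the same route as the paper: establish that $M = A^{\otimes l} - (L((ww')^c))_{w,w'}$ is symmetric and $S_l$-invariant by checking that conjugation by each $P_\sigma$ fixes both $A^{\otimes l}$ and the moment block (via $(w^\sigma(w')^\sigma)^c=(ww')^c$), then apply Lemma~\ref{lemQ} and Lemma~\ref{leminvariant}. The only difference is that you spell out the symmetry check, which the paper leaves implicit.
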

\begin{proof}
For any $w,w'\in \ncx_{=l}$ we have $(P_\sigma A^{\otimes l} P_\sigma^\T)_{w,w'} = A^{\otimes l}_{w^\sigma, (w')^\sigma} = A^{\otimes l}_{w,w'}$ and 
\[
(P_\sigma (L((uu')^c))_{u,u'\in \ncx_{=l}} P_\sigma^*)_{w,w'} = L((w^\sigma(w')^\sigma)^c) = L((ww')^c).
\]
This  shows that the matrix $A^{\otimes l}-(L((ww')^c))_{w,w'\in \ncx_{=l}}$ is $S_l$-invariant. Hence the claimed result follows by using Lemma~\ref{lemQ} and Lemma~\ref{leminvariant}. \qed
\end{proof}

\subsection{Computational examples}

\subsubsection{Bipartite matrices} \label{sec: bipartite cp}

Consider the $(p+q)\times (p+q)$ matrices
\[
P(a,b) = \begin{pmatrix} (a+q) I_p & J_{p,q} \\ J_{q,p} & (b+p) I_q \end{pmatrix}, \quad a,b \in \R_+,
\]
where $J_{p,q}$ denotes the all-ones matrix of size $p \times q$. We have $P(a,b)=P(0,0)+D$ for some nonnegative diagonal matrix $D$. As can be easily verified, $P(0,0)$ is completely positive with $\cprank(P(0,0))=pq$, so $P(a,b)$ is completely positive with $pq \leq \cprank(P(a,b)) \leq pq + p + q$.

For $p=2$ and $q=3$ we have $\cprank(P(a,b))=6$ for all $a,b \ge 0$, which follows from the fact that $5 \times 5$ completely positive matrices with at least one zero entry have $\cprank$ at most $6$; see~\cite[Theorem~3.12]{BSM03}. Fawzi and Parrilo~\cite{FP16} show that $\tau_{\text{cp}}^{\mathrm{sos}}(P(0,0)) = 6$, and give a subregion of $[0,1]^2$ where $5 < \tau_{\text{cp}}^{\mathrm{sos}}(P(a,b)) < 6$. The next lemma shows the bound $\xib{cp}{2,\dagger}(P(a,b))$ is tight for all $a,b \geq 0$ and therefore strictly improves on $\taucp^{\mathrm{sos}}$ in this region.
\begin{lemma} \label{lem:lowercpfawziexample}
For $a,b \geq 0$ we have $\xib{cp}{2,\dagger}(P(a,b)) \geq pq$. 
\end{lemma}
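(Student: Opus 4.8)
The plan is to use the tensor constraint~\eqref{eq:tensorconstraints} at level $l=2$ together with the structure of $P(a,b)$ to force the objective value $L(1)$ to be at least $pq$. Since $\xib{cp}{2,\dagger}(P(a,b))$ admits an optimal solution $L$ by Proposition~\ref{prop:xi star dagger=taucp}, it suffices to show $L(1)\ge pq$ for every feasible $L$. First I would set $\alpha=L(1)$ and, as in the proof of Proposition~\ref{lem:comparison}, assemble the $n^2\times n^2$ matrix $X=(L(x_ix_jx_kx_l))$, which satisfies $X\preceq A\otimes A$ (with $A=P(a,b)$), the diagonal bound $X_{(i,j),(i,j)}\le A_{ij}^2$, and the ``flip'' symmetry $X_{(i,j),(k,l)}=X_{(i,l),(k,j)}$, and for which $\bigl(\begin{smallmatrix}\alpha & \mathrm{vec}(A)^\T\\ \mathrm{vec}(A) & X\end{smallmatrix}\bigr)\succeq 0$; equivalently $X\succeq \tfrac1\alpha\,\mathrm{vec}(A)\mathrm{vec}(A)^\T$.

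The key step is a clever choice of test vector (or, dually, a witness functional) exploiting the bipartite block structure. Writing indices of $[n]=[p+q]$ as $P$-block indices $i\in[p]$ and $Q$-block indices $\hat\jmath\in\{p+1,\dots,p+q\}$, the off-diagonal block $J_{p,q}$ means $A_{i\hat\jmath}=1$ for every cross pair. I would look at the principal submatrix of $X$ indexed by the $pq$ cross pairs $(i,\hat\jmath)$. On this submatrix the constraint $X\preceq A\otimes A$ restricts to $X\preceq (A\otimes A)|_{\text{cross}}$, the diagonal entries are $\le A_{i\hat\jmath}^2=1$, and the flip symmetry $X_{(i,\hat\jmath),(k,\hat l)}=X_{(i,\hat l),(k,\hat\jmath)}$ holds among cross pairs. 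Meanwhile the semidefinite (Schur) constraint gives $\langle \mathbf 1,X\mathbf 1\rangle\ge \tfrac1\alpha\langle \mathbf 1,\mathrm{vec}(A)\rangle^2$ when $\mathbf 1$ is supported on the cross pairs, and $\langle \mathbf 1,\mathrm{vec}(A)\rangle=\sum_{i,\hat\jmath}A_{i\hat\jmath}=pq$. Combining: $pq^2/\alpha \le \langle \mathbf 1, X\mathbf 1\rangle$. So the whole game is to bound $\langle\mathbf 1,X\mathbf 1\rangle$ from above by $pq$ using the diagonal bounds, the flip symmetry, and $X\preceq A\otimes A$. I expect that the flip symmetry forces the off-diagonal cross-block entries of $X$ to vanish in the relevant averaged sense (e.g.\ $X_{(i,\hat\jmath),(k,\hat l)}=X_{(i,\hat l),(k,\hat\jmath)}$ with $i\ne k$, $\hat\jmath\ne\hat l$ interacts with $X\preceq A\otimes A$, whose corresponding block is structured), yielding $\langle\mathbf 1, X\mathbf 1\rangle\le \sum_{i,\hat\jmath}X_{(i,\hat\jmath),(i,\hat\jmath)}+(\text{controlled cross terms})\le pq$, hence $pq^2/\alpha\le pq$ and $\alpha\ge q$ — which is not yet enough.

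Because the naive cross-pair averaging only yields $\alpha\ge q$ (or $\alpha\ge p$ by symmetry), the real obstacle — and the step I'd spend the most care on — is getting the sharp bound $pq$. For this I would instead use the full tensor constraint on $X$ together with the diagonal $P$-block structure: $A$ has large diagonal $(a+q)I_p$ on the $P$-block and $(b+p)I_q$ on the $Q$-block, and the constraints $L(\sqrt{A_{ii}}x_i-x_i^2)\ge0$, $L(A_{i\hat\jmath}-x_ix_{\hat\jmath})\ge 0$, and positivity of $M_2(L)$ should, via a suitable $6$-dimensional (for $p=2,q=3$) or general Schur-complement argument, pin down the cross-block of $X$ on the nose. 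Concretely I'd extract the $pq\times pq$ Gram-type matrix $R=(L(x_ix_{\hat\jmath})\cdot)$, note $R\preceq \tfrac1\alpha$-scaled blocks, and track that the only way to have $X\succeq\tfrac1\alpha\mathrm{vec}(A)\mathrm{vec}(A)^\T$ while $X_{(i,\hat\jmath),(i,\hat\jmath)}\le 1$ and respecting the flip identities is $\alpha\ge pq$, the extra factor coming from the rank-one ``block'' structure of $J_{p,q}=\mathbf 1_p\mathbf 1_q^\T$ forcing $X$ restricted to cross pairs to dominate $\tfrac1\alpha(\mathbf 1_p\mathbf 1_q^\T)^{\otimes}$, a rank-one $pq\times pq$ matrix with all entries $\tfrac1\alpha$, whose trace $pq/\alpha$ must be $\le pq$ only after also using that the off-diagonal entries are forced up to $1$ by positive semidefiniteness against the rank-one lower bound. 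I'd verify the final inequality $\alpha\ge pq$ by a direct eigenvalue/trace computation on this reduced $pq\times pq$ model, and separately check the hypotheses $a,b\ge0$ enter only through $A_{ii}\ge q$ (resp.\ $\ge p$) so the argument is uniform in $a,b$.
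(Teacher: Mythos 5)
Your overall skeleton---restrict attention to the principal submatrix of $M_2(L)$ indexed by $1$ and the $pq$ cross monomials $x_ix_{\hat\jmath}$ ($i\in[p]$, $\hat\jmath\in p+[q]$), use its positive semidefiniteness to get $\mathbf 1^\T X|_{\mathrm{cross}}\mathbf 1\ge \langle\mathbf 1,\mathrm{vec}(A)|_{\mathrm{cross}}\rangle^2/\alpha$, and then bound $\mathbf 1^\T X|_{\mathrm{cross}}\mathbf 1$ from above by $pq$---is exactly the paper's route, and it \emph{does} give $\alpha\ge pq$ directly. You lost the thread in two places. First, an arithmetic slip: $\langle\mathbf 1,\mathrm{vec}(A)|_{\mathrm{cross}}\rangle=pq$, so the Schur bound is $(pq)^2/\alpha$, not $pq^2/\alpha$; combined with $\mathbf 1^\T X|_{\mathrm{cross}}\mathbf 1\le pq$ this already yields $\alpha\ge pq$. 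This slip is why you concluded the ``naive'' argument only gives $\alpha\ge q$ and then went hunting, in your final paragraph, for a sharper argument that is never actually carried out and whose trace-based version ($pq/\alpha\le pq$) would only give $\alpha\ge 1$.

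Second, and more importantly, the step you merely ``expect''---that the off-diagonal entries of $X|_{\mathrm{cross}}$ vanish---is the heart of the proof, and it does not follow from the flip symmetry or from $X\preceq A\otimes A$ (a PSD constraint on $A\otimes A-X$ bounds off-diagonal entries by geometric means of diagonal ones; it does not kill them). The correct mechanism is the zero pattern of $P(a,b)$ inside the two diagonal blocks together with the linear positivity constraints~\eqref{eqposcp}: for $i\ne k$ in the same block we have $A_{ik}=0$, and the two constraints $L(x_ix_ku)\ge 0$ and $L((A_{ik}-x_ix_k)u)=-L(x_ix_ku)\ge 0$ force $L(x_ix_ku)=0$ for all $u\in[\bx]_2$. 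Any product of two \emph{distinct} cross monomials $x_ix_{\hat\jmath}\cdot x_kx_{\hat l}$ contains such a factor (either $x_ix_k$ with $i\ne k$ or $x_{\hat\jmath}x_{\hat l}$ with $\hat\jmath\ne\hat l$), so by commutativity $L(x_ix_{\hat\jmath}x_kx_{\hat l})=0$ and $X|_{\mathrm{cross}}$ is diagonal. Its diagonal entries satisfy $z=L((x_ix_{\hat\jmath})^2)\le A_{i\hat\jmath}\,L(x_ix_{\hat\jmath})=1$ by the constraint $L((A_{i\hat\jmath}-x_ix_{\hat\jmath})\,x_ix_{\hat\jmath})\ge 0$, whence $\mathbf 1^\T X|_{\mathrm{cross}}\mathbf 1=\sum_k z_k\le pq$ and the conclusion follows. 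Note the tensor constraints~\eqref{eq:tensorconstraints} play no role; only the constraints~\eqref{eqposcp} are needed, and the hypothesis $a,b\ge 0$ enters only through the vanishing of the within-block off-diagonal entries of $P(a,b)$, not through the size of its diagonal.
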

\begin{proof}
Let $L$ be feasible for $\xib{cp}{2,\dagger}(P(a,b))$ and let
\[
B = \begin{pmatrix} \alpha & c^\T \\ c & X \end{pmatrix}
\]
be the principal submatrix of $M_2(L)$ where the rows and columns are indexed by 
\[
\{1\} \cup \{x_ix_j : 1 \leq i \leq p, \, p+1 \leq j \in p+q\}.
\]
It follows that $c$ is the all ones vector $c = \mathbf{1}$. Moreover, if $P(a,b)_{ij} = 0$ for some $i\ne j$, then the constraints $L(x_ix_ju) \geq 0$ and $L((P(a,b)_{ij} - x_ix_j)u) \geq 0$ imply $L(x_i x_j u) = 0$ for all $u \in [\bx]_2$. Hence, $X_{x_ix_j,x_kx_l} = L(x_i x_j x_k x_l) = 0$ whenever $x_ix_j \neq x_k x_l$. It follows that $X$ is a diagonal matrix. We write
\[
B = \begin{pmatrix} \alpha & \mathbf{1}^\T \\ \mathbf{1} & \mathrm{Diag}(z_1, \ldots, z_{pq}) \end{pmatrix}.
\]
Since $\begin{pmatrix} 1 & - \mathbf{1}^\T \\ -\mathbf{1} & J \end{pmatrix} \succeq 0$ we have 
\[
0 \leq \mathrm{Tr}\left(\begin{pmatrix} \alpha & \mathbf{1}^\T \\ \mathbf{1} & \mathrm{Diag}(z_1, \ldots, z_{pq}) \end{pmatrix} \begin{pmatrix} 1 & - \mathbf{1}^\T \\ -\mathbf{1} & J \end{pmatrix}\right) = \alpha - 2 pq + \sum_{k = 1}^{pq} z_k.
\]
Finally, by the constraints $L((P(a,b)_{ij} - x_i x_j) u) \geq 0$ (with $i \in [p], j \in p+[q]$ and $u = x_i x_j$) and  $L(x_i x_j) = P(a,b)_{ij}$ we obtain $z_k \leq 1$ for all $k \in [pq]$. Combined with the above inequality, it follows that 
\[
L(1) = \alpha \geq 2pq - \sum_{k=1}^{pq} z_k \geq pq,
\]
and hence $\xib{cp}{2,\dagger}(P(a,b)) \geq pq$. \qed
\end{proof}

\subsubsection{Examples related to the DJL-conjecture} \label{sec:DJL}

The Drew-Johnson-Loewy conjecture~\cite{DJL94} states that the maximal $\cprank$ of an $n~\times~n$ completely positive matrix is equal to $\lfloor n^2/4 \rfloor$. Recently this conjecture has been disproven for $n=7,8,9,10,11$ in~\cite{BSU14} and for all $n \geq 12$ in~\cite{BSU15} (interestingly, it remains open for $n=6$). Here we study our bounds on the examples of~\cite{BSU14}. Although our bounds are not tight for the $\cprank$, they are non-trivial and as such may be of interest for future comparisons. For numerical stability reasons we have evaluated our bounds on scaled versions of the matrices from~\cite{BSU14}, so that the diagonal entries become equal to $1$. The matrices $\tilde M_7$, $\tilde M_8$ and $\tilde M_9$ correspond to the matrices $\tilde M$ in Examples 1,2,3 of~\cite{BSU14}, and $M_7$, $M_{11}$ correspond to the matrices $M$ in Examples 1 and 4. The column $\xib{cp}{2,\dagger}(\cdot) + x_i x_j$ corresponds to the bound $\xib{cp}{2,\dagger}(\cdot)$ where we replace $\SAcp$ by $\SAcp \cup \{ x_i x_j : 1 \leq i < j \leq n\}$.

\begin{table}[h]
\caption{Examples from~\cite{BSU14} with various bounds on their cp-rank}
\centering
\begin{tabular}{ l l l l l l l l l}
\hline\noalign{\smallskip}
Example & $\cprank(\cdot)$ & $\lfloor \frac{n^2}{4}\rfloor$ & $\rank(\cdot)$ & $\xib{cp}{1}(\cdot)$ &  $\xib{cp}{2}(\cdot)$ & $\xib{cp}{2,\dagger}(\cdot)$ & $\xib{cp}{2,\dagger}(\cdot) + x_i x_j$ & $\xib{cp}{3,\dagger}(\cdot)$ \\
\noalign{\smallskip}\hline\noalign{\smallskip}
  $M_7$ & $14$ & $12$  & $7$ & $2.64$ & $4.21$ & $7.21$ & $9.75$ & $10.50$ \\
  $\widetilde{M}_7$ & $14$ & $12$ & $7$ & $2.58$ & $4.66$ & $8.43$ & $9.53$  & $10.50$ \\ 
  $\widetilde{M}_8$ &  $18$ & $16$ & $8$ & $3.23$ & $5.45$ & $8.74$ & $10.41$ & $13.82$ \\ 
  $\widetilde{M}_9$ & $26$ & $20$ & $9$ & $3.39$ & $5.71$ & $11.60$ & $13.74$ & $17.74$ \\
  $M_{11}$ & $32$ & $30$ & $11$ & $4.32$ & $7.46$ & $20.76$ & $21.84$ & -- \\
  \noalign{\smallskip}\hline
\end{tabular}

\end{table}

\section{Lower bounds on the nonnegative rank}\label{sec:lowernnr}

In this section we adapt the techniques for the cp-rank from Section~\ref{sec:lowercp} to the asymmetric setting of the nonnegative rank. We now view a factorization $A = (a_i^\T b_j)_{i \in [m], j \in [n]}$ by nonnegative vectors as a factorization by positive semidefinite diagonal matrices, by writing $A_{ij} = \Tr(X_i X_{m+j})$, with $X_i =\Diag(a_i)$ and $X_{m+j} = \Diag(b_j)$. Note that we can view this as a ``partial matrix'' setting, where for the symmetric matrix $(\Tr(X_iX_k))_{i,k\in [m+n]}$ of size $m+n$, only the off-diagonal entries at the  positions $(i,m+j)$ for $i\in [m], j\in [n]$ are specified. 

This asymmetry requires rescaling the factors in order  to get upper bounds on their maximal eigenvalues, which is needed to ensure the Archimedean property for the selected localizing polynomials. For this we use the well-known fact that for any $A \in \R_+^{m \times n}$ there exists a factorization $A=(\Tr(X_iX_{m+j}))$ by diagonal nonnegative matrices of size $\rank_+(A)$, such that
\[
\lambda_\max(X_i), \lambda_\max(X_{m+j}) \leq \sqrt{A_\max}  \quad \text{for all} \quad i \in [m], j \in [n],
\]
where $A_\max := \max_{i,j} A_{ij}$. To see this, observe that for any rank one matrix $R = u v^\T$ with $0 \leq R \leq A$, one may assume $0 \leq u_i, v_j \leq \sqrt{A_\max}$ for all $i,j$.
Hence, the set 
\[
\SAplus = \big\{\sqrt{A_\max}x_i - x_i^2 : i \in [m+n]\big\} \cup \big\{A_{ij} - x_i x_{m+j} : i \in [m], j \in [n] \big\}
\]
is localizing for $A$; that is, there exists a minimal factorization $\bX$ of $A$ with $\bX \in \mathcal D(S_A^+)$.

\medskip
Given  $A\in\R^{m\times n}_{\ge 0}$, for each $t \in \N \cup \{\infty\}$ we consider the semidefinite program
\begin{align*}
\xib{+}{t}(A) = \mathrm{min} \big\{ L(1) : \; & L \in \R[x_1,\ldots,x_{m+n}]_{2t}^*,\\
&L(x_ix_{m+j}) = A_{ij} \quad \text{for} \quad i \in [m], j \in [n],\\
&L \geq 0 \quad \text{on} \quad \mathcal M_{2t}(\SAplus) \big\}.
\end{align*}
Moreover, define $\xib{+}{*}(A)$ by adding the constraint $\rank(M(L)) < \infty$ to the program defining $\xib{+}{\infty}(A)$. It it easy to check that $\xib{+}{t}(A)\le \xib{+}{\infty}(A)\le \xib{+}{*}(A)\le \rank_+(A)$ for $t \in \N$.

Denote by $\xib{+}{t,\dagger}(A)$ the strengthening of $\xib{+}{t}(A)$ where we add the positivity constraints
\begin{equation}\label{eqposrank+}
L(gu) \geq 0 \quad \text{for} \quad g \in \{1\} \cup \SAplus \quad  \text{and} \quad u \in [{\bf x}]_{2t-\deg(g)}.
\end{equation}
Note that these extra constraints can help for finite $t$, but  are redundant for $t \in \{\infty, *\}$.

\subsection{Comparison to other bounds}

As in the previous section, we compare our bounds to the bounds by Fawzi and Parrilo~\cite{FP16}. They introduce the following parameter $\tau_+(A)$ as analogue of the bound $\tau_\mathrm{cp}(A)$ for the nonnegative rank:
\[
\tau_+(A) = \min \Big\{ \alpha : \alpha\ge 0,\,A \in\alpha \cdot \mathrm{conv} \big\{ R \in \R^{m \times n}: 0 \leq R \leq A, \, \rank(R) \le   1\big\}\Big\},
\]
and  the  analogue $\tau_+^\mathrm{sos}(A)$ of the bound $\tau_{\mathrm{cp}}^{\mathrm{sos}}(A)$ for the nonnegative rank:
\begin{align*}
\tau_{+}^{\mathrm{sos}}(A) = \mathrm{inf} \big\{ \alpha : \; &  X \in \R^{mn \times mn}, \, \alpha \in \R,\\[-0.1em]
&\hspace{-0.3em}\begin{pmatrix} \alpha & \text{vec}(A)^\T  \\ \text{vec}(A) & X \end{pmatrix} \succeq 0, \\
& X_{(i,j),(i,j)} \leq A_{ij}^2 \quad \text{for} \quad 1 \leq i \leq m, 1 \leq  j \leq n, \\
& X_{(i,j),(k,l)} = X_{(i,l),(k,j)} \quad \text{for} \quad 1 \leq i < k \leq m, \; 1 \leq j < l \leq n \big\}.
\end{align*}

First we give the analogue of Proposition~\ref{prop:cpconv}, whose proof we omit since it is very similar.

\begin{proposition}\label{prop:+conv}
Let $A \in \R_+^{m \times n}$. For every $t \in \N \cup \{\infty, *\}$ the optimum in $\xib{+}{t}(A)$ is attained, and $\xib{+}{t}(A) \to \xib{+}{\infty}(A) = \xib{+}{*}(A)$ as $t\to\infty$. If $\xib{+}{t}(A)$ admits a flat optimal solution, then $\smash{\xib{+}{t}(A) = \xib{+}{*}(A)}$.
Moreover, $\smash{\xib{+}{\infty}(A) = \xib{+}{*}(A)}$ is the minimum of $L(1)$ over all conic combinations $\smash L$ of trace evaluations at elements of $\smash{D(\SAplus)}$ satisfying $A =( L(x_ix_{m+j}))$.
\end{proposition}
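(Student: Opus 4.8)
The plan is to follow the proofs of Proposition~\ref{prop:lowerbound} and Proposition~\ref{prop:cpconv}, adapting them to the commutative ``partial matrix'' setting of the nonnegative rank. We may assume $A \neq 0$, since for $A = 0$ the zero functional is optimal for every $t$ and all of $\xib{+}{t}(A)$, $\xib{+}{\infty}(A)$, $\xib{+}{*}(A)$ vanish.

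\emph{Archimedean property and attainment.} First I would note that $\MM(\SAplus)$ is Archimedean: since $\sqrt{A_\max}\,x_i - x_i^2 \in \SAplus$ for all $i \in [m+n]$, identity~\eqref{eq:x^2} applied with $c = \sqrt{A_\max}$ gives $A_\max - x_i^2 = (\sqrt{A_\max} - x_i)^2 + 2(\sqrt{A_\max}\,x_i - x_i^2) \in \MM_2(\SAplus)$, hence $(m+n)A_\max - \sum_{i=1}^{m+n} x_i^2 \in \MM_2(\SAplus)$. Since $\xib{+}{t}(A) \le \xib{+}{\infty}(A) \le \rank_+(A) < \infty$, one may add the redundant constraint $L(1) \le \xib{+}{\infty}(A)$ to the program defining $\xib{+}{t}(A)$; then Lemma~\ref{lemma:upperboundLw} shows that its feasible region is compact, so the optimum is attained for each $t \in \N$. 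Taking a pointwise convergent subsequence of optimal solutions $L_t$ (again via Lemma~\ref{lemma:upperboundLw}) produces a functional that is feasible and optimal for $\xib{+}{\infty}(A)$; this gives attainment at $t = \infty$ and $\xib{+}{t}(A) \to \xib{+}{\infty}(A)$. This is the exact analogue of the argument in Proposition~\ref{prop:lowerbound}.

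\emph{Equality $\xib{+}{\infty}(A) = \xib{+}{*}(A)$ and the reformulation.} The inequality $\xib{+}{*}(A) \ge \xib{+}{\infty}(A)$ is immediate, as $\xib{+}{*}(A)$ is obtained from $\xib{+}{\infty}(A)$ by adding the constraint $\rank(M(L)) < \infty$. For the converse, take $L$ optimal for $\xib{+}{\infty}(A)$. Using the Archimedean property just established, Theorem~\ref{thm:eval-comm} applies with $S = \SAplus$: the restriction of $L$ to $\R[\bx]_2$ extends to a conic combination $\sum_k \lambda_k L_{v_k}$ of evaluations at points $v_k \in D(\SAplus)$. This extension is nonnegative on $\MM(\SAplus)$, still satisfies $L(x_i x_{m+j}) = A_{ij}$ and has the same value $L(1)$, and has finite-rank moment matrix (being atomic), so it is feasible for $\xib{+}{*}(A)$. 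Hence $\xib{+}{*}(A) \le \xib{+}{\infty}(A)$, equality holds, the optimum in $\xib{+}{*}(A)$ is attained, and $\xib{+}{\infty}(A) = \xib{+}{*}(A)$ equals the minimum of $L(1)$ over conic combinations of (point) evaluations at elements of $D(\SAplus)$ satisfying $A = (L(x_i x_{m+j}))$. The flatness statement then follows from Theorem~\ref{thm:flat-comm}, exactly as in Proposition~\ref{prop:cpconv}.

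\emph{Where the work lies.} The only genuinely new feature relative to Proposition~\ref{prop:cpconv} is that the affine constraints now pin down only the off-diagonal block entries $L(x_i x_{m+j})$ rather than a full Gram matrix. This causes no real difficulty: these are still degree-$2$ constraints, hence are preserved when passing to the degree-$2$ restriction used by Theorem~\ref{thm:eval-comm}, and the Archimedean property --- the hypothesis needed by Lemma~\ref{lemma:upperboundLw}, Theorem~\ref{thm:eval-comm} and Theorem~\ref{thm:flat-comm} --- is supplied entirely by the scaled localizing polynomials $\sqrt{A_\max}\,x_i - x_i^2$, which is precisely why the rescaling of the factors was built into $\SAplus$. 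Thus the ``hard part'' is merely the routine verification that these three appendix results apply verbatim in the present partial-data setting.
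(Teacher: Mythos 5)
Your proposal is correct and is exactly the argument the paper intends: the authors omit the proof of this proposition precisely because it is the routine adaptation of Propositions~\ref{prop:lowerbound} and~\ref{prop:cpconv} that you carry out (Archimedeanness from $\sqrt{A_\max}\,x_i-x_i^2$ via~\eqref{eq:x^2}, compactness and pointwise limits via Lemma~\ref{lemma:upperboundLw}, the degree-$2$ extension via Theorem~\ref{thm:eval-comm}, and flatness via Theorem~\ref{thm:flat-comm}). Your closing remark that the partial (off-diagonal) affine constraints cause no difficulty because they are still degree-$2$ constraints is exactly the right observation.
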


Now we observe that the parameters 
$\xib{+}{\infty}(A)$ and $\xib{+}{*}(A)$ coincide with $\tau_+(A)$, so that we have a sequence of semidefinite programs converging to $\tau_+(A)$.
\begin{proposition} \label{prop:tau+}
For any $A \in \R_{\geq 0}^{m \times n}$, we have $\xib{+}{\infty}(A) = \xib{+}{*}(A) = \tau_+(A).$
\end{proposition}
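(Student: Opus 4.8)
\emph{Proof proposal.} By Proposition~\ref{prop:+conv} we already have $\xib{+}{\infty}(A) = \xib{+}{*}(A)$, and this common value equals the minimum of $L(1)$ over all conic combinations $L = \sum_k \lambda_k L_{v_k}$ (with $\lambda_k>0$ and $v_k\in D(\SAplus)$) satisfying $A = (L(x_ix_{m+j}))_{i\in[m],j\in[n]}$. So the plan is to show that this conic-combination formulation of $\xib{+}{*}(A)$ coincides with $\tau_+(A)$. The key structural observation is that, unlike $\tau_{\mathrm{cp}}(A)$ (compare Propositions~\ref{prop:tauprime} and~\ref{prop:xitotau}), the parameter $\tau_+(A)$ carries \emph{no} positive semidefinite constraint of the form $R\preceq A$ on its rank-one building blocks; it only asks $0\le R\le A$. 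This is exactly what the localizing set $\SAplus$ encodes, so no extra localizing constraints will be needed and $\xib{+}{*}(A)$ should match $\tau_+(A)$ on the nose.

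Concretely, I would set up a correspondence between points $v=(v_1,\dots,v_{m+n})\in D(\SAplus)$ and nonnegative rank-$\le 1$ matrices $R$ with $0\le R\le A$, via $v \leftrightarrow R := ab^{\T}$ where $a=(v_1,\dots,v_m)$ and $b=(v_{m+1},\dots,v_{m+n})$. One direction is immediate: if $v\in D(\SAplus)$ then $a,b\ge 0$ (since $\sqrt{A_\max}x_i-x_i^2\ge0$ forces $0\le v_i\le\sqrt{A_\max}$), $R=ab^\T$ has rank $\le1$, and the localizing polynomials $A_{ij}-x_ix_{m+j}$ give $R_{ij}=v_iv_{m+j}\le A_{ij}$, i.e.\ $0\le R\le A$; moreover $L_v(x_ix_{m+j})=R_{ij}$ and $L_v(1)=1$. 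For the reverse direction, given $R$ with $0\le R\le A$ and $\rank R\le1$, write $R=ab^\T$ with $a\in\R^m_{\ge0}$, $b\in\R^n_{\ge0}$ (possible after flipping signs since $R\ge0$), and rescale: if $R\ne0$ put $\alpha=\|a\|_\infty>0$ and replace $(a,b)$ by $(ca,b/c)$ with $c=\sqrt{A_\max}/\alpha$. Since $a_ib_j=R_{ij}\le A_{ij}\le A_\max$ implies $\|a\|_\infty\|b\|_\infty\le A_\max$, this rescaling leaves $R=ab^\T$ unchanged while forcing all entries of $ca$ and $b/c$ to lie in $[0,\sqrt{A_\max}]$; hence $v:=(ca,b/c)\in D(\SAplus)$. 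The case $R=0$ is handled by $v=0\in D(\SAplus)$.

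With this correspondence the two inequalities follow routinely. Given a feasible $L=\sum_k\lambda_kL_{v_k}$ for $\xib{+}{*}(A)$, the matrices $R_k=a_kb_k^\T$ lie in $\{R:0\le R\le A,\ \rank R\le1\}$ and $A=\sum_k\lambda_kR_k$, so normalizing $\mu_k=\lambda_k/\sum_l\lambda_l$ exhibits $A\in(\sum_l\lambda_l)\cdot\mathrm{conv}\{\dots\}$ and gives $\tau_+(A)\le L(1)$; taking the minimum yields $\tau_+(A)\le\xib{+}{*}(A)$. Conversely, from $A=\alpha\sum_k\mu_kR_k$ achieving $\tau_+(A)=\alpha$, set $\lambda_k=\alpha\mu_k$, discard the zero terms, pick rescaled points $v_k\in D(\SAplus)$ with $a_kb_k^\T=R_k$, and check that $L=\sum_k\lambda_kL_{v_k}$ is feasible with $L(1)=\alpha$, so $\xib{+}{*}(A)\le\tau_+(A)$. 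The only genuinely nonroutine step is the rescaling argument in the middle paragraph — making sure one can always fit the rank-one factors into the box $[0,\sqrt{A_\max}]^{m+n}$ imposed by the localizing polynomials $\sqrt{A_\max}x_i-x_i^2$ without altering $R$ — together with keeping track of the degenerate cases $A=0$ and $R=0$; everything else is bookkeeping around Proposition~\ref{prop:+conv}.
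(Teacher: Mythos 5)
Your proposal is correct and follows essentially the same route as the paper: both reduce to the conic-evaluation reformulation of $\xib{+}{*}(A)$ from Proposition~\ref{prop:+conv} and then identify points of $D(\SAplus)$ with rank-one matrices $0\le R\le A$ via the rescaling into the box $[0,\sqrt{A_\max}]^{m+n}$ (which the paper delegates to the discussion at the start of Section~\ref{sec:lowernnr}). Your write-up just makes that rescaling and the degenerate cases more explicit.
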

\begin{proof}
The discussion at the beginning of Section~\ref{sec:lowernnr} shows that for any rank one matrix $R$
satisfying $0 \leq R \leq A$ we may assume that $R=uv^\T$ with $(u,v)\in\R^m_+ \times \R^n_+$ and $ u_i,v_j \leq \sqrt{A_{\max}}$ for $i\in [m],j\in [n]$. Hence, $\tau_+(A)$ can be written as 
\begin{align*}
\min \Big\{\alpha : \alpha \geq 0,\, A \in \alpha& \cdot \mathrm{conv} \big\{ uv^\T \colon u \in \Big[0, \sqrt{A_\max}\Big]^m, v \in \Big[0, \sqrt{A_\max}\Big]^n,\,  uv^\T \leq A \big\} \Big\} \\
&=\min \Big\{ \alpha : \alpha \geq 0,\, A \in \alpha \cdot \mathrm{conv}\big\{uv^\T: (u,v) \in  D(\SAplus)\big\} \Big\}.
\end{align*}
The equality $\xib{+}{\infty}(A) = \xib{+}{*}(A)=\tau_+(A)$ now follows from the reformulation of $\xib{+}{*}(A)$ in Proposition~\ref{prop:+conv} in terms of conic evaluations, after noting that for $(u,v)$ in $ \R^m\times\R^n$ we have $(u,v)\in  D(\SAplus)$ if and only if the matrix $R=uv^\T$ satisfies $0\le R\le A$. \qed
\end{proof}

Analogously  to the case of the completely positive rank we have the following proposition. The proof is similar to that of Proposition 4.2, considering now for $M$ the principal submatrix of $M_2(L)$ indexed by the monomials 1 and $x_ix_{m+j}$ for $i\in [m]$ and $j\in [n]$.
\begin{proposition} \label{lem:xi+tausos}
If $A$ is a nonnegative matrix, then $\xib{+}{2,\dagger}(A) \geq \tau_{+}^{\mathrm{sos}}(A)$. 
\end{proposition}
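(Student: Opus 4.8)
The plan is to transcribe the proof of Proposition~\ref{lem:comparison} to the asymmetric setting, replacing the monomials $x_ix_j$ ($i,j\in[n]$) used there by the ``bipartite'' monomials $x_ix_{m+j}$ with $i\in[m]$, $j\in[n]$. A convenient simplification is that the program defining $\tau_{+}^{\mathrm{sos}}(A)$ contains no tensor constraint of the form $X\preceq A\otimes A$ (unlike $\taucpsos(A)$), so the positivity constraints~\eqref{eqposrank+} already present in $\xib{+}{2,\dagger}(A)$ are enough and no analogue of the tensor constraints~\eqref{eq:tensorconstraints} is needed.

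Concretely, I would take an arbitrary $L$ feasible for $\xib{+}{2,\dagger}(A)$ and produce a feasible point of the program defining $\tau_{+}^{\mathrm{sos}}(A)$ with objective value $L(1)$; taking the infimum over such $L$ then gives $\tau_{+}^{\mathrm{sos}}(A)\le\xib{+}{2,\dagger}(A)$. Set $\alpha=L(1)$ and define $X\in\R^{mn\times mn}$ by $X_{(i,j),(k,l)}=L(x_ix_{m+j}x_kx_{m+l})$ for $i,k\in[m]$, $j,l\in[n]$. The first step is to observe that
\[
M:=\begin{pmatrix}\alpha & \mathrm{vec}(A)^\T\\ \mathrm{vec}(A) & X\end{pmatrix}
\]
is precisely the principal submatrix of $M_2(L)$ with rows and columns indexed by $1$ and the monomials $x_ix_{m+j}$ ($i\in[m]$, $j\in[n]$), ordered compatibly with $\mathrm{vec}$: its off-diagonal block equals $\mathrm{vec}(A)$ because $L(x_ix_{m+j})=A_{ij}$. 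Since $L\ge 0$ on $\MM_4(\SAplus)\ni p^2$ (for $\deg p\le 2$), we have $M_2(L)\succeq 0$, and hence $M\succeq 0$.

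It then remains to verify the two sign constraints of $\tau_{+}^{\mathrm{sos}}(A)$. For the diagonal bound, apply the positivity constraint~\eqref{eqposrank+} with $g=A_{ij}-x_ix_{m+j}\in\SAplus$ and $u=x_ix_{m+j}\in[\bx]_2$ to get $0\le L((A_{ij}-x_ix_{m+j})x_ix_{m+j})=A_{ij}L(x_ix_{m+j})-L(x_i^2x_{m+j}^2)=A_{ij}^2-L(x_i^2x_{m+j}^2)$, so $X_{(i,j),(i,j)}=L(x_i^2x_{m+j}^2)\le A_{ij}^2$. The relation $X_{(i,j),(k,l)}=X_{(i,l),(k,j)}$ is immediate from commutativity, since $x_ix_{m+j}x_kx_{m+l}$ and $x_ix_{m+l}x_kx_{m+j}$ are the same monomial in $\R[\bx]$. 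Thus $(\alpha,X)$ is feasible for $\tau_{+}^{\mathrm{sos}}(A)$ with value $L(1)$, which completes the argument.

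I do not anticipate a genuine obstacle here; the only point requiring a little care is that the monomials $x_ix_{m+j}$ are pairwise distinct (each uses exactly one of $x_1,\dots,x_m$ and one of $x_{m+1},\dots,x_{m+n}$), so that $M$ is literally a principal submatrix of $M_2(L)$ --- in contrast with the completely positive case of Proposition~\ref{lem:comparison}, where $x_ix_j=x_jx_i$ forces one to duplicate rows and columns. Everything else is a routine transcription.
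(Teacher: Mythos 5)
Your argument is correct and is exactly the route the paper takes: the paper's proof simply says to mimic Proposition~\ref{lem:comparison}, taking for $M$ the principal submatrix of $M_2(L)$ indexed by $1$ and the monomials $x_ix_{m+j}$ ($i\in[m]$, $j\in[n]$), which is precisely what you do. Your additional observations — that the bipartite monomials are pairwise distinct so no row/column duplication is needed, that the diagonal bound needs no special case since every $A_{ij}-x_ix_{m+j}$ lies in $\SAplus$, and that no tensor constraint has to be verified — are all accurate.
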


In the remainder of this section we recall how $\tau_+(A)$ and $\tau_{+}^{\mathrm{sos}}(A)$ compare to other bounds in the literature. These  bounds can be divided into two categories: combinatorial lower bounds and norm-based lower bounds. The following diagram from~\cite{FP16} summarizes how $\tau_+^{\mathrm{sos}}(A)$ and $\tau_+(A)$ relate to the combinatorial lower bounds
\[
\begin{array}{rcccccl}
                          &      & \tau_+^{\mathrm{sos}}(A) & \leq & \tau_+(A) & \leq & \rank_+(A)\\
                          &      &  \rotatebox[origin=c]{90}{{\large  $\leq$}} &  &    \rotatebox[origin=c]{90}{{\large  $\leq$}} &  &  \quad \rotatebox[origin=c]{90}{{\large  $\leq$}} \\
\mathrm{fool}(A) = \omega(\mathrm{RG}(A)) & \leq & \overline{\vartheta}(\mathrm{RG}(A)) & \leq & \chi_{\mathrm{frac}}(\mathrm{RG}(A)) & \leq & \chi(\mathrm{RG}(A)) = \rank_B(A).
\end{array}
\]
Here  $\mathrm{RG}(A)$ is the {\em rectangular graph}, with  $V = \{(i,j)\in [m]\times [n]: A_{ij} > 0\}$ as vertex set and  $E = \{ ((i,j),(k,l)): A_{il} A_{kj}= 0\}$ as edge set. The coloring number of $\mathrm{RG}(A)$ coincides with  the well known \emph{rectangle covering number} (also denoted $\rank_B(A)$), which was used, e.g.,  in~\cite{FMPTdW12} to show that the extension complexity of the correlation polytope is exponential. The clique number of $\mathrm{RG}(A)$ is also known as the {\em fooling set number} (see, e.g.,~\cite{FKPT}). Observe that the above combinatorial lower bounds only depend on the sparsity pattern of the matrix $A$, and that they are all equal to one for a strictly positive matrix.

Fawzi and Parrilo~\cite{FP16} have furthermore shown that the bound  $\tau_+(A)$ is at least as good as norm-based lower bounds:
\[
\tau_+(A) = \underset{\substack{\mathcal N \text{ monotone and}  \\ \text{  positively homogeneous}}}{\sup} \frac{\mathcal N^*(A)}{\mathcal N(A)}. 
\]
Here, a function $\mathcal N: \R^{m \times n}_{+} \rightarrow \R_{+}$ is \emph{positively homogeneous} if $\mathcal N(\lambda A) = \lambda \mathcal N(A)$ for all $\lambda \geq 0$ and \emph{monotone} if $\mathcal N(A) \leq \mathcal N(B)$ for $A \leq B$, and $\mathcal N^*(A)$ is defined as 
\begin{align*}
\mathcal N^*(A) = \max \{ L(A) : &\ L:\R^{m\times n}\rightarrow \R  \text { linear and } L(X) \leq 1 
\text{ for all } X \in \R^{m \times n}_{+} \\  & \text{ with } \rank(X) \leq 1
 \text{ and } \mathcal N(X) \leq 1\}.
\end{align*}
These bounds are called {\em norm-based}  since norms often provide valid functions $\mathcal N$. For example, when $\mathcal N$ is the  $\ell_\infty$-norm, Rothvo{\ss}~\cite{Roth14} used the corresponding lower bound to show that the matching polytope has exponential extension complexity. 

When $\mathcal N$ is the Frobenius norm: $\mathcal N(A) = (\sum_{i,j} A_{ij}^2)^{1/2}$, the parameter $\mathcal N^*(A)$ is known as the \emph{nonnegative nuclear norm}. In~\cite{FP15} it is denoted by $\nu_+(A)$, shown to satisfy $\rank_+(A)\geq\left(\nu_+(A)/||A||_F\right)^2$, and reformulated as 
\begin{align}
\hspace{-0.18cm} \nu_+(A) &= \min \big\{ \sum_i \lambda_i : A = \sum_{i} \lambda_i u_i v_i^\T, \, (\lambda_i,u_i,v_i) \in \R^{1+m+n}_{+}, 
\, ||u_i||_2 = ||v_i||_2 = 1 \big\} \label{eq:nuplus} \\ 
& = \max \big\{ \langle A, W \rangle : W \in \R^{m \times n}, \, \bigl(\begin{smallmatrix} I & -W \\ -W^\T & I\end{smallmatrix}\bigr) \text{ is copositive} \big\}. \label{eq:nupluscopo}
\end{align}
where the cone of copositive matrices is the dual of the cone of completely positive matrices. Fawzi and Parrilo~\cite{FP15} use the copositive formulation~\eqref{eq:nupluscopo} to provide bounds $\nu_+^{[k]}(A)$ ($k\ge 0$),  based on inner approximations of the copositive cone from~\cite{Par00},  which converge to $\nu_+(A)$ from below.
We now observe that by Theorem~\ref{thm:eval-comm} the atomic formulation of $\nu_+(A)$ from~\eqref{eq:nuplus} can be seen as a moment optimization problem:
\[
\nu_+(A) = \min \int_{V(S)} 1 \, d \mu(x) \quad \text{s.t.} \quad A_{ij} = \int_{V(S)} x_i x_{m+j} \, d \mu(x) \quad \text{for}\quad i\in [m], j\in [n].
\]
Here, the optimization variable $\mu$ is required to be a Borel measure on the variety $V(S)$, where 
\[
S=\textstyle{\{\sum_{i=1}^mx_i^2-1, \ \sum_{j=1}^n x_{m+j}^2-1\}}.
\]
(The same observation is made in~\cite{TS15} for the real nuclear norm of a symmetric $3$-tensor and in~\cite{Nie16} for symmetric odd-dimensional tensors.) For $t \in \N \cup \{\infty\}$, let $\mu_t(A)$ denote the parameter defined analogously to $\xib{+}{t}(A)$, where we replace the condition $L\ge 0$ on $\mathcal M_{2t}(S_A^+)$ by  $L\ge 0$ on $\mathcal M_{2t}(\{x_1,\ldots, x_{m+n}\})$ and $L=0$ on $\mathcal I_{2t}(S)$, and let $\mu_*(A)$ be obtained by adding the constraint $\rank(M(L)) < \infty$ to  $\mu_\infty(A)$. We have $\mu_t(A) \rightarrow \mu_\infty(A) = \mu_*(A) = \nu_+(A)$ by Theorem~\ref{thm:eval-comm} and (a non-normalized analogue of) Theorem~\ref{thm:convcomm}. One can show that $\mu_1(A)$ with the additional constraints $L(u) \geq 0$ for all $u \in \cx_2$, is at least as good as $\nu_+^{[0]}(A)$. It is not clear how the hierarchies $\mu_t(A)$ and $\nu_+^{[k]}(A)$ compare in general.

\subsection{Computational examples}
We illustrate the performance of our approach  by comparing our lower bounds $\xib{+}{2,\dagger}$ and $\xib{+}{3,\dagger}$ to the lower bounds $\tau_+$ and $\tau_+^{\mathrm{sos}}$ on the two examples considered in~\cite{FP16}.

\subsubsection{All nonnegative $2 \times 2$ matrices}

For $A(\alpha) = \bigl(\begin{smallmatrix} 1 & 1 \\ 1 & \alpha\end{smallmatrix}\bigr)$, Fawzi and Parrilo~\cite{FP16} show that
\[
\tau_+(A(\alpha)) = 2-\alpha \quad \text{and} \quad \tau_+^{\mathrm{sos}}(A(\alpha)) = \frac{2}{1+\alpha} \quad \text{for all} \quad 0 \leq \alpha \leq 1.
\]
Since the parameters $\tau_+(A)$ and $\tau_+^{\mathrm{sos}}(A)$ are invariant under scaling and permuting rows and columns of $A$, one can use the identity 
\[
\begin{pmatrix} 1 & 1 \\ 1 & \alpha \end{pmatrix} =  \begin{pmatrix}  1 & 0 \\ 0 & \alpha \end{pmatrix}\begin{pmatrix} 1 & 1 \\ 1 & 1/\alpha \end{pmatrix}\begin{pmatrix}  0 & 1 \\ 1 & 0 \end{pmatrix}
\]	
to see this describes the parameters for all nonnegative $2 \times 2$ matrices. 
By using a semidefinite programming solver for $\alpha = k/100$, $k \in [100]$, we see that $\xib{+}{2}(A(\alpha))$ coincides with $\tau_+(A(\alpha))$.

\subsubsection{The nested rectangles problem}

In this section we consider the nested rectangles problem as described in~\cite[Section~2.7.2]{FP16} (see also~\cite{MSvS03}), which asks for which $a, b$ there exists a triangle $T$ such that $R(a,b) \subseteq T \subseteq P$, where $R(a,b) = [-a,a] \times [-b,b]$ and $P = [-1,1]^2$.

The nonnegative rank relates not only to the extension complexity of a polytope~\cite{Yan91}, but also to extended formulations of nested pairs~\cite{BFPS15,GG12}. An \emph{extended formulation} of a pair of polytopes $P_1\subseteq P_2 \subseteq \R^d$ is a (possibly) higher dimensional polytope $K$ whose projection $\pi(K)$ is nested between $P_1$ and $P_2$. Let us suppose $\pi(K)= \{ x \in \R^d : y \in \R_+^k, \, (x,y) \in K\}$ and $K= \{(x,y): Ex+Fy = g,\, y \in \R^k_+\}$, then $k$ is the \emph{size of the extended formulation}, and the smallest such $k$ is called the \emph{extension complexity} of the pair $(P_1, P_2)$. It is known (cf.~\cite[Theorem~1]{BFPS15}) that the extension complexity of the pair $(P_1,P_2)$, where 
\[
P_1 = \mathrm{conv}(\{v_1, \ldots, v_n\}) \quad \text{and} \quad P_2 = \{x : a_i^\T x \leq b_i \text{ for } i \in [m]\},
\]
is equal to the nonnegative rank of the generalized slack matrix $S_{P_1,P_2} \in \R^{m \times n}$, defined by
\[
(S_{P_1,P_2})_{ij} = b_j - a_j^\T v_i \quad \text{for} \quad i\in [m], j\in [n].
\]
Any nonnegative matrix is the slack matrix of some nested pair of polytopes~\cite[Lemma~4.1]{GPT13} (see also~\cite{GG12}).

Applying this to the pair  $(R(a,b), P)$, one immediately sees that there exists a polytope $K$ with at most three facets whose projection $T = \pi(K)\subseteq \R^2$ satisfies $R(a,b) \subseteq T \subseteq P$ if and only if the pair $(R(a,b), P)$ admits an extended formulation of size $3$. For $a,b>0$, the polytope $T$ has to be $2$ dimensional, therefore $K$ has to be at least $2$ dimensional as well; it follows that $K$ and $T$ have to be triangles. 
Hence there exists a triangle $T$ such that $R(a,b) \subseteq T \subseteq P$ if and only if the nonnegative rank of the slack matrix $S(a,b) := S_{R(a,b),P}$ is equal to $3$. One can verify that
\[
S(a,b) = \begin{pmatrix} 1-a & 1+a & 1-b & 1+b \\  1+a & 1-a & 1-b & 1+b \\1+a & 1-a & 1+b &1-b \\ 1-a & 1+a & 1+b & 1-b \end{pmatrix}.
\]
Such a triangle exists if and only if $(1+a)(1+b) \leq 2$ (see~\cite[Proposition~4]{FP16} for a proof sketch). To test the quality of their bound, Fawzi and Parrilo~\cite{FP16} compute $\tau_+^{\mathrm{sos}}(S(a,b))$  for different values of $a$ and $b$. In doing so they determine the region where  $\tau_+^{\mathrm{sos}}(S(a,b))>3$. We do the same for the bounds $\xib{+}{1,\dagger}(S(a,b)), \xib{+}{2,\dagger}(S(a,b))$ and $\xib{+}{3,\dagger}(S(a,b))$, see Figure~\ref{fignestedrectangles}. The results show that $\xib{+}{2,\dagger}(S(a,b))$ strictly improves upon the bound $\tau_+^{\mathrm{sos}}(S(a,b))$, and that $\xib{+}{3,\dagger}(S(a,b))$ is again a strict improvement over $\xib{+}{2,\dagger}(S(a,b))$.
\begin{figure}
\centering
\begin{tikzpicture}
    \begin{axis}[enlargelimits=false,axis on top,xlabel =$a$, ylabel = $b$, 
    		xtick={0,0.2,0.4,0.6,0.8,1},ytick={0,0.2,0.4,0.6,0.8,1},
		xticklabel style={tick align = outside},
		yticklabel style={tick align = outside},
		axis equal image
		]
				
       \addplot graphics
       [xmin=0,xmax=1,ymin=0,ymax=1]
       {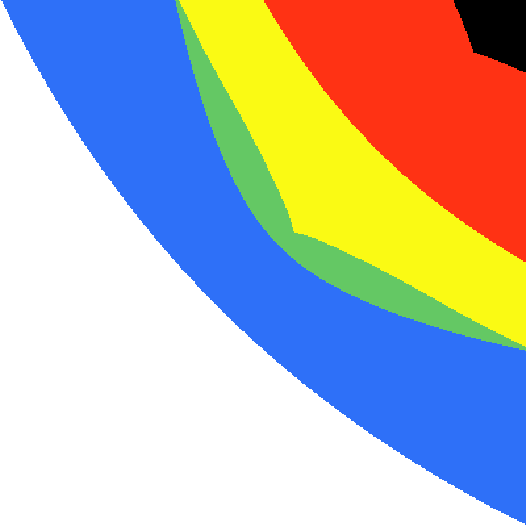};
    \end{axis}
\end{tikzpicture}
\caption{The colored region corresponds to $\rank_+(S(a,b)) = 4$. The top right region (black) corresponds to $\xib{+}{1,\dagger}(S(a,b)) >3$, the two top right regions (black and red) together correspond to $\tau_+^\mathrm{sos}(S(a,b)) > 3$, the three top right regions (black, red and yellow) to $\xib{+}{2,\dagger}(S(a,b))>3$, and the four top right regions (black, red, yellow, and green) to $\xib{+}{3,\dagger}(S(a,b))>3$}
\label{fignestedrectangles}
\end{figure}

\section{Lower bounds on the positive semidefinite rank} \label{sec:psdrank}

The positive semidefinite rank can be seen as an {\em asymmetric} version of the completely positive semidefinite rank.
Hence, as was the case in the previous section for the nonnegative rank, we need to select suitable factors in a minimal factorization in order to be able to bound their maximum eigenvalues and obtain a localizing set of polynomials leading to an Archimedean quadratic module.

For this we can follow, e.g., the approach in~\cite[Lemma~5]{LWdW17} to rescale a  factorization and claim that,
for any $A \in \R^{m\times n}_+$ with psd-rank$_\C(A) = d$, there exists a factorization $A =( \langle X_i, X_{m+j}\rangle)$ by  matrices $X_1, \ldots, X_{m+n} \in \Hermitian_{+}^d$ such that $\sum_{i=1}^m X_i = I$ and $\mathrm{Tr}(X_{m+j}) = \sum_i A_{ij}$ for all $j\in [n]$. Indeed, starting from any  factorization $X_i,X_{m+j}$ in $\Hermitian^d_+$ of $A$, we may replace $X_i$ by $X^{-1/2}X_iX^{-1/2}$ and $X_{m+j}$ by $X^{1/2}X_{m+j}X^{1/2}$, where $X:=\sum_{i=1}^m X_i$ is positive definite (by minimality of $d$). This argument shows that the set of polynomials 
\[
\SApsd = \big\{x_i - x_i^2 : i \in [m]\big\} \cup \big\{\Big(\sum_{i=1}^m A_{ij}\Big) x_{m+j} - x_{m+j}^2 : j \in [n] \big\}
\]
is localizing for $A$; that is, there is {\em at least one} minimal factorization $\bX$ of $A$ such that $g(\bX)\succeq 0$ for all polynomials $g\in \SApsd$. Moreover, for the same minimal factorization $\bX$ of $A$ we have $p(\bX)  (1-\sum_{i=1}^m X_i) = 0$ for all $p \in \R\ncx$. 

Given $A\in\R^{m\times n}_{+}$, for each $t \in \N \cup \{\infty\}$ we consider the semidefinite program
\begin{align*}
\xib{psd}{t}(A) = \mathrm{min} \big\{ L(1) : \; & L \in \R\langle x_1,\ldots,x_{m+n}\rangle_{2t}^*,\\
&L(x_ix_{m+j}) = A_{ij} \quad \text{for} \quad i \in [m], j \in [n],\\[-0.2em]
&L \geq 0 \quad \text{on} \quad \mathcal M_{2t}(\SApsd), \\ 
&L = 0 \quad \text{on} \quad \mathcal I_{2t}(1-\textstyle{\sum_{i=1}^m x_i}) \big\}.
\end{align*}
We additionally define $\xib{psd}{*}(A)$ by adding the constraint $\rank(M(L)) < \infty$ to the program defining $\smash{\xib{psd}{\infty}(A)}$ (and considering the infimum instead of the minimum, since we do not know if the infimum is attained in $\smash{\xib{psd}{*}(A)}$).
By the above discussion it follows that 
the parameter $\smash{\xib{psd}{*}(A)}$ is a lower bound on psd-rank$_\C(A)$
and we have
\[
\xib{psd}{1}(A)\le \ldots \le \xib{psd}{t}(A)\le \ldots \le \xib{psd}{\infty}(A)\le \xib{psd}{*}(A)\le \psdrank_\C(A).
\] 
Note that, in contrast to the previous bounds, the parameter $\xib{psd}{t}(A)$ is not invariant under rescaling the rows of $A$ or under taking the transpose of $A$ (see Section~\ref{sec: examples psd rank polygons}).

It follows from the construction of $\SApsd$ and Equation~\eqref{eq:x^2} that the quadratic module $\MM(\SApsd)$ is Archimedean, and hence the following analogue of Proposition~\ref{prop:lowerbound} can be shown.
\begin{proposition} 
Let $A \in \R^{m \times n}_+$. For each $t \in \N \cup \{\infty\}$, the optimum in $\xib{psd}{t}(A)$ is attained, and we have
\[
\lim_{t \to \infty} \xib{psd}{t}(A) = \xib{psd}{\infty}(A).
\]
Moreover, $\smash{\xib{psd}{\infty}(A)}$ is equal to the infimum over all  
 $\alpha \geq 0$ for which there exists a unital $C^*$-algebra $\MA$ with tracial state $\tau$ and $\bX \in \mathcal D_{\mathcal A} (\SApsd) \cap \mathcal V_\mathcal{A}(1-\textstyle{\sum_{i=1}^m x_i})$ such that $A = \alpha \cdot (\tau(X_iX_{m+j}))_{i\in [m],j\in [n]}$.
\end{proposition}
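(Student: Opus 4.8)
The plan is to mirror the proof of Proposition~\ref{prop:lowerbound} almost verbatim, since the program defining $\xib{psd}{t}(A)$ differs from that of $\xib{cpsd}{t}(A)$ only in three respects: the off-diagonal constraints $L(x_ix_{m+j})=A_{ij}$, the choice of localizing set $\SApsd$, and the extra ideal constraint $L=0$ on $\mathcal I_{2t}(1-\sum_{i=1}^m x_i)$. First I would observe that the sequence $(\xib{psd}{t}(A))_t$ is monotonically nondecreasing (a feasible $L$ for level $k$ restricts to a feasible $L$ for level $t\le k$) and bounded above by $\xib{psd}{\infty}(A)<\infty$, hence its limit exists and is at most $\xib{psd}{\infty}(A)$.

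Next I would establish attainment and the matching lower bound $\lim_t \xib{psd}{t}(A)\ge \xib{psd}{\infty}(A)$ by a compactness argument. The key point is that we can add the redundant constraint $L(1)\le \xib{psd}{\infty}(A)$ at each finite level, and then we need an a priori bound on $|L(w)|$ for all words $w$ in terms of $L(1)$; this is exactly what Lemma~\ref{lemma:upperboundLw} provides once we know that $R-\sum_i x_i^2\in\MM_{2t}(\SApsd)+\mathcal I_{2t}(1-\sum_i x_i)$ for some $R$, i.e. that $\MM(\SApsd)+\mathcal I(1-\sum_i x_i)$ is Archimedean. For $i\in[m]$, identity~\eqref{eq:x^2} with $c=1$ gives $1-x_i^2=(1-x_i)^2+2(x_i-x_i^2)\in\MM_2(\SApsd)$, so $\sum_{i=1}^m x_i^2\preceq m$ modulo the quadratic module; for $j\in[n]$, applying~\eqref{eq:x^2} with $c=\sum_i A_{ij}$ bounds $x_{m+j}^2$ in the quadratic module as well. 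Summing yields the Archimedean property, so Lemma~\ref{lemma:upperboundLw} applies and the feasible region of $\xib{psd}{t}(A)$ (with the redundant bound added) is compact, hence the infimum is attained by some $L_t$. Then $(L_t)_t$, suitably extended by zero outside degree $2t$, has a pointwise convergent subsequence with limit $L\in\R\langle\bx\rangle^*$; one checks that $L$ is symmetric, satisfies $L(x_ix_{m+j})=A_{ij}$, is nonnegative on $\MM(\SApsd)$, and vanishes on $\mathcal I(1-\sum_i x_i)$ — all of these pass to pointwise limits — so $L$ is feasible for $\xib{psd}{\infty}(A)$ with $L(1)=\lim_t\xib{psd}{t}(A)\le\xib{psd}{\infty}(A)$, forcing equality and optimality of $L$.

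Finally, the $C^*$-algebraic reformulation of $\xib{psd}{\infty}(A)$ follows directly from Theorem~\ref{propLinfinitedim} (the GNS-type representation theorem for symmetric linear functionals nonnegative on an Archimedean quadratic module and vanishing on an ideal): any feasible $L$ with $L(1)=\alpha$ is represented as $L(p)=\alpha\,\tau(p(\bX))$ for a unital $C^*$-algebra $\MA$ with tracial state $\tau$ and $\bX\in\mathcal D_{\mathcal A}(\SApsd)\cap\mathcal V_{\mathcal A}(1-\sum_i x_i)$, and conversely such data produce a feasible $L$; the constraint $L(x_ix_{m+j})=A_{ij}$ becomes $A=\alpha\cdot(\tau(X_iX_{m+j}))$, and the infimum over $\alpha$ matches $\xib{psd}{\infty}(A)$. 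The only genuinely new bookkeeping relative to Proposition~\ref{prop:lowerbound} is verifying the Archimedean property for the two-part localizing set $\SApsd$ together with the ideal, and making sure Theorem~\ref{propLinfinitedim} is stated in a form that accommodates an ideal constraint; both are routine given the tools in Appendix~\ref{sec:background}, so I expect no real obstacle — the proof is essentially a transcription with $c=1$ and $c=\sum_i A_{ij}$ substituted into~\eqref{eq:cx}--\eqref{eq:x}. \qed
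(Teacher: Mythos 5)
Your proposal is correct and follows exactly the route the paper intends: the paper omits the proof, noting only that the Archimedean property of $\MM(\SApsd)$ follows from the construction of $\SApsd$ and identity~\eqref{eq:x^2}, and that the rest is an analogue of Proposition~\ref{prop:lowerbound}; your verification of the Archimedean condition (with $c=1$ for $i\in[m]$ and $c=\sum_i A_{ij}$ for $j\in[n]$), the compactness argument via Lemma~\ref{lemma:upperboundLw}, and the appeal to Theorem~\ref{propLinfinitedim} (which already accommodates the ideal constraint) supply precisely the details left implicit. The only trivial omission is that "tracial" should appear alongside "symmetric" in the list of properties passing to the pointwise limit.
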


\subsection{Comparison to other bounds}

In~\cite{LWdW17} the following bound on the complex positive semidefinite rank was derived: 
\begin{equation} \label{eq:lowerbound LWdW}
\psdrank_\C(A) \geq \sum_{i=1}^m \max_{j \in [n]} \frac{ A_{ij}}{\sum_i A_{ij}}.
\end{equation}
If a feasible linear form $L$ to $\xib{psd}{t}(A)$ satisfies the inequalities $L(x_i( \sum_i A_{ij} - x_{m+j}))~\geq~0$ for all $i \in [m], j \in [n]$, then $L(1)$ is at least the above lower bound. Indeed, the inequalities give 
\[
L(x_i) \geq \max_{j \in [n]} \, \frac{L(x_i x_{m+j})}{\sum_i A_{ij}} = \max_{j \in [n]} \, \frac{A_{ij}}{ \sum_i A_{ij}}. 
\]
and hence 
\[
L(1) = \sum_{i = 1}^m L(x_i) \geq \sum_{i=1}^m \max_{j \in [n]} \frac{ A_{ij}}{\sum_i A_{ij}}.
\]
The inequalities $L(x_i( \sum_i A_{ij} - x_{m+j})) \geq 0$ are easily seen to be valid for trace evaluations at points of $\MD(\SApsd)$. More importantly, as in Lemma~\ref{lem:bilinear}, these inequalities are satisfied by feasible linear forms to the programs $\xib{psd}{\infty}(A)$ and $\xib{psd}{*}(A)$. Hence, $\xib{psd}{\infty}(A)$ and $\xib{psd}{*}(A)$ are at least as good as the lower bound~\eqref{eq:lowerbound LWdW}.

In~\cite{LWdW17} two other fidelity based lower bounds on the psd-rank were defined; we do not know how they compare to $\xib{psd}{t}(A)$. 

\subsection{Computational examples}

In this section we apply our bounds to some (small) examples taken from  the literature, namely $3\times 3$ circulant matrices and slack matrices of small polygons.

\subsubsection{Nonnegative circulant matrices of size $3$}

We consider the nonnegative circulant matrices of size $3$ which are, up to scaling, of the form 
\[
M(b,c) = \begin{pmatrix} 1 & b & c \\ c & 1 & b \\ b & c & 1 \end{pmatrix} \quad \text{with} \quad b,c \geq 0.
\]
If $b=1=c$, then $\rank(M(b,c)) = \psdrank_\R(M(b,c)) = \psdrank_\C(M(b,c)) = 1$.
Otherwise we have $\rank(M(b,c))\ge 2$, which implies $\psdrank_\K(M(b,c)) \geq 2$ for $\K \in \{\R,\C\}$. In~\cite[Example~2.7]{FGPRT15} it is shown that
\[
\psdrank_\R(M(b,c)) \leq 2 \quad \Longleftrightarrow \quad 1 +b^2 +c^2 \leq 2(b + c + bc).
\]
Hence, if $b$ and $c$ do not satisfy the above relation then $\psdrank_\R(M(b,c))=3$.

\begin{figure}
\centering
\begin{tikzpicture}[scale=1]
    \begin{axis}[enlargelimits=false,axis on top,xlabel =$b$, ylabel = $c$, 
    		xtick={0,0.5,1,1.5,2,2.5,3,3.5,4},ytick={0,0.5,1,1.5,2,2.5,3,3.5,4},
		xticklabel style={tick align = outside},
		yticklabel style={tick align = outside},
		axis equal image]
				
       \addplot graphics
       [xmin=0,xmax=4,ymin=0,ymax=4]
       {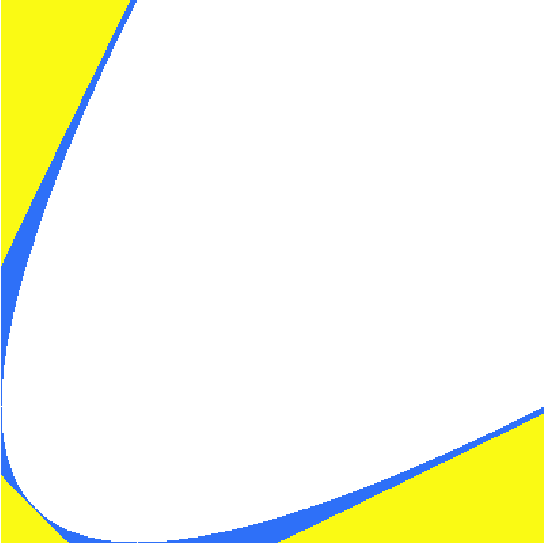};
    \end{axis}
\end{tikzpicture}
\caption{The colored region corresponds to the values $(b,c)$ for which $\psdrank_\R(M(b,c)) =3$; the outer region (yellow) shows the values of $(b,c)$ for which $\xib{psd}{2}(M(b,c))>2$.}
\label{fig:psdcirc}
\end{figure}

To see how good our lower bounds are for this example, we use a semidefinite programming solver to compute $\xib{psd}{2}(M(b,c))$ for $(b,c) \in [0,4]^2$ (with stepsize $0.01$). In Figure~\ref{fig:psdcirc} we see that the bound $\xib{psd}{2}(M(b,c))$ certifies that $\psdrank_\R(M(b,c)) =\psdrank_\C(M(b,c))=3$ for most values $(b,c)$ where $\psdrank_\R(M(b,c))=3$.

\subsubsection{Polygons} \label{sec: examples psd rank polygons} 
Here we consider the slack matrices of two polygons in the plane, where the bounds are sharp (after rounding) and illustrate the dependence on scaling the rows or taking the transpose.  
We consider the quadrilateral $Q$ with vertices 
$(0,0),(0,1),(1,0),(2,2)$, and the regular hexagon $H$, whose slack matrices are given by
\[
S_Q = \begin{pmatrix} 0 &0 &2 &2 \\ 1 & 0 & 0& 3 \\ 0 & 1 & 3 & 0 \\ 2 &2 &0 & 0\end{pmatrix}, \qquad 
S_H = \begin{pmatrix} 0 & 1& 2& 2& 1& 0 \\ 0& 0& 1&2 &2& 1 \\1 & 0 & 0 & 1 & 2 & 2 \\ 2& 1& 0& 0& 1& 2\\ 2& 2& 1& 0& 0 &1 \\ 1& 2& 2& 1& 0 &0 \end{pmatrix}.
\]
Our lower bounds on the $\psdrank_\C$ are not invariant under taking the transpose, indeed numerically we have $\xib{psd}{2}(S_Q) \approx 2.266$ and $\xib{psd}{2}(S_Q^\T) \approx 2.5$. The slack matrix $S_Q$ has $\psdrank_\R(S_Q) = 3$ (a corollary of~\cite[Theorem 4.3]{GRT13}) and therefore both bounds certify $\psdrank_\C(S_Q) = 3 = \psdrank_\R(S_Q)$. 

Secondly, our bounds are not invariant under rescaling the rows of a nonnegative matrix. Numerically we have $\xib{psd}{2}(S_H) \approx 1.99$ while $\xib{psd}{2}(DS_H) \approx 2.12$, where $D = \mathrm{Diag}(2,2,1,1,1,1)$. The bound $\xib{psd}{2}(DS_H)$ is in fact tight (after rounding) for the complex positive semidefinite rank of $DS_H$ and hence of $S_H$: in~\cite{GGS16} it is shown that $\psdrank_\C(S_H) = 3$.

\section{Discussion and future work}

In this work we provide a unified approach for the four matrix factorizations obtained by considering (a)symmetric factorizations by nonnegative vectors and positive semidefinite matrices. Our methods can be extended to the nonnegative tensor rank, which is defined as  the smallest integer $d$ for which a $k$-tensor $A \in \R_+^{n_1 \times \cdots \times n_k}$ can be written as $A = \sum_{l=1}^d u_{1,l}\otimes \cdots \otimes u_{k,l}$ for nonnegative vectors $u_{j,l} \in \R_+^{n_j}$. The approach from Section~\ref{sec:lowernnr} for $\rank_+$ can be extended to obtain a hierarchy of lower bounds on the nonnegative tensor rank. For instance, if  $A$ is a 3-tensor, the analogous bound $\xib{+}{t}(A)$ is obtained by minimizing $L(1)$ over $L\in \R[x_{1},\ldots,x_{n_1+n_2+n_3}]^*$ such that $L(x_{i_1}x_{n_1+i_2}x_{n_1+n_2+i_3})=A_{i_1i_2i_3}$ (for $i_1\in [n_1],i_2\in [n_2],i_3\in [n_3]$), using as localizing polynomials in $S_A^+$ the polynomials $\sqrt[3]{A_\max}x_i-x_i^2$
and $A_{i_1i_2i_3}- x_{i_1}x_{n_1+i_2}x_{n_1+n_2+i_3}$.
As in the matrix case one can compare to the bounds $\tau_+(A)$ and $\tau_+^{\mathrm{sos}}(A)$ from~\cite{FP16}. One can show $\xib{+}{*}(A)=\tau_+(A)$, and one can show $\smash{\xib{+}{3,\dagger}(A)} \ge \tau_+^{\mathrm{sos}}(A)$ after adding the conditions
$L(x_{i_1}x_{n_1+i_2}x_{n_1+n_2+i_3}(A_{i_1i_2i_3}- x_{i_1}x_{n_1+i_2}x_{n_1+n_2+i_3}))\ge 0$ to $\xib{+}{3}(A)$.

Testing membership in the completely positive cone and the completely positive semidefinite cone  is another important problem, to which our hierarchies can also be applied. It follows from the proof of Proposition~\ref{prop:cpconv} that if $A$ is not completely positive then, for some order $t$, the program  $\xib{cp}{t}(A)$ is infeasible or its optimum value is larger than the Caratheodory bound on the cp-rank (which is similar to an earlier result in~\cite{Nie14}).
In the noncommutative setting the situation is  more complicated: If $\xib{cpsd}{*}(A)$ is feasible, then $A\in \CSP$,  and if $A\not\in \CSPvN$, then $\xib{cpsd}{\infty}(A)$ is infeasible (Propositions~\ref{prop:lowerbound} and~\ref{prop:cpsd*}). 
Here $\CSPvN$ is the cone defined in~\cite{BLP17} consisting  of the matrices admitting a factorization in a von Neumann algebra with a trace. By Lemma~\ref{remark:von Neumann},  $\CSPvN$ can equivalently be characterized as the set of matrices of the form $\alpha \, (\tau(a_ia_j))$ for some $C^*$-algebra $\mathcal A$ with tracial state $\tau$, positive elements $a_1,\ldots,a_n\in\mathcal A$ and $\alpha\in\R_+$.

Our lower bounds are on the \emph{complex} version of the (completely) positive semidefinite rank. As far as we are aware, the existing lower bounds (except for the dimension counting rank lower bound) are also on the complex (completely) positive semidefinite rank. It would be interesting to find a lower bound on the \emph{real} (completely) positive semidefinite rank that can go beyond the complex (completely) positive semidefinite rank. 

We conclude with some open questions regarding applications of lower bounds on matrix factorization ranks. First, as was shown in~\cite{PSVW16,GdLL17,PV17}, completely positive semidefinite matrices whose $\cpsdrank_\C$ is larger than their size do exist, but currently we do not know how to construct small examples for which this holds. Hence, a concrete question: Does there exist a $5 \times 5$ completely positive semidefinite matrix whose $\cpsdrank_\C$ is at least $6$? 
Second, as we mentioned before, the asymmetric setting corresponds to (semidefinite) extension complexity of polytopes. Rothvo{\ss}' result~\cite{Roth14} (indirectly) shows that the parameter $\xib{+}{\infty}$ is exponential (in the number of nodes of the graph) for the slack matrix of the matching polytope. Can this result also be shown directly using the dual formulation of $\xib{+}{\infty}$, that is, by a sum-of-squares certificate? If so, could one extend the argument to the noncommutative setting (which would show a lower bound on the semidefinite extension complexity)?

\begin{acknowledgements}
The authors would like to thank Sabine Burgdorf for helpful discussions and an anonymous referee for suggestions that helped improve the presentation. 
\end{acknowledgements}

\appendix

\section{Commutative and tracial polynomial optimization} \label{sec:background}

In this appendix we discuss known convergence and flatness results for commutative and tracial polynomial optimization. We present these results in such a way that they can be directly used for our hierarchies of lower bounds on matrix factorization ranks. Although the commutative case was developed first, here we treat the commutative and tracial cases together. For the reader's convenience we provide all proofs by working on the ``moment side''; that is, relying on properties of linear functionals rather than using real algebraic results on sums of squares.
Tracial optimization is an adaptation of eigenvalue optimization as developed in~\cite{NPA10}, but here we only discuss the commutative and tracial cases, as these are most relevant to our work.

\subsection{Flat extensions and representations of linear forms}\label{sec:flat ext}

The optimization variables in the optimization problems considered in this paper are linear forms on spaces of (noncommutative) polynomials. To study the properties of the bounds obtained through these optimization problems we need to study properties and representations of (flat) linear forms on polynomial spaces.

\medskip
In Section~\ref{sec:prelim} the key examples of symmetric tracial linear functionals on $\R\ncx_{2t}$ are trace evaluations on a (finite dimensional) $C^*$-algebra. In this section we present some results that provide conditions under which, conversely, a symmetric tracial linear map on $\R\ncx_{2t}$ ($t \in \N \cup \{\infty\}$) that is nonnegative on $\MM(S)$ and zero on $\mathcal I(T)$ arises from trace evaluations at elements in the intersection of the $C^*$-algebraic analogs of the matrix positivity domain of $S$ and the matrix ideal  of $T$.
In Theorems~\ref{propLinfinitedim} and~\ref{propLfinitedim} we consider the case $t= \infty$ and in Theorem~\ref{propextension} we consider the case $t \in \N$. Results like these can for instance be used to link the linear forms arising in the limiting optimization problems of our hierarchies to matrix factorization ranks. 
 
The proofs of Theorems~\ref{propLinfinitedim} and~\ref{propLfinitedim} use a classical Gelfand--Naimark--Segal (GNS) construction. In these proofs it  will also 
 be convenient to work with the concept of the null space of a linear functional $L\in\R\ncx_{2t}^*$, which is defined as the vector space 
\[
N_t(L) = \big\{ p \in \R\langle \bx \rangle_t : L(qp) = 0 \text{ for } q\in \R\langle \bx \rangle_t\big\}.
\]
We use the notation $N(L)=N_\infty(L)$ for the nontruncated null space. Recall that $M_t(L)$ is the moment matrix associated to $L$, its rows and columns are indexed by words in $\ncx_t$, and its entries are given by $M_t(L)_{w,w'} = L(w^* w')$ for $w,w' \in \ncx_t$. The null space of $L$ can therefore be identified with the kernel of $M_t(L)$: A polynomial $p=\sum_{w}c_w w$ belongs to $N_t(L)$ if and only if its coefficient vector $(c_w)$ belongs to the kernel of $M_t(L)$.

In Section~\ref{sec:prelim} we defined a linear functional $L \in \R\ncx_{2t}^*$ to be $\delta$-flat based on the rank stabilization property~\eqref{eq:flat} of its moment matrix: $\rank(M_t(L)) = \rank(M_{t-\delta}(L))$. This definition can be reformulated in terms of a decomposition of the corresponding polynomial space using the null space: the form $L$ is $\delta$-flat if and only if 
\[ 
\R\langle \bx \rangle_t = \R\langle \bx \rangle_{t-\delta} + N_t(L).
\]
Recall that $L$ is said to be flat if it is $\delta$-flat for some $\delta \ge 1$. Finally, in the nontruncated case ($t=\infty$) $L$ was called flat if $\rank(M(L))<\infty$. We can now see that $\rank(M(L))<\infty$ if and only if there exists an integer $s \in \N$ such that $\R\ncx = \R\ncx_s + N(L)$.

\medskip

Theorem~\ref{propLinfinitedim} below is implicit in several works (see, e.g.,~\cite{NPA12,BKP16}). Here we assume that $\mathcal M(S) + \mathcal I(T)$ is Archimedean, which we recall means that there exists a scalar $R>0$ such that 
\[
R-\sum_{i=1}^n x_i^2\in \mathcal M(S)+ \mathcal I(T).
\]
\begin{theorem}
\label{propLinfinitedim}
Let $S\subseteq \mathrm{Sym}\, \R\ncx$ and $T \subseteq \R\ncx$ with $\MM(S)+ \mathcal I(T)$ Archimedean. Given a linear form $L\in \R\ncx^*$, the following are equivalent:
\begin{itemize}
\item[(1)] $L$ is symmetric, tracial, nonnegative on $\MM(S)$, zero on $\mathcal I(T)$, and $L(1) = 1$;
\item[(2)] there is a unital $C^*$-algebra $\mathcal A$ with tracial state $\tau$ and ${\bf X} \in \DAS \cap \mathcal V_{\mathcal A}(T)$ with
\begin{equation}\label{eqLA}
L(p)=\tau(p({\bf X})) \quad \text{for all} \quad p\in \R\ncx.
\end{equation}
\end{itemize}
\end{theorem}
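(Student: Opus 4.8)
The implication $(2) \Rightarrow (1)$ is the routine direction, and I would dispatch it first. Given $\bX \in \DAS \cap \mathcal V_{\mathcal A}(T)$ and a tracial state $\tau$, set $L(p) = \tau(p(\bX))$. Since $\tau$ is Hermitian and $p(\bX)^* = p^*(\bX)$ for $p \in \R\ncx$, the value $L(p)$ is real and $L(p) = L(p^*)$, so $L$ is symmetric; it is tracial because $\tau$ is; and $L(1) = \tau(1) = 1$. For $g \in S \cup \{1\}$ we have $g(\bX) \succeq 0$ in $\MA$, hence $g(\bX) = a^*a$ for some $a \in \MA$, and then $L(p^*gp) = \tau\big((a\,p(\bX))^*\,a\,p(\bX)\big) \geq 0$, so $L \geq 0$ on $\MM(S)$. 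For $h \in T$ we have $h(\bX) = 0$, so $L$ vanishes on $\MI(T)$.

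For $(1) \Rightarrow (2)$ the plan is a Gelfand--Naimark--Segal construction. First I would complexify: extend $L$ to a $\C$-linear form $\widehat L$ on $\C\ncx$ by $\widehat L(p_1 + i p_2) = L(p_1) + i L(p_2)$ for $p_1, p_2 \in \R\ncx$. Symmetry of $L$ gives $\widehat L(p^*) = \overline{\widehat L(p)}$, traciality of $L$ gives traciality of $\widehat L$, and positivity follows because $\widehat L\big((p_1 + ip_2)^*(p_1+ip_2)\big) = L(p_1^*p_1) + L(p_2^*p_2) \geq 0$, the cross terms $L(p_1^*p_2) - L(p_2^*p_1)$ vanishing by symmetry. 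Now put $\langle p, q\rangle = \widehat L(p^*q)$, a positive semidefinite Hermitian form on $\C\ncx$; by Cauchy--Schwarz its radical $N = \{p : \widehat L(p^*p) = 0\}$ is a left ideal and equals $\{p : \widehat L(qp) = 0 \text{ for all } q\}$. Let $\Hilbert$ be the Hilbert space completion of $\C\ncx / N$, with cyclic vector $\xi_0 = 1 + N$ (so $\|\xi_0\|^2 = \widehat L(1) = 1$), and for $a \in \C\ncx$ let $\pi(a)$ denote left multiplication by $a$ on $\C\ncx / N$, which is well defined since $N$ is a left ideal.

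The crucial step is to show each $\pi(x_i)$ is bounded, and this is the only place the Archimedean hypothesis is used. The small but essential observation is that, because $L$ is tracial and vanishes on the left ideal $\mathcal I(T)$, it in fact vanishes on the two-sided ideal generated by $T$: indeed $L(phq) = L(qph) = 0$ since $qph \in \mathcal I(T)$. Writing the Archimedean witness as $R - \sum_i x_i^2 = \sigma + \iota$ with $\sigma \in \MM(S)$ and $\iota \in \mathcal I(T)$, for any $p \in \C\ncx$ we have $p^*\sigma p \in \MM(S)$ and $\widehat L(p^*\iota p) = 0$, hence $\widehat L\big(p^*(R - \sum_i x_i^2)p\big) \geq 0$, i.e. $\langle (\sum_i x_i^2)p, p\rangle \leq R\,\langle p, p\rangle$. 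Since $x_i = x_i^*$ and $L$ is symmetric, each $\pi(x_i)$ is a symmetric operator, so $\sum_i \pi(x_i)^*\pi(x_i) \preceq R\cdot I$ as quadratic forms, giving $\|\pi(x_i)\|^2 \leq R$. Therefore $\pi$ extends to a $*$-homomorphism $\C\ncx \to \MB(\Hilbert)$; let $\MA$ be the norm closure of $\pi(\C\ncx)$, a unital $C^*$-algebra, and $X_i = \pi(x_i)$, self-adjoint elements of $\MA$. Define $\tau(T) = \langle T\xi_0, \xi_0\rangle$; then $\tau$ is a state ($\tau(I) = 1$ and $\tau(T^*T) = \|T\xi_0\|^2 \geq 0$), it is tracial because $\tau(\pi(a)\pi(b)) = \widehat L(ab) = \widehat L(ba)$ on the dense subalgebra $\pi(\C\ncx)$, and $\tau(p(\bX)) = \langle \pi(p)\xi_0, \xi_0\rangle = \widehat L(p) = L(p)$ for $p \in \R\ncx$, which gives \eqref{eqLA}. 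Finally, for $g \in S$ the inequality $\langle \pi(g)\eta, \eta\rangle = \widehat L(p^*gp) \geq 0$ on the dense set $\eta = p + N$ shows $\pi(g)$ is a positive operator, so $g(\bX) \succeq 0$; and for $h \in T$ the identity $\langle \pi(h)p, q\rangle = \widehat L(p^*hq) = \widehat L(qp^*h) = 0$ forces $h(\bX) = 0$. Hence $\bX \in \DAS \cap \mathcal V_{\mathcal A}(T)$, completing the proof.

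The main obstacle is precisely the boundedness estimate; once one notices that traciality promotes ``zero on $\mathcal I(T)$'' to ``zero on the two-sided ideal of $T$'', the Archimedean witness can be sandwiched between the GNS vectors after multiplying by $p^*(\,\cdot\,)p$, and the rest is standard GNS bookkeeping together with the observation that a self-adjoint operator with nonnegative numerical range is positive.
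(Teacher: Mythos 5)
Your proposal is correct and follows essentially the same GNS construction as the paper's proof, differing only in that you complexify the polynomial algebra at the outset whereas the paper works over $\R\ncx/N(L)$ and complexifies the Hilbert space at the end. Your explicit observation that traciality upgrades ``$L=0$ on the left ideal $\MI(T)$'' to vanishing on conjugates $p^*\iota p$ is exactly the point the paper relies on (it spells it out only later, in the proof of Lemma~\ref{lemma:upperboundLw}), so no gap remains.
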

\begin{proof}
We first prove the easy direction $(2) \Rightarrow (1)$: We have
\[
L(p^*) = \tau(p^*(\bX)) = \tau(p(\bX)^*) = \overline{\tau(p(\bX))} = \overline{L(p)} = L(p),
\]
where we use that $\tau$ is Hermitian and $X_i^* = X_i$ for $i \in [n]$. Moreover, $L$ is tracial since $\tau$ is tracial. In addition, for $g \in S \cup \{1\}$ and $p \in \R\langle \bx\rangle$ we have 
\[
L(p^*gp) = \tau(p^*(\bX) g(\bX) p(\bX)) = \tau(p(\bX)^* g(\bX) p(\bX)) \geq 0,
\]
since $g({\bf X})$ is positive in $\mathcal A$ as $\bX\in \MD_\MA(S)$ and $\tau$ is positive. Similarly $L(hq) = \tau(h(\bX) q(\bX)) = 0$ for all $h \in T$, since $\bf X\in \mathcal V_{\mathcal A}(T)$.

We show $(1) \Rightarrow (2)$ by applying a GNS construction. Consider the quotient vector space $\R\ncx/N(L)$, and denote the class of $p$ in $\R\ncx/N(L)$ by $\overline p$. We can equip this quotient with the inner product $\langle \overline p,\overline q\rangle =L(p^*q)$ for $p,q\in\R\ncx$, so that the completion $\Hilbert$ of $\R\ncx/N(L)$ is a separable Hilbert space. As $N(L)$ is a left ideal in $\R\ncx$, the operator
\begin{equation}\label{eqXi}
X_i \colon \R\ncx/N(L) \rightarrow  \R\ncx/N(L), \, \overline p  \mapsto \overline{x_ip}
\end{equation}
is well defined. We have 
\[
\langle X_i\,\overline p,\overline q\rangle = L((x_ip)^*q) = L(p^*x_iq)=\langle \overline p,X_i\overline q\rangle  \quad \text{for all} \quad p,q \in \R\langle {\bf x} \rangle,
\]
so the $X_i$ are self-adjoint. Since $g \in S \cup \{1\}$ is symmetric and $\langle \overline p, g(\bX) \overline p\rangle = \langle \overline p,\overline{gp}\rangle = L(p^* g p)\ge 0$ for all $p$ we have $g({\bf X}) \succeq 0$. By the Archimedean condition, there exists an $R > 0$ such that $R-\sum_{i=1}^nx_i^2\in \mathcal M(S)+\MI(T)$. Using  $R-x_i^2= (R-\sum_{j=1}^nx_j^2)+\sum_{j\ne i}x_j^2 \in \mathcal M(S)+\MI(T)$ we get
\[
\langle X_i\overline p,X_i\overline p\rangle = L(p^*x_i^2p)\le R\cdot L(p^*p)=R\langle \overline p,\overline p\rangle \quad \text{for all} \quad p \in \R\langle {\bf x} \rangle.
\]
So each $X_i$ extends to a bounded self-adjoint operator, also denoted $X_i$, on the Hilbert space $\Hilbert$ such that $g({\bf X})$ is positive for all $g \in S \cup \{1\}$. Moreover, we have
$
\langle \overline f, h(\bX) \overline 1 \rangle = L(f^* h) = 0$ for all $f \in \R\ncx, h \in T$. 

The operators $X_i \in \mathcal B(\Hilbert)$ extend to self-adjoint operators in $\mathcal B(\C \otimes_\R \Hilbert)$, where $\C \otimes_\R \Hilbert$ is the complexification of $\Hilbert$.
Let $\mathcal A$ be the unital $C^*$-algebra obtained by taking the operator norm closure of $\R\langle \bX \rangle \subseteq \mathcal B(\C \otimes_\R \Hilbert)$. It follows that $\bX \in \mathcal D_{\mathcal A}(S) \cap \mathcal V_{\mathcal A}(T)$. 

Define the state $\tau$ on $\mathcal A$ by
$\tau(a) = \langle \overline{1}, a\overline{1} \rangle$ for $a \in \mathcal A$. For all $p,q \in \R\ncx$ we have
\begin{equation} \label{eqtracial}
\tau(p(\bX) q(\bX)) = \langle \overline{1}, p(\bX) q(\bX)\overline{1} \rangle = \langle \overline{1}, \overline{pq} \rangle = L(pq),
\end{equation}
so that the restriction of $\tau$ to $\R\langle \bX\rangle$ is tracial. Since $\R\langle \bX \rangle$ is dense in $\mathcal A$ in the operator norm, this implies $\tau$ is tracial.

To conclude the proof, observe that~\eqref{eqLA} follows from~\eqref{eqtracial} by taking $q=1$.  \qed
\end{proof}

The next result can be seen as a finite dimensional analogue of the above result, where we do not need 
$\mathcal M(S) +\mathcal I(T)$ to be Archimedean, but instead we assume the rank of $M(L)$ to be finite (i.e., $L$ to be flat). In addition to the Gelfand--Naimark--Segal construction, the proof uses Artin--Wedderburn theory. For the unconstrained case the proof of this result can be found in~\cite{BK12}, and in~\cite{BKP16,KP16} this result is extended to the constrained case.
\begin{theorem}
\label{propLfinitedim}
For $S\subseteq \mathrm{Sym}\, \R\ncx$, $T \subseteq \R\ncx$, and 
$L\in \R\ncx^*$, the following are equivalent:
\begin{enumerate}
\item[(1)] $L$ is a symmetric, tracial, linear form with $L(1) =1$ that is nonnegative on $\MM(S)$, zero on $\mathcal I(T)$, and has $\mathrm{rank}(M(L)) < \infty$;
\item[(2)] there is a finite dimensional $C^*$-algebra $\mathcal A$ with a tracial state $\tau$, and ${\bf X} \in \smash{\DAS} \cap \mathcal V_{\mathcal A}(T)$ satisfying equation~\smash{\eqref{eqLA}};
\item[(3)] $L$ is a convex combination of normalized trace evaluations at points in $\smash{\MD(S)} \cap \mathcal V(T)$.
\end{enumerate}
\end{theorem}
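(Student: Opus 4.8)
The plan is to prove the cycle of implications $(1)\Rightarrow(2)\Rightarrow(3)\Rightarrow(1)$. The implication $(1)\Rightarrow(2)$ is proved by a Gelfand--Naimark--Segal (GNS) construction, essentially the one used for Theorem~\ref{propLinfinitedim}, except that the finite-rank hypothesis on $M(L)$ takes over the role played there by the Archimedean condition and makes the associated Hilbert space finite dimensional. The implication $(2)\Rightarrow(3)$ is obtained from Artin--Wedderburn structure theory for finite dimensional $C^*$-algebras, and $(3)\Rightarrow(1)$ is a direct verification.

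I would handle $(3)\Rightarrow(1)$ first. If $L=\sum_{k=1}^K\lambda_k L_k$ with $\lambda_k>0$, $\sum_k\lambda_k=1$, and each $L_k$ the normalized trace evaluation at a tuple $\mathbf X^{(k)}\in\mathcal D(S)\cap\mathcal V(T)$, then each $L_k$ is symmetric, tracial, satisfies $L_k(1)=1$, is nonnegative on $\MM(S)$ because $g(\mathbf X^{(k)})\succeq 0$ for $g\in S\cup\{1\}$, vanishes on $\mathcal I(T)$ because $h(\mathbf X^{(k)})=0$ for $h\in T$, and has $\rank(M(L_k))<\infty$ since $M(L_k)$ is a Gram matrix of elements of a finite dimensional matrix algebra. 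A convex combination inherits all of the algebraic properties, and $M(L)=\sum_k\lambda_k M(L_k)$ is a finite sum of matrices of finite rank, hence of finite rank; this yields $(1)$.

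For $(1)\Rightarrow(2)$ I would form the space $H:=\R\ncx/N(L)$, which is finite dimensional precisely because $\rank(M(L))<\infty$, equipped with the inner product $\langle\bar p,\bar q\rangle=L(p^*q)$ (an inner product since $L$ is positive). As $N(L)$ is a left ideal, the multiplication maps $X_i\colon\bar p\mapsto\overline{x_i p}$ are well defined; symmetry of $L$ makes each $X_i$ self-adjoint; nonnegativity of $L$ on $\MM(S)$ gives $g(\mathbf X)\succeq 0$ for $g\in S\cup\{1\}$ (from $\langle\bar p,g(\mathbf X)\bar p\rangle=L(p^*gp)\ge 0$); and, using traciality of $L$ together with $L=0$ on $\mathcal I(T)$, one has $L(qhf)=L(fqh)=0$ for all $q,f\in\R\ncx$ and $h\in T$, so $hf\in N(L)$ and therefore $h(\mathbf X)=0$. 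Let $\MA$ be the $C^*$-algebra generated by $X_1,\dots,X_n$ and the identity, acting on the complexification of $H$; it is finite dimensional, $\mathbf X\in\DAS\cap\mathcal V_{\mathcal A}(T)$, and $\tau(a):=\langle\bar 1,a\bar 1\rangle$ is a state with $\tau(p(\mathbf X)q(\mathbf X))=L(pq)$, whence $\tau$ is tracial (traciality holds on the dense subalgebra $\R\langle\mathbf X\rangle$) and~\eqref{eqLA} follows by setting $q=1$.

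The step I expect to require the most care is $(2)\Rightarrow(3)$: passing from an abstract finite dimensional $C^*$-algebra with a tracial state to an explicit convex combination of normalized matrix traces. Using Artin--Wedderburn I would fix a $*$-isomorphism $\MA\cong\bigoplus_{m=1}^M\C^{d_m\times d_m}$, write $\mathbf X=\bigoplus_{m=1}^M\mathbf X^{(m)}$ accordingly, and check block by block that $\mathbf X^{(m)}\in\mathcal D(S)\cap\mathcal V(T)$; here one must make sure the isomorphism transports self-adjointness, positivity of $g(\mathbf X)$, and the relations $h(\mathbf X)=0$ faithfully. Since the normalized trace is the unique tracial state on a full matrix algebra, the restriction of $\tau$ to the $m$-th block equals $\lambda_m\,\mathrm{tr}_m$ with $\lambda_m\ge 0$, and $\sum_m\lambda_m=\tau(1)=1$, so $L(p)=\tau(p(\mathbf X))=\sum_{m=1}^M\lambda_m\,\mathrm{tr}_m(p(\mathbf X^{(m)}))$ is the desired convex combination. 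Once the finite-dimensionality coming from the rank hypothesis is in hand, the analytic core (GNS) is routine; the genuinely delicate point throughout is the correct translation of the quadratic-module and ideal conditions on $L$ into the matrix positivity-domain and matrix-variety conditions on the constructed tuple (and back), which is exactly the content of the constrained version of the classical result.
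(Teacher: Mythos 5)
Your proposal is correct and follows essentially the same route as the paper: the cycle $(1)\Rightarrow(2)$ via a GNS construction on the finite dimensional space $\R\ncx/N(L)$, $(2)\Rightarrow(3)$ via Artin--Wedderburn and uniqueness of the tracial state on a full matrix algebra, and $(3)\Rightarrow(1)$ by direct verification. Your explicit use of traciality to show $hf\in N(L)$ (hence $h(\mathbf X)=0$ as an operator, not merely $h(\mathbf X)\bar 1=0$) is a welcome point of extra care, but the argument is otherwise the paper's own.
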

\begin{proof}
((1) $\Rightarrow$ (2)) Here we can follow the proof of Theorem~\ref{propLinfinitedim}, with the extra observation that the
condition $\rank (M(L))<\infty$ implies that the quotient space $\R\ncx/N(L)$ is finite dimensional. Since $\R\ncx/N(L)$ is finite dimensional the multiplication operators are bounded, 
and the constructed $C^*$-algebra $\mathcal A$ is finite dimensional.

((2) $\Rightarrow$ (3)) 
By Artin-Wedderburn theory there exists a $*$-isomorphism
\[
\varphi \colon \mathcal A \to \bigoplus_{m=1}^M \C^{d_m \times d_m}\quad \text{ for some } \ M\in \N,\ d_1,\ldots,d_M\in \N.
\]
Define the $*$-homomorphisms $\varphi_m \colon \mathcal A \to \C^{d_m \times d_m}$ for $m\in[M]$ by $\varphi = \oplus_{m=1}^M \varphi_m$. Then, for each $m\in [M]$,  the map 
$\C^{d_m \times d_m} \to \C$ defined by $X \mapsto \tau(\varphi_m^{-1}(X))$ is a positive tracial linear form, and hence it is a nonnegative multiple $\lambda_m \mathrm{tr}(\cdot)$ of the normalized matrix trace (since, for a full matrix algebra, the normalized trace is the unique tracial state). Then we have  $\tau(a) = \sum_m \lambda_m\, \mathrm{tr}(\varphi_m(a))$ for all $a\in\MA$. So $\tau(\cdot)=\sum_m\lambda_m \mathrm{tr}(\cdot)$  for nonnegative scalars $\lambda_m$ with $\sum_m \lambda_m = L(1) = 1$. By defining the matrices $X_i^m = \varphi_m(X_i)$ for $m\in [M]$, we get
\[
L(p) = \tau(p(X_1,\ldots,X_n)) = \sum_{m=1}^M \lambda_m\, \mathrm{tr}(p(X_1^m, \ldots, X_n^m)) \quad \text{ for all }\quad  p\in \R\ncx.
\]
Since $\varphi_m$ is a $*$-homomorphism we have $g(X_1^m,\ldots,X_n^m) \succeq 0$ for all $g \in  S \cup \{1\}$ and also $h(X_1^m,\ldots,X_n^m) = 0$ for all $h \in T$, which shows $(X_1^m,\ldots,X_n^m) \in \mathcal D(S) \cap \mathcal V(T)$.

((3) $\Rightarrow$ (1)) If $L$ is a conic combination of trace evaluations at elements from $\DS \cap \mathcal V(T)$, then $L$ is symmetric, tracial, nonnegative on $\mathcal M(S)$, zero on $\mathcal I(T)$, and satisfies $\rank(M(L)) < \infty$ because the moment matrix of any trace evaluation has finite rank. \qed
\end{proof}

The previous two theorems were about linear functionals defined on the full space of noncommutative polynomials. The following result claims that a {\em flat} linear functional on a truncated polynomial space can be extended to a flat linear functional on the full space of polynomials while preserving the same positivity properties. 
It is due to Curto and Fialkow~\cite{CF96} in the commutative case and extensions to the noncommutative case can be found  in~\cite{NPA10} (for eigenvalue optimization) and~\cite{BK12} (for trace optimization). 
\begin{restatable}{theorem}{respropextension}\label{propextension}
Let $1 \leq \delta \leq t < \infty$, $S \subseteq \mathrm{Sym} \, \R\ncx_{2\delta}$, and $T \subseteq \R\ncx_{2\delta}$. Suppose $L\in \R\ncx_{2t}^*$ is symmetric, tracial, $\delta$-flat, nonnegative on $\MM_{2t}(S)$, and zero on $\mathcal I_{2t}(T)$. Then
$L$ extends to a symmetric, tracial, linear form on $\R\ncx$ that is nonnegative on $\mathcal M(S)$, zero on $\mathcal I(T)$, and whose moment matrix has finite rank.
\end{restatable}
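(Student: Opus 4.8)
The plan is to prove the theorem by a flat-extension argument: I want to push the $\delta$-flatness of $L$ up one degree at a time, obtaining at each step a symmetric, tracial extension preserving all the positivity and ideal constraints, until I reach a linear form on all of $\R\langle\bx\rangle$ whose moment matrix has stabilized rank. The key structural fact is that $\delta$-flatness means $\R\langle\bx\rangle_t = \R\langle\bx\rangle_{t-\delta} + N_t(L)$, equivalently $\rank M_t(L) = \rank M_{t-\delta}(L)$. Since $S \subseteq \mathrm{Sym}\,\R\langle\bx\rangle_{2\delta}$ and $T \subseteq \R\langle\bx\rangle_{2\delta}$, the localizing and ideal constraints only ever look at $L$ in degree $\le 2t$, so once we have a genuine extension to $\R\langle\bx\rangle$ that is still flat, Theorem~\ref{propLfinitedim} (or rather the GNS construction inside its proof) takes over and gives the $C^*$-algebraic picture; but for the statement as phrased I only need the extension itself, so I will not even need to invoke Artin--Wedderburn.

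Concretely, first I would recall the standard \emph{flat extension} mechanism. Given $L \in \R\langle\bx\rangle_{2t}^*$ that is $\delta$-flat, one extends $M_t(L)$ to a moment matrix $M_{t+1}(\widetilde L)$ by the unique rank-preserving (``flat'') completion: the new rows/columns indexed by words of degree $t+1$ are forced, via the relation $\R\langle\bx\rangle_t = \R\langle\bx\rangle_{t-\delta} + N_t(L)$, to be linear combinations of the old ones in a consistent way, and one checks this is well-defined (the crucial point is that the extension respects the kernel, i.e.\ $N_t(L)$ generates a left ideal that is compatible with multiplication by each $x_i$). This is exactly the content of the Curto--Fialkow flat extension theorem in the commutative case and of its noncommutative analogues in Helton--Klep--McCullough / Burgdorf--Klep and in Navascués--Pironio--Ac\'in. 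Since $L$ is $\delta$-flat at level $t$, the extension $\widetilde L$ at level $t+1$ is again $\delta$-flat (rank does not increase), so the procedure iterates and in the limit yields $L^\infty \in \R\langle\bx\rangle^*$ with $\rank M(L^\infty) = \rank M_t(L) < \infty$. Symmetry and traciality are preserved at each step because the flat completion is canonical and these are closed linear conditions on the (finitely many relevant) moments; one verifies them on the generating words and they propagate.

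Next I would check that the positivity and ideal constraints survive the extension. For positivity on $\MM(S)$: it suffices to show $M_{s-d_g}(gL^\infty) \succeq 0$ for each $g \in S \cup \{1\}$ and all $s$, where $d_g = \lceil \deg(g)/2\rceil$. For $s \le t$ this is the hypothesis. For $s > t$, one uses that flatness forces $M_s(L^\infty)$ to be a flat extension of $M_t(L)$, hence $\ker M_s(L^\infty) \supseteq $ (the span of shifts of) $\ker M_t(L)$, and a now-standard argument (again in Curto--Fialkow and its noncommutative versions) shows that the localizing matrices $M_{s-d_g}(g L^\infty)$ are themselves flat extensions of $M_{t-d_g}(gL)$ and therefore remain positive semidefinite: a flat extension of a PSD matrix, obtained by the Schur-complement characterization, is PSD. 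For the ideal: $L = 0$ on $\MI_{2t}(T)$ is a set of linear equations on entries of $M_t(L)$ indexed by products $ph$ with $h \in T \subseteq \R\langle\bx\rangle_{2\delta}$; I would argue that because $h$ lies in the null space picture in the appropriate sense (the relations $ph \equiv 0$ are consistent with the kernel of $M_t(L)$, as $L$ vanishes on $\MI_{2t}(T)$ and is $\delta$-flat with $\deg h \le 2\delta \le 2t$), the flat completion automatically annihilates $\MI(T)$; concretely, one shows by induction on degree that $\mathrm{span}\{ph : \deg(ph) \le 2s\} \subseteq N_{s}(L^\infty)\cdot(\text{dual sense})$, i.e.\ $L^\infty(q\,ph) = 0$ for all $q$, using that the left ideal generated by $T$ is contained in the null space.

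The main obstacle, I expect, is the noncommutative bookkeeping in the last two points: showing that the canonical flat completion is genuinely \emph{well-defined} (independence of the choice of representatives $r + n$ with $r \in \R\langle\bx\rangle_{t-\delta}$, $n \in N_t(L)$, when extending the action of the multiplication operators one degree higher), and showing that the localizing/ideal structure is preserved — the commutative proofs of these facts in Curto--Fialkow rely on commutativity for some of the consistency checks, and one must be careful to use only the weaker noncommutative statements (self-adjointness of the shift operators and the left-ideal property of $N(L)$, as in the proof of Theorem~\ref{propLinfinitedim}) rather than, say, that the $X_i$ commute. The cleanest route is probably to phrase the whole iteration via the GNS-type quotient $\R\langle\bx\rangle_t / N_t(L)$ — which is finite-dimensional by flatness — realize the multiplication-by-$x_i$ operators there, note they are self-adjoint and bounded on this finite-dimensional inner-product space, and then \emph{define} $L^\infty(p) := \langle \overline 1, p(\bX)\,\overline 1\rangle$ with $\bX$ these operators; positivity on $\MM(S)$, vanishing on $\MI(T)$, symmetry, traciality, and finite moment-matrix rank are then all immediate from the operator realization exactly as in the proof of Theorem~\ref{propLinfinitedim}, and the only thing left to verify is that $L^\infty$ agrees with $L$ on $\R\langle\bx\rangle_{2t}$ — which is where $\delta$-flatness (with $t \ge \delta$) is used, since it guarantees that $\overline{x_{i_1}\cdots x_{i_k}}$ is computed correctly for words of length up to $t$, hence inner products $\langle \overline w, \overline{w'}\rangle$ for $|w|,|w'|\le t$ reproduce $L(w^*w')$.
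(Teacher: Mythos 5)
Your overall strategy is the right one and matches the paper's: reduce to the finite-dimensional quotient $\R\ncx_t/N_t(L)$ afforded by $\delta$-flatness, extend $L$ degree by degree via the canonical flat completion (equivalently, realize the extension as $p\mapsto\langle \overline 1, p(\bX)\overline 1\rangle$ for the multiplication operators $\bX$ on that quotient), and then verify that positivity on $\MM(S)$ and vanishing on $\MI(T)$ only ever involve the reduced representatives $r_p\in\R\ncx_{t-\delta}$, so that the degree bound $\deg(r_p^*gr_p)\le 2(t-\delta)+2\delta=2t$ brings everything back into the range where the hypotheses apply. The paper does exactly this, organized around the decomposition $\R\ncx_t=\mathrm{span}(W)\oplus N_t(L)$ and the two facts $\MI(N_t(L))\subseteq N(\hat L)$ and $\R\ncx=\mathrm{span}(W)\oplus\MI(N_t(L))$.

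However, there is a genuine gap at the traciality step. You assert that traciality of the extension is ``immediate from the operator realization exactly as in the proof of Theorem~\ref{propLinfinitedim}.'' In that theorem, traciality of the GNS state $\tau(a)=\langle\overline 1,a\overline 1\rangle$ is \emph{inherited} from the hypothesis that $L$ is tracial on the whole algebra $\R\ncx$, via $\tau(p(\bX)q(\bX))=L(pq)=L(qp)$. Here $L$ is tracial only up to degree $2t$, and a vector state on a finite-dimensional operator algebra is not tracial in general (e.g.\ $\langle e_1,\cdot\,e_1\rangle$ on $\C^{2\times 2}$), so traciality of the extension must be \emph{proved} for words of degree exceeding $2t$. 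This is the core difficulty of the noncommutative (tracial) flat extension theorem, and it is where the paper spends most of its effort: one first shows both $x_i(u-r_u)\in N_{t+1}(\hat L)$ and $(u-r_u)x_i\in N_{t+1}(\hat L)$ (the second of these already uses traciality of $L$ in degree $\le 2t$ in an essential way), derives the exchange identities $\hat L((x_iu)^*vx_j)=\hat L((x_ir_u)^*r_vx_j)$ and $\hat L((ux_i)^*x_jv)=\hat L((r_ux_i)^*x_jr_v)$, and only then establishes $\hat L(wx_j)=\hat L(x_jw)$ by a chain of substitutions that shuttles between reduced and unreduced representatives. A similar (milder) gap affects your treatment of the localizing and ideal constraints: the claim that $g(\bX)\succeq 0$ in the operator picture, or that the flat completion ``automatically annihilates'' $\MI(T)$, rests on precisely the two null-space lemmas above, which you gesture at but do not supply. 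The architecture of your argument is sound, but as written it postpones exactly the computations that constitute the proof.
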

\begin{proof}
Let  $W\subseteq \ncx_{t-\delta}$  index a maximum nonsingular submatrix of $M_{t-\delta}(L)$, and let $\mathrm{span}(W)$ be the linear space spanned by $W$. We have the vector space direct sum 
\begin{equation} \label{eq:basecase}
\R\ncx_t=\mathrm{span}(W)\oplus N_t(L).
\end{equation}
That is, for each $u \in \ncx_t$ there exists a unique $r_u \in \mathrm{span}(W)$ such that $u - r_u \in N_t(L)$. 

We first construct the (unique) symmetric flat extension $\hat L \in \R\ncx_{2t+2}$ of $L$. For this we set $\hat L(p) = L(p)$ for $\deg(p) \leq 2t$, and we set
\[
\hat L(u^* x_i v) = L(u^* x_i r_v) \quad \text{and} \quad \hat L((x_i u)^* x_j v) = L((x_i r_u)^* x_j r_v)
\]
for all $i,j \in [n]$ and $u,v \in \ncx$ with $|u|=|v| = t$. One can verify that $\hat L$ is symmetric and satisfies $x_i (u - r_u) \in N_{t+1}(\hat L)$ for all $i \in [n]$ and $u \in \R\ncx_t$, from which it follows that $\hat L$ is $2$-flat.

We also have $(u-r_u)x_i \in N_{t+1}(\hat L)$ for all $i \in [n]$ and $u \in \R\ncx_t$: Since $\hat L$ is $2$-flat, we have $(u-r_u)x_i \in N_{t+1}(\hat L)$ if and only if $\hat L(p (u-r_u) x_i) = 0$ for all $p \in \R\ncx_{t-1}$. By using  $\deg(x_ip) \leq t$, $L$ is tracial, and $u-r_u \in N_t(L)$, we get 
$\hat L(p(u-r_u) x_i) = L(p(u-r_u)x_i)=L(x_i p(u-r_u)) = 0$.

By consecutively using $(v-r_v)x_j \in N_{t+1}(\hat L)$, symmetry of $\hat L$, $x_i (u-r_u) \in N_{t+1}(\hat L)$, and again symmetry of $\hat L$, we see that
\begin{equation}\label{eq:example1} 
\hat L((x_i u)^* v x_j) = \hat L((x_i u)^* r_v x_j) = \hat L((r_v x_j)^* x_i u) = \hat L((r_v x_j)^* x_i r_u) = \hat L((x_i r_u)^* r_v x_j),
\end{equation}
and in an analogous way one can show
\begin{equation}\label{eq:example3}
\hat L((u x_i)^* x_j v ) = \hat L(( r_u x_i)^* x_j r_v ).  
\end{equation}

We can now show that $\hat L$ is tracial. We do this by showing that $\hat L(w x_j) = \hat L(x_j w)$ for all $w$ with $\deg(w) \leq 2t+1$. Notice that when $\deg(w) \leq 2t-1$ the statement follows from the fact that $\hat L$ is an extension of $L$. Suppose $w = u^* v$ with $\deg(u) = t+1$ and $\deg(v) \leq t$. We write $u = x_i u'$, and we let $r_{u'},r_v \in \R\ncx_{t-1}$ be such that $u' - r_{u'}, v-r_v \in N_t(L)$. We then have 
\begin{align*}
\hat L(wx_j) = \hat{L}(u^*vx_j) &= \hat L((x_i u')^* v x_j) \\
&= \hat L((x_i r_{u'})^* r_v x_j) & \text{ by }~\eqref{eq:example1} \\
&= L((x_i r_{u'})^* r_v x_j) &\text{ since } \deg( x_i r_{u'} r_v x_j) \leq 2t \\
&= L( (r_{u'} x_j)^* x_i r_v) &\text{ since } L \text{ is tracial}\\
&= \smash{\hat L}( (r_{u'} x_j)^* x_i r_v) &\text{ since } \deg((r_{u'} x_j)^* x_i r_v) \leq 2t\\
&= \smash{\hat L}((u'x_j)^* x_i v) & \text{ by }~\eqref{eq:example3}\\ 
&= \smash{\hat L}(x_j w).
\end{align*}
It follows $\hat L$ is a symmetric tracial flat extension of $L$, and $\rank(M(\hat L)) = \rank(M(L))$.

Next, we iterate the above procedure to extend $L$ to a symmetric tracial linear functional $\hat L \in \R\ncx^*$. It remains to show that $\hat L$ is nonnegative on $\MM(S)$ and zero on $\mathcal I(T)$. For this we make two observations:
\begin{itemize}
\item[(i)] $\mathcal I(N_t(L)) \subseteq N(\hat L)$. 
\item[(ii)] $\R\ncx = \mathrm{span}(W) \oplus \mathcal I(N_t(L))$.
\end{itemize}
For (i) 
we use the (easy to check) fact that 
$
N_t(L) = \mathrm{span}( \{u-r_u: u \in \ncx_t\}). 
$
Then it suffices to show that $w(u-r_u)\in N(\hat L)$ for all $w\in \ncx$, which can be done using induction on $|w|$.
From (i) one easily deduces that $ \mathrm{span}(W)\cap N(\hat L)=\{0\}$, so  we have the direct sum  $ \mathrm{span}(W) \oplus \mathcal I(N_t(L))$. 
The claim (ii)  follows  using induction on the length of $w \in \ncx$: The base case $w \in \ncx_t$ follows from~\eqref{eq:basecase}. Let $w = x_i v \in \ncx$ and assume $v \in \mathrm{span}(W) \oplus \mathcal I(N_t(L))$, that is, $v= r_v + q_v$ where $r_v \in \mathrm{span}(W)$ and $q_v \in \mathcal I(N_t(L))$. We have $x_i v = x_i r_v + x_i q_v$ so it suffices to show $x_i r_v, x_i q_v \in \mathrm{span}(W) \oplus \mathcal I(N_t(L))$. Clearly $x_i q_v \in \mathcal I(N_t(L))$, since $q_v \in \mathcal I(N_t(L))$. Also, observe that $x_i r_v \in \R\ncx_t$ and therefore $x_i r_v \in \mathrm{span}(W) \oplus \mathcal I(N_t(L))$ by~\eqref{eq:basecase}. 

We conclude the proof by showing that $\hat L$ is nonnegative on $\MM(S)$ and zero on $\mathcal I(T)$. Let $g \in \MM(S)$, $h \in \mathcal I(T)$, and $p \in \R\ncx$. For $p \in \R\ncx$ we extend the definition of $r_p$ so that $r_p \in \mathrm{span}(W)$ and $p -r_p \in \mathcal I(N_t(L))$, which is possible by (ii). Then,
\[
\hat L(p^* g p) \overset{\mathrm{(i)}}{=} \hat L(p^* g r_p) = \hat L(r_p^* g p) \overset{\mathrm{(i)}}{=} \hat L(r_p^* g r_p) = L(r_p^* g r_p) \geq 0, 
\]
\[
\hat L(p^* h) = \hat L(h^* p) \overset{\mathrm{(i)}}{=} \hat L(h^* r_p) = \hat L(r_p h) = L(r_p h) = 0, 
\]
where we use  $\deg(r_p^*gr_p)\le 2(t-\delta)+2\delta=2t$ and $\deg(r_ph)\le (t-\delta)+ 2\delta \le 2t$. \qed
\end{proof}

Combining Theorems~\ref{propLfinitedim} and~\ref{propextension} gives the following result, which shows that a flat linear form can be extended to a conic combination of trace evaluation maps. It was first proven in~\cite[Proposition~6.1]{KP16} (and in~\cite{BK12} for the unconstrained case).
\begin{restatable}{corollary}{resthmflatnonc}
\label{thm:flat-nonc}
Let $1 \leq \delta \leq t < \infty$, $S \subseteq \mathrm{Sym}\, \R\ncx_{2\delta}$, and $T \in \R\ncx_{2\delta}$. If $L \in \R\langle \bx \rangle^*_{2t}$ is  symmetric, tracial, $\delta$-flat, nonnegative on $\smash{\mathcal M_{2t}(S)}$, and zero on $\mathcal I_{2t}(T)$, then it extends to a conic combination of trace evaluations at elements of $\smash{\DS \cap \mathcal V(T)}$.
\end{restatable}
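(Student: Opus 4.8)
The plan is to obtain the statement by composing Theorem~\ref{propextension} and Theorem~\ref{propLfinitedim}, with a normalization step in between; essentially all of the content has already been packaged into those two results.

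First I would apply Theorem~\ref{propextension}: the hypotheses imposed on $L$ here are exactly its hypotheses, so $L$ extends to a symmetric, tracial $\hat L \in \R\ncx^*$ that is nonnegative on $\mathcal M(S)$, zero on $\mathcal I(T)$, and has $\mathrm{rank}(M(\hat L)) < \infty$. If $\hat L = 0$ then $L = 0$ is the empty conic combination and we are done, so assume $\hat L \neq 0$. I would then check $\hat L(1) > 0$: nonnegativity of $\hat L$ on squares $p^*p$ makes $M(\hat L)$ positive semidefinite, so $\hat L(1) = M(\hat L)_{1,1} \geq 0$; and $\hat L(1) = 0$ would force the entire first row and column of the positive semidefinite matrix $M(\hat L)$ to vanish, i.e.\ $\hat L(w) = M(\hat L)_{1,w} = 0$ for every word $w$, contradicting $\hat L \neq 0$. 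Hence $L' := \hat L/\hat L(1)$ is symmetric, tracial, satisfies $L'(1) = 1$, is nonnegative on $\mathcal M(S)$, zero on $\mathcal I(T)$, and has finite-rank moment matrix, i.e.\ $L'$ satisfies condition~(1) of Theorem~\ref{propLfinitedim}.

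Finally I would invoke the implication $(1)\Rightarrow(3)$ of Theorem~\ref{propLfinitedim} applied to $L'$: it is a convex combination of normalized trace evaluations $p\mapsto\mathrm{tr}(p(\bX^k))$ at tuples $\bX^k\in\DS\cap\mathcal V(T)$, say $L'=\sum_k\mu_k\,\mathrm{tr}\big(\,\cdot\,(\bX^k)\big)$ with $\mu_k\ge 0$ and $\sum_k\mu_k=1$. Scaling back by $\hat L(1)$, writing $d_k$ for the matrix size of $\bX^k$, and using $\mathrm{tr}(\cdot)=\tfrac{1}{d_k}\mathrm{Tr}(\cdot)$ together with the fact that $\hat L$ is real-valued, one gets $\hat L=\sum_k\lambda_k L_{\bX^k}$ with $\lambda_k:=\hat L(1)\mu_k/d_k\ge 0$; since $\hat L$ restricts to $L$ on $\R\ncx_{2t}$, this exhibits $L$ as a conic combination of trace evaluations at points of $\DS\cap\mathcal V(T)$, as claimed. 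The only point requiring a moment's care is the strict positivity $\hat L(1)>0$ needed for the normalization, and even that is routine; I do not expect any genuine obstacle here.
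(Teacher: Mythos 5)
Your proposal is correct and follows exactly the route the paper intends: the paper derives this corollary by "combining Theorems~\ref{propLfinitedim} and~\ref{propextension}", which is precisely your composition of the flat-extension theorem with the finite-rank representation theorem, plus the routine normalization and rescaling steps that you fill in carefully.
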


\subsection{Specialization to the commutative setting}\label{sec:comm}

The material from Appendix~\ref{sec:flat ext} can be adapted to the commutative setting. Throughout $\cx$ denotes the set of monomials in $x_1,\ldots,x_n$, i.e., the commutative analog of $\ncx$.

The moment matrix $M_t(L)$ of a linear form $L\in \R\cx_{2t}^*$ is now indexed by the monomials in $\cx_{t}$, where we set $M_t(L)_{w,w'}=L(ww')$ for $w,w'\in \cx_t$. Due to the commutativity of the variables, this matrix is smaller and more entries are now required to be equal. For instance, the $(x_2x_1,x_3x_4)$-entry of $M_2(L)$ is equal to its  $(x_3x_1,x_2x_4)$-entry, which does not hold in general in the noncommutative case. 

Given $a \in \R^n$, the {\em evaluation map} at $a$ is the linear map $L_a\in \R\cx^*$ defined by
\[
L_a(p)= p(a_1,\ldots,a_n) \quad \text{for all} \quad p\in \R\cx.
\]
We can view $L_a$ as a trace evaluation at scalar matrices. Moreover, we can view a trace evaluation map at a tuple of pairwise commuting matrices as a conic combination of evaluation maps at scalars by simultaneously diagonalizing the matrices. 

The  quadratic module $\mathcal M(S)$ and the ideal $\mathcal I(T)$ have immediate specializations to the commutative setting. We recall that in the commutative setting the (scalar) positivity domain and scalar variety of  sets $S,T\subseteq \R\cx$ are given by
\begin{equation}\label{eqDS}
\CDS = \big\{a \in \R^n: g(a)\ge 0 \text{ for } g\in S\big\} \text{, } \quad V(T) = \big\{a \in \R^n: h(a) = 0 \text{ for } h \in T\big\}.\footnote{Note that in the commutative setting we could avoid using the variety since $V(T)=D(\pm T)$. However, in the noncommutative setting, the polynomials in $T$ need not be symmetric in which case the quadratic module $\mathcal D(\pm T)$ would not be well defined.}
\end{equation}

We first give the commutative analogue of Theorem~\ref{propLinfinitedim}, where we give an additional integral representation in point (3). The equivalence of points (1) and (3) is proved in~\cite{Pu93} based on Putinar's Positivstellensatz. Here we give a direct proof on the ``moment side'' using the Gelfand representation.
\begin{restatable}{theorem}{respropLinfinitedimcomm}
\label{propLinfinitedimcomm}
Let $S,T \subseteq \R[\bx]$ with $\MM(S) + \mathcal I(T)$ Archimedean. For $L\in \R[\bx]^*$, the following are equivalent:
\begin{enumerate}
\item[(1)] $L$ is  nonnegative on $\MM(S)$, zero on $\mathcal I(T)$, and $L(1) = 1$;
\item[(2)] there exists a unital commutative $C^*$-algebra $\mathcal A$ with a state $\tau$ and ${\bf X} \in \DAS \cap \mathcal V_{\mathcal A}(T)$ such that $L(p)=\tau(p({\bf X}))$ for all $p\in \R[\bx]$;
\item[(3)] there exists a probability measure $\mu$ on $D(S) \cap V(T)$ such that  
\[
L(p) = \int_{D(S) \cap V(T)} p(x) \, d\mu(x) \quad \text{for all} \quad p\in \R[\bx].
\]
\end{enumerate}
\end{restatable}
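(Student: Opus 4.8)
The plan is to prove the chain of equivalences $(1) \Rightarrow (2) \Rightarrow (3) \Rightarrow (1)$, closely following the structure of the proof of Theorem~\ref{propLinfinitedim} but specializing and enriching it in the commutative setting. The implication $(3) \Rightarrow (1)$ is the easy direction: if $L(p) = \int p\,d\mu$ for a probability measure $\mu$ supported on $D(S) \cap V(T)$, then $L(1) = \mu(D(S)\cap V(T)) = 1$, and for $g \in S \cup \{1\}$ and $p \in \R[\bx]$ we have $L(p^2 g) = \int p(x)^2 g(x)\,d\mu(x) \ge 0$ since $g \ge 0$ on the support of $\mu$; likewise $L(ph) = \int p(x)h(x)\,d\mu(x) = 0$ for $h \in T$ since $h$ vanishes on the support. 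Hence $L \ge 0$ on $\MM(S)$ and $L = 0$ on $\mathcal I(T)$.

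For $(1) \Rightarrow (2)$ I would mimic the GNS construction in the proof of Theorem~\ref{propLinfinitedim}, but now everything collapses to the commutative case. Form the quotient $\R[\bx]/N(L)$ with inner product $\langle \overline p, \overline q \rangle = L(pq)$, complete to a separable Hilbert space $\Hilbert$, and define multiplication operators $X_i \colon \overline p \mapsto \overline{x_i p}$. As before these are well defined (since $N(L)$ is an ideal), self-adjoint, satisfy $g(\bX) \succeq 0$ for $g \in S \cup \{1\}$ because $\langle \overline p, g(\bX)\overline p\rangle = L(p^2 g) \ge 0$, and are bounded by the Archimedean condition $R - \sum_i x_i^2 \in \MM(S) + \mathcal I(T)$. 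Crucially, the $X_i$ now pairwise commute (since the $x_i$ commute in $\R[\bx]$), so the $C^*$-algebra $\MA$ generated by them inside $\MB(\C \otimes_\R \Hilbert)$ is commutative. Defining $\tau(a) = \langle \overline 1, a \overline 1\rangle$ gives a state with $\tau(p(\bX)) = L(p)$, and $\bX \in \DAS \cap \mathcal V_\MA(T)$ follows exactly as in Theorem~\ref{propLinfinitedim} (now the tracial property is automatic since $\MA$ is commutative).

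The step $(2) \Rightarrow (3)$ is where the commutative setting adds genuine content, via the Gelfand representation. Given a commutative unital $C^*$-algebra $\MA$, the Gelfand transform is a $*$-isomorphism $\MA \cong C(\Omega)$, where $\Omega$ is the (compact Hausdorff) spectrum of $\MA$; under this identification each self-adjoint $X_i$ corresponds to a real-valued continuous function $\hat X_i \in C(\Omega)$, and the state $\tau$ corresponds, by the Riesz representation theorem, to a regular Borel probability measure $\nu$ on $\Omega$. Setting $\Phi \colon \Omega \to \R^n$, $\omega \mapsto (\hat X_1(\omega), \ldots, \hat X_n(\omega))$, which is continuous, one pushes $\nu$ forward to $\mu = \Phi_* \nu$ on $\R^n$; then $L(p) = \tau(p(\bX)) = \int_\Omega p(\hat X_1(\omega),\dots,\hat X_n(\omega))\,d\nu(\omega) = \int_{\R^n} p\,d\mu$. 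It remains to check that $\mu$ is supported on $D(S) \cap V(T)$: since $g(\bX) \succeq 0$ in $\MA$, the function $\hat g := g(\hat X_1,\dots,\hat X_n) = \widehat{g(\bX)}$ is nonnegative on $\Omega$, so $g \circ \Phi \ge 0$ $\nu$-a.e., hence $g \ge 0$ $\mu$-a.e., and similarly $h \circ \Phi = 0$ $\nu$-a.e. for $h \in T$; taking a countable dense (or generating) subset and intersecting the full-measure sets shows $\mu(D(S) \cap V(T)) = 1$. The main obstacle I expect is this last measure-theoretic bookkeeping — ensuring the support statement holds simultaneously for all (possibly infinitely many) constraints in $S$ and $T$ — but since $S, T$ may be taken finite (or one reduces to a countable separating family of polynomials), this is routine once the Gelfand picture is in place. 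I would therefore write $(2) \Rightarrow (3)$ carefully and leave $(3) \Rightarrow (1)$ and the $C^*$-algebraic parts of $(1) \Rightarrow (2)$ as brief remarks referring back to Theorem~\ref{propLinfinitedim}.
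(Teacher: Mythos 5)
Your proposal is correct and follows essentially the same route as the paper: $(1)\Rightarrow(2)$ via the commutative GNS construction inherited from Theorem~\ref{propLinfinitedim}, $(2)\Rightarrow(3)$ via the Gelfand representation, the Riesz representation theorem, and a pushforward measure, and $(3)\Rightarrow(1)$ as immediate. The only cosmetic difference is in the support check: the paper observes that $\Gamma(g(\bX))$ is a nonnegative continuous function on the whole spectrum, so the range of the evaluation map lies in $D(S)\cap V(T)$ pointwise, which avoids your almost-everywhere bookkeeping entirely.
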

\begin{proof}
((1) $\Rightarrow$ (2)) 
This is the commutative analogue of the  implication (1) $\Rightarrow$ (2) in  Theorem~\ref{propLinfinitedim} (observing in addition that the operators $X_i$ in~\eqref{eqXi} pairwise commute so that the constructed $C^*$-algebra $\MA$ is commutative).

((2) $\Rightarrow$ (3)) Let  $\widehat{\mathcal A}$ denote the set of unital $*$-homomorphisms $\mathcal A \to \C$, known as  the \emph{spectrum} of $\MA$. We equip $\smash{\widehat{\mathcal A}}$ with  the  weak-$^*$ topology, so that it  is compact as a result of $\mathcal A$ being unital
(see, e.g.,~\cite[II.2.1.4]{Blackadar06}). The {\em Gelfand representation} is the $*$-isomorphism
\[
\Gamma \colon \mathcal A \to \mathcal C(\widehat{\mathcal A}), \quad \Gamma(a)(\phi) = \phi(a) \quad \text{for} \quad a\in \MA,\ \phi\in \widehat{\mathcal A},
\]
where $\mathcal C(\widehat{\mathcal A})$ is the set of complex-valued continuous functions on $\widehat{\mathcal A}$.
Since $\Gamma$ is an isomorphism, the state $\tau$ on $\MA$ induces a state  $\tau'$ on  $\mathcal C(\smash{\widehat{\mathcal A}})$ defined by $\tau'(\Gamma(a))=\tau(a)$ for $a\in \MA$.
By the Riesz representation theorem (see, e.g.,~\cite[Theorem~2.14]{Rudin87}) there is a Radon measure $\nu$ on  $\smash{\widehat{\mathcal A}}$ such that 
\[
\tau'(\Gamma(a))  = \int_{\widehat{\mathcal A}} \Gamma(a)(\phi) \, d\nu(\phi) \quad \text{for all} \quad a \in \mathcal A.
\]
We then have
\begin{align*}
L(p) &= \tau(p(\bX)) 
= \tau'(\Gamma(p(\bX)))
= \int_{\widehat{\mathcal A}} \Gamma(p(\bX))(\phi) \, d\nu(\phi) = \int_{\widehat{\mathcal A}} \phi(p(\bX)) \, d\nu(\phi)\\
&= \int_{\widehat{\mathcal A}} p(\phi(X_1),\ldots,\phi(X_n)) \, d\nu(\phi) = \int_{\widehat{\mathcal A}} p(f(\phi)) \, d\nu(\phi) = \int_{\R^n} p(x) \, d\mu(x),
\end{align*}
where $f \colon \widehat{\mathcal A} \to \R^n$ is defined by $\phi \mapsto (\phi(X_1),\ldots,\phi(X_n)),
$
and where $\mu = f_*\nu$ is the pushforward measure of $\nu$ by $f$; that is, $\mu(B) = \nu(f^{-1}(B))$ for measurable $B \subseteq \R^n$.

Since $\bX \in \DAS$, we have $g(\bX) \succeq 0$ for all $g \in S$, hence $\Gamma(g(\bX))$ is a positive element of 
$\mathcal C(\smash{\widehat{\mathcal A}})$, implying 
$
g(\phi(X_1), \ldots, \phi(X_n)) = \phi(g(\bX)) =\Gamma(g(\bX))(\phi) \geq 0.
$
Similarly we see $h(\phi(X_1), \ldots, \phi(X_n)) = 0$ for all $h \in T$. So, the range of $f$ is contained in $D(S) \cap V(T)$, $\mu$ is a probability measure on $D(S) \cap V(T)$ since $L(1)=1$, and we have $L(p) = \int_{D(S) \cap V(T)} p(x) \, d\mu(x)$ for all $p \in \R[\bx]$.

((3) $\Rightarrow$ (1)) This is immediate. \qed
\end{proof}
Note that the more common proof for the implication (1) $\Rightarrow$ (3) in Theorem~\ref{propLinfinitedimcomm} relies on Putinar's Positivstellensatz~\cite{Pu93}: if $L$ satisfies (1) then $L(p)\ge 0$ for all polynomials $p$ nonnegative on $D(S)\cap V(T)$ (since $p+\epsilon \in \mathcal M(S) +\mathcal I(T)$ for any $\epsilon>0$), and thus $L$ has a representing measure $\mu$ as in (3) by the Riesz-Haviland theorem~\cite{Hav36}.

The following is the commutative analogue of Theorem~\ref{propLfinitedim}.
\begin{restatable}{theorem}{respropLfinitedimcommutative}
\label{propLfinitedimcommutative}
For $S\subseteq \R\cx$, $T \subseteq \R\cx$, and $L\in \R\cx^*$, the following are equivalent:
\begin{enumerate}
\item[(1)] $L$ is nonnegative on $\MM(S)$, zero on $\mathcal I(T)$, has $\mathrm{rank}(M(L)) < \infty$, and $L(1)=1$;
\item[(2)] there is a finite dimensional commutative $C^*$-algebra $\mathcal A$ 
with a state $\tau$, and ${\bf X} \in \mathcal D_{\mathcal A}(S) \cap  \mathcal V_{\mathcal A}(T)$ 
such that $L(p)=\tau(p({\bf X}))$ for all $p\in \R\cx$;
\item[(3)] $L$ is a convex combination of 
evaluations at points in $D(S) \cap V(T)$.
\end{enumerate}
\end{restatable}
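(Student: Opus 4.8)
The plan is to prove Theorem~\ref{propLfinitedimcommutative} as the commutative specialization of Theorem~\ref{propLfinitedim}, tracking where commutativity simplifies (or alters) each implication. I would organize the argument as a cycle $(1)\Rightarrow(2)\Rightarrow(3)\Rightarrow(1)$, mirroring the structure of the noncommutative proof but invoking the Gelfand representation in place of Artin--Wedderburn for the decomposition step.

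\medskip
\noindent\textbf{Proof.}
$(1)\Rightarrow(2)$: One follows the GNS construction exactly as in the implication $(1)\Rightarrow(2)$ of Theorem~\ref{propLfinitedim}, working on the quotient $\R\cx/N(L)$ equipped with the inner product $\langle\overline p,\overline q\rangle=L(pq)$. The hypothesis $\rank(M(L))<\infty$ guarantees this quotient is finite dimensional, so the multiplication operators $X_i\colon\overline p\mapsto\overline{x_ip}$ are automatically bounded and self-adjoint, no Archimedean assumption is needed. The key new feature is that the $X_i$ \emph{pairwise commute}, since $x_ix_jp$ and $x_jx_ip$ represent the same class; hence the generated finite dimensional $C^*$-algebra $\mathcal A\subseteq\mathcal B(\C\otimes_\R(\R\cx/N(L)))$ is commutative. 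As before, $g(\bX)\succeq0$ for $g\in S\cup\{1\}$ because $\langle\overline p,g(\bX)\overline p\rangle=L(gp^2)\ge0$, and $h(\bX)=0$ for $h\in T$ because $\langle\overline f,h(\bX)\overline 1\rangle=L(fh)=0$ for all $f$, using that $\mathcal I(T)\subseteq N(L)$. Setting $\tau(a)=\langle\overline 1,a\overline 1\rangle$ gives a state with $L(p)=\tau(p(\bX))$, so $\bX\in\mathcal D_{\mathcal A}(S)\cap\mathcal V_{\mathcal A}(T)$ and $(2)$ holds.

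\medskip
$(2)\Rightarrow(3)$: Since $\mathcal A$ is a finite dimensional commutative $C^*$-algebra, its spectrum $\widehat{\mathcal A}=\{\phi_1,\ldots,\phi_K\}$ is a finite set, and the Gelfand representation $\Gamma\colon\mathcal A\to\C^K$ is a $*$-isomorphism with $\Gamma(a)=(\phi_k(a))_k$. The state $\tau$ transports to a state on $\C^K$, which is necessarily of the form $\tau(a)=\sum_{k=1}^K\lambda_k\phi_k(a)$ for nonnegative scalars $\lambda_k$ with $\sum_k\lambda_k=\tau(1)=L(1)=1$. Writing $v_k=(\phi_k(X_1),\ldots,\phi_k(X_n))\in\R^n$ and arguing as in the proof of Theorem~\ref{propLinfinitedimcomm} that $g(v_k)=\phi_k(g(\bX))\ge0$ for $g\in S$ and $h(v_k)=0$ for $h\in T$, we get $v_k\in D(S)\cap V(T)$ and
\[
L(p)=\tau(p(\bX))=\sum_{k=1}^K\lambda_k\,p(v_k)=\sum_{k=1}^K\lambda_k\,L_{v_k}(p)
\]
for all $p\in\R\cx$, exhibiting $L$ as a convex combination of evaluation maps at points of $D(S)\cap V(T)$.

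\medskip
$(3)\Rightarrow(1)$: This is immediate: if $L=\sum_k\lambda_k L_{v_k}$ with $\lambda_k\ge0$, $\sum_k\lambda_k=1$, and $v_k\in D(S)\cap V(T)$, then $L(1)=1$, $L(gp^2)=\sum_k\lambda_k g(v_k)p(v_k)^2\ge0$ for $g\in S\cup\{1\}$ so $L\ge0$ on $\mathcal M(S)$, and $L(hq)=\sum_k\lambda_k h(v_k)q(v_k)=0$ for $h\in T$ so $L=0$ on $\mathcal I(T)$; finally each $L_{v_k}$ has moment matrix $(w(v_k)w'(v_k))_{w,w'}$ of rank $1$, so $\rank(M(L))\le K<\infty$. \qed

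\medskip
\noindent The only step requiring genuine care is $(2)\Rightarrow(3)$, specifically the passage from an abstract finite dimensional commutative $C^*$-algebra to its (finite) spectrum and the identification of every state with a convex combination of point evaluations; everything else is a routine specialization of the noncommutative arguments already proved, with the sole substantive difference that commutativity of the variables forces the operators $X_i$ to commute and hence the algebra $\mathcal A$ to be commutative. One could alternatively deduce $(3)$ directly from Theorem~\ref{propLfinitedim} applied to $L$ viewed on $\R\ncx$ (every commutative $L$ lifts to a tracial symmetric functional), but the self-contained spectral argument above is cleaner and keeps the exposition on the moment side, consistent with the rest of the appendix.
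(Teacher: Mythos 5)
Your proof is correct and rests on the same ideas as the paper's: a GNS construction on $\R\cx/N(L)$ for $(1)\Rightarrow(2)$, and the structure of finite dimensional commutative $C^*$-algebras for $(2)\Rightarrow(3)$. The only difference is presentational: the paper obtains both implications by lifting $L$ to a tracial symmetric functional on $\R\ncx$ (adding the commutators $x_ix_j-x_jx_i$ to $T$) and invoking Theorems~\ref{propLinfinitedim} and~\ref{propLfinitedim}, with Artin--Wedderburn yielding diagonal blocks, whereas you carry out the construction directly in the commutative setting and use the finite Gelfand spectrum --- the same fact in different language, as you yourself note in your closing remark.
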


\begin{proof}
((1) $\Rightarrow$ (2)) We indicate how to derive this claim from its noncommutative analogue. 
For this denote the commutative version of $p \in \R\langle {\bf x}\rangle$ by $p^c \in \R[{\bf x}]$.
For any $g\in S$ and $h \in T$, select symmetric polynomials $g',h' \in \R\ncx$ with $(g')^c = g$ and $(h')^c = h$, and set
\[
S' = \big\{ g' : g \in S \big\}\subseteq\R\ncx \ \text{ and }\  T' = \big\{ h' : h \in T \big\} \cup \big\{x_ix_j - x_j x_i \in \R\ncx: i, j \in [n], \, i \neq j\big\}\subseteq\R\ncx.
\]
Define the linear form $L' \in \R\langle {\bf x} \rangle^*$ by $L'(p) = L(p^c)$ for $p\in \R\ncx$. Then $L'$  is symmetric, tracial,  nonnegative on $\mathcal M(S')$, zero on $\mathcal \MI(T')$, and satisfies $\rank M(L')=\rank M(L)<\infty$.
Following the proof of the implication (1) $\Rightarrow$ (2) in Theorem~\ref{propLinfinitedim}, we see that the operators $X_1,\ldots,X_n$ pairwise commute (since $\bX\in \mathcal V_\MA(T')$ and $T'$ contains all $x_ix_j-x_jx_i$) and thus the constructed $C^*$-algebra $\MA$ is finite dimensional and commutative.

((2) $\Rightarrow$ (3))
Here we follow the proof of this implication in Theorem~\ref{propLfinitedim} and  observe that since $\MA$ is finite dimensional and commutative, it is $*$-isomorphic to an algebra of diagonal matrices ($d_m=1$ for all $m\in [M]$), which gives directly the desired result.

((3) $\Rightarrow$ (1)) is easy. \qed
\end{proof}

The next result, due to  Curto and Fialkow~\cite{CF96}, is the commutative analogue of Corollary~\ref{thm:flat-nonc}.
\begin{restatable}{theorem}{resflatcomm}
\label{thm:flat-comm}
Let $1 \leq \delta \leq t < \infty$ and $S,T \subseteq \R[\bx]_{2\delta}$. If $L\in \R\cx_{2t}^*$ is $\delta$-flat, nonnegative on $\mathcal M_{2t}(S)$, and zero on $\mathcal I_{2t}(T)$, then $L$ extends to a conic combination of evaluation maps at points in $D(S) \cap V(T)$. 
\end{restatable}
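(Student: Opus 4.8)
The plan is to reduce the commutative result to the noncommutative Corollary~\ref{thm:flat-nonc}, mirroring the way Theorem~\ref{propLfinitedimcommutative} was derived from Theorem~\ref{propLfinitedim}. The key observation is that a $\delta$-flat commutative linear form can be lifted to a $\delta$-flat \emph{tracial} noncommutative linear form by ``commutativizing'' through the ideal generated by the commutators $x_ix_j-x_jx_i$. First I would fix the data: suppose $L\in\R\cx_{2t}^*$ is $\delta$-flat, nonnegative on $\MM_{2t}(S)$, and zero on $\MI_{2t}(T)$, where $S,T\subseteq\R\cx_{2\delta}$. Write $p\mapsto p^c$ for the natural surjection $\R\ncx\to\R\cx$ and choose, for each $g\in S$ a symmetric lift $g'\in\R\ncx_{2\delta}$ with $(g')^c=g$, and for each $h\in T$ a lift $h'\in\R\ncx_{2\delta}$ with $(h')^c=h$. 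Set
\[
S'=\{g':g\in S\}\subseteq\mathrm{Sym}\,\R\ncx_{2\delta},\qquad
T'=\{h':h\in T\}\cup\{x_ix_j-x_jx_i:i\neq j\}\subseteq\R\ncx_{2\delta}.
\]

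**Lifting the linear form.** Define $L'\in\R\ncx_{2t}^*$ by $L'(p)=L(p^c)$ for $p\in\R\ncx_{2t}$. I claim $L'$ is symmetric, tracial, $\delta$-flat, nonnegative on $\MM_{2t}(S')$, and zero on $\MI_{2t}(T')$. Symmetry is clear since $(p^*)^c=(p^c)$ up to commuting variables, hence $L'(p^*)=L((p^*)^c)=L(p^c)=L'(p)$; traciality of $L'$ is immediate because $(ww')^c=(w'w)^c$ in $\R\cx$. Nonnegativity on $\MM_{2t}(S')$ follows because $L'(p^*g'p)=L((p^*g'p)^c)=L((p^c)^2 g)\ge 0$ as $g\in S\cup\{1\}$ (after absorbing the square into the commutative module). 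For $T'$: if $h\in T$ then $L'(ph')=L(p^c h)=0$; and for the commutator relations, $L'(p(x_ix_j-x_jx_i))=L(p^c\cdot 0)=0$. The flatness claim $\rank M_t(L')=\rank M_{t-\delta}(L')$ requires a small argument: the moment matrix $M_t(L')$ factors through $M_t(L)$ via the linear map induced by $p\mapsto p^c$, and one checks that $\rank M_t(L')=\rank M_t(L)$ because the kernel of $M_t(L')$ is exactly the preimage under commutativization of $\ker M_t(L)$ together with the span of $\{w-w^{\sigma}\}$ (words differing by permutation), and this holds at every truncation level, so the rank equality is preserved. Hence $L'$ is $\delta$-flat.

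**Applying the noncommutative corollary and descending.** By Corollary~\ref{thm:flat-nonc} applied to $L'$, $S'$, $T'$, the form $L'$ extends to a conic combination $\widehat{L'}=\sum_k \lambda_k L_{\bX^{(k)}}$ of normalized trace evaluations at tuples $\bX^{(k)}\in\mathcal D(S')\cap\mathcal V(T')$ (with $\lambda_k\ge 0$). Because each $\bX^{(k)}$ lies in $\mathcal V(\{x_ix_j-x_jx_i\})$, the matrices $X^{(k)}_1,\dots,X^{(k)}_n$ pairwise commute, so they are simultaneously (unitarily) diagonalizable; writing each trace evaluation as a conic combination of evaluation maps at the joint eigenvalue tuples $a\in\R^n$, and noting that $g'(\bX^{(k)})\succeq 0$ gives $g(a)\ge 0$ for each eigenvalue tuple $a$ (and similarly $h(a)=0$), we conclude $a\in D(S)\cap V(T)$. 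Finally, restricting $\widehat{L'}$ back along the commutativization map yields a conic combination of evaluation maps at points of $D(S)\cap V(T)$ that agrees with $L$ on $\R\cx_{2t}$; this is the desired extension.

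**Expected obstacle.** The only delicate point is verifying that $L'$ remains $\delta$-flat, i.e., that commutativization does not inflate the rank of the moment matrix. Concretely one must show $\rank M_t(L')=\rank M_t(L)$ and $\rank M_{t-\delta}(L')=\rank M_{t-\delta}(L)$ simultaneously; the clean way is to exhibit an explicit isomorphism $\R\ncx_t/N_t(L')\cong\R\cx_t/N_t(L)$ respecting the filtration, using that $x_ix_j-x_jx_i\in N_t(L')$ forces the quotient to factor through $\R\cx_t$. Everything else is a routine unwinding of definitions and of the simultaneous-diagonalization argument already used in the proof of Theorem~\ref{propLfinitedimcommutative}.
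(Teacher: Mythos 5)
Your proposal is correct and follows essentially the same route as the paper: lift $L$ to a tracial form $L'$ on $\R\ncx_{2t}$ via $L'(p)=L(p^c)$ with $T'$ augmented by the commutators, apply Corollary~\ref{thm:flat-nonc}, and simultaneously diagonalize the commuting tuples to descend to scalar evaluations in $D(S)\cap V(T)$. The one point the paper states without proof --- that $L'$ inherits $\delta$-flatness --- is exactly the point you flag, and your justification is right (indeed $M_s(L')$ is just $M_s(L)$ with each row/column indexed by $m\in\cx_s$ duplicated $d_m$ times, so the ranks agree at every truncation level).
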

\begin{proof}
Here too we derive the result from its noncommutative analogue in Corollary~\ref{thm:flat-nonc}.
As in the above proof for the implication  (1) $\Longrightarrow$ (2) in Theorem~\ref{propLfinitedimcommutative},
define the sets $S',T'\subseteq \R\ncx$ and 
 the linear form $L' \in \R\langle {\bf x} \rangle_{2t}^*$ by $L'(p) = L(p^c)$ for $p\in \R\ncx_{2t}$. Then $L'$  is symmetric, tracial, nonnegative on $\mathcal M_{2t}(S')$,  zero on $\mathcal I_{2t}(T')$, and $\delta$-flat. 
By Corollary~\ref{thm:flat-nonc}, $L'$ is a conic combination of trace evaluation maps at elements of $\mathcal D(S') \cap \mathcal V(T')$. 
It suffices now to observe that  such a trace evaluation $L_\bX$ is a conic combination of (scalar) evaluations at elements of $D(S) \cap V(T)$.
Indeed, 
as $\bX\in \mathcal V(T')$,  the matrices $X_1,\ldots,X_n$ pairwise commute and thus can be assumed to be diagonal.
Since $\bX\in \MD(S') \cap \mathcal V(T')$, we have $g(\bX)\succeq 0$ for $g'\in S'$ and $h'(\bX) = 0$ for $h' \in T'$. This implies 
$g((X_1)_{jj},\ldots,(X_n)_{jj})\ge 0$ and $h((X_1)_{jj},\ldots,(X_n)_{jj}) = 0$ for all $g\in S$, $h \in T$, and $j\in [d]$. Thus $L_\bX = \sum_j L_{r_j}$, where  $r_j = ((X_1)_{jj},\ldots,(X_n)_{jj}) \in D(S) \cap V(T)$. \qed
\end{proof}

Unlike in the noncommutative setting, here we also have the following result, which permits to express any linear functional $L$ nonnegative on an Archimedean quadratic module as a conic combination of evaluations at points, when restricting  $L$ to polynomials of bounded degree.
\begin{restatable}{theorem}{resthmevalcomm}
\label{thm:eval-comm}
Let $S,T \subseteq \R\cx$ such that $\mathcal M(S) + \mathcal I(T)$ is Archimedean. 
If $L\in \R\cx^*$ is nonnegative on $\mathcal M(S)$ and zero on $\mathcal I(T)$, then for any integer $k\in \N$ the  restriction of $L$  to  $\R\cx_{k}$ extends to a conic combination of  evaluations at points in $\CDS \cap V(T)$.
\end{restatable}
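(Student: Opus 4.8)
The plan is to deduce this from the measure representation in Theorem~\ref{propLinfinitedimcommutative} together with a Carath\'eodory-type argument, exploiting that $\R\cx_k$ is finite dimensional. First I would dispose of the degenerate case: the bilinear form $(p,q)\mapsto L(pq)$ on $\R\cx$ is positive semidefinite because $L\ge 0$ on $\MM(S)$ and every square $p^2$ lies in $\MM(S)$, so if $L(1)=0$ then Cauchy--Schwarz gives $L(p)^2\le L(p^2)L(1)=0$ for all $p$, hence $L=0$ and the claim holds trivially with the empty conic combination. Otherwise I rescale to $L'=L/L(1)$, which satisfies $L'(1)=1$, is nonnegative on $\MM(S)$ and zero on $\MI(T)$. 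Here it is important to note that the Archimedean hypothesis, say $R-\sum_i x_i^2\in\MM(S)+\MI(T)$, implies $\|a\|_2^2\le R$ for every $a\in D(S)\cap V(T)$, so that this set is bounded; being also closed (an intersection of preimages of closed sets under polynomials), it is compact. Theorem~\ref{propLinfinitedimcommutative} applied to $L'$ then yields a probability measure $\mu$ on $D(S)\cap V(T)$ with $L'(p)=\int p\,d\mu$, hence $L(p)=L(1)\int p\,d\mu$ for all $p\in\R\cx$.

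Next, fixing $k\in\N$, I would pass to the finite dimensional picture. Listing the monomials $m_1,\dots,m_N$ of degree at most $k$ and setting $\Phi(a)=(m_1(a),\dots,m_N(a))\in\R^N$, the image $K:=\Phi(D(S)\cap V(T))$ is compact, and the vector $v:=(L(m_1),\dots,L(m_N))$ equals $L(1)$ times the barycenter $\int\Phi(a)\,d\mu(a)$ of a probability measure on $K$. A short separation argument shows that such a barycenter lies in the convex hull of $K$, which is compact since $K$ is; Carath\'eodory's theorem then produces points $a_1,\dots,a_{N+1}\in D(S)\cap V(T)$ and weights $\lambda_j\ge 0$ with $\sum_j\lambda_j=1$ and $v/L(1)=\sum_j\lambda_j\Phi(a_j)$. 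Reading this off coordinatewise and setting $c_j=L(1)\lambda_j\ge 0$, I obtain $L(m_i)=\sum_j c_j\,L_{a_j}(m_i)$ for every $i$, hence $L=\sum_{j=1}^{N+1} c_j L_{a_j}$ on all of $\R\cx_k$; the right-hand side is the desired conic combination of evaluation maps at points of $D(S)\cap V(T)$ extending the restriction of $L$.

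The only genuinely delicate point is the replacement of the representing measure by a finitely supported one, and this is precisely where finite dimensionality of $\R\cx_k$ and compactness of $D(S)\cap V(T)$ (a consequence of the Archimedean assumption, and the reason the statement must be restricted to bounded degree) enter. An alternative route, staying entirely on the ``moment side'' and avoiding the measure altogether, is to note that $K$ lies in the affine hyperplane indexed by the monomial $1$, so $\mathrm{cone}(K)$ is closed; if $v\notin\mathrm{cone}(K)$ one could separate $v$ from it by some $p\in\R\cx_k$ with $p\ge 0$ on $D(S)\cap V(T)$ but $L(p)<0$, contradicting $L(p+\varepsilon)=L(p)+\varepsilon L(1)\ge 0$, which holds for every $\varepsilon>0$ since $p+\varepsilon\in\MM(S)+\MI(T)$ by the Archimedean property. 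I would present the first route, as it reuses Theorem~\ref{propLinfinitedimcommutative} directly.
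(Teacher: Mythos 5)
Your proof is correct, and its overall architecture matches the paper's: first obtain a representing probability measure on $D(S)\cap V(T)$ via Theorem~\ref{propLinfinitedimcomm}, then replace it by a finitely supported measure that agrees with it on $\R\cx_k$. The difference lies in how you carry out that second, atomization step. The paper invokes the Bayer--Teichmann generalization of Tchakaloff's theorem as a black box, whereas you prove the step directly: the Archimedean hypothesis forces $D(S)\cap V(T)$ to be compact, so the image $K$ of the degree-$k$ moment map $\Phi$ is compact, the barycenter of the pushforward measure lies in $\mathrm{conv}(K)$ (which is closed because $K$ is compact and the ambient space is finite dimensional), and Carath\'eodory finishes the job. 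Your route is more self-contained and gives the explicit bound of at most $\dim\R\cx_k+1$ atoms; the trade-off is that it genuinely relies on compactness, while the Tchakaloff--Bayer--Teichmann result applies to measures with finite moments on non-compact supports, so the paper's atomization step is more robust even though compactness happens to be available here. Your explicit handling of the degenerate case $L(1)=0$ via Cauchy--Schwarz is a point the paper's proof silently skips over, and it is correct.
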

\begin{proof}
By Theorem~\ref{propLinfinitedimcomm} there exists a  probability measure $\mu$ on $D(S)$ such that  
\[
L(p) = L(1) \int_{D(S) \cap V(T)} p(x) \, d\mu(x) \quad \text{for all} \quad p\in \R[\bx].
\]
A general version of Tchakaloff's theorem, as explained in~\cite{BT}, shows that there  exist $r\in \N$,   scalars $\lambda_1,\ldots,\lambda_r>0$ and points $x_1,\ldots,x_r \in D(S)$ such that
\[
\int_{D(S) \cap V(T)} p(x) \, d\mu(x) = \sum_{i=1}^r \lambda_i p(x_i) \quad \text{for all} \quad p\in \R[\bx]_k.
\]
Hence the restriction of $L$ to $\R[\bx]_k$ extends to a conic combination of evaluations at points in $D(S)$. \qed
\end{proof}

\subsection{Commutative and tracial polynomial optimization}\label{sec:hierarchies}
We briefly discuss here the basic polynomial optimization problems in the commutative and tracial settings. We recall how to design hierarchies of semidefinite programming based bounds and we give their main convergence properties. 
The classical commutative polynomial optimization problem asks to minimize a polynomial $f\in \R\cx$ over a feasible region of the form $D(S)$ as defined in~\eqref{eqDS}:
\[
\fmin = \mathrm{inf}_{a\in \CDS}f(a) = \mathrm{inf}\big\{ f(a) : a \in \R^n, \, g(a)\ge 0 \text{ for } g\in S\big\}.
\]
In tracial polynomial optimization, given $f\in \mathrm{Sym}\, \R\ncx$, this is modified to minimizing $\mathrm{tr}(f({\bf X}))$ over a feasible region of the form $\DS$ as in~\eqref{eqDSnc}:
\[
f_*^\mathrm{tr} = \mathrm{inf}_{{\bf X} \in \DS} \mathrm{tr}(f({\bf X})) = \mathrm{inf}\big\{ \mathrm{tr}(f({\bf X})) : d\in \N,\, {\bf X} \in (\Hermitian^d)^n, \, g({\bf X}) \succeq 0 \text{ for } g\in S\big\},
\]
where the infimum does not change if we replace $\Hermitian^d$ by $\psdcone^d$. Commutative polynomial optimization is recovered by restricting to $1 \times 1$ matrices.

For the commutative case, Lasserre~\cite{Las01} and Parrilo~\cite{Par00} have proposed hierarchies of semidefinite programming relaxations based on sums of squares of polynomials and the dual theory of moments. This approach has been extended to eigenvalue optimization~\cite{NPA10,NPA12} and later to tracial optimization~\cite{BCKP13,KP16}. The starting point in deriving these relaxations is to reformulate the above problems as minimizing $L(f)$ over all normalized trace evaluation maps $L$ at points in $\CDS$ or $\DS$, and then to express computationally tractable properties satisfied by such maps $L$.

For $S \cup \{f\} \subseteq \R[{\bf x}]$ and $\lceil\deg(f)/2\rceil \leq t \leq \infty$, recall the (truncated) quadratic module $\mathcal M_{2t}(S)$ 
 \[
\MM_{2t}(S) =\mathrm{cone}\big\{gp^2: p\in \R[{\bf x}], \ g\in S\cup\{1\},\ \deg(gp^2)\le 2t\big\},
\]
which we use  to formulate the following semidefinite programming  lower bound on $\fmin$:
\[
f_t =\mathrm{inf}_{}\big\{L(f) : L\in \R\cx_{2t}^*,\, L(1)=1,\, L\ge 0  \text{ on }  \MM_{2t}(S)\big\}.
\]
For $t\in\N$ we have $f_t\le f_\infty\le f_*$. 

In the same way, for $S \cup \{f\} \subseteq \mathrm{Sym} \, \R\langle {\bf x} \rangle$  and $t$ such that $\lceil\deg(f)/2\rceil \leq t \leq \infty$, we have the following semidefinite programming lower bound on $f_*^\mathrm{tr}$: 
\[
f_t^\mathrm{tr} =\mathrm{inf}_{}\big\{L(f) : L\in \R\langle {\bf x}\rangle_{2t}^* \text{ tracial and symmetric},\, L(1)=1,\, L \ge 0 \text{ on } \MM_{2t}(S)\big\},
\]
where we now use definition~\eqref{eq:quadratic module} for $\MM_{2t}(S)$.

The next theorem from~\cite{Las01} gives fundamental convergence properties for the commutative case; see also, e.g.,~\cite{Las09,Lau09} for a detailed exposition. 
\begin{restatable}{theorem}{resthmconvcomm}\label{thm:convcomm}
Let $1 \leq \delta \leq t < \infty$ and  $S \cup \{f\} \subseteq \R\cx_{2\delta}$ with $D(S)\ne \emptyset$.
\begin{itemize}
\item[(i)] If  $\MM(S)$ is Archimedean, then $f_t \to f_\infty$ as $t \to \infty$,  
the optimal values in $f_\infty$ and $\fmin$ are attained,  and $f_\infty = \fmin$.
\item[(ii)] If $f_t$ admits an optimal solution $L$ that is $\delta$-flat, then $L$ is a convex combination of evaluation maps at global minimizers of $f$ in $\CDS$, and $f_t=f_\infty =\fmin$.
\end{itemize}
\end{restatable}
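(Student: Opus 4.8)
The plan is to derive both parts from the general results about linear forms established in Appendix~\ref{sec:flat ext}–\ref{sec:comm}, rather than invoking real-algebraic Positivstellens\"atze directly; this keeps everything on the ``moment side'', consistent with the rest of the paper. Throughout, write $\fmin$ for $\mathrm{inf}_{a\in D(S)} f(a)$ and recall $f_t \le f_\infty \le \fmin$ for $t\in\N$ (the first inequality by restriction of feasible $L$ to lower degree, the second because every evaluation map $L_a$ at a point $a\in D(S)$ is feasible for $f_\infty$ with value $f(a)$, so that $f_\infty \le \inf_a f(a) = \fmin$).

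\medskip
\emph{Part (i).} First I would establish that the feasible region of $f_t$ is nonempty and compact for every $t\in\N$. Nonemptiness follows since $D(S)\ne\emptyset$ gives an evaluation map $L_a$, which is feasible. For compactness, use that $\mathcal M(S)$ Archimedean means $R-\sum_i x_i^2 \in \mathcal M(S)$ for some $R>0$, hence $R-\sum_i x_i^2 \in \mathcal M_{2t}(S)$ for $t$ large enough; this forces $L(x_i^2)\le R$ for any feasible $L$, and then an analogue of Lemma~\ref{lemma:upperboundLw} bounds $|L(w)|$ for all monomials $w$ of bounded degree, so the feasible set is a closed bounded subset of a finite-dimensional space. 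Consequently each $f_t$ is attained by some $L_t$. Since $(f_t)_t$ is nondecreasing and bounded above by $\fmin<\infty$, the limit $f_\infty' := \lim_t f_t$ exists with $f_\infty' \le f_\infty \le \fmin$. Extract a pointwise-convergent subsequence of the $(L_t)$ (again using Lemma~\ref{lemma:upperboundLw} to get the needed uniform bounds, and a diagonal argument over the countably many monomials); the limit $L_\infty\in\R\cx^*$ satisfies $L_\infty(1)=1$ and is nonnegative on $\mathcal M(S)$, hence is feasible for $f_\infty$, and $L_\infty(f) = \lim f_t = f_\infty'$. Thus $f_\infty' \ge f_\infty$, so $f_\infty = f_\infty' = \lim_t f_t$ and the optimum in $f_\infty$ is attained by $L_\infty$. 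Finally, apply Theorem~\ref{propLinfinitedimcomm} (with $T=\emptyset$) to $L_\infty$: since $\mathcal M(S)$ is Archimedean, $L_\infty$ is represented by a probability measure $\mu$ on $D(S)$, so $f_\infty = L_\infty(f) = \int_{D(S)} f\,d\mu \ge \fmin$. Combined with $f_\infty \le \fmin$, this gives $f_\infty = \fmin$. That $\fmin$ is attained then follows because $\mu$ is supported on the compact set $D(S)$ (compactness of $D(S)$ being itself a consequence of the Archimedean condition) and $\int f\,d\mu = \fmin = \mathrm{essinf}\, f$ forces $\mu$ to charge the set of minimizers.

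\medskip
\emph{Part (ii).} Suppose $L$ is an optimal solution of $f_t$ that is $\delta$-flat. By Theorem~\ref{thm:flat-comm} (applied with $T=\emptyset$, $S\subseteq\R\cx_{2\delta}$, $1\le\delta\le t<\infty$), $L$ extends to a conic combination $\widehat L = \sum_{i=1}^r \lambda_i L_{a_i}$ of evaluation maps at points $a_i\in D(S)$, with $\lambda_i>0$. Since $\widehat L$ extends $L$ we have $\widehat L(1) = L(1) = 1$, so $\sum_i\lambda_i = 1$ and $\widehat L$ is a convex combination of evaluation maps. Moreover $\widehat L$ is feasible for $f_\infty$ (it is nonnegative on $\mathcal M(S)$, being a conic combination of point evaluations in $D(S)$), and $\widehat L(f) = L(f) = f_t$, so $f_\infty \le f_t$; with $f_t\le f_\infty$ this gives $f_t = f_\infty = \fmin$. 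Then $\fmin = \widehat L(f) = \sum_i \lambda_i f(a_i) \ge \sum_i \lambda_i \fmin = \fmin$, forcing $f(a_i) = \fmin$ for every $i$, i.e.\ each $a_i$ is a global minimizer of $f$ over $D(S)$; hence $L$ (equivalently $\widehat L$ restricted) is a convex combination of evaluation maps at global minimizers.

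\medskip
The main obstacle is the compactness/uniform-boundedness bookkeeping in part (i): one must be careful that the Archimedean bound $R-\sum x_i^2\in\mathcal M_{2t}(S)$ only holds once $2t$ is large enough to ``see'' the representation, so the compactness argument and the extraction of a convergent subsequence should be phrased for $t$ beyond that threshold (harmless, since we take $t\to\infty$). Everything else is a direct packaging of Theorems~\ref{propLinfinitedimcomm} and~\ref{thm:flat-comm} together with Lemma~\ref{lemma:upperboundLw}.
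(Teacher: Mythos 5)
Your proof is correct and follows the paper's strategy closely; the one genuine difference is in how you get $f_\infty=\fmin$ and the attainment of $\fmin$. The paper applies Theorem~\ref{thm:eval-comm} (the Tchakaloff-based result) to the optimal $L$ for $f_\infty$, obtaining a \emph{finite} convex combination $\sum_i\lambda_iL_{a_i}$ of evaluations at points of $\CDS$; then $f_\infty=\sum_i\lambda_if(a_i)\ge\fmin$ and the atoms $a_i$ are automatically minimizers, so attainment of $\fmin$ comes for free. You instead invoke Theorem~\ref{propLinfinitedimcomm} to represent $L$ by a probability measure $\mu$ on $\CDS$ and then argue that $\int(f-\fmin)\,d\mu=0$ forces $\mu$ to charge the minimizers. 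Both routes stay on the moment side and both are valid (your closing measure-theoretic step does not even need compactness of $\CDS$, since $f=\fmin$ $\mu$-a.e.\ and $\mu$ is a probability measure already gives a minimizer); the Tchakaloff route just packages the attainment argument more cleanly. Part (ii) is exactly the paper's argument via Theorem~\ref{thm:flat-comm}.

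One small inaccuracy to fix: in part (i) you claim the feasible region of $f_t$ is compact and hence that each $f_t$ is \emph{attained}. Lemma~\ref{lemma:upperboundLw} only bounds $|L(w)|$ for $w\in\cx_{2t-2d+2}$, where $d$ is such that $R-\sum_ix_i^2\in\MM_{2d}(S)$; for $d>1$ the top-degree entries of $M_t(L)$ are not controlled, so compactness of the feasible set of $f_t$ (and attainment at finite $t$) does not follow, and indeed the theorem does not assert it. This is harmless for your argument: as in the paper's proof of Theorem~\ref{thm:convnc}(i), take $\epsilon$-optimal $L_t$ instead of optimal ones; pointwise convergence on each fixed monomial only uses the bounds in degrees $\le 2t-2d+2$, which eventually cover every monomial as $t\to\infty$.
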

\begin{proof}
(i) By repeating the first part of the proof of Theorem~\ref{thm:convnc} in the commutative setting we see that $f_t \to f_\infty$ and that the optimum is attained in $f_\infty$. Let $L$ be optimal for $f_\infty$ and let $k$ be greater than $\mathrm{deg}(f)$ and $\mathrm{deg}(g)$ for $g \in S$. By Theorem~\ref{thm:eval-comm}, the restriction of $L$ to $\R[\bx]_k$ extends to  a conic combination of evaluations at points in $D(S)$. It follows that this extension if feasible for $f_*$ with the same objective value, which shows $f_\infty = f_*$.

(ii) This follows in the same way as the proof of Theorem~\ref{thm:convnc}(ii) below, where, instead of using Corollary~\ref{thm:flat-nonc}, we now use its commutative analogue, Theorem~\ref{thm:flat-comm}. \qed
\end{proof}

To discuss convergence for the tracial case we need one more optimization problem: 
\[
f_\mathrm{II_1}^\mathrm{tr} = \mathrm{inf} \big\{ \tau(f({\bf X})) : {\bf X} \in \DAS, \, \mathcal A \text{ is a unital $C^*$-algebra with tracial state } \tau\big\}.
\]
This problem can be seen as an infinite dimensional analogue of $f_*^{\mathrm{tr}}$: if we restrict to finite dimensional $C^*$-algebras in the definition of $f_{\mathrm{II_1}}^{\mathrm{tr}}$, then we recover the parameter $f_*^{\mathrm{tr}}$ (use Theorem~\ref{propLfinitedim} to see this). 
Moreover, as we see in Theorem~\ref{thm:convnc}(ii) below, equality $f_*^{\mathrm{tr}} = f_{\mathrm{II_1}}^{\mathrm{tr}}$ holds if some flatness condition is satisfied.
Whether $f_\mathrm{II_1}^\mathrm{tr} = f_*^\mathrm{tr}$ is true in general is related to Connes' embedding conjecture (see~\cite{KS08,KP16,BKP16}).

Above we defined the parameter $f_\mathrm{II_1}^\mathrm{tr}$ using $C^*$-algebras. However, the following lemma
shows that we get the same optimal value if we restrict to $\MA$ being a von Neumann algebra of type~$\mathrm{II_1}$ with separable predual, which is the more common way of defining the parameter  $f_\mathrm{II_1}^\mathrm{tr}$ as is done in~\cite{KP16} (and justifies the notation).
We omit the proof of this lemma which relies on a GNS construction and  algebraic manipulations, standard  for algebraists.

\begin{restatable}{lemma}{resremarkvonNeumann}
\label{remark:von Neumann}
Let $\mathcal A$ be a $C^*$-algebra with tracial state $\tau$ and $a_1,\ldots,a_n \in \mathcal A$. There exists a von Neumann algebra $\mathcal F$ of type $\mathrm{II_1}$ with separable predual, a faithful normal tracial state $\phi$, and elements $b_1,\ldots,b_n \in \mathcal F$, so that for every $p \in \R\langle{\bf x}\rangle$ we have 
\[
\tau(p(a_1,\ldots,a_n)) = \phi(p(b_1,\ldots,b_n)) \quad \text{ and }
\]
\[
p(a_1,\ldots, a_n) \text{ is positive} \quad \iff \quad p(b_1,\ldots,b_n) \text{ is positive}.
\]
\end{restatable}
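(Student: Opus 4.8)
The final statement to prove is Lemma~\ref{remark:von Neumann}, which upgrades an arbitrary $C^*$-algebra with a tracial state to a type $\mathrm{II_1}$ von Neumann algebra with a faithful normal tracial state, preserving all moments and positivity of polynomial expressions in the given elements.

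The plan is to proceed via a GNS-type construction followed by a padding argument to force the $\mathrm{II_1}$ property. First I would perform the GNS construction associated to the tracial state $\tau$: form the pre-Hilbert space $\mathcal A/N_\tau$ where $N_\tau=\{a\in\mathcal A:\tau(a^*a)=0\}$ (a two-sided ideal since $\tau$ is tracial), equip it with the inner product $\langle \overline a,\overline b\rangle=\tau(a^*b)$, and complete to a Hilbert space $\mathcal H_\tau$; since $\mathcal A$ acts on a separable Hilbert space, $\mathcal A$ is norm-separable and hence $\mathcal H_\tau$ is separable. Left multiplication gives a $*$-representation $\pi$ of $\mathcal A$ on $\mathcal H_\tau$ with cyclic and separating vector $\xi=\overline 1$, and $\tau(a)=\langle\xi,\pi(a)\xi\rangle$. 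Let $\mathcal M=\pi(\mathcal A)''$ be the von Neumann algebra generated by $\pi(\mathcal A)$. The vector state $\omega_\xi(x)=\langle\xi,x\xi\rangle$ restricts to $\tau$ on $\pi(\mathcal A)$, is a trace on $\pi(\mathcal A)$, hence (by normality/weak-$*$ density and Kaplansky) extends to a normal tracial state on $\mathcal M$; since $\xi$ is separating for $\pi(\mathcal A)$ one checks it is separating for $\mathcal M$, so the trace is faithful. Thus $\mathcal M$ is a finite von Neumann algebra with separable predual carrying a faithful normal tracial state $\phi$, and setting $b_i=\pi(a_i)$ we already get $\tau(p(\mathbf a))=\phi(p(\mathbf b))$ for all $p\in\R\ncx$, and $p(\mathbf a)\succeq 0 \iff p(\mathbf b)\succeq 0$ because $\pi$ is an injective (being trace-preserving with faithful trace) $*$-homomorphism on the $C^*$-subalgebra generated by the $a_i$, so it preserves positivity in both directions.

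The remaining issue is that $\mathcal M$ need not be of type $\mathrm{II_1}$ — it could be finite dimensional, or more generally have a nontrivial type $\mathrm{I}$ or direct-summand structure. To fix this I would tensor with a fixed copy of the hyperfinite $\mathrm{II_1}$ factor $\mathcal R$: set $\mathcal F=\mathcal M\,\overline\otimes\,\mathcal R$ with the tracial state $\phi\otimes\tau_{\mathcal R}$, which is again faithful, normal, and tracial, and has separable predual since both factors do. A tensor product of a finite von Neumann algebra with a $\mathrm{II_1}$ factor is of type $\mathrm{II_1}$ (it has no minimal projections and no type $\mathrm{III}$ part because the trace is faithful normal). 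Then embed $\mathcal M \hookrightarrow \mathcal F$ via $x\mapsto x\otimes 1$, which is a trace-preserving normal unital $*$-monomorphism, and redefine $b_i=\pi(a_i)\otimes 1$. All the required identities are preserved: $\phi(p(\mathbf b))=(\phi\otimes\tau_{\mathcal R})(p(\mathbf b)\otimes 1)=\ldots$, wait — more precisely $p(\mathbf b)=p(\pi(\mathbf a))\otimes 1$, so its trace is $\phi(p(\pi(\mathbf a)))=\tau(p(\mathbf a))$, and positivity of $p(\mathbf b)=y\otimes 1$ is equivalent to positivity of $y=p(\pi(\mathbf a))$, which is equivalent to positivity of $p(\mathbf a)$ as noted.

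The main obstacle, conceptually, is the verification that the GNS von Neumann algebra $\mathcal M=\pi(\mathcal A)''$ genuinely carries a \emph{faithful normal} tracial state extending $\tau$: the trace property and normality on $\pi(\mathcal A)$ extend by weak-$*$ density together with the Kaplansky density theorem and joint weak-$*$ continuity of multiplication on bounded sets, but faithfulness requires the observation that the GNS cyclic vector $\xi$, being separating for $\pi(\mathcal A)$, remains separating for its weak closure $\mathcal M$ — this uses that $\mathcal M'\xi$ is dense and a standard argument via the commutant. The tensoring step is routine once one cites the structure theory of von Neumann algebras (a finite-by-$\mathrm{II_1}$ tensor product is $\mathrm{II_1}$), and the polynomial identities are purely formal. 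As the excerpt notes, these manipulations are standard for operator algebraists, so in the paper I would present the construction compactly and relegate the type-classification and faithfulness verifications to citations of~\cite{Blackadar06,BK12,KP16}.
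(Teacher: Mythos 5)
The paper does not actually print a proof of this lemma (it is explicitly omitted as ``standard for algebraists''), and your architecture --- GNS with respect to $\tau$, pass to the double commutant $\mathcal M=\pi(\mathcal A)''$ with the normal tracial vector state, then tensor with the hyperfinite $\mathrm{II_1}$ factor to force type $\mathrm{II_1}$ --- is surely the intended route. The trace identity, the normality/traciality/faithfulness of $\omega_\xi$ on $\mathcal M$, and the tensoring step are all handled correctly at the level of detail appropriate here.

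There is, however, a genuine gap in your treatment of the ``$\Leftarrow$'' direction of the positivity equivalence. You assert that $\pi$ is injective on the $C^*$-subalgebra generated by the $a_i$ ``being trace-preserving with faithful trace'', but this is circular: $\phi\circ\pi=\tau$, and faithfulness of $\phi$ only tells you that $\ker\pi$ equals the trace-null ideal $N_\tau=\{a:\tau(a^*a)=0\}$, which is nonzero precisely when $\tau$ is \emph{not} faithful --- and the lemma does not assume $\tau$ faithful. When $\ker\pi\neq 0$, positivity of $\pi(p(\mathbf a))$ does not imply positivity of $p(\mathbf a)$. Worse, this direction of the lemma cannot be rescued by any other construction: take $\mathcal A=\C\oplus\C$, $\tau(x,y)=x$ (a non-faithful tracial state) and $a_1=(1,-1)$. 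Applying the required equivalence to $\pm(x_1^2-1)$ forces $b_1^2=1$, and then $\phi(b_1)=\tau(a_1)=1$ forces the idempotent $(1+b_1)/2$ to have trace $1$, hence (by faithfulness of $\phi$ in a finite von Neumann algebra) to equal $1$, so $b_1=1\succeq 0$ while $a_1\not\succeq 0$. So you should only claim and prove the forward implication ``$p(\mathbf a)$ positive $\Rightarrow$ $p(\mathbf b)$ positive'' (immediate since $*$-homomorphisms preserve positivity), which is in fact all that the paper's applications of the lemma (Lemmas~\ref{lem:bilinear},~\ref{lem:kernel} and~\ref{lemoplus}) ever use. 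A second, easily repaired, inaccuracy: a norm-closed subalgebra of $\mathcal B(\Hilbert)$ with $\Hilbert$ separable need not be norm-separable (e.g.\ $\ell^\infty$), so to get a separable $H_\tau$ and a separable predual you should first replace $\mathcal A$ by the separable unital $C^*$-subalgebra generated by $1,a_1,\ldots,a_n$, which is harmless since only polynomial expressions in the $a_i$ occur in the statement.
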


For all $t \in \N$ we have
$$
f_t^\mathrm{tr} \leq \ftr_{\infty} \le f_\mathrm{II_1}^\mathrm{tr} \leq f_\mathrm{*}^\mathrm{tr},
$$ 
where the last inequality follows by considering for $\MA$ the full matrix algebra $\C^{d\times d}$. 
The next theorem from~\cite{KP16} summarizes convergence properties for these parameters, its proof uses  Lemma \ref{lemma:upperboundLw} below.

\begin{restatable}{theorem}{resthmconvnc}\label{thm:convnc}
Let $1 \leq \delta \leq t < \infty$ and $S\cup\{f\}\subseteq \mathrm{Sym}\, \R\ncx_{2\delta}$ with $\MD(S)\ne \emptyset$.
\begin{itemize}
\item[(i)] If $\MM(S)$ is Archimedean, then $f_t^{\text{tr}} \to f_\infty^\mathrm{tr}$ as $t \to \infty$, and the optimal values in $\ftr_{\infty}$ and $f_\mathrm{II_1}^\mathrm{tr}$ are attained and equal.
\item[(ii)] If $f_t^{\text{tr}}$ has an optimal solution $L$ that is $\delta$-flat, then $L$ is a convex combination of normalized trace evaluations at matrix tuples in $\DS$, and $f_t^{\text{tr}}=f_\infty^{\text{tr}}=f_\mathrm{II_1}^\mathrm{tr} =f_*^\mathrm{tr}$.
\end{itemize}
\end{restatable}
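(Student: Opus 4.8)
The plan is to obtain both parts by a short assembly argument, combining the uniform bounds of Lemma~\ref{lemma:upperboundLw} with the $C^*$-algebraic representation results of Appendix~\ref{sec:flat ext}. For part~(i) I would first record that $(f_t^{\mathrm{tr}})_t$ is nondecreasing in $t$, since restricting to $\R\ncx_{2t}$ any $L$ feasible for $f_k^{\mathrm{tr}}$ (with $t\le k\le\infty$) gives a feasible point of $f_t^{\mathrm{tr}}$; in particular $\lim_{t\to\infty}f_t^{\mathrm{tr}}$ exists and is at most $f_\infty^{\mathrm{tr}}$. Using the Archimedean hypothesis I would fix $R>0$ with $R-\sum_i x_i^2\in\MM(S)$ and invoke Lemma~\ref{lemma:upperboundLw} to bound $|L(w)|$ by a quantity depending only on $R$, $|w|$ and $L(1)=1$, uniformly over all $L$ feasible for $f_t^{\mathrm{tr}}$; this makes the feasible set of each $f_t^{\mathrm{tr}}$ compact in the product topology, so an optimizer $L_t$ exists. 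A diagonal extraction then produces a subsequence of $(L_t)_t$ converging pointwise to some $L\in\R\ncx^*$; since symmetry, traciality, the normalization $L(1)=1$, and nonnegativity on $\MM(S)$ (using $\MM(S)=\bigcup_s\MM_{2s}(S)$, so that each fixed element of $\MM(S)$ lies in $\MM_{2t}(S)$ for all large $t$) are preserved under pointwise limits, $L$ is feasible for $f_\infty^{\mathrm{tr}}$, and $f_\infty^{\mathrm{tr}}\le L(f)=\lim_t f_t^{\mathrm{tr}}\le f_\infty^{\mathrm{tr}}$ yields convergence together with attainment in $f_\infty^{\mathrm{tr}}$.

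To finish part~(i), I would identify $f_\infty^{\mathrm{tr}}$ with $f_{\mathrm{II}_1}^{\mathrm{tr}}$. Applying Theorem~\ref{propLinfinitedim} (with empty ideal) to the optimal $L$ produces a unital $C^*$-algebra $\MA$ with tracial state $\tau$ and $\bX\in\DAS$ with $L=\tau(\cdot(\bX))$, so $f_\infty^{\mathrm{tr}}=\tau(f(\bX))\ge f_{\mathrm{II}_1}^{\mathrm{tr}}$; conversely any triple $(\MA,\tau,\bX)$ with $\bX\in\DAS$ yields a feasible point $L_\bX$ of $f_\infty^{\mathrm{tr}}$ with $L_\bX(f)=\tau(f(\bX))$, whence $f_\infty^{\mathrm{tr}}\le f_{\mathrm{II}_1}^{\mathrm{tr}}$. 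Equality follows, and the representation of the optimal $L$ exhibits a minimizer for $f_{\mathrm{II}_1}^{\mathrm{tr}}$, so that infimum is attained as well.

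For part~(ii), let $L\in\R\ncx_{2t}^*$ be a $\delta$-flat optimal solution of $f_t^{\mathrm{tr}}$. Since $S\subseteq\mathrm{Sym}\,\R\ncx_{2\delta}$ and $\delta\le t$, Corollary~\ref{thm:flat-nonc} applies (again with empty ideal): $L$ extends to a conic combination of trace evaluations at tuples in $\DS$, and since $L(1)=1$ this is, after normalizing the trace on each block, a convex combination of normalized trace evaluations at tuples in $\DS$, whose restriction to $\R\ncx_{2t}$ represents $L$ itself. This extension $\hat L$ is symmetric, tracial, nonnegative on $\MM(S)$, satisfies $\hat L(1)=1$, and has finite-rank moment matrix, hence is feasible for $f_*^{\mathrm{tr}}$, while $\hat L(f)=L(f)=f_t^{\mathrm{tr}}$ because $\deg(f)\le 2\delta\le 2t$. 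Combining this with the chain $f_t^{\mathrm{tr}}\le f_\infty^{\mathrm{tr}}\le f_{\mathrm{II}_1}^{\mathrm{tr}}\le f_*^{\mathrm{tr}}$, which holds without the Archimedean assumption (the last step from restricting $f_{\mathrm{II}_1}^{\mathrm{tr}}$ to full matrix algebras), and with $f_*^{\mathrm{tr}}\le\hat L(f)=f_t^{\mathrm{tr}}$ forces all four quantities to coincide.

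The heavy lifting is thus delegated to Lemma~\ref{lemma:upperboundLw}, Theorem~\ref{propLinfinitedim}, and Corollary~\ref{thm:flat-nonc}, so within the present argument the main obstacle is the compactness step of part~(i): one must verify carefully that the uniform estimates on $|L(w)|$ survive the pointwise limit and that each constraint defining the feasible region — especially nonnegativity on every element of $\MM(S)$ rather than merely on its truncations — is a closed condition, so that the limit functional is genuinely feasible for $f_\infty^{\mathrm{tr}}$. The degree bookkeeping in part~(ii), ensuring that the flat extension leaves $L(f)$ unchanged, is routine but should be spelled out.
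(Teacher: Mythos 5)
Your argument follows the paper's proof essentially step for step: monotonicity of $f_t^{\mathrm{tr}}$ plus Lemma~\ref{lemma:upperboundLw} and a pointwise-limit extraction for convergence and attainment in $f_\infty^{\mathrm{tr}}$, Theorem~\ref{propLinfinitedim} (with empty ideal) for the identification with $f_{\mathrm{II_1}}^{\mathrm{tr}}$ and its attainment, and Corollary~\ref{thm:flat-nonc} for part~(ii).

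The one place you overreach is the claim that Lemma~\ref{lemma:upperboundLw} makes the feasible set of each finite-level program $f_t^{\mathrm{tr}}$ compact, so that an optimizer $L_t$ exists. The lemma bounds $|L(w)|$ only for words of degree at most $2t-2d+2$, where $d$ is the degree parameter in the Archimedean certificate $R-\sum_i x_i^2\in\MM_{2d}(S)$; when $d\ge 2$ the coordinates $L(w)$ with $2t-2d+2<|w|\le 2t$ are not controlled, so compactness of the feasible region of a fixed truncation is not established by the cited lemma. (In Proposition~\ref{prop:lowerbound} the analogous attainment claim is legitimate precisely because there one has $d=1$.) The theorem does not assert attainment for finite $t$, and the paper sidesteps the issue by taking $\epsilon$-optimal solutions $L_t$ with $L_t(f)\le f_t^{\mathrm{tr}}+\epsilon$ and letting $\epsilon\to 0$ at the end; your limit argument goes through verbatim with that substitution, since for each fixed word $w$ the bound $|L_t(w)|\le R^{|w|/2}$ does hold for all sufficiently large $t$, which is all the diagonal extraction needs. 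Everything else --- the closedness of the constraints under pointwise limits, the two directions of $f_\infty^{\mathrm{tr}}=f_{\mathrm{II_1}}^{\mathrm{tr}}$, and the degree bookkeeping in part~(ii) --- is correct and matches the paper.
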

\begin{proof}
We first show  (i). As  $\MM(S)$ is Archimedean,   $R-\sum_{i=1}^nx_i^2\in \MM_{2d}(S)$ for some $R>0$ and $d\in \N$.
Since the bounds $\ftr_t$ are monotone nondecreasing in $t$ and upper bounded by $\ftr_\infty$, the limit $\lim_{t\rightarrow \infty} \ftr_t$ exists and it is at most $\ftr_\infty$. 

Fix $\epsilon>0$. For  $t\in \N$ let $L_t$ be a feasible solution to the program defining $\ftr_t$ with value $L_t(f)\le \ftr_t+\epsilon$.
As $L_t(1)=1$ for all $t$ we can apply Lemma~\ref{lemma:upperboundLw} below and conclude that the sequence $(L_t)_t$ has a convergent subsequence. 
 Let $L\in \R\ncx^*$ be the pointwise limit. One can easily check that $L$ is feasible for  $\ftr_\infty$. Hence we have 
 $\ftr_\infty \le L(f)\le \lim_{t\to\infty} \ftr_t +\epsilon\le \ftr_\infty +\epsilon$. Letting $\epsilon\to 0$ we obtain that $\ftr_\infty=\lim_{t\to\infty}\ftr_t$ and 
 $L$ is optimal for  $\ftr_\infty$.

Next, since $L$ is symmetric, tracial, and nonnegative on $\MM(S)$, we can apply Theorem~\ref{propLinfinitedim} to obtain a feasible solution $(\MA,\tau,\bX)$ to $f_\mathrm{II_1}^\mathrm{tr}$ satisfying~\eqref{eqLA} with objective value $L(f)$. This shows $\ftr_\infty= f_\mathrm{II_1}^{\mathrm{tr}}$ and that the optima are attained in $\ftr_\infty$ and  $\ftr_\mathrm{II_1}$. 

Finally, part (ii) is derived as follows. If $L$ is an optimal solution of $\ftr_t$ that is $\delta$-flat, then, by Corollary~\ref{thm:flat-nonc}, it has an extension $\hat L\in \R\ncx^*$ that is a conic combination of trace evaluations at elements of $\DS$. This shows $\smash{\ftr_*} \le \smash{\hat L(f)} = L(f)$, and thus the chain of equalities 
$\ftr_t=\ftr_\infty= \ftr_*=\ftr_{\Pi_1}$ holds.  
\end{proof}

We conclude with the following technical lemma, based on the Banach-Alaoglu theorem. It is a well known crucial tool for proving  the asymptotic convergence result from Theorem~\ref{thm:convnc}(i) and it is used at other places in the paper.

\begin{restatable}{lemma}{reslemmaupperboundLw}\label{lemma:upperboundLw}
Let $S \subseteq \mathrm{Sym}\, \R\ncx$, $T \subseteq \R\ncx$, 
and assume  $R-(x_1^2 + \cdots + x_n^2) \in \MM_{2d}(S)+\mathcal I_{2d}(T)$  for some $d\in \N$ and $R>0$.
 For $t\in \N$ assume $L_t \in \smash{\R\langle \bx \rangle_{2t}^*}$ is tracial, nonnegative on $\mathcal M_{2t}(S)$ and zero on $\MI_{2t}(T)$. 
Then we have
$\smash{|L_t(w)|\le R^{|w|/2} L_t(1)}$ for all $w\in \ncx_{2t-2d+2}$. In addition, if $\sup_t \, L_t(1) < \infty$, then  $\smash{\{L_t\}}_t$ has a pointwise converging subsequence in $\smash{\R\langle \bx \rangle^*}$.
\end{restatable}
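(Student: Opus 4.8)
The plan is to first establish the pointwise bound $|L_t(w)| \le R^{|w|/2}L_t(1)$ for all $w \in \ncx_{2t-2d+2}$, and then to extract a pointwise converging subsequence by a diagonal argument. We may assume $t \ge d$; for $t \le d-1$ the index set $\ncx_{2t-2d+2}$ is empty, except that for $t=d-1$ it consists of the empty word alone, for which the inequality is the trivial $L_t(1)\le L_t(1)$ (and $L_t(1)\ge 0$ because $1\in\MM_{2t}(S)$).

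Two elementary consequences of the hypotheses will be used. First, since $L_t$ is nonnegative on $\MM_{2t}(S) \supseteq \{\, p^*p : p\in\R\ncx_t \,\}$, the moment matrix $M_t(L_t)$ is positive semidefinite, so Cauchy--Schwarz for positive semidefinite matrices gives, for all words $u,v$ with $|u|,|v|\le t$,
\[
|L_t(uv)|^2 = \big|(M_t(L_t))_{u^*,v}\big|^2 \le (M_t(L_t))_{u^*,u^*}\,(M_t(L_t))_{v,v} = L_t(uu^*)\,L_t(v^*v).
\]
Second, for every word $q$ with $|q|\le t-d$ the polynomial $q^*\big(R-\sum_i x_i^2\big)q$ has degree at most $2|q|+2d\le 2t$ and lies in $\MM_{2t}(S)+\MI_{2t}(T)$, so $L_t\big(q^*(R-\sum_i x_i^2)q\big)\ge 0$; since each $(x_iq)^*(x_iq)\in\MM_{2t}(S)$, rearranging yields $\sum_i L_t\big((x_iq)^*(x_iq)\big) = L_t\big(q^*\big(\sum_i x_i^2\big)q\big) \le R\,L_t(q^*q)$.

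Next I would prove by induction on $k$ the auxiliary estimate $L_t(p^*p)\le R^{k}L_t(1)$ for every word $p$ of length $k\le t-d+1$. The case $k=0$ is an equality. For $k\ge 1$, write $p=x_ip'$ with $|p'|=k-1\le t-d$; since $p^*p=(p')^*x_i^2p'$ and every $(p')^*x_j^2p'=(x_jp')^*(x_jp')\in\MM_{2t}(S)$ is given a nonnegative value by $L_t$, the second consequence and the inductive hypothesis give $L_t(p^*p)\le L_t\big((p')^*\big(\sum_j x_j^2\big)p'\big)\le R\,L_t\big((p')^*p'\big)\le R\cdot R^{k-1}L_t(1)$. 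Now take a word $w$ of length $\ell\le 2t-2d+2$ and split it as $w=uv$ with $|u|=\lceil\ell/2\rceil$ and $|v|=\lfloor\ell/2\rfloor$; both lengths are at most $t-d+1$ (hence at most $t$, as $d\ge 1$). Applying the first consequence and then the auxiliary estimate to $p=u^*$ and $p=v$ gives $|L_t(w)|\le\sqrt{L_t(uu^*)}\,\sqrt{L_t(v^*v)}\le\sqrt{R^{|u|}L_t(1)}\,\sqrt{R^{|v|}L_t(1)}=R^{\ell/2}L_t(1)$, which is the desired bound.

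For the last statement, assume $C:=\sup_t L_t(1)<\infty$. Then for every word $w$ and every $t$ with $2t-2d+2\ge|w|$ we have $|L_t(w)|\le CR^{|w|/2}$, so the real sequence $\big(L_t(w)\big)_t$ is bounded for all large $t$. Since $\ncx$ is countable, a standard diagonal extraction (Bolzano--Weierstrass applied successively along an enumeration of $\ncx$, equivalently Tychonoff's theorem) yields a subsequence $(L_{t_k})_k$ for which $\lim_k L_{t_k}(w)$ exists for every $w\in\ncx$; setting $L(w)$ to be this limit and extending $\R$-linearly produces $L\in\R\ncx^*$ with $L_{t_k}\to L$ pointwise on $\R\ncx$, since every element of $\R\ncx$ is a finite linear combination of words. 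I expect the only delicate point to be the degree bookkeeping in the inductive step and in the split $w=uv$, ensuring that all polynomials involved stay within degree $2t$ and all words within $\ncx_t$; the Cauchy--Schwarz inequality and the subsequence extraction are entirely routine.
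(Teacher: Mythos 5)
Your proof is essentially the paper's: the same induction giving $L_t(p^*p)\le R^{|p|}L_t(1)$ for $|p|\le t-d+1$, the same Cauchy--Schwarz split $w=uv$ on the positive semidefinite moment matrix, and a diagonal extraction in place of the paper's Banach--Alaoglu argument (equivalent here, since $\ncx$ is countable). The one justification you should repair is the claim that $q^*\bigl(R-\sum_i x_i^2\bigr)q$ \emph{lies in} $\MM_{2t}(S)+\MI_{2t}(T)$. Writing $R-\sum_i x_i^2=\sigma+\iota$ with $\sigma\in\MM_{2d}(S)$ and $\iota=\sum_j p_jh_j$, $h_j\in T$, the term $q^*\sigma q$ does stay in $\MM_{2t}(S)$, but $q^*p_jh_jq$ need not lie in $\MI_{2t}(T)$: by definition~\eqref{eq:ideal} this is only a \emph{left} ideal, and conjugation leaves $h_j$ in the middle of the word rather than at the end. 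The conclusion $L_t\bigl(q^*(R-\sum_i x_i^2)q\bigr)\ge 0$ is still correct, but to reach it you must invoke the hypothesis that $L_t$ is tracial: $L_t(q^*p_jh_jq)=L_t(qq^*p_jh_j)=0$ because $qq^*p_jh_j\in\MI_{2t}(T)$ (its degree is at most $2|q|+2d\le 2t$). This is exactly the commutator identity $w^*ph w= ww^*ph+[w^*ph,w]$ that the paper's proof spells out at this point; with that one line added, your argument is complete.
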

\begin{proof}
We first use induction on $|w|$ to show that $L_t(w^*w)\le R^{|w|}L_t(1)$ for all $w\in \ncx_{t-d+1}$. 
For this, assume $L_t(w^*w)\le R^{|w|}L_t(1)$ and $|w|\le t-d$. Then we have
$$L_t((x_iw)^*x_iw) =L_t(w^*(x_i^2-R)w)+R \cdot L_t(w^*w) \le R\cdot R^{|w|}L_t(1)=R^{|x_iw|}L_t(1).$$
For the inequality we use the fact that $L_t(w^*(x_i^2-R)w)\le 0$ since $w^*(R-x_i^2)w$ can be written as the sum of a polynomial in $\mathcal M_{2t}(S)+\mathcal I_{2t}(T)$ and a sum of commutators of degree at most $2t$, which follows using the following identity:
$w^*qhw=ww^*qh+[w^*qh,w].$
Next we write any $w\in\ncx_{2(t-d+1)}$ as $w=w_1^*w_2$ with $w_1,w_2\in \ncx_{t-d+1}$ and use the positive semidefiniteness of the principal submatrix of $M_t(L_t)$ indexed by $\{w_1,w_2\}$ to get
\[
L_t(w)^2 = L_t(w_1^*w_2)^2\le  L_t(w_1^*w_1)L_t(w_2^*w_2) \le  R^{|w_1|+|w_2|}L_t(1)^2=R^{|w|}L_t(1)^2.
\]
This shows the first claim.

Suppose $c:=\sup_t \, L_t(1) < \infty$. For each $t \in \N$,  consider the linear functional $\hat L_t\in\R\ncx^*$ defined by 
$\hat L_t(w)=L_t(w)$ if $|w|\le 2t-2d+2$ and $\hat L_t(w)=0$ otherwise.
Then the vector $(\hat L_t(w)/(c R^{|w|/2}))_{w \in \langle{\bf x}\rangle}$ lies in the supremum norm unit ball of $\smash{\R^{\langle {\bf x} \rangle}}$, which is compact in the weak$*$ topology by the Banach--Alaoglu theorem. It follows that the sequence $(\hat L_t)_t$ has a pointwise converging subsequence and thus the same holds for the sequence $(L_t)_t$. \qed
\end{proof}






\end{document}